\documentclass[a4paper]{article}
 
\usepackage{amsmath}
\usepackage{amssymb}
\usepackage{mathtools}
\usepackage{empheq}
\usepackage{amsthm}
\usepackage{dsfont}
\usepackage{xcolor}
\usepackage{enumitem}
\usepackage{appendix}
\usepackage{authblk}
\usepackage{a4wide}
\usepackage{epigraph}
\usepackage[hidelinks]{hyperref}       
\usepackage[backend=biber]{biblatex}
\usepackage{nccmath}
\mathtoolsset{showonlyrefs}
\numberwithin{equation}{section}

\newtheorem{assumption}{Assumption}
\newtheorem{setting}{Setting}
\newtheorem{theorem}{Theorem}[section]
\newtheorem{proposition}[theorem]{Proposition}
\newtheorem{corollary}[theorem]{Corollary}
\newtheorem{lemma}[theorem]{Lemma}
\newtheorem{definition}[theorem]{Definition}
\newtheorem{remark}[theorem]{Remark}
\newtheorem{example}[theorem]{Example}

\DeclareMathOperator*{\argmin}{arg\,min}
\DeclareMathOperator{\Cov}{Cov}
\DeclareMathOperator{\diag}{diag}
\DeclareMathOperator{\erf}{erf}
\DeclareMathOperator{\essinf}{ess\,inf}
\DeclareMathOperator{\Id}{Id}
\DeclareMathOperator{\supp}{supp}

\def\biggg#1{{\hbox{$\left#1\vbox to20.5pt{}\right.$}}}

\newcommand{\IC}{\mathbb C}
\newcommand{\IN}{\mathbb N}
\newcommand{\IR}{\mathbb R}
\newcommand{\IRNN}{\IR^{N\times N}}

\newcommand{\B}{\mathcal B}
\newcommand{\calG}{\mathcal{G}}
\newcommand{\F}{\mathcal F}
\newcommand{\N}{\mathcal N}
\newcommand{\T}{\mathcal T}
\newcommand{\X}{\mathcal X}
\newcommand{\Y}{\mathcal Y}
\newcommand{\Z}{\mathcal Z}

\newcommand{\Xalpha}{\X^\alpha}
\newcommand{\XC}{\X_{\IC}}
\newcommand{\Ybeta}{\Y^\beta}

\newcommand{\OC}{O_{\IC}}

\newcommand{\BX}{\mathcal{B}(\X)}
\newcommand{\BY}{\mathcal{B}(\Y)}

\newcommand{\CzKY}{C^0(K,\Y)}

\newcommand{\LtwoCrsIPxYE}{L^2(C_R^s(\X),\IPX;\Y)}
\newcommand{\LtwoK}{L^2(K)}

\newcommand{\IE}{\mathbb E}
\newcommand{\IP}{\mathbb P}
\newcommand{\IT}{\mathbb T}
\newcommand{\IPE}{\IP_{E}}
\newcommand{\IPpE}{\IP_{\boldsymbol{E}}}
\newcommand{\IPpX}{\IP_{\pXX}}
\newcommand{\IPX}{\IP_{X}}

\newcommand{\IW}{\mathbb W}
\newcommand{\WY}{\IW_{\Y}}

\newcommand{\Ei}{E_i}

\newcommand{\bs}[1]{\boldsymbol{#1}}
\newcommand{\Bb}{{\boldsymbol{b}}}
\newcommand{\Bc}{{\boldsymbol{c}}}
\newcommand{\Brho}{{\boldsymbol{\gamma}}}
\newcommand{\pe}{\boldsymbol{e}}
\newcommand{\pEE}{\boldsymbol{E}}
\newcommand{\pZZ}{\boldsymbol{Z}}
\newcommand{\pK}{\boldsymbol{K}}
\newcommand{\pkappa}{\boldsymbol{\kappa}}
\newcommand{\pnu}{\boldsymbol{\nu}}
\newcommand{\pPhi}{\boldsymbol{\Phi}}
\newcommand{\ppsi}{\boldsymbol{\psi}}
\newcommand{\pu}{\boldsymbol{u}}

\newcommand{\px}{\boldsymbol{x}}
\newcommand{\pgamma}{\boldsymbol{\gamma}}
\newcommand{\pX}{\boldsymbol{\X}}
\newcommand{\pXX}{\boldsymbol{X}}
\newcommand{\pz}{\boldsymbol{z}}
\newcommand{\pZ}{\boldsymbol{\Z}}

\newcommand{\Hpost}{H_{\text{post}}}
\newcommand{\Hprior}{H_{\text{prior}}}

\newcommand{\lV}{\lVert}
\newcommand{\rV}{\rVert}

\newcommand{\dup}[3][]{\langle #2,#3\rangle_{#1}}
\newcommand{\dupb}[3][]{\Big\langle #2,#3\Big\rangle_{#1}}
\newcommand{\set}[2]{\{#1\,|\,#2\}}

\newcommand{\prodjNprime}{\prod_{j'\leq N}}
\newcommand{\sumin}{\sum_{i=1}^n}
\newcommand{\sumj}{\sum_{j\in\IN}}
\newcommand{\sumjlN}{\sum_{j,\ell\leq N}}
\newcommand{\sumjlNprime}{\sum_{j',\ell'\leq N}}
\newcommand{\sumjN}{\sum_{j\leq N}}

\newcommand{\sumlNnotj}{\sum_{\substack{\ell\leq N\\ \ell\neq j}}}
\newcommand{\sumjNprime}{\sum_{j'\leq N}}
\newcommand{\sumjnr}{\sum_{j\leq\lceil  n^{1/(\rr-\rho-1)}\rceil}}
\newcommand{\sumjnrgr}{\sum_{j> \lceil n^{1/(\rr-\rho-1)}\rceil}}

\newcommand{\sumjntwor}{\sum_{j\leq \lceil n^{1/(2\rr)}\rceil}}
\newcommand{\sumk}{\sum_{k\in\IN}}

\newcommand{\sumlnr}{\sum_{\ell\leq \lceil n^{1/(\rr-\rho-1)}\rceil}}
\newcommand{\sumlnrnotj}{\sum_{\substack{\ell\leq \lceil n^{1/(\rr-\rho-1)}\rceil\\ \ell\neq j}}}
\newcommand{\sumlnrgr}{\sum_{\ell> \lceil n^{1/(\rr-\rho-1)}\rceil}}
\newcommand{\sumlnrgrnotj}{\sum_{\substack{\ell> \lceil n^{1/(\rr-\rho-1)}\rceil\\ \ell\neq j}}}
\newcommand{\limNinfty}{\lim_{N\to\infty}}
\newcommand{\supxN}{\sup_{\lVert x_N\rVert_2=1}}

\newcommand{\eps}{\varepsilon}
\newcommand{\qq}{q}
\newcommand{\rr}{r}
\newcommand{\ssv}{s}
\newcommand{\ttv}{t}

\newcommand{\alphajj}{\alpha_{jj}}
\newcommand{\alphajl}{\alpha_{j\ell}}
\newcommand{\alphajlprime}{\alpha_{j'\ell'}}
\newcommand{\alphall}{\alpha_{\ell\ell}}
\newcommand{\anu}{a_{\pnu}}
\newcommand{\betaj}{\beta_{j}}
\newcommand{\betajprime}{\beta_{j'}}
\newcommand{\cnuk}{c_{\pnu,k}}
\newcommand{\deltajl}{\delta_{j\ell}}
\newcommand{\deltajlprime}{\delta_{j'\ell'}}
\newcommand{\enu}{e_{\pnu}}

\newcommand{\etak}{\eta_k}

\newcommand{\kappaj}{\kappa_{j}}
\newcommand{\kappajprime}{\kappa_{j'}}
\newcommand{\kappal}{\kappa_\ell}
\newcommand{\Lj}{L_j}
\newcommand{\Lnu}{L_{\pnu}}
\newcommand{\omegaj}{\omega_{j}}
\newcommand{\omeganu}{\omega_{\pnu}}
\newcommand{\phij}{\phi_j}
\newcommand{\psij}{\psi_{j}}
\newcommand{\psinuk}{\psi_{\pnu,k}}
\newcommand{\thetaj}{\theta_j}
\newcommand{\tomega}{\tilde\omega}

\newcommand{\gj}{g_j}
\newcommand{\gjprime}{g_{j'}}
\newcommand{\gl}{g_\ell}
\newcommand{\glprime}{g_{\ell'}}
\newcommand{\Gznuk}{g_{0,\pnu,k}}
\newcommand{\Gzj}{g_{0,j}}
\newcommand{\GzeroN}{\bs{g}_{0,N}}

\newcommand{\gnhat}{\hat{g}_n}
\newcommand{\hatgn}{\hat{g}_n}

\newcommand{\AN}{\bs{A}_N}
\newcommand{\ANin}{\AN^{-1}}
\newcommand{\bN}{\bs{b}_N}
\newcommand{\DN}{\bs{D}_N}
\newcommand{\DNhalf}{\bs{D}_N^{1/2}}
\newcommand{\DNhalfinv}{\bs{D}_N^{-1/2}}
\newcommand{\DNinv}{\bs{D}_N^{-1}}
\newcommand{\EN}{\bs{E}_N}
\newcommand{\GN}{\bs{G}_N}
\newcommand{\GNj}{(\GN)_j}
\newcommand{\GNt}{\GN^T}
\newcommand{\IIN}{\bs{I}_N}
\newcommand{\psiN}{\bs{\psi}_N}
\newcommand{\psiNt}{\psiN^T}
\newcommand{\RRN}{\bs{R}_N}
\newcommand{\uN}{\bs{u}_N}
\newcommand{\vN}{\bs{v}_N}
\newcommand{\wN}{\bs{w}_N}

\newcommand{\CRr}{C_R^{\ssv}}
\newcommand{\dd}{\;\mathrm{d}}
\newcommand{\One}{\mathds{1}}
\newcommand{\Sdeltag}{S_{>}^\delta}
\newcommand{\Sdeltal}{S_{\leq}^\delta}
\newcommand{\sigmaRr}{\sigma_{R}^{\ssv}}

\newcommand{\trm}{\textrm}
\DeclareMathOperator{\Unif}{Unif}
\usepackage{todonotes}

\title{Bayesian Operator Learning: Posterior Existence and Convergence of Point Estimates for Gaussian Priors}
\begin{document}
\author[1]{Niklas Reinhardt}
\author[1]{Jakob Zech}
\affil[1]{\small Interdisziplin\"ares Zentrum f\"ur wissenschaftliches Rechnen, Universit\"at Heidelberg, Im Neuenheimer Feld 205, 69120 Heidelberg, Germany
  
  \texttt{niklas.reinhardt@iwr.uni-heidelberg.de}\qquad
    \texttt{jakob.zech@uni-heidelberg.de}}

\maketitle
\begin{abstract}
We develop a Bayesian framework for learning nonlinear operators between infinite-dime\-nsional spaces. Given
a map $g_0:\mathcal{X}\to\mathcal{Y}$ between separable Hilbert spaces, we study the recovery of $g_0$ from $n\in\mathbb{N}$ noisy
input-output pairs $(\boldsymbol{X},\boldsymbol{Z})=(X_i,Z_i)_{i=1}^n$ with $Z_i= g_0 (X_i ) + E_i$. Here the $X_i\in\mathcal{X}$ are randomly drawn ``design'' points in a compact subset of $\X$, and the $E_i$ are assumed to be i.i.d.~draws from a Gaussian white noise process indexed by $\mathcal{Y}$. For any ``operator-valued'' prior $\IP_G$ supported on the space of continuous operators, we show existence of the posterior $\mathbb P_{G|(\boldsymbol{X},\boldsymbol{Z})}$ as a regular conditional distribution, and provide a characterization of its Radon-Nikodym derivative.
For Gaussian priors, we establish \emph{algebraic} (in the sample size $n$) convergence rates for the posterior mean towards the ground truth; this corresponds to a ridge regularized kernel estimator. Moreover, we show that the posterior mean is minimax optimal (up to logarithmic factors) over hyperrectangles when the smoothness of the prior matches that of the ground truth.
To illustrate the applicability of our analysis, we derive explicit learning rates for the Darcy flow solution operator.
\end{abstract}

\tableofcontents

\section{Introduction}
The study of nonlinear relationships from high- and infinite-dimensional data remains a fundamental problem with applications in machine learning and data science.
Learning nonlinear mappings in infinite dimensions is mainly motivated through the natural sciences, where phenomena are often described through differential equations and the ground truth $g_0$ consequently maps between (infinite-dimensional) function spaces. Important examples are solution operators of parametric partial differential equations (PDEs) or operators relating the initial state of a dynamical system to the state at some later time.

Various approaches to approximate such PDE solution operators have been proposed including compressed sensing \cite{DOOSTAN20113015,RS16_1344}, sparse grid interpolation \cite{MR3800593,NobilerandomPDE}, least-squares \cite{leastsquaresI,leastsquaresII} and reduced basis methods \cite{MR3379913,MR3408061}.
More recently, under the framework known as \emph{Operator Learning}, neural network-based approaches have gained considerable attention for approximating nonlinear mappings between 
Banach spaces \cite{li2020fourier,HESTHAVEN201855,BhattModRedNNPDE,Lu2021,raonic2023convolutional,anandkumar2019neural,batlle2023kernel,nelsen2024operator}; see \cite{kovachki2024operator} for a review. Being purely data driven, these approaches provide fast-to-evaluate surrogate models even in situations where a mathematical description of the underlying physical process is not available.

The \emph{Bayesian nonparametric} (BNP) learning framework \cite{ferguson1973bayesian,doksum1974tailfree,doksum1972decision,ferguson1974prior} offers an alternative route to learn an (infinite-dimensional) unknown parameter. By explicitly incorporating prior knowledge in form of a probability distribution, the measurements are used to update that \emph{prior} distribution by conditioning on the data resulting in the \emph{posterior} distribution. Point estimators can then be derived from the posterior.
For a given measurement model, establishing a BNP framework usually involves
\begin{enumerate}[label=(\roman*)]
    \item \label{item:intro1} choosing a \emph{prior} distribution $\IP_G$ on some (infi\-nite-dimensional) candidate space $\calG\ni g_0$ incorporating existing knowledge of $g_0$,
    \item \label{item:intro2} showing existence of a well-defined posterior $\IP_{G|(\pXX,\pZZ)}$, where 
    $(\pXX,\pZZ)$ 
    are (random) measurements generated by $g_0$, and $G\sim\IP_G$,
    \item \label{item:intro3} examining the convergence behavior of the posterior mean estimator and the maximum a posteriori (MAP) estimator, calculated from $\IP_{G|(\pXX,\pZZ)}$, towards the ground truth $g_0$,
    \item \label{item:intro4} showing contraction of the posterior measure towards the ground truth \emph{in distribution} by establishing Bernstein--von Mises-type theorems.
\end{enumerate}
Within this framework, the parameter is typically a function or distribution, see, e.g.,~\cite{hjort2010bayesian,ghosal2017fundamentals}. In this paper, we discuss \ref{item:intro1}--\ref{item:intro3} for an operator-valued Bayesian framework, while \ref{item:intro4} is left as future work. For the convergence rates \ref{item:intro3}, we resort to a Gaussian setting.

\subsection{Outline and Contributions}\label{sec:outline}
This work establishes a BNP framework for parameters that are operators between infinite-dimensional spaces. Specifically, we show existence of the posterior \ref{item:intro2} for a general class of priors \ref{item:intro1} supported on the space of continuous operators. Moreover, for Gaussian priors whose eigenfunctions form an ONB of $L^2(\X,\IP_X;\Y)$, we show that the posterior mean estimator and the MAP estimator coincide and converge towards the ground truth \ref{item:intro3} with certain rates.
Finally, we apply our theory to the parametric Darcy Flow.

To illustrate our contributions, 
we next state an informal version of our main convergence result (for full details see Theorem \ref{th:convergencetheorem} and Corollary \ref{cor:Expectationconvergence}).
Consider a \emph{supervised learning} set-up: Let $\X$ and $\Y$ be separable Hilbert spaces and let data $(X_i,Z_i)$ be generated by
$$Z_i=g_0(X_i)+ E_i,~~ i=1,\ldots,n.$$ Here the $X_i\overset{\rm iid}{\sim}\IP_X$ are i.i.d.\ with their distribution
$\IP_X$ compactly supported on $\supp(\IP_X)\coloneqq K\subset\X$, and the noise variables $E_i$ are i.i.d.~Gaussian white noise processes indexed by $\mathcal{Y}$. The aim is to infer the ground truth $g_0$ within a Bayesian framework. We therefore choose a continuous ONB $(\psi_j)_{j\in\IN}$ of $L^2(K)\coloneqq L^2(K,\IP_X;\Y)$ and introduce the (operator-valued) Gaussian prior $\IP_G$ as the law of 
\begin{align*}
    G\coloneqq\sum_{j\in\IN}\xi_j\psi_j,\quad \xi_j\sim\N(0,\kappa_j^2),
\end{align*} 
where the $\xi_j$, $j\in\IN$, are independent. For $(\kappa_j)_{j\in\IN}\in\ell^2(\IN)$ this gives a well-defined Gaussian prior on $L^2(K)$. 

\begin{theorem}[Informal]\label{th:informal}
Consider the  setup from above.  
Further let $g_0\in C^0(K,\Y)$, $\Gzj\coloneqq\langle g_0,\psij\rangle_{L^2(K)}$, $j\in\IN$, $q>1/2$, $r>1$ and  
    \begin{align}\label{eq:informaldecay}
        |\Gzj|\lesssim j^{-\qq},~~\kappaj\simeq j^{-\rr},~~\lVert\psij\rVert_{C^0(K)}\lesssim 1.
    \end{align}
    Then the posterior $\IP_{G|(\pXX,\pZZ)}$ and its mean $\hatgn$ exist and there exists a constant $C>0$ such that for $n\in\IN$, $n\geq 2$ it holds  
    \begin{align}\label{eq:informalrate}
\IE_{(X_i,Z_i)}\left[\lVert\hatgn-g_0\rVert_{L^2(K)}^2\right]\leq C\log(n)^2n^{-\frac{2\min\{\qq,\rr\}-1}{2\rr}}.
\end{align}
\end{theorem}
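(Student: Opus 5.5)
Since the prior is Gaussian, the likelihood is Gaussian in $G$ and the data enter linearly, so the posterior is again Gaussian and its mean solves a quadratic minimization. Writing $G=\sumj \xi_j\psij$, the white-noise likelihood is $\exp\bigl(\sumin \langle Z_i,G(X_i)\rangle-\tfrac12\lVert G(X_i)\rV_\Y^2\bigr)$, interpreted via the isonormal process. Hence $\hatgn$ is the minimizer of
\begin{align}
  \tfrac1n\sumin\lV g(X_i)\rV_\Y^2-\tfrac2n\sumin\langle Z_i,g(X_i)\rangle+\tfrac1n\sumj \kappaj^{-2}\dup{g}{\psij}^2 ,
\end{align}
which is a kernel ridge estimator with regularization $\lambda=1/n$ in the Cameron--Martin norm. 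For the analysis I would truncate to $N$ coefficients, with $N\simeq n^{1/(2\rr)}$ up to logs. In coordinates the truncated posterior mean is $\ANin\bN$, where $\AN=\tfrac1n\GNt\GN+\tfrac1n\DNinv$ is built from the empirical Gram matrix $(\tfrac1n\sumin\langle\psij(X_i),\psi_\ell(X_i)\rangle)_{j,\ell\le N}$. The remaining tail is controlled by the prior decay $\sum_{j>N}\kappaj^2$ and the decay of $\Gzj$. Existence of the posterior and of its mean I take from the earlier general existence result together with Gaussian conjugacy.

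The error then splits into three parts. The first is a \emph{tail/approximation} term $\sum_{j>N}\Gzj^2\lesssim N^{1-2\qq}$. The second is a \emph{regularization bias}, which for the population version is $\sumjN \frac{n^{-2}\kappaj^{-4}}{(1+n^{-1}\kappaj^{-2})^2}\Gzj^2\lesssim\sumjN\min(1,n^{-2}j^{4\rr})j^{-2\qq}$. Choosing the crossover $j\simeq n^{1/(2\rr)}$, this gives $n^{-(2\qq-1)/(2\rr)}$ when $\qq\le\rr$, and $n^{-(2\rr-1)/(2\rr)}$ (with a $\log$ when $\qq=\rr$) otherwise. The third is a \emph{variance} term $\IE\sumjN \frac{\cdot}{n(1+n^{-1}\kappaj^{-2})^2}\lesssim\#\{j:\kappaj^2\gtrsim 1/n\}/n\simeq n^{1/(2\rr)-1}$. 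Since $\qq>1/2$ and $\rr>1$, all three are at most $n^{-(2\min\{\qq,\rr\}-1)/(2\rr)}$.

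The main obstacle is replacing the empirical Gram matrix by the identity, since the bias and variance bounds above assume $\tfrac1n\sum_i\langle\psij(X_i),\psi_\ell(X_i)\rangle\approx\deltajl$. I would handle this with a matrix Bernstein inequality. The assumption $\lV\psij\rV_{C^0(K)}\lesssim1$ bounds each rank-one summand by $N$ in operator norm, so on an event $\Omega_n$ of probability at least $1-n^{-c}$ the empirical Gram matrix satisfies $\lV\cdot-\IIN\rV\le\tfrac12$ provided $N\log N\lesssim n$. Here the logarithmic factors $\log(n)^2$ enter. On $\Omega_n$ the normal equations are uniformly well conditioned and the deterministic bounds above transfer. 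On $\Omega_n^c$ I would bound $\lV\hatgn-g_0\rV^2$ crudely by a polynomial in $n$, using that the ridge term gives $\lV\ANin\rV\le n\max_{j\le N}\kappaj^2$ and Gaussian moments of the noise. The small probability of $\Omega_n^c$ then makes this contribution negligible.

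A second subtlety is cross terms: the truncated model is misspecified, so the tail $\sum_{j>N}\Gzj\psij(X_i)$ acts as additional deterministic-but-random noise in the normal equations. I would bound its projected contribution in expectation by $\lV\cdot\rV_{C^0}$-summability, using $|\Gzj|\lesssim j^{-\qq}$ and, if needed, a slightly larger $N$ so that $\sum_{j>N}\Gzj^2$ is still below the target rate. Conditional independence of the noise from the design keeps the variance term clean. Combining the three pieces on $\Omega_n$ and the crude bound on $\Omega_n^c$ gives the stated rate \eqref{eq:informalrate}.
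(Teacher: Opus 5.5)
Your population bias--variance bookkeeping is fine, but the transfer to the actual estimator has a genuine gap. The posterior mean is the \emph{untruncated} ridge estimator $\hatgn=\lim_{N\to\infty}\psiNt\ANin\bN$. At your cutoff $N\simeq n^{1/(2\rr)}$ the coordinates $j>N$ are not suppressed by the prior. What matters is $n\kappaj^2$, not $\kappaj^2$, and since $1=\lVert\psij\rVert_{L^2(K)}\le\omegaj\lesssim1$ one has $n\sum_{j>N}\kappaj^2\omegaj^2\simeq nN^{1-2\rr}\simeq N\to\infty$. Write $\bs{A}$ for the infinite matrix $(\alpha_{j\ell}+\delta_{j\ell}n^{-1}\kappaj^{-2})_{j,\ell\in\IN}$, and set $H=\{1,\dots,N\}$, $T=\{N+1,N+2,\dots\}$. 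The first $N$ coefficients of $\hatgn$ are then governed by the Schur complement $\bs{A}_{HH}-\bs{A}_{HT}\bs{A}_{TT}^{-1}\bs{A}_{TH}$ and by cross terms of the form $\bs{A}_{HT}\bs{A}_{TT}^{-1}(\cdot)_T$, not by $\bs{A}_{HH}^{-1}$ alone. Your matrix Bernstein bound on the $N\times N$ block says nothing about the random head--tail entries $\alpha_{j\ell}$, $j\le N<\ell$. The deterministic bound on this correction that positive semidefiniteness alone gives is of order $n\sum_{j>N}\kappaj^2\omegaj^2$, which diverges. So ``the remaining tail is controlled by the prior decay'' fails at this cutoff. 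This coupling is exactly what the paper's proof is organized around. It proves entrywise Hoeffding bounds for $\alpha_{j\ell}$ and $\beta_j$ for all $j,\ell\le N_H=\lceil n^{1/(\rr-\rho-1)}\rceil\gg n^{1/(2\rr)}$. It then expands $\ANin$ in a Neumann series around its diagonal, and uses the crude bound $|\alpha_{j\ell}|\le\omegaj\omega_\ell$ only beyond $N_H$, where $n\sum_{\ell>N_H}\kappa_\ell^2\omega_\ell^2$ is polynomially small (Step~3a of the proof of Theorem~\ref{th:convergencetheorem}, in particular \eqref{eq:ENsecondsum}).

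The same truncation issue breaks your bound on $\Omega_n^c$. Indeed $\lVert\ANin\rVert_2\lesssim n$ uniformly in $N$. But the noise part of $\beta_j$ has conditional variance $\alpha_{jj}/n$ with $\IE\,\alpha_{jj}=1$, so $\IE\lVert\bN\rVert_2^2\ge N/n\to\infty$, and $\lVert\ANin\rVert_2\lVert\bN\rVert_2$ gives nothing for $\hatgn$. You need a weighted estimate, e.g.\ $\lVert\ANin\bN\rVert_2^2\le\lVert\ANin\rVert_2\,\bN^T\ANin\bN\lesssim n^2\sum_{j\le N}\kappaj^2\beta_j^2$ via $\ANin\preceq n\diag(\kappaj^2)$; this is the analogue of the paper's use of $\DNhalfinv\bN$ in Step~4.

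The main gap can be closed inside your framework. Take the Bernstein block size $N\gg n^{1/(2\rr-1)}$ (e.g.\ $N=n^{1/(2\rr-1)}\log n$). This is still compatible with $N\log N\ll n$ precisely because $\rr>1$, and it makes $\epsilon_N\coloneqq n\sum_{j>N}\kappaj^2\omegaj^2=o(1)$. Then $\bs{A}_{TT}^{-1}\preceq n\diag(\kappaj^2)_{j>N}$, together with positive semidefiniteness of the empirical Gram operator, gives $\lVert\bs{A}_{HT}\bs{A}_{TT}^{-1}\bs{A}_{TH}\rVert_2\le 2\epsilon_N$ on $\Omega_n$. Hence the Schur complement inherits the conditioning of $\bs{A}_{HH}$, and your bias and variance bounds transfer to the full estimator. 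Alternatively, a weighted operator Bernstein inequality on the whole space would do, with effective dimension $\sup_x\sum_j n\kappaj^2\lVert\psij(x)\rVert_\Y^2/(1+n\kappaj^2)\lesssim n^{1/(2\rr)}$. So repaired, your route is a genuinely different and arguably cleaner alternative to the paper's entrywise argument.

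Two smaller points. First, bounding the misspecification term by $C^0$-summability fails for $\qq\le1$, where $\sum_j j^{-\qq}$ diverges, so you need an $L^2$-type variance bound there (the paper instead applies Hoeffding to $\beta_j$ using only $\lVert g_0\rVert_\infty$). Second, the Gram summands are not rank one when $\dim\Y>1$, although your norm bound $\sum_{j\le N}\omegaj^2\lesssim N$ remains valid.
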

Theorem \ref{th:informal} is a special case of Corollary \ref{cor:Expectationconvergence}. Its most important feature
is that the convergence rate in \eqref{eq:informalrate} is algebraic in $n$ and does therefore circumvent the curse of dimensionality. This is possible since we assumed algebraic decay of the expansion coefficients $\Gzj$ in our chosen enumeration of the coefficients. In finite dimensions, this 
can, for example, be interpreted as a form of Sobolev regularity of $g_0$. In infinite dimensions, this assumption is 
in general not fulfilled for operators with finite regularity, see, e.g.,~\cite{kovachki2024data}. One relevant class of operators where \eqref{eq:informaldecay} is well-known to hold is
\emph{holomorphic operators}, see Section \ref{sec:holomorphicoperators} below. In \eqref{eq:informalrate}, 
the best convergence rate is 
achieved with $\rr=\qq$, i.e.~by matching the prior smoothness $\rr$ and the ``true'' smoothness $\qq$ of $g_0$. 

This is intuitive, since the prior then encodes the ``correct'' regularity of the ground truth. The cases $\rr<\qq$ and $\rr>\qq$ are usually referred to as the \emph{undersmoothing} and \emph{oversmoothing} regime, where the smoothness of the prior is under- or overestimated, e.g.~\cite{knapik2011bayesian}.

To prove Theorem \ref{th:informal}, we proceed in several steps. We 
first introduce the measurement model with the corresponding Gaussian white noise in Section \ref{sec:measurenementmodel}. Existence of the posterior as a \emph{regular conditional distribution} (e.g.~\cite[Section 8]{klenke2008probability}) is verified in Section \ref{sec:exposterior}. Theorem \ref{th:existenceposterior} first provides an abstract existence result independent of the noise model. Applying it to the white noise case results in Theorem \ref{th:PosteriorGaussianNoise}, which shows that the posterior exists for all priors supported on the space of continuous operators. Examples such as the Gaussian and the uniform prior are discussed in Section \ref{sec:Prior}.

In Section \ref{sec:existencealgebraicform} we prove a more general version of Theorem \ref{th:informal}. We start by showing existence of the posterior mean $\hatgn$ and explicitly calculate $\hatgn$ for the case of a linear Gaussian prior. We then prove our main result, Theorem \ref{th:convergencetheorem}, which gives a uniform concentration inequality for the convergence of $\hatgn$ to $g_0$. Theorem \ref{th:informal} is 
a special case of Corollary \ref{cor:Expectationconvergence}.  
The MAP estimator is analyzed in Section \ref{sec:MAPestimator}, and the maximum likelihood (ML) estimator is recalled from \cite{reinhardt2026statistical} in Section \ref{sec:MLestimator}. In Section \ref{sec:kernel} we
discuss the connection to vector-valued kernel methods, and we show minimax optimality (up to logarithmic factors) of the posterior mean over hyperrectangles in Section \ref{sec:minimax}.

In Section \ref{sec:holomorphicoperators}, we show that holomorphic operators fulfill the assumptions of Theorem \ref{th:convergencetheorem} and hence can be efficiently learned in a BNP framework. As the solution operator of the Darcy Flow PDE is known to be holomorphic, we provide explicit learning rates for this example, see Proposition \ref{prop:darcy} in Section \ref{sec:darcy}. We conclude our work and give an outlook to future research questions in Section \ref{sec:conclusion}.

\subsection{Existing Results}\label{sec:existingresults}
Studying the approximation properties of mappings between infinite-dimensio\-nal spaces has been studied extensively in Uncertainty Quantification, e.g., \cite{NobilerandomPDE,cohen2010convergence,schwab2011sparse,CohenDeVore,MR3379913}. More recently, various architectures for the computation of data driven surrogates have been proposed and investigated in Scientific Machine learning, see, e.g., \cite{HESTHAVEN201855,Lu2021,li2020fourier,BhattModRedNNPDE,anandkumar2019neural,raonic2023convolutional,OLEARYROSEBERRY2022114199,%
JMLR:v22:21-0806,batlle2023kernel,Adcock.2024b}, the surveys \cite{KOVACHKI2024419,SWZ26_1171}, and for practical performance comparisons \cite{LU2022114778,hoop2022,Westermann2026Performance}.
In terms of approximation properties, our analysis builds strongly on the holomorphic framework of \cite{CDS11,chkifa2015breaking,CohenDeVore,herrmann2024neural}.   

\paragraph*{Statistical Learning Theory in Infinite Dimensions}
Recently, the statistical analysis of \emph{sample complexity}, which studies how much data is needed to achieve a certain error, has gained interest. For linear and diagonalizable operators, such an analysis can be found in \cite{de2023convergence}; for lower bounds see, e.g.,~\cite{chagny2022adaptive}. Logarithmic lower bounds on learning rates are known for both Lipschitz \cite{Mhaskar.1997,adcock2025towards,adcock2024sample} and $C^k$-operators \cite{kovachki2024data} showing that finite regularity does in general not suffice to learn operators with an algebraic learning rate. For the case of holomorphic operators algebraic learning rates can be shown, see, e.g.,~\cite{ADCOCK2025106761,Adcock.2024b} and also our recent work \cite{reinhardt2026statistical} which yields algebraic learning rates for concrete PDE solution operators. Except for the linear result in \cite{de2023convergence}, none of these works examine the learning properties of operators in a Bayesian framework. 

\paragraph*{Bayesian Learning}
The Bayesian nonparametric (BNP) framework \cite{ferguson1973bayesian,doksum1974tailfree,doksum1972decision,ferguson1974prior} is well-studied and has had considerable success in e.g.~function approximation \cite{lavine1992some,muller1996bayesian,van2008rates} and density estimation \cite{lo1984class,escobar1995bayesian}. The framework has also been successfully extended to Gaussian white noise models \cite{castillo2013nonparametric,castillo2014bernstein} and inverse problems; in the latter case, well-posedness of the posterior \cite{latz2020well,stuart2010inverse}, posterior contraction \cite{dashti2013map,knapik2011bayesian} and \emph{Bernstein--von Mises theorems} \cite{nickl2023bayesian,monard2021statistical} have been studied for linear and nonlinear operators. In all of these works the prior is placed on the unknown parameter of a known forward map, whereas we place a prior on the operator itself. Moreover, the Bayesian framework has been applied for learning parametric differential systems in the context of reduced order modelling \cite{guo2022bayesian,mcquarrie2025active}. The authors establish an algorithmic framework which adaptively selects the ``best'' training samples in terms of numerical stability and accuracy. In contrast to our work, \cite{guo2022bayesian,mcquarrie2025active} are tailored to the purpose of learning PDE solution operators, the Bayesian learning problem is formulated on the discretized problem and is hence finite-dimensional and no theoretical guarantees are given.

\paragraph*{Gaussian Process Regression and Kernel Methods}
Minimax contraction rates of posterior distributions using Gaussian processes on classical $s$-smooth Hölder and Sobolev spaces were derived in \cite{van2008rates,van2011information}. In high dimensions, these rates naturally suffer from the curse of dimensionality, which can only be circumvented by assuming more ground truth regularity, e.g.~by assuming that the ground truth lies in the reproducing kernel Hilbert space of a Gaussian process with anisotropic scaling \cite{fasshauer2012dimension} (for finite dimensional input). As Gaussian Process regression (GPR) is equivalent to kernel ridge regression of the corresponding covariance kernel (see Section \ref{sec:kernel}, and, e.g., \cite{kanagawa2018gaussian,alvarez2012kernels} for this connection), the minimax rates obtained for GPR mirror the optimal convergence rates established for regularized least-squares algorithms in reproducing kernel Hilbert spaces \cite{gyorfi2002distribution,kohler2002nonparametric} for finite-dimensional $\Y$.

For the theory of vector-valued reproducing kernel Hilbert spaces with infinite-dimensional output space, see, e.g., \cite{micchelli2005learning,carmeli2006vector,caponnetto2007optimal}. In particular \cite{caponnetto2007optimal} established upper rates for operator-valued kernels in the well-specified case with $\Y$-valued noise, and matching lower bounds for finite-dimensional $\Y$. In contrast to \cite{caponnetto2007optimal}, we allow for the misspecified case, and our lower bound in Theorem \ref{th:minimax} holds for infinite-dimensional $\Y$, see Section \ref{sec:kernel} for details. Upper and lower learning rates for vector-valued ridge regression, also in the misspecified case, were recently obtained in \cite{li2024towards,li2022optimal,meunier2024optimal}; see also \cite{yang2025learning} for learning rates using a regularized stochastic gradient descent scheme. While these results are formulated for multiplicative kernels, our prior covariance is a
trace class operator (similar as in \cite{caponnetto2007optimal}). Neither assumption implies the other, see Section \ref{sec:kernel} for details. Moreover, none of the works \cite{li2024towards,li2022optimal,meunier2024optimal,caponnetto2007optimal} provide explicit learning rates for concrete PDE models.

\subsection{Notation} \label{sec:notation}
Let $\IN = \{1, 2,\dots \}$ and $\IN_0 = \{0,1, 2,\dots \}$. We write $a_n\lesssim b_n$, $a_n\gtrsim b_n$ for real sequences
$(a_n)_{n\in\IN}$, $(b_n)_{n\in\IN}$ if $a_n$ is respectively upper or lower bounded by a positive multiple of $b_n$ and $a_n\simeq b_n$ iff $a_n\lesssim b_n$ and $a_n\gtrsim b_n$.

Let $(\Omega,\Sigma,\IP)$ denote some probability space. For a measure space $(\T,\Sigma_\T,\tau)$, a Hilbert
space $(\Z,\lV\cdot\rV_{\Z})$ and $p\in [1,\infty)$, we define the Bochner space
\begin{align*}
    &L^p(\T,\tau;\Z)
    =
    \left\{
    F:\T\to\Z
    ~\middle|~
    \lVert F\rVert_{L^p(\T,\tau,\Z)}<\infty,~F\text{ is strongly measurable}\footnotemark
    \right\},
    \\
    &\lVert F\rVert_{L^p(\T,\tau;\Z)}^p
    \coloneqq
    \int_{\T}\lVert F(t)\rVert_{\Z}^p\,\dd\tau(t).
\end{align*}
and also write $L^p(\T)$, $L^p(\T,\tau)$ or $L^p(\Z)$ depending on the context. Bochner spaces are complete \cite[Theorem X.4.10]{Amann2008}, especially $L^2(\T,\tau;\Z)$ is a Hilbert space.
\footnotetext{A function $F:\T\to\mathcal Z$ is strongly measurable if there exists a sequence of simple functions converging $\tau$-a.s.\ to $F$, see, e.g., \cite[p.~41]{DiestelUhl1977}.}

Let additionally $\Z$ be measurable with $\sigma$-algebra $\Sigma_{\Z}$ and let $F:\T\to\Z$ be a measurable function. We define the \emph{pushforward} measure $F_\# \tau$ on $\Sigma_{\Z}$ as
$$\left(F_\# \tau\right)(Z)=\tau\left(F^{-1}(Z)\right),~~Z\in\Sigma_\Z. $$
Let $\mu$ be another measure on $(\T,\Sigma_\T)$. We write $\mu\ll\tau$ iff for all $T\in\Sigma_\T$
$$\tau(T)=0\implies \mu(T)=0$$
and say that $\mu$ is \emph{absolutely continuous} w.r.t.~$\tau$. If $\mu\ll\tau$, we define the \emph{Radon--Nikodym derivative} $\dd\mu/\dd\tau$ implicitly as
$$\mu(T)=\int_T \frac{\dd \mu}{\dd\tau} \dd \tau,\qquad T\in\Sigma_\T.$$
This definition is well-defined up to $\tau$-nullsets, see, e.g.,~\cite[Theorem 6.10]{rudin1987real}.
We write $\B(M)$ for the Borel $\sigma$-algebra of a metric space $(M,d)$. We use the shorthand \textit{iff} for ``if and only if''.

For two vector spaces $\X$, $\Y$ we say that $\iota:\X\to \Y$ is an embedding, if $\iota$
is a bounded, linear, and injective map. In this case we also write $\X\overset{\iota}{\hookrightarrow}\Y$.

As usual, $\delta_{j\ell}$ denotes the Kronecker delta.

\section{Problem Formulation}\label{sec:problemformulation}
In the following, let $(\X,\BX)$ and $(\Y,\BY)$ be two Borel measurable, separable Hilbert spaces with inner products $\langle\cdot,\cdot\rangle_{\X}$ and $\langle\cdot,\cdot\rangle_{\Y}$ respectively.

\subsection{Measurement Model}\label{sec:measurenementmodel}
We aim to reconstruct the Borel measurable \emph{ground truth} operator
\[ g_0:\X\to\Y\]
from noisy ``input-output'' pairs 
\noeqref{eq:xiyi1,eq:xiyi2}
\begin{subequations}\label{eq:xiyi}
\begin{equation}\label{eq:xiyi1}
    (\pXX,\pZZ)=(X_i,Z_i)_{i=1}^n,~~ n\in\IN,
    \end{equation}
generated by\footnote{A rigorous form of \eqref{eq:xiyi2}, justified in Lemma \ref{lem:white_noise_construction}, is $Z_i= d\circ g_0(X_i)+E_i$ with the isomorphism $d$ from \eqref{eq:disomorphism}.}
\begin{align}\label{eq:xiyi2}
    Z_i=g_0(X_i)+ E_i,\quad i=1,\dots,n.
\end{align}
\end{subequations}

In \eqref{eq:xiyi},
$X_i\overset{\trm{iid}}{\sim}\IPX$ are random design points, where $\IPX$ is a probability measure on $(\X,\BX)$. 
Moreover, the noise variables $(E_i)_{i=1}^n$ are independent copies of a white noise process. For a separable Hilbert space $\Y$ a white noise process $\WY:\Y\times\Omega\to\IR$ is defined as a centered Gaussian process with ``iso-normal'' covariance structure, meaning
\begin{align}\label{eq:WN}
\WY(y,\cdot)\sim\N\left(0,\lVert y\rVert_{\Y}^2\right),~\Cov\left(\WY(y,\cdot),\WY(y',\cdot)\right)=\langle y,y'\rangle_{\Y},~y,y'\in\Y.
\end{align}
In particular, $y\mapsto \WY(y,\cdot)\in L^2(\Omega,\IP)$ is a linear isometry. The above assumptions define $\WY$ uniquely \emph{in law}, see, e.g.,~\cite[p.~15--16]{Gine.op.2016}.

Crucially, white noise $\WY$ cannot be interpreted as a (Gaussian) $\Y$-valued random variable. Instead, it can be interpreted as a Gaussian random variable on some larger space 
into which $\Y$ is continuously embedded. To set up the problem rigorously, and explain what it means to ``observe'' $Z_i$ in \eqref{eq:xiyi2}, we next give a concrete construction of a version\footnote{We say $\tilde{\mathbb{W}}_{\Y}$ is a version of $\WY$, if for all $y\in \Y$ it holds $\IP\{\omega\in\Omega|\tilde{\mathbb{W}}_{\Y}(y,\omega)=\WY(y,\omega)\}=1$.} of $\WY$.

\subsubsection{Gaussian Measures}\label{sec:gaussianmeasures}
We first recall a few basic definitions and results about Gaussian measures on a separable Hilbert space $\Y$, see, e.g., \cite{bogachev1998gaussian,da2006introduction} for proofs and further details: 
A measure $\mu$ on $(\Y,\B(\Y))$ is called Gaussian if for every $y\in \Y$, the pushforward of $\mu$ under $y'\mapsto \langle y',y\rangle_{\Y}:\Y\to\IR$ is a Gaussian measure on $\IR$. Gaussian measures are uniquely determined via their mean $m=\int_{\Y}y\dd\mu(y)$ and their covariance operator $\mathcal C  :\Y\to \Y$, defined via
\begin{equation}\label{eq:covarianceoperator}
  \langle \mathcal C  y,y'\rangle_{\Y}=\int_{\Y}\langle y,h-m\rangle_{\Y}\langle y',h-m\rangle_{\Y}\dd\mu(h).
\end{equation}
It can be shown that $\mathcal C  $ must be a self-adjoint, positive semi-definite, trace class operator (hence bounded and compact), and for every such operator $\mathcal C  :\Y\to\Y$ and $m\in\Y$, there exists a unique Gaussian measure $\mu$ with mean $m$ and covariance $\mathcal C  $. We thus write $\mu=\N(m,\mathcal C  )$. A measurable map $Y:(\Omega,\Sigma,\IP)\to (\Y,\B(\Y))$ is called a (centered) Gaussian random variable, if its distribution $\IP_Y=Y_\#\IP$ is a (centered, i.e.\ $m=0$) Gaussian measure on $(\Y,\B(\Y))$.

A prominent role is played by the Cameron--Martin space of a Gaussian measure (or of a random variable whose distribution is a Gaussian measure). In the current context it can be defined as the Hilbert space
\begin{equation}
  H\coloneqq \mathcal C^{1/2}\Y=\set{\mathcal C  ^{1/2}y}{y\in\Y}\label{eq:CMspace}
\end{equation}
with inner product
\begin{equation*}
  \dup[H]{y}{y'}\coloneqq \dup[\Y]{\mathcal C  ^{-1/2}y}{\mathcal C  ^{-1/2} y'}.
\end{equation*}
Since $\mathcal C$
  is compact and self-adjoint, there exists an orthonormal basis $(\eta_j)_{j\in\IN}$ of $\Y$ and a non-negative sequence $(\lambda_j)_{j\in\IN}\in\ell^1(\IN)$, such that $\mathcal C  $ has the spectral decomposition
\begin{equation*}
  \mathcal C  y = \sum_{j\in\IN} \lambda_j \langle y,\eta_j\rangle_{\Y}\eta_j.
\end{equation*}
Here, $\mathcal C  ^{1/2}:\Y\to\Y$ is well-defined by taking the square roots of the eigenvalues in the spectral decomposition. Similarly, $\mathcal C^{-1/2}$ is well-defined (as the Moore--Penrose pseudoinverse) on the Cameron--Martin space $H$, via
$$\mathcal C  ^{-1/2}y=\sum_{\set{j}{\lambda_j>0}}\lambda_j^{-1/2}\langle y,\eta_j\rangle_{\Y}\eta_j.$$
\subsubsection{White Noise}\label{sec:whitenoise}
In this section we show in which sense $\Y$-valued white noise can be interpreted as a Gaussian measure on a Hilbert space $\tilde\Y^*$ into which $\Y$ is embedded.

 Let $(\gamma_j)_{j\in\IN}\in\ell^2(\IN)$ be an arbitrary but fixed sequence of positive numbers, and let $(\eta_j)_{j\in\IN}$ be an orthonormal basis of $\Y$. Then
  \begin{align}\label{eq:Ytilde}
    \tilde\Y\coloneqq \biggl\{~\sum_{j\in\IN}c_j\eta_j~\Big|~\sum_{j\in\IN}c_j^2\gamma_j^{-2}<\infty\biggr\},~~\dup[\tilde\Y]{y}{y'}\coloneqq \sum_{j\in\IN}\gamma_j^{-2}\dup[\Y]{y}{\eta_j}\dup[\Y]{y'}{\eta_j}
  \end{align}
  defines a separable Hilbert space, see Lemma \ref{lem:tY}. Moreover
    \begin{equation*}
      \iota: \tilde \Y\to\Y:y\mapsto y
  \end{equation*}
  is evidently an injective, linear, and continuous map (i.e.\ an embedding). Let $\tilde\Y^*$ denote the topological dual space of $\tilde\Y$. Then Lemma \ref{lem:tY} shows that $(\tilde\Y,\Y,\tilde\Y^*)$ is a Gelfand triplet, i.e.~that
\begin{equation*}
  \tilde\Y\overset{\iota}{\hookrightarrow} \Y\simeq\Y^*\overset{\iota^*}\hookrightarrow\tilde\Y^*,~~\iota,\iota^*\text{ are dense embeddings,}
\end{equation*}
where $\iota^*$ denotes the adjoint of $\iota$, and $\simeq$ stands for the identification of $\Y$ and $\Y^*$ via the Riesz isomorphism.
For details on the Gelfand triplet characterization see, e.g.,~\cite{hida1993infinite,gel2014generalized}.
\begin{example}\label{ex:gelfandH1}
Let $\mathbb T$ be the one-dimensional torus, $\Y=L^2(\mathbb T)$ with Fourier ONB and weights $\gamma_j\simeq j^{-s}$, $s>1/2$. Then $\tilde\Y =H^s(\mathbb T)$ (e.g.~\cite[Section 5.3]{saranen2001periodic} for full definitions and details) and Lemma \ref{lem:tY} yields the Gelfand triplet $H^s(\mathbb T)\hookrightarrow L^2(\mathbb T)\hookrightarrow H^{-s}(\mathbb T)$.
\end{example}

In the following we use the dual-pairing notation ${}_{\Y^*}\langle y^*, y\rangle_{\Y}\coloneqq y^*(y)$ for $y^*\in\Y^*$ and $y\in\Y$ and abbreviate $\IW_\Y(y)\coloneqq \IW_\Y(y,\cdot)\in L^2(\Omega,\IP,\IR)$.
Moreover, we denote by $$R_\Y:\Y\to\Y^*$$ the Riesz isomorphism.
We are now in position to formalize the white noise process $\IW_\Y$ within our Hilbert space setting. For a proof of the next lemma see Appendix \ref{app:white_noise_construction}.

\begin{lemma}\label{lem:white_noise_construction}
  Fix $(\eta_j)_{j\in\IN}$, $(\gamma_j)_{j\in\IN}$, and $\tilde\Y$ as in \eqref{eq:Ytilde}.
  Let $\IW_\Y$ be a white noise process on $\Y$ as in \eqref{eq:WN}, and define $\xi_j\coloneqq \IW_\Y(\eta_j)$. Then
  \begin{equation}\label{eq:limtE}
    \lim_{N\to\infty}\sum_{j=1}^N \xi_j (\iota^*\circ R_\Y) \eta_j
  \end{equation}
  converges in $L^2(\Omega,\IP,\tilde\Y^*)$ to a $\tilde\Y^*$-valued centered Gaussian random variable $E:\Omega\to \tilde\Y^*$. The corresponding Cameron--Martin space $H_E\subset\tilde\Y^*$ is isometrically isomorphic to $\Y$ via
    \begin{equation}\label{eq:disomorphism} 
      d:=\iota^*\circ R_\Y:\Y\to H_E.
  \end{equation}

  Moreover, $E$ can be identified with $\IW_\Y$ in the sense that for every $\tilde y\in \tilde\Y$
  \begin{subequations}\label{eq:IWchar}
  \begin{equation}\label{eq:IWchar1}
    \IW_\Y(\tilde y) = {}_{\tilde\Y^*}\langle E,\tilde y\rangle_{\tilde\Y}
  \end{equation}
  with equality in $L^2(\Omega,\IP,\IR)$.
  Additionally, denoting by $P_N:\Y\to {\rm span}\{\eta_1,\dots,\eta_N\} \subseteq\tilde \Y$, $N\in\IN$, the orthogonal projection in $\Y$ onto ${\rm span}\{\eta_1,\dots,\eta_N\}$, it holds for every $y\in\Y$
  \begin{equation}\label{eq:IWchar2}
    \IW_\Y(y)= \lim_{N\to\infty}{}_{\tilde\Y^*}\dup[\tilde\Y]{E}{P_N y},
  \end{equation}
  \end{subequations}
  with convergence in $L^2(\Omega,\IP,\IR)$.

\end{lemma}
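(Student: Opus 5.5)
The plan is to work entirely in coordinates. First, make $\tilde\Y^*$ concrete. By Lemma \ref{lem:tY}, $\tilde\Y$ has ONB $(\gamma_j\eta_j)_{j}$. Every $f\in\tilde\Y^*$ is therefore determined by the numbers $f(\eta_j)$, with $\lVert f\rVert_{\tilde\Y^*}^2=\sum_j \gamma_j^2 f(\eta_j)^2$. For $y\in\Y$ and $\tilde y\in\tilde\Y$ we have ${}_{\tilde\Y^*}\langle d y,\tilde y\rangle_{\tilde\Y}=\langle y,\tilde y\rangle_\Y$. Hence $(d\eta_j)(\eta_k)=\delta_{jk}$, so the elements $d\eta_j$ are orthogonal in $\tilde\Y^*$ with $\lVert d\eta_j\rVert_{\tilde\Y^*}=\gamma_j$.

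\textbf{Convergence.} Since $\xi_j=\IW_\Y(\eta_j)$ are jointly Gaussian with covariance $\langle\eta_j,\eta_k\rangle_\Y=\delta_{jk}$, they are i.i.d.\ $\N(0,1)$. For $M<N$, orthogonality gives
\[
\IE\Bigl\lVert\sum_{j=M+1}^N\xi_j d\eta_j\Bigr\rVert_{\tilde\Y^*}^2=\sum_{j=M+1}^N\gamma_j^2\to0,
\]
because $(\gamma_j)\in\ell^2$. So the partial sums are Cauchy in $L^2(\Omega,\IP,\tilde\Y^*)$ and converge to some $E$. Each partial sum is a centered Gaussian $\tilde\Y^*$-valued variable. $L^2$-limits of centered Gaussians are centered Gaussian, since their one-dimensional projections converge in $L^2$ and $L^2$-limits of centered normals are normal. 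Hence $E$ is centered Gaussian. Its covariance operator is $\mathcal C=\sum_j\gamma_j^2\,\langle\cdot,e_j\rangle_{\tilde\Y^*}e_j$ with $e_j\coloneqq\gamma_j^{-1}d\eta_j$ an ONB of $\tilde\Y^*$. This ONB is complete because $f(\eta_j)=0$ for all $j$ forces $f=0$.

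\textbf{Cameron--Martin space.} By \eqref{eq:CMspace}, $H_E=\mathcal C^{1/2}\tilde\Y^*=\{\sum_j c_j\gamma_j e_j:\ (c_j)\in\ell^2\}=\{\sum_j c_j d\eta_j:\ (c_j)\in\ell^2\}$. Its norm is $\lVert\sum_j c_jd\eta_j\rVert_{H_E}^2=\sum_jc_j^2$. On the other hand $d(\sum_jc_j\eta_j)=\sum_jc_jd\eta_j$, using continuity of $d$ into $\tilde\Y^*$. Thus $d:\Y\to H_E$ is a surjective isometry.

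\textbf{Identification.} For $\tilde y\in\tilde\Y$, the map $f\mapsto f(\tilde y)$ is continuous on $\tilde\Y^*$. Hence, in $L^2(\Omega)$,
\[
{}_{\tilde\Y^*}\langle E,\tilde y\rangle_{\tilde\Y}=\lim_N\sum_{j\le N}\xi_j\langle\eta_j,\tilde y\rangle_\Y=\lim_N\IW_\Y(P_N\tilde y)=\IW_\Y(\tilde y).
\]
Here the second equality uses linearity of $\IW_\Y$, and the last uses that $\IW_\Y$ is an isometry into $L^2(\Omega)$ together with $P_N\tilde y\to\tilde y$ in $\Y$. This proves \eqref{eq:IWchar1}. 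For $y\in\Y$, we have $P_Ny\in\tilde\Y$, so \eqref{eq:IWchar1} gives ${}_{\tilde\Y^*}\langle E,P_Ny\rangle_{\tilde\Y}=\IW_\Y(P_Ny)$. This converges to $\IW_\Y(y)$ in $L^2$ by the isometry, giving \eqref{eq:IWchar2}.

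\textbf{Main obstacle.} The only delicate point is the identification of $\tilde\Y^*$ and of its ONB $e_j$, which makes the covariance and Cameron--Martin computations rigorous. I expect this to follow from Lemma \ref{lem:tY} and the Riesz representation on $\tilde\Y$.
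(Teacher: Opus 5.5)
Your proposal is correct and follows essentially the same route as the paper. The steps match one for one: the $L^2(\Omega,\IP,\tilde\Y^*)$-Cauchy argument via $\lVert d\eta_j\rVert_{\tilde\Y^*}=\gamma_j$; the identification \eqref{eq:IWchar} through continuity of the dual pairing and the $L^2$-isometry of $\IW_\Y$; and the diagonalisation of the covariance with eigenvectors $\gamma_j^{-1}d\eta_j$, so that $d$ maps the ONB $(\eta_j)$ of $\Y$ onto the ONB $(d\eta_j)$ of $H_E$. The only (cosmetic) difference is that you identify $\tilde\Y^*$ through the ONB $(\gamma_j\eta_j)$ of $\tilde\Y$ and the Riesz representation, whereas the paper uses the isometry $\zeta:\ell^2_{\Brho}\to\tilde\Y^*$ from Lemma \ref{lem:tYstar}.
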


Following Lemma \ref{lem:white_noise_construction}, white noise $\IW_\Y$ can be interpreted as the observation of a $\tilde\Y^*$-valued Gaussian random variable.

Let again $H_E\subset\tilde\Y^*$ be the Cameron--Martin space of the $\tilde\Y^*$-valued Gaussian random variable $E$, see \eqref{eq:CMspace}, and assume $\lVert g_0(X_i)\rVert_{L^2(\Omega,\IP,\Y)}<\infty$. Composing $g_0(X_i)\in L^2(\Omega,\IP,\Y)$ with the map $d$ in \eqref{eq:disomorphism}, we can identify it with a random variable in $\tilde\Y^*$; in fact, since $d:\Y\to H_E$ is an isometric isomorphism,
it is $H_E$-valued. We
have thus clarified that the measurement data $(\pXX,\pZZ)$ in
\eqref{eq:xiyi} can be modelled as a random variable in
$\X^n\times(\tilde\Y^{*})^n\coloneqq\pX\times\boldsymbol{\tilde\Y^*}$.

Denote the limit in \eqref{eq:IWchar2} as $\langle\cdot,y\rangle_\sim$. The map $\langle\cdot,\cdot\rangle_\sim$ appears in the log-likelihood in Theorem \ref{th:PosteriorGaussianNoise} below.
Lemma \ref{lem:white_noise_construction} implies the following properties for $\langle\cdot,\cdot\rangle_\sim$ which we summarize in the next corollary.

\begin{corollary}\label{cor:existencetildemapping}
 Consider the setting of Lemma \ref{lem:white_noise_construction}. Define the map
 \begin{align*}
  \langle\cdot,\cdot\rangle_\sim:\tilde\Y^*\times\Y\to\IR,~~
\langle z,y\rangle_\sim=
\begin{cases}
\lim_{N\to\infty}{}_{\tilde\Y^*}\dup[\tilde\Y]{z}{P_N y} & \text{if limit exists}\\
0 & \text{otherwise.}
\end{cases}
 \end{align*}
 Then $ \langle\cdot,\cdot\rangle_\sim$ is Borel measurable. Moreover, for every $y\in\Y$
  there exists $\Omega_y\subseteq\Omega$ with $\IP(\Omega_y)=0$, such that
 \begin{subequations}\label{eq:tildemapping}
 \begin{align}
\langle E(\omega),y\rangle_\sim&=\IW_{\Y}(y,\omega) 
                                 &&\forall \omega\in\Omega\backslash\Omega_y,
                                 \label{eq:tildemapping1}\\
   \langle z,\iota\tilde y\rangle_\sim&={}_{\tilde\Y^*}\langle z,\tilde  y\rangle_{\tilde\Y} &&\forall z\in\tilde\Y^*,~\tilde y\in\tilde\Y.
\label{eq:tildemapping2}
 \end{align}
\end{subequations}
Finally, for all $y\in\Y$, there exists a linear subspace $L_y\subset\tilde\Y^*$ with $\IP_E(L_y)=1$
and such that $\langle\cdot,y\rangle_{\sim}:L_y\to\IR$ is linear.
\end{corollary}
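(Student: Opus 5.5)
Throughout, fix $z\in\tilde\Y^*$ and $y\in\Y$, and set $s_N(z,y)\coloneqq{}_{\tilde\Y^*}\dup[\tilde\Y]{z}{P_N y}=\sum_{j\le N}\dup[\Y]{y}{\eta_j}\,{}_{\tilde\Y^*}\dup[\tilde\Y]{z}{\eta_j}$. The proof treats measurability, the two identities \eqref{eq:tildemapping}, and the linearity statement in turn.

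\textbf{Measurability.} Each $s_N:\tilde\Y^*\times\Y\to\IR$ is continuous. Indeed, $P_N y$ is a finite sum of $\eta_j\in\tilde\Y$ with coefficients depending continuously on $y$, and the dual pairing is jointly continuous. Since $\tilde\Y^*\times\Y$ is separable metric, its Borel $\sigma$-algebra equals the product $\sigma$-algebra. The set
\[
C\coloneqq\{(z,y)\,:\,\lim_N s_N(z,y)\text{ exists}\}=\bigcap_{k}\bigcup_{M}\bigcap_{N,N'\ge M}\{|s_N-s_{N'}|\le 1/k\}
\]
is therefore Borel. On $C$ the map $\langle\cdot,\cdot\rangle_\sim$ equals $\limsup_N s_N$, and off $C$ it is $0$. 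Hence $\langle\cdot,\cdot\rangle_\sim=\One_C\limsup_N s_N$ (with the convention $0\cdot\infty=0$ off $C$) is Borel measurable.

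\textbf{Identity \eqref{eq:tildemapping2}.} For $\tilde y\in\tilde\Y$ we have $P_N\iota\tilde y\to\tilde y$ in $\tilde\Y$. This holds because $P_N$ is also the orthogonal projection in $\tilde\Y$: the $\eta_j$ are orthogonal in $\tilde\Y$ with norm $\gamma_j^{-1}$, and the tail $\sum_{j>N}\gamma_j^{-2}\dup{\tilde y}{\eta_j}^2\to0$. By continuity of $z$ on $\tilde\Y$, the limit exists and equals ${}_{\tilde\Y^*}\dup[\tilde\Y]{z}{\tilde y}$, for every $z$.

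\textbf{Identity \eqref{eq:tildemapping1}.} Lemma~\ref{lem:white_noise_construction}, via \eqref{eq:IWchar2}, gives $s_N(E,y)\to\IW_\Y(y)$ only in $L^2(\Omega)$, so we need almost sure convergence. Using \eqref{eq:IWchar1} and linearity of $\IW_\Y$,
\[
s_N(E,y)=\sum_{j\le N}\dup[\Y]{y}{\eta_j}\,\xi_j ,
\]
with $\xi_j=\IW_\Y(\eta_j)$ i.i.d.\ $\N(0,1)$; the $\xi_j$ are independent because they are jointly Gaussian and uncorrelated. This is a sum of independent centered random variables with summable variances $\sum_j\dup{y}{\eta_j}^2=\lV y\rV_\Y^2<\infty$. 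By Kolmogorov's two-series theorem (equivalently, Lévy's theorem or the $L^2$-bounded martingale convergence theorem), it converges almost surely. Its a.s.\ limit must agree a.s.\ with the $L^2$ limit $\IW_\Y(y)$. A small technical point: \eqref{eq:IWchar1} holds only as an a.s.\ equality for each $\eta_j$, but there are countably many $j$, so the exceptional null sets can be merged. Taking $\Omega_y$ to be the union of these null sets gives \eqref{eq:tildemapping1}. This a.s.\ upgrade is the main (though mild) obstacle.

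\textbf{Linearity.} Set $L_y\coloneqq\{z\in\tilde\Y^*:\lim_N s_N(z,y)\text{ exists}\}$. This is a linear subspace, since $s_N(\cdot,y)$ is linear and limits of sums are sums of limits. It is Borel by the argument above. Moreover $\IP_E(L_y)=\IP(E\in L_y)=1$ by the almost sure convergence just shown. On $L_y$ the map $\langle\cdot,y\rangle_\sim$ is the pointwise limit of the linear maps $s_N(\cdot,y)$, hence linear.
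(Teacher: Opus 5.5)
Your proof is correct and follows the same approach as the paper. You use continuity of the partial pairings and Borel-ness of the convergence set for measurability, and the bound $\lVert z\rVert_{\tilde\Y^*}\lVert \tilde y-P_N\tilde y\rVert_{\tilde\Y}\to 0$ for \eqref{eq:tildemapping2}. You then upgrade the $L^2$ limit in \eqref{eq:IWchar2} to an almost-sure limit via Kolmogorov's criterion for the i.i.d.\ Gaussian series, giving \eqref{eq:tildemapping1}, and take $L_y$ as the set where the limit exists. Along the way you supply details the paper leaves implicit: why $P_N\tilde y\to\tilde y$ in $\tilde\Y$, why the $\xi_j$ are independent, and the countable merging of null sets.
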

\begin{proof}
  Since the maps $f_N:\tilde\Y^*\times\Y\to\IR$, $f_N(z,y)={}_{\tilde\Y^*}\dup[\tilde\Y]{z}{P_Ny}$,
$N\in\IN$, are continuous and hence Borel measurable, \cite[Lemma 2.1.7]{bogachev2007measure} shows that $\{(z,y) \in \tilde\Y^*\times\Y: \lim_{N\to\infty}f_N(z,y)\in\IR\}$ is a Borel set. Then \cite[Theorem 2.1.5 (v)]{bogachev2007measure} yields Borel measurability of $\langle\cdot,\cdot\rangle_\sim$. For \eqref{eq:tildemapping1} let $y\in\Y$. On the one hand we have for $\omega\in\Omega$,
  \begin{equation*}
    \langle E(\omega),y\rangle_{\sim}=\begin{cases}
\lim_{N\to\infty}{}_{\tilde\Y^*}\dup[\tilde\Y]{E(\omega)}{P_N y} & \text{if limit exists}\\
0 & \text{otherwise},
\end{cases}
  \end{equation*}
  and on the other hand we have $\lim_{N\to\infty}{}_{\tilde\Y^*}\langle E(\cdot),P_N y\rangle_{\tilde\Y}=\IW_{\Y}(y,\cdot)$
  in the sense of $L^2(\Omega,\IP,\IR)$ by \eqref{eq:IWchar2}. Therefore
  $${}_{\tilde\Y^*}\langle E(\cdot),P_Ny\rangle_{\tilde\Y}=\sum_{j=1}^N
  {}_{\tilde\Y^*}\langle E(\cdot),\eta_j\rangle_{\tilde \Y} \langle y,\eta_j\rangle_{\Y}
=\sum_{j=1}^N\IW_{\Y}(\eta_j,\cdot) \langle y,\eta_j\rangle_{\Y},
  $$
  where the equality holds in $L^2(\Omega,\IP,\IR)$.
  Since the $\IW_{\Y}(\eta_j,\cdot)$, $j\in\IN$, are i.i.d.\ standard Gaussian random variables, see \eqref{eq:WN}, it holds
  $$
  \sum_{j\in\IN}\operatorname{Var}[
  \IW_{\Y}(\eta_j,\cdot)
  \langle y,\eta_j\rangle_\Y]=\sum_{j\in\IN}\langle y,\eta_j\rangle_\Y^2=\lVert y\rVert_\Y^2<\infty,\quad y\in\Y.$$
  By Kolmogorov's convergence criterion, e.g.,~\cite[Chapter 6, Theorem 5.2]{gut2012probability}, we can conclude that $\lim_{N\to\infty}{}_{\tilde\Y^*}\dup[\tilde\Y]{E(\omega)}{P_N y}$ converges a.s.~so that \eqref{eq:tildemapping1} holds. Moreover, \eqref{eq:tildemapping2} follows since for $\tilde y\in\tilde\Y$,
  \begin{equation*}
    \left|{}_{\tilde\Y^*}\dup[\tilde\Y]{z}{P_N \tilde y}-{}_{\tilde\Y^*}\dup[\tilde\Y]{z}{\tilde y}\right|\leq \lVert z\rVert_{\tilde\Y^*} \lVert \tilde y-P_N \tilde y\rVert_{\tilde\Y}\overset{N\to\infty}{\longrightarrow} 0.
  \end{equation*}
Finally, for $y\in\Y$, the map $\langle\cdot,y\rangle_{\sim}$ is linear (as a limit of linear maps) on the linear subspace
\begin{equation*}
    L_y\coloneqq\{z\in\tilde\Y^*:~~\lim_{N\to\infty}{}_{\tilde\Y^*}\dup[\tilde\Y]{z}{P_N y}\text{ exists}\}
\end{equation*}
and $\IP_E(L_y)=1$ since the sum converges $\IP_E$-a.s.~following \eqref{eq:tildemapping1}.
\end{proof}

We emphasize that \eqref{eq:tildemapping1} shows in particular that $\langle E,y\rangle_{\sim}$ does not depend on the choice of $(\gamma_j)_{j\in\IN}$ and $(\eta_j)_{j\in\IN}$ (and thus $\tilde\Y^*$) in \eqref{eq:Ytilde}. 

\begin{example}\label{ex:sequenceexample}
    The meaning of the mappings introduced in this section is best understood
    in the sequence spaces: Identify $y\in\Y$ with its coefficient sequence $\Bc=(c_j)_{j\in\IN}=(\langle y,\eta_j\rangle_{\Y})_{j\in\IN}$ and denote (with $\pgamma=(\gamma_j)_{j\in\IN}\in\ell^2$ a positive sequence as before)
    \begin{equation*}
      \ell_{\pgamma^{-1}}^2=\set{\Bc}{\sum_{j\in\IN}c_j^2\gamma_j^{-2}<\infty},\qquad
      \ell_{\pgamma}^2=\set{\Bc}{\sum_{j\in\IN}c_j^2\gamma_j^{2}<\infty}.
    \end{equation*}
    It is easily checked that $(\ell_{\pgamma^{-1}}^2)^*$ can be
    identified with $\ell_{\pgamma}^2$, $(\ell_{\pgamma}^2)^*$ with
    $\ell_{\pgamma^{-1}}^2$, and of course $(\ell^2)^*$ with $\ell^2$,
    which we do in the following. Then
    \begin{equation*}
      \tilde\Y\simeq \ell_{\pgamma^{-1}}^2\subsetneq \Y\simeq\ell^2\subsetneq \tilde\Y^*\simeq\ell_{\pgamma}^2.
    \end{equation*}
    The three maps (interpreted in the sequence spaces)
    \begin{equation*}
      \iota:\ell_{\pgamma^{-1}}^2\to\ell^2,\qquad
      R_{\Y}:\ell^2\to\ell^2,\qquad
      \iota^*:\ell^2\to \ell_{\pgamma}^2
    \end{equation*}
    all act as the identity on the sequence spaces, only the topology changes.
    In particular $d=\iota^*\circ R_\Y:\ell^2\to \ell_{\pgamma}^2$, so that
    \begin{equation*}
      \|\Bc\|^2_{\ell^2}=\sum_{j\in\IN}c_j^2,\qquad
      \|d(\Bc)\|^2_{\ell_{\pgamma}^2}=\sum_{j\in\IN}\gamma_j^2c_j^2,
    \end{equation*}
    and clearly $d$ is bounded linear and compact. With $H_E\simeq\ell^2\subsetneq\ell_{\pgamma}^2$,
    $d:\Y\simeq\ell^2\to H_E\simeq\ell^2$ is an isometric isomorphism.
  \end{example}

\begin{remark}
  In Bayesian nonparametrics, absolute continuity of the shifted noise measure (which is needed for a non-degenerate likelihood) requires the shifts to lie in the Cameron--Martin space $H_E$ of the noise, see, e.g.,~\cite[Theorem 32]{dashti2013bayesian}. 
  We work with
white noise in $\Y$ because then $H_E\simeq\Y$ via \eqref{eq:disomorphism}, so that the shifts may range over all of $\Y$. Noise distributed according to a Gaussian measure is a special case which can be obtained by replacing
$\tilde\Y^*\to\Y$ and $\Y\to H_E$.
\end{remark}

\subsection{Existence of Posterior Distribution}\label{sec:exposterior}
We aim for Bayesian inference on the ground truth $g_0$ by conditioning on the data $(\pXX,\pZZ)=(X_i,Z_i)_{i=1}^n$ and thereby consider a setup similar to the learning of linear operators in \cite{de2023convergence}.

Throughout the rest of this paper
  \begin{equation}\label{eq:GtopVec}
    \calG\subseteq\set{g:\X\to\Y}{g\text{ is Borel measurable}}
  \end{equation}
  is a Banach space equipped with the Borel $\sigma$-algebra $\B(\calG)$.
Given a random variable $G$ with prior distribution $G\sim\IP_G$ on $(\calG,\B(\calG))$, we want to define the \emph{posterior} distribution
$\IP_{G|(\pXX,\pZZ)}$ conditioned on the data. 
We 
start by recalling the notion of \emph{regular conditional distributions}, see, e.g.,~\cite[Definitions 8.25 and 8.28]{klenke2008probability}, which allows to rigorously define such conditionals.
\begin{definition}\label{def:regcond}
Let $\mathcal U$ and $\mathcal V$ be separable Banach spaces equipped with their Borel $\sigma$-algebras $\B(\mathcal U)$ and $\B(\mathcal V)$, and let $U:\Omega\to\mathcal U$ and $V:\Omega\to\mathcal V$ be two RVs with distributions $\IP_U$ and $\IP_V$, respectively. A \emph{regular conditional distribution of $U$ given $V$} is a map $\IP_{U|V}:\B(\mathcal U)\times\mathcal V\to\IR$ s.t.
\begin{enumerate}[label=(\roman*)]
    \item \label{item:conditional1} $\IP_{U|V}(A,\cdot)$ is Borel measurable for all $A\in\mathcal B(\mathcal U)$,
    \item\label{item:conditional2} $\IP_{U|V}(\cdot,v)$ is a probability measure on $(\mathcal U,\mathcal B(\mathcal U))$ for all $v\in\{V(\omega):\omega\in\Omega\}$,
    \item\label{item:conditional3} $\IP(U\in A,V\in B)=\int_B\IP_{U|V}(A,v)\dd\IP_V(v)$ for all $A\in\mathcal B(\mathcal U)$ and $B\in\mathcal B(\mathcal V)$.
\end{enumerate}
\end{definition}
Uniqueness of regular conditional distributions only holds up to
  $\IP_V$ null-sets: if $D$, $\tilde D:\B(\mathcal{U})\times\mathcal{V}\to\IR$ are two regular conditional
  distributions, then there exists a $\IP_V$ null-set
  $N_V\in\B(\mathcal{V})$ such that $D(\cdot,v)=\tilde D(\cdot,v)$
  (i.e.\ these are the same measures on $\mathcal{U}$) for all $v\in N_V^c$, e.g.,
  \cite[Lemma 3.5.23]{ScheichlZech2021}. For brevity, we do not mention this at every
  instance. In particular: if 
  $D$ satisfies the conditions
  of Definition \ref{def:regcond}, we will say that $D$ is \emph{the}
  conditional distribution of $U$ given $V$, or we will simply write
  $D=\IP_{U|V}$, i.e.\ this equality is understood in the above
  sense.

\subsubsection{General Existence Theory}\label{sec:generalexistence}
 To establish an existence result for the posterior we follow notation and generalize arguments from \cite[Section 3]{dashti2013bayesian} and \cite{stuart2010inverse}. 
 The works \cite{stuart2010inverse,dashti2013bayesian} study the Bayesian approach from an inverse problem perspective: In terms of the measurement model \eqref{eq:xiyi}, they fix $g_0$, put a prior measure on $\pXX$ and condition on the data $\pZZ$ resulting in the posterior measure $\IP_{\pXX|\pZZ}$. In contrast to their approach we ``randomize'' the operator by putting a prior 
 on $\mathcal G\ni g_0$ and condition on $(\pXX,\pZZ)$ resulting in the posterior $\IP_{G|(\pXX,\pZZ)}$.

\begin{assumption}[Data generation]\label{ass:data}   Let $n\in\IN$ and let
 \begin{enumerate}[label=(\alph*)]
\item\label{item:embedding} $\X$, $\Y$ and $\Z$ be
  separable Hilbert spaces with Borel $\sigma$-algebras $\B(\X)$, $\B(\Y)$ and $\B(\Z)$ and 
  let\footnote{Later $\Z=\tilde\Y^*$ and $d=\iota^*\circ R_{\Y}$ via the Gelfand triplet, see Section \ref{sec:measurenementmodel}; for now we keep the setting general.} $d:\Y\to \Z$ be a measurable map,
    \item  $\IP_G$ be a prior measure on $(\calG,\B(\calG))$ with the Banach space $\calG$ as in \eqref{eq:GtopVec},
    \item the data $(\pXX,\pZZ)=(X_i,Z_i)_{i=1}^n\in \X^n\times \Z^{n}\eqqcolon \pX\times\pZ$ be distributed according to
    \begin{align}\label{eq:YGX}
      Z_i=d\circ G(X_i)+\Ei,\quad i=1,\dots, n 
    \end{align}
    with $G\sim\IP_G$, $\Ei\overset{\trm{iid}}{\sim}\IPE$ and $X_i\overset{\trm{iid}}{\sim}{\IPX}$, all of them independent.
  \end{enumerate}
\end{assumption}

  \paragraph*{Joint distribution of $(G,\pXX,\pEE)$:} Since $G$, $X_i$, $E_i$ are all independent random variables, their joint distribution on $\calG\times\pX\times\pZ$ with the product $\sigma$-algebra
$\B(\calG)\otimes\B(\pX)\otimes\B(\pZ)$
  is simply
  \begin{equation*}
    \IP_{(G,\pXX,\pEE)}=\IP_G\otimes \IP_{\pXX}\otimes\IP_{\pEE},
  \end{equation*}
  where $\IP_{\pXX}=\otimes_{i=1}^n \IP_{X}$ and $\IP_{\pEE}=\otimes_{i=1}^n\IP_{E}$.

  \paragraph*{Joint distribution of $(G,\pXX,\pZZ)$:}
  To guarantee that $\pZZ$ is a well-defined random variable, we make
  the following assumption.
  \begin{assumption}[Measurability]\label{ass:meas}
    The map
      \begin{equation}
        \calG\times\X\to\Y:(g,x)\mapsto g(x)
      \end{equation}
      is Borel measurable w.r.t.\ the product $\sigma$-algebra on $\calG\times\X$.
  \end{assumption}

  With
  \begin{equation}\label{eq:T}
        T:\calG\times\pX\times\pZ\to \calG\times\pX\times\pZ:(g,\px,\pz)\mapsto (g,\px, d \circ g(\px)+\pz),
      \end{equation}
      where the application of $d\circ g$ is understood componentwise
      and $n$ (cp.~Assumption \ref{ass:data}) will always be clear from context,
      it holds $(G,\pXX,\pZZ)=T(G,\pXX,\pEE)$.
      Assumption \ref{ass:meas} directly yields measurability of $T$
      (since addition is measurable on separable Borel Hilbert
      spaces), which implies $(G,\pXX,\pZZ)$ to be a well-defined
      random variable with distribution
      \begin{equation}\label{eq:GXZ}
        \IP_{(G,\pXX,\pZZ)}=T_\sharp\IP_{(G,\pXX,\pEE)}.
      \end{equation}

      \paragraph*{Likelihood $\pZZ|(G,\pXX)$:}
      Set for $z\in\Z$
      \begin{equation}\label{eq:Tz}
        T_z:\Z\to\Z:e\mapsto z+e.
      \end{equation}
      We then have $Z_i=T_{d\circ G(X_i)}(E_i)$. The natural candidate for the distribution
      of $Z_i$ given $(g,x_i)$ is thus
      \begin{equation*}
        (T_{d\circ g(x_i)})_{\sharp}\IP_E.
      \end{equation*}

      We wish to formalize this, and describe the distribution in terms
      of its Radon--Nikodym derivative w.r.t.\ $\IP_E$. This motivates
      the next assumption.   
      \begin{assumption}[Absolute continuity of noise shifts]\label{ass:generalassump}
        Let
\begin{enumerate}[label=(\alph*)]
\item\label{item:general1} for $(\IP_G\otimes\IP_{X})$-a.e. $(g,x)\in\calG\times\X$
      \begin{equation*}
        (T_{d\circ g(x)})_\sharp\IP_E\ll \IP_E,
      \end{equation*}
      and define for each such $(g,x)$ and for $\IP_E$-a.e.\ $z\in\Z$
      \begin{equation*}
        \Phi(g,x,z):=-\log \frac{\dd (T_{d\circ g(x)})_\sharp\IP_E}{\dd \IP_E}(z),
      \end{equation*}
  \item\label{item:general2} the map thus defined
      \begin{equation}
        \Phi:\calG\times\X\times\Z\to\IR\cup \{\pm\infty\}
      \end{equation}
      be jointly Borel-measurable, i.e.\ there exists a
      jointly Borel-measurable $\Phi$ satisfying \ref{item:general1}. 
\end{enumerate} 
\end{assumption}
Here and below we denote
\begin{equation}\label{eq:pPhi}
  \pPhi(g,\px,\pz)=\sum_{i=1}^n\Phi(g,x_i,z_i),\qquad g\in\calG,~\px\in\pX,~\pz\in\pZ.
\end{equation}
The proof of the next lemma can be found in Appendix \ref{sec:lemma:GXZ}.

\begin{lemma}\label{lemma:GXZ}
  Let Assumptions \ref{ass:data}--\ref{ass:generalassump} hold. Then
  \begin{enumerate}[label=(\roman*)]
  \item\label{item:ZGX} there exists a regular conditional distribution
    $\IP_{\pZZ|(G,\pXX)}(\cdot,(g,\px))$ and it holds for $(\IP_G\otimes\IP_{\pXX})$-a.e.\ 
    $(g,\px)\in\calG\times \pX$ that $\IP_{\pZZ|(G,\pXX)}(\cdot,(g,\px))\ll \IP_{\pEE}$ and
    \begin{equation*}
      \frac{\dd \IP_{\pZZ|(G,\pXX)}(\cdot,(g,\px))}{\dd\IP_{\pEE}}= \exp(-\pPhi(g,\px,\cdot)),
    \end{equation*}
  \item\label{item:GXZ} $\IP_{(G,\pXX,\pZZ)}$ is well-defined, it holds $\IP_{(G,\pXX,\pZZ)}\ll\IP_{(G,\pXX,\pEE)}$ and
    \begin{equation*}
      \frac{\dd \IP_{(G,\pXX,\pZZ)}}{\dd\IP_{(G,\pXX,\pEE)}}= \exp(-\pPhi).
    \end{equation*}
  \end{enumerate}
\end{lemma}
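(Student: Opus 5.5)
The plan is to establish \ref{item:GXZ} first, by a direct change-of-variables computation on product sets, and then to read off \ref{item:ZGX} by disintegration. For a measurable rectangle $A\times B\times C\in\B(\calG)\otimes\B(\pX)\otimes\B(\pZ)$, \eqref{eq:GXZ} and Fubini give
\begin{equation*}
\IP_{(G,\pXX,\pZZ)}(A\times B\times C)=\int_{A\times B}\IP_{\pEE}\bigl(\set{\pe}{d\circ g(\px)+\pe\in C}\bigr)\dd(\IP_G\otimes\IP_{\pXX})(g,\px).
\end{equation*}
The inner quantity is $\bigl(\otimes_{i=1}^n (T_{d\circ g(x_i)})_\sharp\IP_E\bigr)(C)$, which is legitimate because $\IP_{\pEE}$ is a product measure and the shift acts componentwise.

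By Assumption \ref{ass:generalassump}\ref{item:general1}, for $\IP_G\otimes\IP_X$-a.e.\ $(g,x_i)$ each factor has density $\exp(-\Phi(g,x_i,\cdot))$ with respect to $\IP_E$. The exceptional set in $\calG\times\pX$ is a finite union of preimages of null sets under the projections $(g,\px)\mapsto(g,x_i)$, and each such projection pushes $\IP_G\otimes\IP_{\pXX}$ forward to $\IP_G\otimes\IP_X$. So, off an $(\IP_G\otimes\IP_{\pXX})$-null set, the product measure has density $\prod_i\exp(-\Phi(g,x_i,z_i))=\exp(-\pPhi(g,\px,\pz))$ with respect to $\IP_{\pEE}$, using that a product of absolutely continuous measures has the product density. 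Substituting this gives
\begin{equation*}
\IP_{(G,\pXX,\pZZ)}(A\times B\times C)=\int_{A\times B\times C}\exp(-\pPhi)\dd(\IP_G\otimes\IP_{\pXX}\otimes\IP_{\pEE}).
\end{equation*}
Fubini applies here because $\pPhi$ is jointly measurable by Assumption \ref{ass:generalassump}\ref{item:general2}. Taking $A\times B\times C$ to be the whole space also shows that $\exp(-\pPhi)$ is integrable with total mass one. Rectangles form a $\cap$-stable generator of the product $\sigma$-algebra, and both sides are finite measures, so the two measures agree by the uniqueness theorem for measures. This proves \ref{item:GXZ}, and in particular the absolute continuity $\IP_{(G,\pXX,\pZZ)}\ll\IP_{(G,\pXX,\pEE)}$.

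For \ref{item:ZGX}, define the kernel $\IP_{\pZZ|(G,\pXX)}(C,(g,\px))\coloneqq\int_C\exp(-\pPhi(g,\px,\pz))\dd\IP_{\pEE}(\pz)$ whenever $(g,\px)$ lies outside the exceptional null set, and set it equal to $\IP_{\pEE}$ otherwise. Measurability in $(g,\px)$ follows from Fubini--Tonelli and the joint measurability of $\pPhi$. On the good set the kernel is a probability measure, since it is the pushforward shown above. Property \ref{item:conditional3} of Definition \ref{def:regcond} is exactly the rectangle identity from the first step, extended from rectangles $A\times B$ to all of $\B(\calG)\otimes\B(\pX)$ by the same $\pi$-$\lambda$ argument.

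The main technical obstacle is the measure-theoretic bookkeeping around the null sets. One must check that the exceptional set can be chosen measurable; taking it to be $\set{(g,\px)}{\int\exp(-\pPhi)\dd\IP_{\pEE}\neq 1}$ together with its measurable components should work. One must also check that the density supplied pointwise by Assumption \ref{ass:generalassump}\ref{item:general1} can be replaced by the jointly measurable version $\Phi$ without affecting the integrals. This holds because for each fixed $(g,x)$ the two versions differ only on a $\IP_E$-null set.
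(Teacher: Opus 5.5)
Your proposal is correct and uses the same ingredients as the paper's proof. These are Tonelli applied to $T^{-1}$ of product sets, the identification of the shifted noise measure with $\otimes_{i=1}^n (T_{d\circ g(x_i)})_\sharp\IP_E$ and its product density $\exp(-\pPhi)$, the exceptional set built as a finite union of preimages under the projections $(g,\px)\mapsto(g,x_i)$, and redefining the kernel as $\IP_{\pEE}$ on that set; the only difference is the order, since you prove (ii) on rectangles and extend by the uniqueness theorem before reading off (i), whereas the paper verifies the kernel properties for (i) directly (for arbitrary $B_{\calG,\pX}$) and then obtains (ii) from it by Tonelli.
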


\paragraph*{Posterior $G|(\pXX,\pZZ)$:}
With the joint distribution given, we now have a natural candidate for the posterior.
The next theorem formally verifies that it's a regular conditional distribution
under the given assumptions; its proof is in Appendix \ref{app:existenceposterior}.

\begin{theorem}[Existence of Posterior]\label{th:existenceposterior}
  Let Assumptions \ref{ass:data}--\ref{ass:generalassump} hold,
  and let for $\IP_{(\pXX,\pEE)}$-a.e.\ $(\px,\pz)$
\begin{align}\label{eq:Zxy}
    \mathfrak Z(\px,\pz)\coloneqq\int_{\calG}\exp\left(-\pPhi(g,\px,\pz)\right)\dd\IP_G(g)>0. 
\end{align}
Then there exists a regular conditional distribution
$\IP_{G|(\pXX,\pZZ)}$ such that for $\IP_{(\pXX,\pZZ)}$-a.e.\ $(\px,\pz)$ 
\begin{align}
    \frac{\dd \IP_{G|(\pXX,\pZZ)}(\cdot,(\px,\pz))}{\dd\IP_G}(g)=\frac{1}{\mathfrak Z(\px,\pz)}\exp\left(-\pPhi(g,\px,\pz)\right)\label{eq:generalposteriorform}.
\end{align}
\end{theorem}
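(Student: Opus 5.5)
We define the candidate posterior directly by the formula in \eqref{eq:generalposteriorform} and check the three conditions of Definition \ref{def:regcond}. Concretely, we set, for $A\in\B(\calG)$ and $(\px,\pz)\in\pX\times\pZ$,
\begin{equation*}
  D(A,(\px,\pz))\coloneqq\begin{cases}\dfrac{1}{\mathfrak Z(\px,\pz)}\displaystyle\int_A\exp(-\pPhi(g,\px,\pz))\dd\IP_G(g) & \text{if } 0<\mathfrak Z(\px,\pz)<\infty,\\[2mm] \IP_G(A) & \text{otherwise.}\end{cases}
\end{equation*}
The key input is Lemma \ref{lemma:GXZ}\ref{item:GXZ}, which gives $\dd\IP_{(G,\pXX,\pZZ)}=\exp(-\pPhi)\dd(\IP_G\otimes\IP_{\pXX}\otimes\IP_{\pEE})$.

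\textbf{Step 1: measurability and well-definedness.} By Assumption \ref{ass:generalassump}\ref{item:general2}, $\pPhi$ is jointly measurable. Tonelli's theorem, applied to the nonnegative function $\One_A(g)\exp(-\pPhi(g,\px,\pz))$, then shows that $(\px,\pz)\mapsto\int_A\exp(-\pPhi)\dd\IP_G$ is Borel measurable with values in $[0,\infty]$. In particular $\mathfrak Z$ is measurable, the exceptional set $\{\mathfrak Z\in\{0,\infty\}\}$ is Borel, and $D(A,\cdot)$ is measurable. This gives \ref{item:conditional1}.

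For \ref{item:conditional2}, $D(\cdot,(\px,\pz))$ is a probability measure whenever $0<\mathfrak Z<\infty$, by monotone convergence; on the exceptional set it equals the prior, which is also a probability measure. We also need the exceptional set to be $\IP_{(\pXX,\pZZ)}$-null. Marginalising the density from Lemma \ref{lemma:GXZ}\ref{item:GXZ} over $g$ shows that $\IP_{(\pXX,\pZZ)}\ll\IP_{\pXX}\otimes\IP_{\pEE}$ with density $\mathfrak Z$. Consequently:
\begin{enumerate}[label=(\roman*)]
\item $\{\mathfrak Z=0\}$ is $\IP_{(\pXX,\pZZ)}$-null, directly from the density, and also by hypothesis \eqref{eq:Zxy};
\item $\int\mathfrak Z\dd(\IP_{\pXX}\otimes\IP_{\pEE})=1$, so $\mathfrak Z<\infty$ holds $\IP_{\pXX}\otimes\IP_{\pEE}$-a.e., and hence $\IP_{(\pXX,\pZZ)}$-a.e. by absolute continuity.
\end{enumerate}
Note that $\IP_{(\pXX,\pEE)}=\IP_{\pXX}\otimes\IP_{\pEE}$.

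\textbf{Step 2: the disintegration identity \ref{item:conditional3}.} Fix $A\in\B(\calG)$ and $B\in\B(\pX\times\pZ)$. By Lemma \ref{lemma:GXZ}\ref{item:GXZ} and Fubini,
\begin{equation*}
  \IP(G\in A,(\pXX,\pZZ)\in B)=\int_B\int_A \exp(-\pPhi(g,\px,\pz))\dd\IP_G(g)\dd(\IP_{\pXX}\otimes\IP_{\pEE})(\px,\pz).
\end{equation*}
Off a $\IP_{(\pXX,\pZZ)}$-null set, the inner integral equals $D(A,(\px,\pz))\,\mathfrak Z(\px,\pz)$. The remaining points of $B$ lie either in $\{\mathfrak Z=0\}$, where the inner integral vanishes, or in $\{\mathfrak Z=\infty\}$, which is $\IP_{\pXX}\otimes\IP_{\pEE}$-null. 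The right-hand side therefore equals $\int_B D(A,\cdot)\,\mathfrak Z\dd(\IP_{\pXX}\otimes\IP_{\pEE})=\int_B D(A,\cdot)\dd\IP_{(\pXX,\pZZ)}$, using Step 1. So $D=\IP_{G|(\pXX,\pZZ)}$, and by construction its Radon--Nikodym derivative with respect to $\IP_G$ is \eqref{eq:generalposteriorform} for $\IP_{(\pXX,\pZZ)}$-a.e.\ $(\px,\pz)$.

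\textbf{Main obstacle.} The delicate point is the bookkeeping of null sets. Hypothesis \eqref{eq:Zxy} is stated with respect to $\IP_{(\pXX,\pEE)}$, whereas the conclusion is needed with respect to $\IP_{(\pXX,\pZZ)}$. The bridge between the two is the density $\mathfrak Z$ of $\IP_{(\pXX,\pZZ)}$ with respect to $\IP_{\pXX}\otimes\IP_{\pEE}$, which Step 1 extracts from Lemma \ref{lemma:GXZ}. A second point to watch is that $\Phi$ may take the values $\pm\infty$. This is handled by working throughout with the nonnegative function $\exp(-\pPhi)\in[0,\infty]$, so that Tonelli's theorem applies without integrability assumptions.
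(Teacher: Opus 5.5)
Your proposal is correct and follows the paper's proof essentially step for step. It uses the same candidate kernel (the prior on the exceptional set $\{\mathfrak Z\notin(0,\infty)\}$), Tonelli for measurability, and the density $\mathfrak Z$ of $\IP_{(\pXX,\pZZ)}$ with respect to $\IP_{(\pXX,\pEE)}$ from Lemma \ref{lemma:GXZ} \ref{item:GXZ} to verify the disintegration identity. Your remark that $\{\mathfrak Z=0\}$ is automatically $\IP_{(\pXX,\pZZ)}$-null is a small sharpening: the paper uses hypothesis \eqref{eq:Zxy} to make the exceptional set $\IP_{(\pXX,\pEE)}$-null and then transfers this by absolute continuity, whereas the hypothesis is really needed only for the equivalence of measures in Remark \ref{rmk:measureequivalence}.
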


\begin{remark}\label{rmk:measureequivalence}
Theorem \ref{th:existenceposterior} and its proof (cf.~\eqref{eq:Zxy} and \eqref{eq:XZXEequivalence}) shows $\dd\IP_{(\pXX,\pZZ)}/\dd\IP_{(\pXX,\pEE)}(\px,\pz)>0$ for $\IP_{(\pXX,\pEE)}$-a.e.~$(\px,\pz)$. Since Lemma \ref{lemma:GXZ} \ref{item:GXZ} yields $\IP_{(G,\pXX,\pZZ)}\ll \IP_{(G,\pXX,\pEE)}$ which implies $\IP_{(\pXX,\pZZ)}\ll \IP_{(\pXX,\pEE)}$, for example \cite[p.~179]{bogachev2007measure} shows that $\IP_{(\pXX,\pZZ)}$ and $\IP_{(\pXX,\pEE)}$ are equivalent measures. Thus all $\IP_{(\pXX,\pEE)}$-a.s.~statements can be exchanged with $\IP_{(\pXX,\pZZ)}$-a.s.~statements in the setting of Theorem \ref{th:existenceposterior}.
\end{remark}

\begin{remark}\label{rmk:singularnoise}
Suppose $T$ is Borel-measurable but $(T_{d\circ g(x)})_\sharp\IP_E\ll\IP_E$ is not true
(i.e.\ Assumption \ref{ass:meas} holds but Assumption \ref{ass:generalassump} not).
Then the joint distribution $\IP_{(G,\pXX,\pZZ)}=T_\# \IP_{(G,\pXX,\pEE)}$ is a probability measure on $\calG\times\pX\times\pZ$. If $\calG$ is Polish, so is the product space, and the disintegration theorem \cite[Theorem~10.4.14]{bogachev2007measurevol2} yields a posterior $\IP_{G|(\pXX,\pZZ)}$ as a $\IP_{(\pXX,\pZZ)}$-a.s.~unique regular conditional distribution in the sense of Definition~\ref{def:regcond}. However, without absolute continuity, see Assumption~\ref{ass:generalassump}~\ref{item:general1}, the posterior can in general not be represented as a Radon--Nikodym density w.r.t.~the prior $\IP_G$. 
\end{remark}

\subsubsection{Additive White Noise for Continuous Operators on Compact Sets}
We now specify the setting to an important generic problem class:
  additive white noise, continuous operators, and approximation on compact sets.
Our goal is to apply Theorem \ref{th:existenceposterior} to show existence
of posteriors.

Denote by $\supp(\IPX)$ the (topological) support of $\IPX$
\begin{align*}
   \supp(\IPX)\coloneqq \left\{x\in\X\,|\,\IPX(N_x)>0~ \text{for all open neighborhoods } N_x\in\BX\text{ of }x\right\}.
\end{align*}
For a compact set $K\subset \X$, we denote the space of all bounded, continuous operators from $K$ to $\Y$ by $C^0(K)\coloneqq \CzKY$. Then %
$(C^0(K),\lVert\cdot\rVert_\infty)$ with the supremum norm $\lVert g\rVert_\infty\coloneqq \sup_{x\in K}\lVert g(x) \rVert_\Y$ is separable and complete, see \cite[Theorem 4.19]{kechris1995classical} and \cite[p.~97--98]{treves2016topological}.
\begin{setting}\label{set:gaussiansetup} 
  Let $\X$, $\Y$ be two separable Hilbert spaces, $\tilde\Y$ as in \eqref{eq:Ytilde}, $d:\Y\to\tilde\Y^*$ as in \eqref{eq:disomorphism}, and
  \begin{equation*}
    Z_i = d\circ G(X_i)+E_i\in\tilde\Y^*,\qquad i=1,\dots,n
  \end{equation*}
  where $G$, $X_i$, $E_i$ are all independent random variables such that:
  \begin{enumerate}[label=(\alph*)]
  \item\label{item:Gaussiansetupnoise} {\bf white noise}:
  $\Ei\overset{\rm iid}{\sim}\IP_E$ for a centered Gaussian measure on $(\tilde\Y^*,\B(\tilde\Y^*))$ with covariance operator $\mathcal{C}:\tilde\Y^*\to\tilde\Y^*$ and eigen\-values $(\gamma_j^2)_{j\in\IN}$ $\in\ell^1(\IN)$.  
\item\label{item:gaussiansetupnu} {\bf design points in compact set}:
    $X_i\overset{\rm iid}{\sim}\IP_X$ for a 
  Borel probability measure on $\X$ with compact support $K\coloneqq\supp(\IPX)\subseteq\X$. 
    
\item\label{item:GaussianG} {\bf continuous operator class}:
  $G\sim\IP_G$ for a Borel probability measure on $\calG=C^0(K,\Y)$
  endowed with the supremum norm $$\lV g\rV_\infty=\sup_{x\in K}~\lVert   g(x)\rVert_{\Y},~~g\in\calG.$$
\end{enumerate}
\end{setting}

Item \ref{item:Gaussiansetupnoise} formalises the white noise model introduced in Section \ref{sec:whitenoise}. Assuming $\calG=C^0(K)$ in \ref{item:GaussianG} ensures a continuous and hence Borel measurable translation map $T$ (cf.~Assumption \ref{ass:meas}).%

The next theorem establishes well-definedness of posteriors; its proof can be found in Appendix \ref{app:proofposteriorgaussian}.
\begin{theorem}[Posterior under white noise]\label{th:PosteriorGaussianNoise}
 Consider Setting \ref{set:gaussiansetup} with $\pXX=(X_i)_{i=1}^n$, $\pZZ=(Z_i)_{i=1}^n$.
  Then the posterior $\IP_{G|(\pXX,\pZZ)}$ is well-defined.
  It is given for $\IP_{(\pXX,\pZZ)}$-a.e.\ $(\px,\pz)\in\pX\times\bs{\tilde{\Y}^*}$
  via \eqref{eq:generalposteriorform}, with negative log-likelihood
  \begin{align}\label{eq:phigeneral}
        \pPhi(g,\px,\pz)=\sum_{i=1}^n\Big(\frac{1}{2}\lVert g(x_i)\rVert_{\Y}^2-\dup[\sim]{z_i}{g(x_i)}\Big)
    \end{align}  
and $\dup[\sim]{\cdot}{\cdot}$ as in Corollary \ref{cor:existencetildemapping}. 
\end{theorem}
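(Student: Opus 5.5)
The plan is to verify the hypotheses of Theorem \ref{th:existenceposterior} in Setting \ref{set:gaussiansetup}, with $\Z=\tilde\Y^*$ and $d=\iota^*\circ R_\Y$. Once Assumptions \ref{ass:data}--\ref{ass:generalassump} and the positivity condition \eqref{eq:Zxy} are established, the posterior and its density \eqref{eq:generalposteriorform} follow directly from that theorem. Remark \ref{rmk:measureequivalence} then converts the $\IP_{(\pXX,\pEE)}$-a.e.\ statements into $\IP_{(\pXX,\pZZ)}$-a.e.\ statements.

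\textbf{Assumptions \ref{ass:data} and \ref{ass:meas}.} Assumption \ref{ass:data} holds by construction; $d$ is linear and bounded, hence measurable. For Assumption \ref{ass:meas}, the evaluation map $C^0(K,\Y)\times K\to\Y$, $(g,x)\mapsto g(x)$, is jointly continuous. Indeed,
\[
\lVert g(x)-g'(x')\rVert_\Y\le\lVert g-g'\rVert_\infty+\lVert g'(x)-g'(x')\rVert_\Y .
\]
Outside $K$ I extend by $0$ (or otherwise measurably), which is harmless because $\IP_X(K)=1$. Since the spaces are separable metric, the product Borel $\sigma$-algebra coincides with the Borel $\sigma$-algebra of the product, so joint continuity gives the required product measurability.

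\textbf{Assumption \ref{ass:generalassump}.} Fix $(g,x)$ and set $y=g(x)\in\Y$. Lemma \ref{lem:white_noise_construction} gives that $d(y)$ lies in the Cameron--Martin space $H_E$ of $\IP_E$, with $\lVert d(y)\rVert_{H_E}=\lVert y\rVert_\Y$. The Cameron--Martin theorem then yields $(T_{d(y)})_\sharp\IP_E\ll\IP_E$, with density
\[
\exp\Bigl(\mathcal I(y)(z)-\tfrac12\lVert y\rVert_\Y^2\Bigr),
\]
where $\mathcal I(y)$ is the Paley--Wiener functional, that is, the $L^2(\IP_E)$-limit of $z\mapsto{}_{\tilde\Y^*}\langle z,P_Ny\rangle_{\tilde\Y}$.

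I then identify this $L^2$-limit with $\langle z,y\rangle_\sim$ for $\IP_E$-a.e.\ $z$. This is the content of Corollary \ref{cor:existencetildemapping}, \eqref{eq:tildemapping1}: the partial sums converge a.s., and that a.s.\ limit must agree a.e.\ with the $L^2$-limit. This gives $\Phi(g,x,z)=\tfrac12\lVert g(x)\rVert_\Y^2-\langle z,g(x)\rangle_\sim$. Joint Borel measurability of $\Phi$ follows by composing the continuous evaluation map with the Borel map $\langle\cdot,\cdot\rangle_\sim$ from Corollary \ref{cor:existencetildemapping}. The subtle point is that the identity of the density holds only $\IP_E$-a.e.\ for each fixed $(g,x)$, while the formula is defined everywhere. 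Assumption \ref{ass:generalassump} asks only for a jointly measurable version, which this formula supplies.

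\textbf{Positivity of $\mathfrak Z$.} For every $g$ and every $(\px,\pz)$, $\pPhi(g,\px,\pz)$ is a finite real number, because $\langle\cdot,\cdot\rangle_\sim$ is real-valued (set to $0$ when the limit fails to exist). Hence $\exp(-\pPhi)>0$ everywhere, and integrating a strictly positive measurable function against the probability measure $\IP_G$ gives $\mathfrak Z(\px,\pz)\in(0,\infty]$. Finiteness is not needed for \eqref{eq:Zxy}, but the normalization requires it a.e. Since $\exp(-\pPhi)$ is a probability density in $\pz$ for each $(g,\px)$, Tonelli gives
\[
\int\mathfrak Z\dd\IP_{(\pXX,\pEE)}=1,
\]
so $\mathfrak Z<\infty$ almost everywhere. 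Theorem \ref{th:existenceposterior} then yields the claim.

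The main obstacle is the Cameron--Martin step, namely matching the density's linear term with the everywhere-defined $\langle\cdot,\cdot\rangle_\sim$ in a jointly measurable way. This is where Corollary \ref{cor:existencetildemapping} carries the weight.
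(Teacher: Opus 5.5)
Your proposal is correct and follows essentially the same route as the paper. You verify Assumptions \ref{ass:data}--\ref{ass:generalassump} using continuity of evaluation, the Cameron--Martin theorem with $d:\Y\to H_E$, and Borel measurability of $\langle\cdot,\cdot\rangle_\sim$, and then observe that $\pPhi<\infty$ everywhere, so $\mathfrak Z>0$.

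One step you assert rather than check: that the Paley--Wiener functional of $d(y)$ is the $L^2(\IP_E)$-limit of ${}_{\tilde\Y^*}\langle\cdot,P_Ny\rangle_{\tilde\Y}$. The paper verifies exactly this by showing $R_{\IP_E}\langle\cdot,y\rangle_\sim=d(y)$ through the white-noise covariance identity $\int{}_{\tilde\Y^*}\langle z,\tilde y\rangle_{\tilde\Y}\,{}_{\tilde\Y^*}\langle z,P_Ny\rangle_{\tilde\Y}\dd\IP_E(z)=\langle\tilde y,P_Ny\rangle_\Y$. So that step rests on this computation, not only on Corollary \ref{cor:existencetildemapping}.
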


\begin{remark}\label{rmk:compactness}
  Compactness of $K$ in Setting \ref{set:gaussiansetup} \ref{item:gaussiansetupnu} is assumed to ensure separability of $C^0(K)$, see \cite[Theorem 4.19]{kechris1995classical}. This assumption on $K$ can be weakened to $K\subseteq\X$ being a closed subset:
  for a bounded continuous ONB $(\psij)_{j\in\IN}$ of $L^2(K)$ as in Section \ref{sec:Prior} below, $$\calG_{\ppsi}\coloneqq\overline{\operatorname{span}\{\psij:j\in\IN\}}^{\,\lV\cdot\rV_\infty}\subseteq C^0(K)$$ is then a separable Banach space and Theorem \ref{th:PosteriorGaussianNoise}
remains valid with $\calG$ in Setting \ref{set:gaussiansetup} \ref{item:GaussianG} replaced by $\calG_{\ppsi}$.
\end{remark}

\subsection{Prior Distribution}\label{sec:Prior}
Let $\ppsi=(\psi_j)_{j\in\IN}$ be a continuous ONB of $L^2(K)= L^2(K,\IP_X;\Y)$ with compact $K=\supp(\IP_X)\subset\X$.
The prior distribution $\IP_G$ should encode a priori knowledge of the ground truth $g_0$.
Frequently used priors include for example Gaussian and uniform priors,
e.g.\ \cite[Chapter 2]{dashti2013bayesian}. 
\begin{example}[Gaussian Prior]\label{ex:GaussianPrior}
  Consider $\IP_G$ to be a Gaussian measure on
  $(L^2(K),\B(L^2(K)))$ with covariance operator
\begin{align}\label{eq:L2Gaus}
     \mathcal{C}_{\rm{prior}}:L^2(K)\to L^2(K),\quad
    \mathcal{C}_{\rm{prior}}(g)=\sumj \kappaj^2\langle g,\psij\rangle_{L^2(K)}~\psij
\end{align}
where $(\kappaj)_{j\in\IN}\in\ell^2(\IN)$.
\end{example}
\begin{example}[Uniform Prior]\label{ex:UniformPrior}
    Let $\pi=\bigotimes_{j\in\IN}\Unif[-1,1]$ be the uniform distribution on $U=[-1,1]^{\IN}$, see, e.g.,~\cite[Section 3.5]{bogachev2007measure}. 
    For $\pkappa\in\IR^{\IN}$, we define
    \begin{align}
        &\sigma_{\pkappa}: U\to L^2(K),\qquad \sigma_{\pkappa}(\boldsymbol{a})=\sumj a_{j}\kappaj\psij\label{eq:uniformkappa}
        \end{align}
        which is a well-defined, continuous and hence Borel measurable mapping iff $(\kappaj)_{j\in\IN}\in\ell^2(\IN)$. Thus, setting $\IP_G=(\sigma_{\pkappa})_{\#}\pi$, we get a well-defined (uniform) prior measure on $(L^2(K),\B(L^2(K)))$.
\end{example}

 In order to apply Theorem \ref{th:PosteriorGaussianNoise}, the support of $\IP_G$ has to lie in $C^0(K)\subset L^2(K)$ and hence continuity has to be %
 enforced into the priors from Examples \ref{ex:GaussianPrior} and \ref{ex:UniformPrior}. As the next lemma shows (and as is well-known)
 this can be done by assuming continuity of the $\psi_j$ and sufficient decay of the sequence $\kappaj\lVert\psi_j\rVert_{\infty}$;
 for the proof of the Lemma see Appendix \ref{app:decayholo}.
\begin{lemma}\label{lem:decay}
  Let $\IP_G$ be as in Example \ref{ex:GaussianPrior} or Example
  \ref{ex:UniformPrior} for certain $\psi_j\in C^0(K)$ and define
 \begin{align}
    \label{eq:omegaj}
    \omegaj\coloneqq\sup_{x\in K}\lVert\psij(x)\rVert_{\Y}.
\end{align}
Assume that $\sum_{j\in\IN}\kappaj\omegaj<\infty$ and $\kappa_j>0$, $j\in\IN$.
Then draws $G$ from $\IP_G$ are a.s.\ bounded continuous operators, i.e.~$\IP_G(C^0(K))=1$,
and $\IP_G$ induces a Borel probability measure on $C^0(K)$. 
\end{lemma}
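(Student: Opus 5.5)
The plan is to realise $G$ as a random series converging absolutely in $C^0(K)$, and then to identify the law of its sum with $\IP_G$ restricted to continuous elements. In both examples $\IP_G$ is the law in $L^2(K)$ of
\[
G=\sumj \kappaj a_j\psij .
\]
In the uniform case $a_j\sim\Unif[-1,1]$. In the Gaussian case $a_j\sim\N(0,1)$, which follows from the Karhunen--Lo\`eve expansion of $\N(0,\mathcal C_{\rm prior})$ with eigenpairs $(\kappaj^2,\psij)$. In both cases the $a_j$ are i.i.d. The series converges in $L^2(K)$ almost surely since $(\kappaj)\in\ell^2$. In the Gaussian case this convergence can also be taken in $L^2(\Omega;L^2(K))$.

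\textbf{Uniform case.} Here $|a_j|\le 1$ for every outcome, so
\[
\sumj \kappaj|a_j|\,\lVert\psij\rVert_\infty\le\sumj\kappaj\omegaj<\infty .
\]
The partial sums are therefore Cauchy in the Banach space $(C^0(K),\lVert\cdot\rVert_\infty)$ and converge uniformly to a limit $\tilde G\in C^0(K)$. Uniform convergence on $K$ implies convergence in $L^2(K)$, because $\IP_X$ is a probability measure. Hence $\tilde G$ coincides with the $L^2$-limit $G$ as elements of $L^2(K)$.

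\textbf{Gaussian case.} Use $\IE|a_j|=\sqrt{2/\pi}$ and monotone convergence:
\[
\IE\sumj\kappaj|a_j|\omegaj=\sqrt{2/\pi}\sumj\kappaj\omegaj<\infty .
\]
Thus $\sumj\kappaj|a_j|\omegaj<\infty$ almost surely, and on that event the same Cauchy argument applies.

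This step needs the continuous representative $\tilde G$ of an $L^2$-class to be well defined. Two continuous functions that agree $\IP_X$-a.e.\ must agree on $\supp(\IPX)=K$, because the set where they differ is open in $K$ and would have positive measure. Consequently the natural inclusion $j:C^0(K)\to L^2(K)$ is injective and continuous, and $C^0(K)$ can be viewed as a subset of $L^2(K)$. We get $\IP_G(j(C^0(K)))\ge\IP(\tilde G\text{ exists})=1$, which gives $\IP_G(C^0(K))=1$.

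\textbf{Induced measure on $C^0(K)$.} Define the Borel measure on $C^0(K)$ as the law of $\tilde G$, set to $0$ on the null set where the series diverges. Measurability of $\tilde G$ follows because it is a pointwise limit of the measurable partial sums $\omega\mapsto\sum_{j\le N}\kappaj a_j(\omega)\psij\in C^0(K)$, and the convergence event is measurable. Consistency with $\IP_G$ holds because $j_\#(\text{law of }\tilde G)=\IP_G$. One could also argue via Lusin--Souslin: $j$ is an injective continuous map between Polish spaces, so $j(B)$ is Borel in $L^2(K)$ for every Borel $B\subseteq C^0(K)$, and restricting $\IP_G$ through $j^{-1}$ defines the measure directly.

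\textbf{Main obstacle.} I expect the main technical care to lie in this last measurability/identification step, rather than in the convergence estimates, which are routine. The assumption $\kappaj>0$ is not needed for the argument and only ensures non-degeneracy.
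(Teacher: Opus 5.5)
Your proposal is correct and follows essentially the paper's route: almost sure absolute convergence of $\sum_j \kappa_j|a_j|\omega_j$ (surely in the uniform case, via $\IE|a_j|$ and monotone convergence in the Gaussian case) makes the partial sums Cauchy in $C^0(K)$, and the induced measure is the law of the uniform limit. The only difference is one of presentation. The paper builds this law as $Q_\#\IP_G$ with a Borel map $Q:L^2(K)\to C^0(K)$ defined on the Borel full-measure set $\Lambda=\{g:\sum_j\omega_j|\langle g,\psi_j\rangle_{L^2(K)}|<\infty\}$, so it never needs Lusin--Souslin for the Borel measurability of $C^0(K)$ inside $L^2(K)$; your explicit check that $C^0(K)\to L^2(K)$ is injective (using $\supp(\IPX)=K$) makes precise an identification the paper leaves implicit.
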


\section{Posterior Analysis for Gaussian Model}\label{sec:contraction}
In this section we analyze the posterior mean, the maximum a posteriori and the maximum likelihood (ML) estimators for the white noise model from Section \ref{sec:whitenoise} with Gaussian prior from Example \ref{ex:GaussianPrior}. We do not have convergence results for the uniform prior from Example \ref{ex:UniformPrior} as the expression for the posterior mean gets more involved in that case.

\subsection{Posterior Mean Estimator}\label{sec:existencealgebraicform}
We define the posterior mean $\hatgn$ as
 $$\hatgn:\pX\times\bs{\tilde\Y^*}\to\calG,~~\hatgn(\px,\pz)=\IE_{G\sim\IP_{G|(\pXX,\pZZ)}(\cdot,\px,\pz)}\left[G\right].$$ We start by showing that $\hatgn$ is well-defined. The proof of Lemma \ref{lem:posteriorexpectationexistence} can be found in Appendix \ref{app:proofexistenceGnhat}.

\begin{lemma}[Existence of Posterior Expectation]
\label{lem:posteriorexpectationexistence}
Consider Setting \ref{set:gaussiansetup}, and let $(\psij)_{j\in\IN}$ be a continuous ONB of $L^2(K)$. Consider either the Gaussian prior from Example \ref{ex:GaussianPrior} or the uniform prior from Example \ref{ex:UniformPrior} and let 
\begin{align}\label{eq:strongerdecayproperty}
    \sum_{j\in\IN}\kappa_j\omega_j\max\left\{1,\log(j)^{1/2}\right\}<\infty.
\end{align}
Then the posterior expectation
\begin{align*}
\hatgn(\px,\pz)
=\mathfrak Z(\px,\pz)^{-1}\int_{\calG} g\exp\left(-\pPhi(g,\px,\pz) \right)\dd\IP_G(g),
\end{align*}
with the likelihood $\exp(-\pPhi)$ from Theorem \ref{th:PosteriorGaussianNoise}, exists $(\px,\pz)$ $(\IPpX\otimes\IPpE)$-a.s.~as a $\calG$-valued Bochner integral. Moreover, $\hatgn:\pX\times\bs{\tilde\Y^*}\to\calG$ is Borel measurable.
\end{lemma}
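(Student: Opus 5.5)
Since $\calG=C^0(K)$ is a separable Banach space, the Bochner integral $\int_{\calG} g\exp(-\pPhi(g,\px,\pz))\dd\IP_G(g)$ exists once $g\mapsto g\exp(-\pPhi(g,\px,\pz))$ is strongly measurable and
\begin{equation*}
\int_{\calG}\lVert g\rVert_\infty\exp\left(-\pPhi(g,\px,\pz)\right)\dd\IP_G(g)<\infty .
\end{equation*}
Measurability is immediate from the joint Borel measurability of $\pPhi$ (Theorem \ref{th:PosteriorGaussianNoise}) and separability of $\calG$. Positivity of $\mathfrak Z(\px,\pz)$ is already part of Theorem \ref{th:PosteriorGaussianNoise}. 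So the whole task reduces to the integrability bound for $(\IPpX\otimes\IPpE)$-a.e.\ $(\px,\pz)$. The final measurability claim should follow by writing $\hatgn$ as a quotient of two parametrized integrals and using Fubini/Tonelli, for instance by approximating by simple functions or testing against a countable norming family of functionals.

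For the integrability I would write $G=\sum_j \xi_j\kappa_j\psi_j$, with $\xi_j$ i.i.d.\ standard Gaussian (Gaussian prior) or uniform on $[-1,1]$ (uniform prior). Since $\frac12\lVert g(x_i)\rVert_\Y^2\ge 0$,
\begin{equation*}
\exp(-\pPhi)\le\exp\Big(\sum_{i=1}^n \dup[\sim]{z_i}{g(x_i)}\Big).
\end{equation*}
By Corollary \ref{cor:existencetildemapping}, $\langle z_i,\cdot\rangle_\sim$ is linear on the subspace where the defining limits exist. For $\IPpE$-a.e.\ $\pz$ and countably many $y$ (namely $\psi_j(x_i)$), this gives
\begin{equation*}
\dup[\sim]{z_i}{g(x_i)}=\sum_j \xi_j\kappa_j\, w_{ij},\qquad w_{ij}\coloneqq\dup[\sim]{z_i}{\psi_j(x_i)}.
\end{equation*}
This interchange of the limit with the infinite sum is delicate. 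I would justify it almost surely in $G$ by working at finite $N$ and passing to the limit via Fatou, or by using the $L^2$ isometry of the white noise. Under $\IPpE$, each $w_{ij}$ is $\N(0,\lVert\psi_j(x_i)\rVert_\Y^2)$ with variance at most $\omega_j^2$. A union bound with Borel--Cantelli then shows that a.s.\ $|w_{ij}|\le C(\pz)\,\omega_j\max\{1,\log(j)^{1/2}\}$ for all $j$ and $i\le n$. This is exactly where condition \eqref{eq:strongerdecayproperty} enters: it yields $a_j\coloneqq\kappa_j\sum_i|w_{ij}|$ with $\sum_j a_j<\infty$ for a.e.\ $(\px,\pz)$.

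It remains to bound $\IE\big[\lVert G\rVert_\infty\exp(\sum_j a_j|\xi_j|)\big]$. I would use $\lVert G\rVert_\infty\le\sum_j\kappa_j\omega_j|\xi_j|$ together with independence. For the uniform prior everything is bounded by $\sum_j\kappa_j\omega_j\cdot\exp(\sum_j a_j)$. For the Gaussian prior,
\begin{equation*}
\IE\, e^{a_j|\xi_j|}\le 2e^{a_j^2/2}\le e^{\log 2\cdot\One_{a_j>\epsilon}+Ca_j},
\end{equation*}
so the product over $j$ converges because $\sum_j a_j<\infty$. The extra factor $|\xi_k|$ costs only a finite multiplicative constant, by Cauchy--Schwarz in the $k$-th coordinate. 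Summing over $k$ with weights $\kappa_k\omega_k$ gives finiteness.

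The main obstacle is the step identifying $\dup[\sim]{z_i}{G(x_i)}$ with the series $\sum_j\xi_j\kappa_j w_{ij}$ simultaneously for $\IP_G$-a.e.\ $g$ and a.e.\ $\pz$, since $\langle\cdot,\cdot\rangle_\sim$ is only linear on $z$-dependent subspaces. If the direct interchange fails, I would first prove the bound for the truncations $G_N=\sum_{j\le N}$, where linearity is finite. I would then pass to the limit using $G_N\to G$ in $C^0(K)$ and the a.s.\ convergence in \eqref{eq:tildemapping1}, combined with Fatou's lemma applied to the nonnegative integrands.
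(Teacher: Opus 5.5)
Your outline follows the paper's proof. You drop the quadratic term, expand the cross term in the $\psi_j$, control $w_{ij}=\langle z_i,\psi_j(x_i)\rangle_\sim$ by a Borel--Cantelli envelope $\lesssim\omega_j\max\{1,\log(j)^{1/2}\}$, and integrate the product majorant coordinatewise. However, the step you call the main obstacle is never resolved, and the Fatou fallback you commit to cannot resolve it.

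\textbf{The series identification.} Fatou on the truncations $Q_Ng$ needs $\exp(-\pPhi(Q_Ng,\px,\pz))\to\exp(-\pPhi(g,\px,\pz))$ for $\IP_G$-a.e.\ $g$. That convergence \emph{is} the identification $\langle z_i,g(x_i)\rangle_\sim=\sum_j g_jw_{ij}$, so Fatou assumes what you want to prove. Nor does $Q_Ng(x_i)\to g(x_i)$ in $\Y$ help for fixed $z_i$: $y\mapsto\langle z_i,y\rangle_\sim$ is not continuous on $\Y$, and linearity on the $z_i$-dependent subspace where the limits exist does not pass to infinite sums. Also, \eqref{eq:tildemapping1} holds only a.s.\ for each \emph{fixed} $y$.

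What is needed is the paper's argument:
\begin{enumerate}[label=(\arabic*)]
\item Fix $g$ with $\sum_j\tomega_j|g_j|<\infty$; this is a $\IP_G$-full set.
\item The partial sums $\sum_{j\le N}g_jw_{ij}$ converge absolutely by the envelope bound, and they coincide a.s.\ with $\WY(Q_Ng(x_i))$.
\item By the isometry, $\WY(Q_Ng(x_i))\to\WY(g(x_i))=\langle E_i,g(x_i)\rangle_\sim$ in $L^2(\Omega,\IP)$, hence a.s.\ along a subsequence. The identity therefore holds $(\IPpX\otimes\IPpE)$-a.s., with a null set depending on $g$.
\item Check that the exceptional set in $\pX\times\bs{\tilde\Y^*}\times\calG$ is jointly Borel, and apply Tonelli to swap the quantifiers.
\end{enumerate}
Naming ``the $L^2$ isometry'' as an option is not a substitute for steps (3) and (4).

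Alternatively, you can bypass the identification for this lemma. Under $\IP_E$, $\langle\cdot,y\rangle_\sim\sim\N(0,\lVert y\rVert_\Y^2)$, so $\int e^{-\pPhi(g,\px,\pz)}\dd\IPpE(\pz)=1$ for every $g$ and $\px\in\pK$. Tonelli then gives $\int\!\int\lVert g\rVert_\infty e^{-\pPhi}\dd\IP_G\dd(\IPpX\otimes\IPpE)=\IE\lVert G\rVert_\infty<\infty$. The paper's explicit majorant is still needed later, for dominated convergence in Lemma \ref{lem:calcgaussiangnhat}.

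\textbf{The Gaussian product bound} is wrong as written. The bound $\IE e^{a_j|\xi_j|}\le 2e^{a_j^2/2}$ is true, but every such factor is at least $2$, so the infinite product diverges. The next inequality, $2e^{a_j^2/2}\le e^{\log 2\cdot\One_{a_j>\epsilon}+Ca_j}$, fails for all large $j$: since $a_j\to0$, its right-hand side tends to $1$. You need factors of the form $1+O(a_j)$, for instance
\[
\IE e^{a|\xi|}=e^{a^2/2}\bigl(1+\erf(a/\sqrt2)\bigr)\le\exp\bigl(a^2/2+a\sqrt{2/\pi}\bigr),
\]
which is the paper's estimate \eqref{eq:gaussint1}. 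With $\sup_ja_j<\infty$ and $\sum_ja_j<\infty$ the product then converges, and your Cauchy--Schwarz treatment of the extra factor $|\xi_k|$ goes through.
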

In the following we give an algebraic expression for $\hatgn$ in the case of the Gaussian prior from Example \ref{ex:GaussianPrior}. The resulting formula generalizes the finite-dimensional expression for the posterior mean in conjugate Gaussian linear models, see, e.g.,~\cite[Section~2.1]{rasmussen2006gaussian}. The proof of Lemma \ref{lem:calcgaussiangnhat} can be found in Appendix \ref{app:proofgnhatgaussian}.

\begin{lemma}\label{lem:calcgaussiangnhat}
  Consider Setting \ref{set:gaussiansetup} with the Gaussian prior from Example \ref{ex:GaussianPrior}
 built with a fixed continuous ONB $(\psij)_{j\in\IN}$ of $L^2(K)$. Let the decay property \eqref{eq:strongerdecayproperty} hold, and define, for $N\in\IN$,
    \begin{align*}
      \AN&=(A_{N,j\ell})_{j,\ell=1}^N:\pX\to\IR^{N\times N},\\ \bN &=(b_{N,j})_{j=1}^N:\pX\times\bs{\tilde\Y^*}\to\IR^N
    \end{align*}
with components (cp.~Corollary \ref{cor:existencetildemapping})
\begin{align}
A_{N,j\ell}(\px)&=\alphajl(\px)+\deltajl n^{-1}\kappaj^{-2},\qquad
\alphajl(\px)=\frac{1}{n}\sumin\langle\psij(x_i),\psi_\ell(x_i)\rangle_{\Y},\label{eq:alphajk}\\
b_{N,j}(\px,\pz)&=\frac{1}{n}\sumin\dup[\sim]{z_i}{\psij(x_i)}.\label{eq:betanuj}
\end{align}

Then it holds $(\px,\pz)$ $(\IPpX\otimes\IPpE)$-a.s.
    \begin{align}\label{eq:hatgngaussianprior}
    \hatgn(\px,\pz)=\limNinfty\psiNt\AN^{-1}(\px)\bN(\px,\pz)
    \end{align}
    with convergence in $C^0(K)$ where $\psiN=(\psij)_{j=1}^N\in L^2(K)^N$.
\end{lemma}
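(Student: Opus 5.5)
The approach is finite-dimensional truncation of the prior. For $N\in\IN$ let $G_N\coloneqq\sum_{j\le N}\xi_j\psij$ be the truncated Gaussian prior, with law $\IP_{G_N}$ on $C^0(K)$. Write $\hatgn^N(\px,\pz)$ for the posterior mean obtained from Theorem \ref{th:PosteriorGaussianNoise} with prior $\IP_{G_N}$. We will compute $\hatgn^N$ explicitly by conjugacy, and then show $\hatgn^N\to\hatgn$ in $C^0(K)$ almost surely. Fix $(\px,\pz)$ outside a null set. Corollary \ref{cor:existencetildemapping} guarantees that $\langle z_i,\cdot\rangle_\sim$ is linear on $\operatorname{span}\{\psij(x_i):j\in\IN\}$, which is countably many conditions. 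For $g=\sum_{j\le N}a_j\psij$ the potential \eqref{eq:phigeneral} is then the quadratic form
\begin{equation*}
\pPhi(g,\px,\pz)=\frac n2\,\bs a^T\bs\alpha_N(\px)\bs a-n\,\bs a^T\bN(\px,\pz),
\end{equation*}
where $\bs\alpha_N=(\alphajl)_{j,\ell\le N}$. Multiplying by the Gaussian prior density $\propto\exp(-\frac12\sum_j a_j^2\kappaj^{-2})$ gives a Gaussian posterior on $\IR^N$. Completing the square, its precision is $n\AN$ and its mean is $\AN^{-1}\bN$. Here $\AN$ is symmetric positive definite, since $\bs\alpha_N\succeq0$ and $\kappaj>0$. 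Hence $\hatgn^N=\psiNt\AN^{-1}\bN$, and all integrals are finite.

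Next we pass to the limit. Write $G=G_N+R_N$ with $R_N=\sum_{j>N}\xi_j\psij$ independent of $G_N$. Then
\begin{equation*}
\hatgn=\mathfrak Z^{-1}\IE\big[G\,e^{-\pPhi(G)}\big],\qquad \hatgn^N=\mathfrak Z_N^{-1}\IE\big[G_N\,e^{-\pPhi(G_N)}\big].
\end{equation*}
We will show $\IE[G_Ne^{-\pPhi(G_N)}]\to\IE[Ge^{-\pPhi(G)}]$ in $C^0(K)$ and $\mathfrak Z_N\to\mathfrak Z>0$, using dominated convergence for Bochner integrals. Pointwise, $G_N\to G$ in $C^0(K)$ almost surely by the decay \eqref{eq:strongerdecayproperty}, as in Lemma \ref{lem:decay}. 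We also need $\pPhi(G_N)\to\pPhi(G)$ almost surely. The quadratic part converges because of uniform convergence. For the linear part, $\langle z_i,G_N(x_i)\rangle_\sim=\sum_{j\le N}\xi_j\langle z_i,\psij(x_i)\rangle_\sim$, and we must identify its limit with $\langle z_i,G(x_i)\rangle_\sim$.

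This identification is the main obstacle, since $\langle z,\cdot\rangle_\sim$ is not continuous on $\Y$. My plan is to avoid it by using the version of the likelihood for the full prior that is constructed in the proof of Lemma \ref{lem:posteriorexpectationexistence}, namely $\langle z_i,G(x_i)\rangle_\sim\coloneqq\sum_j\xi_j\langle z_i,\psij(x_i)\rangle_\sim$. This series converges $\IP_G\otimes\IP_E$-almost surely: conditionally on $G$, it is Gaussian with variance $\lVert G(x_i)\rVert_\Y^2$. Moreover, it agrees with $\langle z_i,G(x_i)\rangle_\sim$ for $\IP_G$-a.e.\ $g$, for $(\IPpX\otimes\IPpE)$-a.e.\ $(\px,\pz)$, by a Fubini argument together with \eqref{eq:tildemapping1}.

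For domination, the quadratic term is nonnegative. For the linear term,
\begin{equation*}
\Big|\sum_{j\le N}\xi_j\langle z_i,\psij(x_i)\rangle_\sim\Big|\le \sup_N\Big|\sum_{j\le N}\cdots\Big|,
\end{equation*}
and we need $\IE\big[\sup_N\lVert G_N\rVert_\infty\,e^{n\sup_N|\cdots|}\big]<\infty$. I would get this from Gaussian tail bounds. Conditionally on $(\px,\pz)$, the partial sums form a Gaussian martingale in $\xi$, so Doob's maximal inequality plus Fernique's theorem give exponential integrability. Here one also needs $\sum_j\kappaj^2\langle z_i,\psij(x_i)\rangle_\sim^2<\infty$ almost surely. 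This follows because $\langle E_i,\psij(x_i)\rangle_\sim$ are Gaussians with variance at most $\omegaj^2$, so a Borel--Cantelli bound gives $|\langle E_i,\psij(x_i)\rangle_\sim|\lesssim\omegaj\max\{1,\log(j)^{1/2}\}$ almost surely. Condition \eqref{eq:strongerdecayproperty} is exactly what makes this summable. Remark \ref{rmk:measureequivalence} then transfers the statement from $E_i$ to $z_i$, and $\sup_N\lVert G_N\rVert_\infty$ is integrable with Gaussian moments by the same decay.

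Dominated convergence then yields $\hatgn^N\to\hatgn$ in $C^0(K)$, which is \eqref{eq:hatgngaussianprior}.
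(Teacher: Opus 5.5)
Your proposal is correct and follows the paper's proof. Your $G_N$ has the law of the paper's $Q_NG$ under $\IP_G$, and the $N$-dimensional conjugate Gaussian integral gives $\psiNt\ANin\bN$; the paper differentiates the Gaussian integral in $\tilde{\bs b}_N$ where you complete the square. As in the paper, the cross term is identified via \eqref{eq:pointwiseexpansion}, and both the numerator and $\mathfrak Z$ are passed to the limit by dominated convergence.

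The one real difference is the majorant. You get it from a maximal inequality for the conditionally Gaussian martingale $\sum_{j\le N}\xi_j\langle z_i,\psij(x_i)\rangle_\sim$, which needs only $\sum_j\kappaj^2\langle z_i,\psij(x_i)\rangle_\sim^2<\infty$. The paper instead uses the envelope bound of Lemma \ref{lem:envelopelemma} to build the explicit majorant \eqref{eq:majorant} and integrates it coordinatewise.

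Two small corrections:
\begin{itemize}
\item Linearity of $\langle z_i,\cdot\rangle_\sim$ in its \emph{second} argument on $\operatorname{span}\{\psij(x_i)\}$ follows directly from the definition, as a limit of linear maps on the subspace where that limit exists. Corollary \ref{cor:existencetildemapping} states linearity in the first argument, so it is not the right citation.
\item Almost sure convergence of $\sum_j\xi_j\langle z_i,\psij(x_i)\rangle_\sim$ should be justified by your conditional martingale argument, or by absolute convergence from the envelope bound. Gaussianity given $G$ does not suffice, because for fixed $g$ the summands $g_j\langle E_i,\psij(x_i)\rangle_\sim$ are dependent.
\end{itemize}
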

Having calculated the specific form of $\hatgn$ in Lemma \ref{lem:calcgaussiangnhat} for the Gaussian prior, we are able to formulate a convergence theorem. We therefore assume a ground truth $g_0\in\calG$ and consider $\bs{\tilde\Y^*}$-data distributed according to $\pZZ\sim d\circ g_0(\pXX)+\pEE$, where $\pXX\sim\IPpX$ and $\pEE\sim\IPpE$. 
The proof of Theorem \ref{th:convergencetheorem} can be found in Appendix \ref{app:proofconvergencetheorem}.

\begin{theorem}\label{th:convergencetheorem}
    Consider Setting \ref{set:gaussiansetup} with the Gaussian prior from Example \ref{ex:GaussianPrior} and let $g_0\in C^0(K)$. Moreover, consider a continuous ONB $(\psij)_{j\in\IN}$ of $L^2(K)$ and let
\begin{equation}
\omegaj=\lVert\psij\rVert_{C^0(K)}\lesssim j^\rho,\qquad \rho\geq0.\label{eq:omegaassumppolynomial}
\end{equation} 
Moreover, let $\Gzj\coloneqq\langle g_0,\psij\rangle_{L^2(K)}$, $j\in\IN$, $\qq>1/2$, $\rr>1+2\rho$ and 
    \begin{align}
        |\Gzj|\lesssim j^{-\qq},~~\kappaj\simeq j^{-\rr}.\label{eq:rtassump}
    \end{align}
Define the sequence $(\delta_n)_{n\in\IN}$ with 
\begin{align}
\delta_n=\max\{1,\log(n)\}~n^{-\frac{2\min\{\qq,\rr-2\rho\}-1}{4\rr}},\qquad n\in\IN.
    \label{eq:deltan}
\end{align}
Then there exists a constant $C>0$, depending on $g_0$ only through $\lVert g_0\rVert_\infty$ and the hidden constant in \eqref{eq:rtassump},
s.t.~for all $n\in\IN$ and $\delta\geq \delta_n$ 
\begin{subequations}\label{eq:convergencegaussian}
\begin{fleqn}[3em]
\begin{align}
 &\left(\IPpX\otimes\IP_{\pEE}\right)\left(\left\lVert\hatgn(\pXX,d\circ g_0(\pXX)+\pEE)-g_0\right\rVert_{L^2(K)}^2\geq\delta^2\right)\notag
\end{align}
\end{fleqn}
\vspace{-\abovedisplayskip}
\begin{empheq}[left={\leq\empheqlbrace}]{alignat=2}
    &C\exp\left(-C^{-1}n^{(2\rr-4\rho-1)/(2\rr)}\delta^2\right),
      &&\;\;\delta_n\leq\delta\leq n^{-\frac{1}{4\rr}},\label{eq:convergencegaussian1}\\
    &C\exp\left(-C^{-1}n^{(2\rr-4\rho-2)/(2\rr)}\right),
      &&\;\;\max\{\delta_n,n^{-\frac{1}{4\rr}}\}\leq\delta\leq n^{3/2},\label{eq:convergencegaussian2}\\
    &C\exp\left(-C^{-1}\delta^2n^{-(2\rr+2\rho+1)/\rr}\right),
      &&\;\;\delta\geq n^{3/2}\label{eq:convergencegaussian3}.
\end{empheq}
\end{subequations}
\end{theorem}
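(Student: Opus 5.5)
We start from the closed form of Lemma \ref{lem:calcgaussiangnhat}. With data $z_i=d\circ g_0(x_i)+e_i$, Corollary \ref{cor:existencetildemapping} gives a.s.
\[
b_{N,j}=\frac1n\sumin\langle g_0(x_i),\psij(x_i)\rangle_\Y+\frac1n\sumin\IW_{\Y,i}(\psij(x_i)).
\]
Expanding $g_0=\sumj\Gzj\psij$ in $L^2(K)$, and using that $|\Gzj|\lesssim j^{-\qq}$ with $\omegaj\lesssim j^\rho$, we split the estimator for a truncation level $N$ chosen later as
\[
\psiNt\ANin\bN=\psiNt\ANin\bigl(\AN-n^{-1}\DN^{-1}\bigr)\GzeroN+\psiNt\ANin\RRN+\psiNt\ANin\wN .
\]
Here $\DN=\diag(\kappaj^2)$, $\RRN$ collects the empirical inner products of the tail $g_0-\sum_{j\le N}\Gzj\psij$ with $\psij$, and $\wN$ is the noise vector. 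Hence $\hatgn-g_0$ equals a bias term $-n^{-1}\psiNt\ANin\DNinv\GzeroN$, plus a truncation term, plus a misspecification term $\psiNt\ANin\RRN$, plus a noise term $\psiNt\ANin\wN$. Since $(\psij)$ is an ONB, the $L^2(K)$-norm of $\psiNt v$ is $\|v\|_2$, so everything reduces to Euclidean norms of vectors in $\IR^N$. We then let $N\to\infty$ (or pick $N$ large depending on $n$) and use lower semicontinuity.

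The key step is a concentration bound for the empirical Gram matrix $\alpha(\pXX)$ around its mean $\IIN$, which holds because $\IE\alphajl=\deltajl$ by orthonormality in $L^2(K,\IP_X;\Y)$. We work in the preconditioned form $\DNhalf\AN\DNhalf=\DNhalf\alpha\DNhalf+n^{-1}\IIN$. On the low-frequency block $j\le M\coloneqq\lceil n^{1/(2\rr)}\rceil$ (where $n\kappaj^2\gtrsim1$), matrix Bernstein for sums of i.i.d.\ bounded matrices gives $\lVert\alpha_M-\IIN\rVert\le1/2$ with probability $1-C\exp(-c\,n/(\sum_{j\le M}\omegaj^2))\ge1-C\exp(-cn^{(2\rr-4\rho-2)/(2\rr)})$, roughly. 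This produces the $\delta$-independent regime \eqref{eq:convergencegaussian2}. On the high-frequency block the regularization $n^{-1}\kappaj^{-2}$ dominates, since $\alpha$ is positive semidefinite and its off-diagonal entries are controlled by $\omegaj\omega_\ell$ and $\kappa_j\omega_j$ summability ($\rr>1+2\rho$). On this good event we bound $\lVert\DNhalfinv\ANin\DNhalfinv\rVert$ and obtain:
\begin{itemize}
\item[] the bias $\lesssim\sum_j\min\{1,(n\kappaj^2)^{-1}\}^2\Gzj^2\lesssim n^{-(2\min\{\qq,\rr\}-1)/(2\rr)}$;
\item[] the misspecification term, bounded through Hoeffding-type concentration of $\RRN$, where the $\omegaj\lesssim j^\rho$ growth yields the $\rr-2\rho$ in \eqref{eq:deltan} and the $\log n$ comes from a union bound over $j\le M$.
\end{itemize}

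Conditionally on $\pXX$, the noise vector $\wN$ is centered Gaussian with covariance $n^{-1}\alpha$. Therefore $\psiNt\ANin\wN$ is Gaussian in $L^2(K)$ with covariance trace $n^{-1}\mathrm{tr}(\ANin\alpha\ANin)\lesssim n^{-1}\sum_j\min\{1,n\kappaj^2\}\lesssim n^{-(2\rr-1)/(2\rr)}$. Borell--TIS/Gaussian concentration then gives tails $\exp(-cn^{(2\rr-4\rho-1)/(2\rr)}\delta^2)$ once the $\omega$-factors are included. This is \eqref{eq:convergencegaussian1}, valid for $\delta$ above the mean. For very large $\delta\ge n^{3/2}$ we instead use a crude deterministic bound $\lVert\ANin\rVert\le n\kappa_N^{-2}$-type estimates, without the good event, together with the Gaussian tail of the noise. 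This gives \eqref{eq:convergencegaussian3}. Combining the events with a union bound yields the three regimes.

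The main obstacle is the Gram-matrix step. The operator $\alpha-\IIN$ is infinite-dimensional with unbounded $\omegaj$, so the splitting into low and high frequencies and the choice of the cutoff $M$ must be tuned carefully to get exactly the exponents $(2\rr-4\rho-2)/(2\rr)$. The first attempt would be matrix Bernstein on $\DNhalf(\alpha-\IIN)\DNhalf$ with effective dimension $\mathrm{tr}(\DN(\DN+n^{-1})^{-1})\simeq n^{1/(2\rr)}$.
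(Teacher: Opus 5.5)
The step that fails is the bias bound. Write $\Lambda\coloneqq n^{-1}\diag(\kappa_j^{-2})_{j\le N}$. Your bias term is then $(\alpha+\Lambda)^{-1}\Lambda\GzeroN$, and you bound it by the population quantity $\lVert(\IIN+\Lambda)^{-1}\Lambda\GzeroN\rVert_2^2\simeq\sum_j\min\{1,(n\kappaj^2)^{-1}\}^2\Gzj^2$.

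Even a full two-sided Gram event $\tfrac12(\IIN+\Lambda)\preceq\alpha+\Lambda\preceq\tfrac32(\IIN+\Lambda)$, which is the best matrix Bernstein can give you, does not imply this. First, $t\mapsto t^{-2}$ is not operator monotone. Second, the natural factorisation $\lVert(\alpha+\Lambda)^{-1}(\IIN+\Lambda)^{1/2}\rVert_2\,\lVert(\IIN+\Lambda)^{-1/2}\Lambda\GzeroN\rVert_2$ is useless, because
$$\lVert(\IIN+\Lambda)^{-1/2}\Lambda\GzeroN\rVert_2^2\gtrsim n^{-1}\sum_{j\le N,\,n\kappaj^2\le 1}\kappaj^{-2}\Gzj^2$$
is unbounded in $N$ when $|\Gzj|\simeq j^{-\qq}$ with $\qq\le\rr+1/2$. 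That is precisely the misspecified regime the theorem covers.

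A two-dimensional example makes this concrete. Take $\Lambda=\diag(0,L)$ with $L\ge16$, and the positive semidefinite matrix $\alpha=\begin{pmatrix}1&\sqrt L/4\\ \sqrt L/4&L/16\end{pmatrix}$. Then $\lVert(\bs I_2+\Lambda)^{-1/2}(\alpha-\bs I_2)(\bs I_2+\Lambda)^{-1/2}\rVert_2<1/3$. Yet $\lVert(\alpha+\Lambda)^{-1}\Lambda e_2\rVert_2=(1+L/16)^{1/2}$, while the population bias $\lVert(\bs I_2+\Lambda)^{-1}\Lambda e_2\rVert_2$ is below $1$. So Gram concentration alone cannot give your bias estimate.

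The root cause is the split $\bN=\alpha\GzeroN+R_N+w_N$. Your bias and misspecification terms are not separately controlled uniformly in $N$; they cancel in the sum. $R_N$ involves point values at the design points of $g_0-\sum_{j\le N}\Gzj\psij$. For $\qq\le1+\rho$ this series need not converge uniformly, so a Hoeffding bound for $R_N$ carries the constant $\lVert g_0-\sum_{j\le N}\Gzj\psij\rVert_\infty\,\omegaj$, which is not bounded in $N$.

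What is missing is a way to treat the signal part of $\bN$ as a whole, using only $\lVert g_0\rVert_\infty$. The paper never splits $\beta_j$:
\begin{itemize}
\item After splitting off the Gaussian noise, Hoeffding applies directly to $\frac1n\sum_i\langle g_0(X_i),\psij(X_i)\rangle_\Y-\Gzj$, with constant $\lVert g_0\rVert_\infty\omegaj$.
\item This is combined with entrywise Hoeffding bounds on $\alpha_{j\ell}$ and $\alpha_{jj}-1$ for all $j,\ell\le\lceil n^{1/(\rr-\rho-1)}\rceil$. That cutoff is far beyond your $M$, and is chosen so that the deterministic bound $|\alpha_{j\ell}|\le\omegaj\omega_\ell$ is negligible past it.
\item Beyond the cutoff, $\beta_j$ gets a $j\log(j)^2$-weighted bound.
\item All of these hold at a single $\delta$-dependent level $d_n^\delta$, which is what produces the three regimes from one event.
\item Finally, $\ANin$ is expanded in a Neumann series around the diagonal of $\AN$ itself.
\end{itemize}
Within your operator framework, the repair would be to centre at $g_\lambda\coloneqq\sum_j(1+n^{-1}\kappaj^{-2})^{-1}\Gzj\psij$. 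You would then apply vector Bernstein to the centred vector $s-(\alpha+\Lambda)g_\lambda$, where $s$ is the signal part of $\bN$. This needs a bound on $\lVert g_0-g_\lambda\rVert_\infty$, and a fresh check that the three tails come out as stated.

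Two smaller issues also need fixing.
\begin{itemize}
\item Beyond $M=\lceil n^{1/(2\rr)}\rceil$, positive semidefiniteness together with $|\alpha_{j\ell}|\le\omegaj\omega_\ell$ only gives preconditioned block norms of order $n^{(2\rho+1)/(2\rr)}$. So Bernstein has to be run on the full operator $(\IIN+\Lambda)^{-1/2}(\alpha-\IIN)(\IIN+\Lambda)^{-1/2}$, not just the low block.
\item For $\delta\ge n^{3/2}$, $\lVert\bN\rVert_2$ is in general unbounded in $N$, since each noise coordinate has conditional variance $\alpha_{jj}/n$. Use instead $\lVert\ANin\bN\rVert_2^2\le\lVert\ANin\rVert_2\,n\sum_j\kappaj^2\beta_j^2$ with $\lVert\ANin\rVert_2\le n\max_j\kappaj^2$, rather than a bound of the type $n\kappa_N^{-2}$.
\end{itemize}
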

Theorem \ref{th:convergencetheorem} provides non-asymptotic concentration bounds for the posterior mean $\hatgn$ around the ground truth $g_0$. The concentration results are stated in three regimes: 
\begin{enumerate}[label=(\roman*)]
\item A small-deviation regime in \eqref{eq:convergencegaussian1} which gives sub-Gaussian tails for small deviations. This regime determines the sequence $\delta_n$ via Hoeffding bounds of the dominating components. Note that due to \eqref{eq:deltan}, this case only occurs if $\min\{q,r-2\rho\}>1$.
\item A ``plateau'' regime in \eqref{eq:convergencegaussian2} for moderately large deviations $\delta$, where the exponential bound does not depend on $\delta$. The reason is a Neumann series argument used to bound $\bs{A}_N^{-1}$ in the proof of Theorem \ref{th:convergencetheorem}. This leads to a $\delta$-independent condition which dominates in the plateau regime. 
\item A high-deviation regime in \eqref{eq:convergencegaussian3} using a rough bound on $\AN^{-1}$  to ensure integrability for $\delta\to\infty$.
\end{enumerate} 

The convergence rate $\delta_n$ depends on the regularity of the ground truth (via the parameter $\qq$) and the regularity of the prior (via the parameter $\rr$). 
The fluctuations of the basis functions, described by the parameter $\rho$ in \eqref{eq:omegaassumppolynomial}, reduce the prior regularity to an effective regularity $\rr'=\rr-2\rho$.  
Choosing the ``optimal'' effective smoothness $\rr'=\qq$ for the prior maximizes the convergence rate in \eqref{eq:deltan}.

\begin{remark}\label{rmk:logarithmiccase}
Assume that instead of \eqref{eq:omegaassumppolynomial}, the logarithmic bound $\omega_j\lesssim \log(j)$, $j\geq 2$, holds. Then, similar to the proof of Theorem \ref{th:convergencetheorem}, one can show that the concentration bounds in \eqref{eq:convergencegaussian} hold with $\rho=0$, $\delta_n=\log(n)^3 n^{-(2\min\{\qq,\rr\}-1)/(4\rr)}$ and $C$ replaced by $\log(n)^4C_0$, $n\geq 2$, where $C_0>0$ is a constant. 
\end{remark}

Integrating the concentration inequality from Theorem \ref{th:convergencetheorem} shows a convergence result in expectation, see the following corollary.
\begin{corollary}\label{cor:Expectationconvergence}
Consider the setting of Theorem \ref{th:convergencetheorem}. Then there exists a constant $C>0$ depending on $g_0$ only through $\lVert g_0\rVert_\infty$ and the hidden constant in \eqref{eq:rtassump},
s.t.~for all $n\in\IN$
    \begin{align}
\IE_{(\pXX,\pEE)}\left[\left\lVert\hatgn(\pXX,d\circ g_0(\pXX)+\pEE)-g_0\right\rVert_{L^2(K)}^2\right]\leq C\delta_n^2.\label{eq:convergencegaussianIE}
    \end{align}
\end{corollary}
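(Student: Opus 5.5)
We integrate the tail bound of Theorem \ref{th:convergencetheorem} using the layer-cake formula. Write $W\coloneqq\lVert\hatgn(\pXX,d\circ g_0(\pXX)+\pEE)-g_0\rVert_{L^2(K)}^2$, which is a nonnegative random variable; it is measurable by Lemma \ref{lem:posteriorexpectationexistence}. Then
\begin{equation*}
\IE[W]=\int_0^\infty \IP(W\geq t)\dd t=\int_0^\infty 2\delta\,\IP(W\geq\delta^2)\dd\delta .
\end{equation*}
We split the integral into the four ranges $[0,\delta_n]$, $[\delta_n,n^{-1/(4\rr)}]$, $[\max\{\delta_n,n^{-1/(4\rr)}\},n^{3/2}]$ and $[n^{3/2},\infty)$. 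Some of these ranges may be empty: the second is empty when $\delta_n> n^{-1/(4\rr)}$, which happens for instance if $\min\{\qq,\rr-2\rho\}\le1$. On each range we use the matching case of \eqref{eq:convergencegaussian}.

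The four contributions are bounded as follows, with $C$ denoting a generic constant that depends only on the quantities allowed in the statement.
\begin{itemize}
\item On $[0,\delta_n]$ the probability is at most $1$, so this part contributes at most $\delta_n^2$.
\item On the sub-Gaussian range \eqref{eq:convergencegaussian1}, set $a_n\coloneqq C^{-1}n^{(2\rr-4\rho-1)/(2\rr)}$. Then
\begin{equation*}
\int_{\delta_n}^\infty 2\delta\, C e^{-a_n\delta^2}\dd\delta=\frac{C}{a_n}e^{-a_n\delta_n^2}\leq C^2 n^{-(2\rr-4\rho-1)/(2\rr)}.
\end{equation*}
We need this to be at most a multiple of $\delta_n^2$, which follows if $(2\rr-4\rho-1)/(2\rr)\geq(2\min\{\qq,\rr-2\rho\}-1)/(2\rr)$. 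That inequality holds because $\min\{\qq,\rr-2\rho\}\le \rr-2\rho$.
\item On the plateau range \eqref{eq:convergencegaussian2} the integrand is bounded, so this part is at most
\begin{equation*}
n^{3}\, C\exp\bigl(-C^{-1}n^{(2\rr-4\rho-2)/(2\rr)}\bigr).
\end{equation*}
The exponent $(2\rr-4\rho-2)/(2\rr)$ is strictly positive because $\rr>1+2\rho$. Hence this term decays faster than any power of $n$ and is $\lesssim n^{-1}\le C\delta_n^2$ uniformly in $n$, using $\delta_n^2\geq n^{-1/2}$ for $n\ge 3$ (the exponent in \eqref{eq:deltan} is at most $1/2$).
\item On the tail range \eqref{eq:convergencegaussian3}, set $b_n\coloneqq C^{-1}n^{-(2\rr+2\rho+1)/\rr}$. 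Then
\begin{equation*}
\int_{n^{3/2}}^\infty 2\delta Ce^{-b_n\delta^2}\dd\delta=\frac{C}{b_n}e^{-b_n n^3}.
\end{equation*}
Here $b_n n^3=C^{-1}n^{3-(2\rr+2\rho+1)/\rr}=C^{-1}n^{(\rr-2\rho-1)/\rr}$, which grows like a positive power of $n$ since $\rr>1+2\rho$. So $e^{-b_nn^3}$ beats the polynomial prefactor $1/b_n$, and this term is also $\lesssim n^{-1}\lesssim\delta_n^2$.
\end{itemize}

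Adding the four pieces gives $\IE[W]\le C\delta_n^2$ for all $n$. Small values of $n$, where $\delta_n$ may be awkward, are absorbed into the constant because $\delta_n\ge n^{-1/2}$ is bounded below on any finite range.

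The main obstacle is bookkeeping rather than a conceptual step: we must check that the polynomial prefactors in the plateau and tail regimes are dominated by the stretched-exponential factors uniformly in $n$, with constants depending only on $\lVert g_0\rVert_\infty$ and the hidden constants. The key input there is $\rr>1+2\rho$, which makes both relevant exponents positive.
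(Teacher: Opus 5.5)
Your proposal is correct and follows the paper's proof essentially verbatim. The paper also integrates the tail bound over the same four regimes, in the variable $t=\delta^2$ rather than $\delta$, and uses $\rr>1+2\rho$ so that the stretched exponentials dominate the polynomial prefactors in the plateau and tail regimes.

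One small slip in your plateau bullet: the correct lower bound is $\delta_n^2\geq n^{-1}$, equivalently $\delta_n\geq n^{-1/2}$, which holds for every $n$ because the exponent in \eqref{eq:deltan} is below $1/2$. The bound $\delta_n^2\geq n^{-1/2}$ that you wrote is false in general, but $n^{-1}\lesssim\delta_n^2$ is all that step needs.
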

\begin{proof} 
 We use \eqref{eq:convergencegaussian} to estimate
    \begin{align*}
  &\IE_{(\pXX,\pEE)}\left[\left\lVert\hatgn(\pXX,d\circ g_0(\pXX)+\pEE)-g_0\right\rVert_{L^2(K)}^2\right]\notag\\
=~&\int_0^\infty\left(\IPpX\otimes\IP_{\pEE}\right)\left(\left\lVert\hatgn(\pXX,d\circ g_0(\pXX)+\pEE)-g_0\right\rVert_{L^2(K)}^2\geq t\right)\dd t\notag\\
\lesssim~& \delta_n^2+\int_{\delta_n^2}^{\max\{\delta_n^2,n^{-1/(2\rr)}\}} \exp\left(-C^{-1}n^{(2\rr-4\rho-1)/(2\rr)} t\right)\dd t \notag\\
&\quad +\int_{\max\{\delta_n^2,n^{-1/(2\rr)}\}}^{n^{3}}\exp\left(-C^{-1}n^{(2\rr-4\rho-2)/(2\rr)}\right)\dd t+\int_{n^{3}}^\infty \exp\left(-C^{-1} t n^{-(2\rr+2\rho+1)/\rr}\right)\dd t\notag\\
\lesssim~& \delta_n^2+\underbrace{Cn^{-\frac{2\rr-4\rho-1}{2\rr}}}_{\lesssim\delta_n^2}\underbrace{\exp\left(-C^{-1}n^{(2\rr-4\rho-1)/(2\rr)}\delta_n^2\right)}_{\leq 1}\notag\\
&\quad+\underbrace{n^{3}\exp\left(-C^{-1}n^{(2\rr-4\rho-2)/(2\rr)}\right)}_{\lesssim\delta_n^2}+\underbrace{Cn^{\frac{2\rr+2\rho+1}{\rr}}\exp\left(-C^{-1}n^{(\rr-2\rho-1)/\rr}\right)}_{\lesssim\delta_n^2}\lesssim \delta_n^2
    \end{align*}
    which shows the claim.
\end{proof}

In Section \ref{sec:minimax} below we show that the rate in Corollary \ref{cor:Expectationconvergence} is minimax optimal up to logarithmic factors for $\rho=0$ and $\rr=\qq$.

The proof strategy of Theorem \ref{th:convergencetheorem} can be used to obtain concentration results in the $C^0(K)$-norm; see Appendix \ref{app:C0Kconvergence} for details. For $\rho=0$ in \eqref{eq:omegaassumppolynomial} the squared convergence rate equals (up to logarithmic factors) $\delta_n^2=n^{-(2\min\{\qq,\rr\}-2)/(2\rr)}$, see \eqref{eq:deltanC0}. In particular, this does not recover the minimax rate $(\log(n)/n)^{2k/(2k+d)}$ in $(C^k(\IT^d),\lVert\cdot\rVert_\infty)$ (e.g.~\cite[Theorem 1]{stone1982optimal}) for $k,d\in\IN$,  $k>d/2$. This suboptimality is an artefact of the coefficientwise estimate of the $C^0(K)$-norm used in the proof of Theorem \ref{th:concentrationC0}. Getting sharp bounds in the supremum norm requires different techniques (e.g.~\cite{stone1982optimal,tsybakov2009introduction}) and is left for future work.

\subsection{Maximum a Posteriori Estimator}\label{sec:MAPestimator}
Another popular estimator in the Bayesian framework is the so-called \emph{maximum a posteriori (MAP)} estimator. The infinite-dimensional case was first discussed in the inverse problem setting, see \cite{dashti2013bayesian,dashti2013map}. We follow \cite{ayanbayev2022gamma} in this section.
\begin{definition}\label{def:MAP}
Let $(V,\lVert\cdot\rVert)$ be a normed space and $\mu$ a probability measure on $V$.
Then, for $\delta>0$ and $x\in V$, denote by $B_\delta(x)$ the $\delta$-ball around $x$ and define the functional
\begin{align}\label{eq:Jdeltafunctional}
 J_\delta: V\to\IR,\quad J_{\delta}(x)=\mu(B_{\delta}(x)).
\end{align}
We %
call %
$x_{\text{MAP}}\in V$ with $\lim_{\delta\to 0}\frac{J_{\delta}(x_{\text{MAP}})}{\sup_{x\in V} J_{\delta}(x)}=1$ \emph{maximum a posteriori (MAP)} estimator of $\mu$.
\end{definition}

There exists a general relationship (e.g.~\cite[Proposition 4.1]{ayanbayev2022gamma}) between MAP estimators and minimizers of the so-called \emph{Onsager--Machlup (OM) functional}. Let therefore $\emptyset\neq M\subseteq \supp(\mu)\subseteq V$ and $I:M\to\IR$. Then $I$ is an Onsager--Machlup functional for $\mu$ if
\begin{align}
    \lim_{\delta\to 0}\frac{\mu\left(B_\delta(x_1)\right)}{\mu\left(B_\delta(x_2)\right)}=\exp\left(I(x_2)-I(x_1)\right),\quad x_1,x_2\in M.\label{eq:OMfunctional}
\end{align}
Under assumptions on $M$ (see \cite[Remark 3.2 and Lemma B.1]{ayanbayev2022gamma}) all MAP estimators are minimizers of the Onsager--Machlup functional $I$.\footnote{By \cite[Proposition 4.1]{ayanbayev2022gamma} the minimizers of $I$ are exactly the \emph{global weak modes} (see \cite[Definition 3.7]{ayanbayev2022gamma}) of $\mu$.} 

We denote by
\begin{equation}\label{eq:Hprior}
  H_{\rm{prior}}\subset \calG
\end{equation}
  the CM space of the (Gaussian) prior $\IP_G$ from Example \ref{ex:GaussianPrior}. In order to define the OM functional of the Gaussian posterior, we require a representation of the cross term
$\dup[\sim]{z_i}{g(x_i)}$ in \eqref{eq:phigeneral} as a bounded linear functional
on $H_{\rm{prior}}$. We give this representation in Lemma \ref{lem:abstractlmap} below; for a proof, see Appendix \ref{app:abstractlmap}.

\begin{lemma}\label{lem:abstractlmap}
Consider the continuous ONB $(\psi_j)_{j\in\IN}$ from Example
\ref{ex:GaussianPrior} and let the decay property \eqref{eq:strongerdecayproperty} hold.
Then, $(x,z)$ $(\IPX\otimes\IPE)$-a.s., the set
\begin{align}\label{eq:lmapdomain}
  \Lambda_{x,z}\coloneqq\Bigl\{g\in L^2(K):
  \sum_{j=1}^\infty\bigl|\langle g,\psi_j\rangle_{L^2(K)}
  \langle z,\psi_j(x)\rangle_\sim\bigr|<\infty\Bigr\}
\end{align}
is a linear subspace of $L^2(K)$ with $H_{\rm{prior}}\subseteq\Lambda_{x,z}$ and
$\IP_G(\Lambda_{x,z})=1$, and the map
\begin{align}\label{eq:lmap}
  \ell_{x,z}:\Lambda_{x,z}\to\IR,\quad
  \ell_{x,z}(g)=\sum_{j=1}^\infty\langle g,\psi_j\rangle_{L^2(K)}\langle z,\psi_j(x)\rangle_\sim
\end{align}
is a well-defined linear functional which is bounded on $H_{\rm{prior}}$.
Moreover, $H_{\rm{prior}}\subseteq C^0(K)$ and for every $g\in \Lambda_{x,z}$
there exists a Borel set $N_{g}\subset\X\times\tilde\Y^*$ with $(\IPX\otimes\IPE)(N_{g})=0$ such that
\begin{align}\label{eq:lmapidentity}
  \ell_{x,z}(g)=\dup[\sim]{z}{g(x)},\quad (x,z)\in\X\times\tilde\Y^*\setminus N_{g}.
\end{align}
\end{lemma}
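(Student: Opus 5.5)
The plan is to work coefficientwise. The random variables $W_j(x,z)\coloneqq\langle z,\psi_j(x)\rangle_\sim$ are controlled by a Gaussian tail bound and Borel--Cantelli, and the Gaussian structure of the prior is then exploited.

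\emph{Step 1 (growth of $W_j$).} Conditionally on $x$, Corollary \ref{cor:existencetildemapping} shows that under $z\sim\IP_E$ we have $W_j(x,\cdot)=\IW_\Y(\psi_j(x))\sim\N(0,\lVert\psi_j(x)\rVert_\Y^2)$ a.s. Hence $\IP_E(|W_j|>\omega_j\sqrt{4\log(j+1)})\le 2(j+1)^{-2}$, and this bound is uniform in $x\in K$. By Fubini and Borel--Cantelli there is an $(\IPX\otimes\IPE)$-null set outside of which
\[
|W_j(x,z)|\le C_{x,z}\,\omega_j\max\{1,\log(j)^{1/2}\}\quad\text{for all } j\in\IN.
\]

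\emph{Step 2 (linearity, $H_{\rm prior}\subseteq\Lambda_{x,z}$, boundedness).} $\Lambda_{x,z}$ is clearly a linear subspace, and $\ell_{x,z}$ is linear on it because each series converges absolutely. The Cameron--Martin space is $H_{\rm prior}=\{g:\sum_j\kappa_j^{-2}g_j^2<\infty\}$ with $g_j=\langle g,\psi_j\rangle_{L^2(K)}$. Cauchy--Schwarz gives $\sum_j|g_jW_j|\le \lVert g\rVert_{H_{\rm prior}}(\sum_j\kappa_j^2W_j^2)^{1/2}$. The last factor is at most $C_{x,z}\sum_j\kappa_j\omega_j\max\{1,\log(j)^{1/2}\}<\infty$ by \eqref{eq:strongerdecayproperty}, since $\ell^1\subset\ell^2$. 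This gives the inclusion and the boundedness. Similarly $\sum_j|g_j|\omega_j\le\lVert g\rVert_{H_{\rm prior}}(\sum_j\kappa_j^2\omega_j^2)^{1/2}$, so the expansion converges in $C^0(K)$ and $H_{\rm prior}\subseteq C^0(K)$.

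\emph{Step 3 ($\IP_G(\Lambda_{x,z})=1$).} Under $\IP_G$ we have $g_j=\kappa_j\xi_j$ with $\xi_j$ i.i.d.\ standard normal. Then
\[
\IE\sum_j|g_jW_j|\le\sqrt{2/\pi}\,\sum_j\kappa_j|W_j|<\infty
\]
by Step 1, so the sum is finite $\IP_G$-a.s.

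\emph{Step 4 (identity \eqref{eq:lmapidentity}).} This is the main obstacle, because $g\in\Lambda_{x,z}$ is fixed first and the exceptional set may depend on $g$. The approach is as follows.
\begin{itemize}
\item Write $g=\sum_j g_j\psi_j$ in $L^2(K)$. Then $g^{(N)}\coloneqq\sum_{j\le N}g_j\psi_j\to g$ in $L^2(K)$, so along a subsequence $g^{(N_k)}(x)\to g(x)$ in $\Y$ for $\IPX$-a.e.\ $x$.
\item For each fixed $x$, the map $y\mapsto\langle\cdot,y\rangle_\sim$ coincides $\IP_E$-a.s.\ with the isometry $\IW_\Y:\Y\to L^2(\Omega)$. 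By finite linearity (Corollary \ref{cor:existencetildemapping}),
\[
\langle z,g^{(N)}(x)\rangle_\sim=\sum_{j\le N}g_jW_j(x,z)
\]
$\IP_E$-a.s.
\item The $L^2(\IP_E)$-difference $\lVert g^{(N_k)}(x)-g(x)\rVert_\Y$ tends to zero for a.e.\ $x$. Passing to a further (diagonal) subsequence, chosen via dominated convergence in $L^2(\IPX\otimes\IPE)$ applied to the squared norms, which are bounded using $g\in L^2(K)$, gives $\langle z,g^{(N_k)}(x)\rangle_\sim\to\langle z,g(x)\rangle_\sim$ almost surely.
\item On $\Lambda_{x,z}$ the partial sums converge to $\ell_{x,z}(g)$ by absolute convergence, so the two limits coincide off a null set $N_g$.
\end{itemize}
Borel measurability of $N_g$ follows from the measurability of $\langle\cdot,\cdot\rangle_\sim$. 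Some care is needed because $g$ must lie in $\Lambda_{x,z}$ for the relevant $(x,z)$; I would restrict to those pairs, which is enough for the claim as stated.
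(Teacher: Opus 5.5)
Your proposal is correct, and Steps 1--3 are essentially the paper's proof. Step 1 is the envelope bound of Lemma \ref{lem:envelopelemma} (Gaussian tails plus Borel--Cantelli). Step 2 is the same Cauchy--Schwarz estimate with weights $\kappa_j^{\pm1}$. In Step 3 you compute $\IE_G\sum_j|g_jW_j|$ for fixed $(x,z)$; the paper instead uses that $\sum_j\tilde\omega_j|g_j|<\infty$ holds on a $\IP_G$-full set independent of $(x,z)$ and combines this with the envelope bound. The two arguments are equivalent.

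The genuine difference is in Step 4. The paper does not argue afresh; it invokes \eqref{eq:pointwiseexpansion} from the proof of Lemma \ref{lem:posteriorexpectationexistence}. That identity is only established for $g$ in $\calG_0=\{g\in C^0(K):\sum_j\tilde\omega_j|g_j|<\infty\}$. There the partial sums converge uniformly, so $Q_Ng(x)\to g(x)$ for \emph{every} $x\in K$, and the white-noise isometry plus an a.s.\ subsequence are applied for each fixed $x$. Since $\Hprior\subseteq\calG_0$ and $\IP_G(\calG_0)=1$, this covers everything used later, e.g.\ in Theorem \ref{th:MAPestimator}. It does not, however, cover literally every $g\in\Lambda_{x,z}$.

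Your argument uses only $L^2(K)$-convergence of the partial sums and works jointly in $(x,z)$. It therefore proves the identity for a Borel representative of any $g\in L^2(K)$, on those pairs with $g\in\Lambda_{x,z}$. So it is self-contained and closer to the lemma as written. The price is that you get the identity only for $\IPX$-a.e.\ $x$ rather than for every $x\in K$, which is all the statement requires.

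One justification in your Step 4 should be replaced. You need neither the preliminary pointwise subsequence nor dominated convergence, and the domination you allude to is not justified as stated for a general ONB. Instead, combine finite linearity of $\langle z,\cdot\rangle_\sim$ on an $\IPE$-full set (Corollary \ref{cor:existencetildemapping}), the isometry for fixed $x$, and Tonelli. These give directly
\[
\int_K\int_{\tilde\Y^*}\bigl|\langle z,g^{(N)}(x)\rangle_\sim-\langle z,g(x)\rangle_\sim\bigr|^2\dd\IPE(z)\dd\IPX(x)=\lVert g^{(N)}-g\rVert_{L^2(K)}^2\to0 .
\]
Hence a single subsequence converges $(\IPX\otimes\IPE)$-a.s., and absolute convergence on $\Lambda_{x,z}$ identifies its limit with $\ell_{x,z}(g)$.
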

We are now able to classify MAP estimators of the posterior $\IP_{G|(\pXX,\pZZ)}$ and do so in Theorem \ref{th:MAPestimator} below; for a proof, see Appendix \ref{app:MAPestimator}.

\begin{theorem}\label{th:MAPestimator}
Consider Setting \ref{set:gaussiansetup} with the Gaussian prior from Example \ref{ex:GaussianPrior}. Moreover, let the decay property \eqref{eq:strongerdecayproperty} hold. Then the posterior $\IP_{G|(\pXX,\pZZ)}$ has for $(\px,\pz)$ $(\IPpX\otimes\IPpE)$-a.s.~the (Tikhonov) OM functional
\begin{align}
   I: \mathcal G\to \IR\cup\{\infty\},~~I(g)=\begin{cases}
    \frac{1}{2}\left(\sum_{i=1}^n\left(\lVert g(x_i)\rVert_\Y^2-2\ell_{x_i,z_i}(g)\right)+\lVert g\rVert_{H_{\rm{prior}}}^2\right), &g\in H_{\rm{prior}},\\
   \infty,&\text{otherwise}
   \end{cases}\label{eq:tikhonovOM}
\end{align}
with $\ell_{x,z}$ from \eqref{eq:lmap}. %
{Moreover, the posterior mean $\hatgn(\px,\pz)$ from Lemma \ref{lem:calcgaussiangnhat} is the unique MAP estimator of $\IP_{G|(\pXX,\pZZ)}(\cdot,\px,\pz)$ in the sense of Definition \ref{def:MAP}.}
\end{theorem}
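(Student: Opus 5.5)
The plan is to reduce the statement to the standard Onsager--Machlup theory for Gaussian measures perturbed by a potential, and then identify the unique minimizer of $I$ with $\hatgn$. By Theorem \ref{th:PosteriorGaussianNoise}, the posterior has density $\mathfrak Z^{-1}\exp(-\pPhi(g,\px,\pz))$ with respect to the Gaussian prior $\IP_G$ on $\calG=C^0(K)$; by Lemma \ref{lem:decay} the prior is a centered Gaussian measure on $C^0(K)$, and by Lemma \ref{lem:abstractlmap} its Cameron--Martin space satisfies $H_{\rm prior}\subseteq C^0(K)$. On the full-measure set $\bigcap_i\Lambda_{x_i,z_i}$ the potential can be rewritten as
\[
\pPhi(g,\px,\pz)=\sum_{i=1}^n\Big(\tfrac12\lVert g(x_i)\rVert_\Y^2-\ell_{x_i,z_i}(g)\Big).
\]
This uses \eqref{eq:lmapidentity}, after fixing a version of the density via Fubini, since the identity holds only off null sets depending on $g$.

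\textbf{Step 1 (OM functional).} For $g_1,g_2\in H_{\rm prior}$ I split
\[
\frac{\mu(B_\delta(g_1))}{\mu(B_\delta(g_2))}=\frac{\int_{B_\delta(g_1)}e^{-\pPhi}\dd\IP_G}{\int_{B_\delta(g_2)}e^{-\pPhi}\dd\IP_G}.
\]
The prior part is handled by the Cameron--Martin formula. Shifting $B_\delta(g_k)$ to $B_\delta(0)$ produces the factor $\exp(-\tfrac12\lVert g_k\rVert_{H_{\rm prior}}^2-\langle g_k,\cdot\rangle_{\sim,H})$. The linear term averages out on symmetric balls, by Anderson's inequality and symmetry in the standard argument of \cite{dashti2013map}. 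This gives the prior OM functional $\tfrac12\lVert g\rVert_{H_{\rm prior}}^2$. The quadratic part $\tfrac12\sum_i\lVert g(x_i)\rVert^2$ is continuous in $\lVert\cdot\rVert_\infty$, so it converges to its value at $g_k$ uniformly on $B_\delta(g_k)$.

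\textbf{Main obstacle.} The term $\ell_{x_i,z_i}$ is linear but not continuous on $C^0(K)$; only its restriction to $H_{\rm prior}$ is bounded. My first attempt is to write $\ell(g_k+h)=\ell(g_k)+\ell(h)$ for $h$ in the centered ball and treat $\ell(h)$ as a measurable linear functional of the prior. It is then a centered Gaussian under $\IP_G$, by its series form in the independent coefficients $\xi_j$. Because $e^{-\ell(h)}$ is the same factor for both $k=1,2$, the problem becomes showing that
\[
\frac{\int_{B_\delta(0)}e^{-\ell(h)-\langle g_1,h\rangle_\sim}\dd\IP_G}{\int_{B_\delta(0)}e^{-\ell(h)-\langle g_2,h\rangle_\sim}\dd\IP_G}\to1 .
\]
Combining both measurable linear terms and using Anderson or symmetry arguments on balls shows this ratio tends to $1$, exactly as in the proof for Gaussian priors with linear perturbations. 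If $g_1\notin H_{\rm prior}$, the standard result that balls around non-Cameron--Martin points have asymptotically negligible mass gives $I=\infty$, using boundedness of the likelihood on balls. Throughout, the conditions on $M=H_{\rm prior}$ from \cite[Remark 3.2, Lemma B.1]{ayanbayev2022gamma} must be checked.

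\textbf{Step 2 (MAP equals the mean).} $I$ is a strictly convex, coercive quadratic functional on $H_{\rm prior}$, with coercivity coming from the $\lVert\cdot\rVert_{H_{\rm prior}}^2$ term, and $\ell$ is bounded there. It therefore has a unique minimizer, and every MAP is a minimizer. In coordinates $g=\sum_j c_j\psi_j$ the Euler--Lagrange equations are $\sum_\ell (n\alpha_{j\ell}+\delta_{j\ell}\kappa_j^{-2})c_\ell=n b_j$, i.e.\ the infinite system whose Galerkin truncations $\AN^{-1}\bN$ appear in Lemma \ref{lem:calcgaussiangnhat}. Galerkin convergence of the truncated minimizers to the minimizer in $H_{\rm prior}$, which embeds in $C^0(K)$, identifies the minimizer with $\hatgn$. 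Finally, existence of a MAP follows because the posterior is Gaussian with Cameron--Martin space $H_{\rm prior}$, since the potential is quadratic. For a Gaussian measure the mean is the unique mode, so the mean attains the limit in Definition \ref{def:MAP}.
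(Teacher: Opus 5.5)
You have a genuine gap in how you handle points outside $H_{\rm prior}$. You conclude $\mu(B_\delta(g_1))/\mu(B_\delta(g_2))\to0$ for $g_1\notin H_{\rm prior}$ from ``boundedness of the likelihood on balls''. But the likelihood is $\exp(-\tfrac12 B(g,g)+\ell(g))$ with $B(g,g)=\sum_i\lVert g(x_i)\rVert_\Y^2$ and $\ell=\sum_i\ell_{x_i,z_i}$, and, as you note yourself, $\ell$ is not continuous on $C^0(K)$. For infinite-dimensional $\Y$ the factor $e^{\ell}$ is in general not even $\IP_G$-essentially bounded on $\lVert\cdot\rVert_\infty$-balls, so it cannot be pulled out of $\int_{B_\delta(g_1)}\cdot\,\dd\IP_G$. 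This matters. Asymptotic negligibility of balls around points off $H_{\rm prior}$ is exactly the condition on $M=H_{\rm prior}$ from \cite[Remark 3.2]{ayanbayev2022gamma} that you flag. It justifies the value $\infty$ in \eqref{eq:tikhonovOM}, and it licenses your step ``every MAP is a minimizer of $I$'', on which uniqueness rests.

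The missing observation is that $\ell$ is a Cameron--Martin functional of the prior. By Lemma \ref{lem:abstractlmap} and the Riesz theorem there is $m_0\in H_{\rm prior}$, with coefficients $\kappa_j^2\sum_i\langle z_i,\psi_j(x_i)\rangle_\sim$, such that $\ell=\langle m_0,\cdot\rangle_{H_{\rm prior}}$ on $H_{\rm prior}$. The series defining $\ell$ is exactly the measurable linear extension $\widehat{m_0}$, so $\ell=\widehat{m_0}$ holds $\IP_G$-a.s.

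Consequently $e^{\ell}\dd\IP_G$ is a constant multiple of the law of $G+m_0$. The posterior is then a normalized $e^{-\frac12 B(g,g)}$ times a Gaussian measure with Cameron--Martin space $H_{\rm prior}$, which also makes your claim that the posterior is Gaussian transparent. Since $e^{-\frac12 B}$ is continuous and bounded by $1$, negligibility off $H_{\rm prior}$ transfers from this shifted Gaussian.

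The same fact is what your Step 1 actually needs. You have $\ell-\widehat{g_k}=\widehat{m_0-g_k}$ with $m_0-g_k\in H_{\rm prior}$. Hence $\int_{B_\delta(0)}e^{\ell-\widehat{g_k}}\dd\IP_G/\IP_G(B_\delta(0))\to1$ follows from the classical limit $\IP_G(B_\delta(w))/\IP_G(B_\delta(0))\to e^{-\frac12\lVert w\rVert_{H_{\rm prior}}^2}$ for $w\in H_{\rm prior}$. Symmetry or Anderson's inequality alone only give the lower bound. Also, the density contains $e^{+\ell}$, not $e^{-\ell}$; this is harmless.

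With this repair your route is valid and genuinely different from the paper's. The paper proceeds as follows:
\begin{enumerate}
\item It shows the posterior is Gaussian and equivalent to $\IP_G$.
\item It computes $\langle g,h\rangle_{H_{\rm post}}=\langle g,h\rangle_{H_{\rm prior}}+\sum_i\langle g(x_i),h(x_i)\rangle_\Y$ by comparing products of the $\pm h$ Cameron--Martin densities of posterior and prior.
\item It identifies $\hatgn$ as the $H_{\rm post}$-Riesz representer of $\ell$ via the Galerkin projections $\psiNt\ANin\bN$; this is essentially your Step 2.
\item It reads off $I$, up to an additive constant, from the Gaussian OM functional $\tfrac12\lVert g-\hatgn\rVert_{H_{\rm post}}^2$ of \cite[Corollary 5.4]{ayanbayev2022gamma}.
\item It gets MAP-ness of $\hatgn$ from Anderson's inequality and uniqueness via global weak modes.
\end{enumerate}
In the paper's route the off-$H_{\rm prior}$ behaviour comes for free from Gaussian theory. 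Your direct small-ball computation gives the Tikhonov form without introducing $H_{\rm post}$ or using Gaussianity of the posterior for the OM part. The price is that you must verify the off-$H_{\rm prior}$ condition by hand, which the Cameron--Martin absorption of $e^{\ell}$ above makes possible.
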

Theorem \ref{th:MAPestimator} shows that the posterior mean and the MAP estimator coincide in the Gaussian case. In particular Theorems \ref{th:convergencetheorem} and \ref{th:concentrationC0} yield algebraic convergence rates for the MAP estimator.  

\subsection{Maximum Likelihood Estimator}\label{sec:MLestimator}
We conclude this section with a short discussion of the \emph{maximum likelihood (ML)} estimator.
If it exists, the maximum likelihood estimator $g_{n}^{\text{ML}}$ of the measurement model \eqref{eq:xiyi} with white noise minimizes the negative log-likelihood $\pPhi$ from \eqref{eq:phigeneral}, i.e.
\begin{align*}
   g_{n}^{ML}(\px,\pz)\in\argmin_{g\in\calG}\pPhi(g,\px,\pz)=\argmin_{g\in\calG} \sum_{i=1}^n\frac{1}{2}\lVert g(x_i)\rVert_{\Y}^2-\langle z_i, g(x_i)\rangle_{\sim}. 
\end{align*}
Convergence analysis for the expectation
$$\IE_{(\pXX,\pEE)}\left[\left\lVert g_{n}^{ML}(\pXX,d\circ g_0(\pXX)+\pEE)-g_0\right\rVert_{L^2(K)}^2\right]$$
in the case of compact operator classes $\calG$ can be found in \cite[Corollary 7]{reinhardt2026statistical}. In that work, it is shown that the convergence rate (in terms of $n$) depends on both the entropy of $\calG$ (which measures how ``big'' $\calG$ is) and the bias of $\calG$ towards $g_0$ (which measures how well $\calG$ approximates $g_0$). Establishing convergence theorems for the ML estimator in the case of non-compact regression classes is ongoing work.

\subsection{Vector-Valued Kernel Methods}\label{sec:kernel}
In this section we show the connection of our Bayesian operator learning framework to vector-valued kernel methods, see, e.g.,~\cite{caponnetto2007optimal,micchelli2005learning,carmeli2006vector,li2024towards}. 

Denote by $\mathcal L(\Y)$ the space of all bounded linear operators $A:\Y\to\Y$ and 
define $\odot:\Y\times\Y\to\mathcal L(\Y)$ via $(y\odot y')y''=\langle y',y''\rangle_{\Y}y$ for $y,y',y''\in\Y$. Then we have the following standard characterization of the Cameron-Martin space
as an RKHS:
\begin{proposition}\label{prop:kernel}
Consider Setting \ref{set:gaussiansetup} with the Gaussian prior from
Example \ref{ex:GaussianPrior} with continuous ONB $(\psij)_{j\in\IN}$ of $L^2(K)$ and let
\eqref{eq:strongerdecayproperty} hold. Then
\begin{align}\label{eq:kernel}
  \Gamma:K\times K\to\mathcal L(\Y),\qquad
  \Gamma(x,x')=\sum_{j\in\IN}\kappa_j^2\,\psi_j(x)\odot\psi_j(x')
\end{align}
is the reproducing kernel of the Cameron--Martin space $H_{\rm{prior}}$ (cp.~\eqref{eq:Hprior}).
\end{proposition}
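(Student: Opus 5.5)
We first identify $H_{\rm{prior}}$ explicitly, then verify the two defining properties of a reproducing kernel. By \eqref{eq:CMspace} and \eqref{eq:L2Gaus}, $H_{\rm{prior}}=\mathcal C_{\rm{prior}}^{1/2}L^2(K)$ consists of all $g=\sum_j c_j\psi_j$ with $\sum_j c_j^2\kappa_j^{-2}<\infty$, equipped with the inner product
\[
\langle g,h\rangle_{H_{\rm{prior}}}=\sum_j\kappa_j^{-2}\langle g,\psi_j\rangle_{L^2(K)}\langle h,\psi_j\rangle_{L^2(K)}.
\]
Equivalently, $(\kappa_j\psi_j)_{j\in\IN}$ is an ONB of $H_{\rm{prior}}$. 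By Lemma \ref{lem:abstractlmap}, $H_{\rm{prior}}\subseteq C^0(K)$. Indeed, Cauchy--Schwarz gives
\[
\sup_{x\in K}\lVert g(x)\rVert_\Y\le \sum_j|c_j|\omega_j\le \lVert g\rVert_{H_{\rm{prior}}}\Bigl(\sum_j\kappa_j^2\omega_j^2\Bigr)^{1/2},
\]
and the last factor is finite by \eqref{eq:strongerdecayproperty}, since $\sum_j\kappa_j^2\omega_j^2\le(\sum_j\kappa_j\omega_j)^2$. So point evaluations $g\mapsto g(x)$ are bounded from $H_{\rm{prior}}$ to $\Y$, uniformly in $x\in K$. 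We will always use the continuous representative of each element.

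Next we show that $\Gamma$ is well defined. For fixed $x,x'\in K$, each summand satisfies $\lVert\psi_j(x)\odot\psi_j(x')\rVert_{\mathcal L(\Y)}\le\omega_j^2$. Hence the series \eqref{eq:kernel} converges absolutely in operator norm, because $\sum_j\kappa_j^2\omega_j^2<\infty$.

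We then verify the two RKHS properties, in the vector-valued sense of \cite{carmeli2006vector,micchelli2005learning}.

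\emph{(i) Sections lie in the space.} For $x\in K$ and $y\in\Y$, the function $\Gamma(\cdot,x)y$ should belong to $H_{\rm{prior}}$. By definition of $\odot$,
\[
\Gamma(\cdot,x)y=\sum_j\kappa_j^2\langle\psi_j(x),y\rangle_\Y\,\psi_j,
\]
whose coefficients are $c_j=\kappa_j^2\langle\psi_j(x),y\rangle_\Y$. Then
\[
\sum_j c_j^2\kappa_j^{-2}=\sum_j\kappa_j^2\langle\psi_j(x),y\rangle_\Y^2\le\lVert y\rVert_\Y^2\sum_j\kappa_j^2\omega_j^2<\infty,
\]
so the function lies in $H_{\rm{prior}}$. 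The series also converges uniformly on $K$, so it agrees with the pointwise definition of $\Gamma(\cdot,x)y$.

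\emph{(ii) Reproducing property.} For $g=\sum_j c_j\psi_j\in H_{\rm{prior}}$ we need $\langle g,\Gamma(\cdot,x)y\rangle_{H_{\rm{prior}}}=\langle g(x),y\rangle_\Y$. Using the inner product above,
\[
\langle g,\Gamma(\cdot,x)y\rangle_{H_{\rm{prior}}}=\sum_j\kappa_j^{-2}c_j\,\kappa_j^2\langle\psi_j(x),y\rangle_\Y=\Bigl\langle\sum_j c_j\psi_j(x),y\Bigr\rangle_\Y.
\]
The last sum equals $g(x)$, which is the step needing care: we must know that the $L^2(K)$-expansion of $g$ converges pointwise to its continuous representative. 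This holds because the series $\sum_j c_j\psi_j$ converges absolutely in $C^0(K)$ (by the Cauchy--Schwarz bound above). Its uniform limit is therefore continuous and coincides $\IP_X$-a.e.\ with $g$. Two continuous functions that agree $\IP_X$-a.e.\ on $K=\supp(\IP_X)$ agree everywhere on $K$, since their difference is continuous and the set where it is nonzero is open and hence has positive measure if nonempty. This identification of representatives is the main subtlety; once it is settled, the algebra above completes the argument.

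Finally, reproducing kernels are unique, so $\Gamma$ is the reproducing kernel of $H_{\rm{prior}}$.
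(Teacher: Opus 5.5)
Your proof is correct and follows essentially the same route as the paper: you use the ONB $(\kappa_j\psi_j)_{j\in\IN}$ of $H_{\rm prior}$, get well-definedness of $\Gamma$ from $\sum_j\kappa_j^2\omega_j^2<\infty$, show $\Gamma(\cdot,x)y\in H_{\rm prior}$, and obtain the reproducing property by the same coefficient computation. The only difference is added care: you prove $H_{\rm prior}\subseteq C^0(K)$ directly and justify that the $L^2(K)$-expansion converges uniformly to the continuous representative, using $K=\supp(\IP_X)$, whereas the paper handles both points by citing Lemma~\ref{lem:abstractlmap}.
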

\begin{proof}
  Since $H_{\rm prior}$ is the Cameron-Martin space of the Gaussian measure,
  whose covariance has eigenfunctions $\psi_j$ and eigenvalues $\kappa_j^2$,
  the sequence $(\kappa_j\psi_j)_{j\in\IN}$ forms an ONB of $H_{\rm prior}$.

  Next, we observe that $\Gamma$ is well-defined since $\sum_{j\in\IN}\kappaj^2\omegaj^2<\infty$ by \eqref{eq:strongerdecayproperty}.
For every $x\in K$ and $y\in\Y$, we also have
$\Gamma(\cdot,x)y\in H_{\rm{prior}}$, since
\[
  \|\Gamma(\cdot,x)y\|_{H_{\rm{prior}}}^2
  =\Bigl\lVert\sum_{j\in\IN}\kappa_j^2
    \langle\psi_j(x),y\rangle_{\Y}\psij\Bigr\rVert_{\Hprior}^2
  =\sum_{j\in\IN}\kappa_j^2
    |\langle\psi_j(x),y\rangle_{\Y}|^2
  \leq \|y\|_{\Y}^2
    \sum_{j\in\IN}\kappa_j^2\omega_j^2<\infty.
\]
 Moreover, since $\Hprior\subseteq C^0(K)$ (see Lemma \ref{lem:abstractlmap}) we have the reproducing property
\begin{align*}
  \langle g,\Gamma(\cdot,x)y\rangle_{H_{\rm{prior}}}
  &=\sum_{j\in\IN}\kappa_j^{-2}\langle g,\psi_j\rangle_{L^2(K)}\,\langle\psij,\Gamma(\cdot,x)y\rangle_{L^2(K)}\\
    &=\sum_{j\in\IN}\kappa_j^{-2}\langle g,\psi_j\rangle_{L^2(K)}\,\kappa_j^{2}\langle\psi_j(x),y\rangle_{\Y}\\
  &=\Bigl\langle \sum_{j\in\IN}\langle g,\psi_j\rangle_{L^2(K)}\psi_j(x),y\Bigr\rangle_{\Y}=\langle g(x),y\rangle_{\Y},\qquad g\in H_{\rm{prior}},~~x\in K,~~y\in\Y
\end{align*}
which shows the claim.
\end{proof} 
By Theorem \ref{th:MAPestimator} and \cite[p.~9]{li2024towards}, the maximum a posteriori estimator (and the posterior mean) coincides\footnote{In \cite{li2024towards}, the noise is $\Y$-valued, which gives a regularized least-squares objective. In contrast, we assume a white noise model leading to the modified least-squares objective in \eqref{eq:tikhonovOM} by formally subtracting the infinite term $\frac 12 \sum_{i=1}^n\lVert z_i\rVert_\Y^2$, see also \cite[Section 2.1.1]{reinhardt2026statistical}.} with the (vector-valued) kernel ridge regression estimator over $H_{\rm{prior}}$ with regularization parameter $\lambda=1/n$.
In the following we compare the kernel in \eqref{eq:kernel} with the literature on vector-valued kernel ridge regression.

\begin{enumerate}[label=(\roman*)]
\item {\bf Multiplicative kernels:} The kernel in \eqref{eq:kernel} has a different structure from the multiplicative kernels underlying the analysis in \cite{li2024towards,li2022optimal,yang2025learning}. In these works, the structure
$\Gamma=KT$ with a scalar-valued kernel $K$ and a positive semi-definite self-adjoint operator $T\in\mathcal L(\Y)$ is assumed (with $T=\Id_{\Y}$ in \cite{li2022optimal,yang2025learning}). Such a multiplicative structure holds in \eqref{eq:kernel} if the basis $(\psi_j)_{j\in\IN}$ factorizes as $\psi_{(k,\ell)}=\varphi_{k}\eta_\ell$, where $(\varphi_{k})_{k\in\IN}$ is an ONB of $L^2(K,\IPX;\IR)$ and $(\eta_\ell)_{\ell\in\IN}$ is an ONB of $\Y$, and the weights $(\kappa_j)_{j\in\IN}$ factorize as $\kappa_{(k,\ell)}=s_kt_\ell$ for sequences $(s_k)_{k\in\IN}$ and $(t_\ell)_{\ell\in\IN}$. On the other hand, in the present work the operator $\mathcal C_{\rm{prior}}$ is trace class. Hence, for example, the choice $T=\Id_{\Y}$, permitted in
\cite{li2024towards,li2022optimal,yang2025learning}, is not admissible in our setting if $\dim(\Y)=\infty$.
\item {\bf Trace-class kernels:} The analysis in \cite{caponnetto2007optimal} allows general trace-class operator-valued kernels $\Gamma$, i.e.~kernels with $\sup_{x\in K}\operatorname{tr}(\Gamma(x,x))<\infty$, which includes \eqref{eq:kernel}, since
\[
\operatorname{tr}(\Gamma(x,x))=\sum_{j\in\IN}\kappa_j^2\lVert\psij(x)\rVert_\Y^2\leq\sum_{j\in\IN}\kappa_j^2\omega_j^2<\infty
\]
by \eqref{eq:strongerdecayproperty}. However, \cite{caponnetto2007optimal} consider only the well-specified case $g_0\in H_{\rm{prior}}$; this corresponds to the choice $\rr<\qq-1/2$ in our setup. In that case, the upper bounds in \cite{caponnetto2007optimal} are better than ours. The reason is that \cite{caponnetto2007optimal} choose the regularization parameter $\lambda=\lambda_n(\qq,\rr)$ depending on $\qq$ and $\rr$ (the regularity of $g_0$ and the eigenvalue decay of the kernel, respectively), whereas our setup corresponds to $\lambda=1/n$, independent of $\qq$ and $\rr$. Moreover, \cite{caponnetto2007optimal} only have lower bounds for the case $\dim(\Y)<\infty$,
also see Section \ref{sec:minimax} ahead.
\end{enumerate}
\begin{remark}\label{rmk:mercer}
Under suitable assumptions, certain positive semi-definite trace-class kernels $\Gamma:K\times K\to\mathcal L(\Y)$ can be shown to be of the form \eqref{eq:kernel} by Mercer's theorem for operator-valued kernels (see, e.g.,~\cite[Proposition 4]{carmeli2010vector}).
\end{remark}

\subsection{\texorpdfstring{Minimax Optimality}{Minimax Optimality}}\label{sec:minimax}
Let $\qq>1$ and $(\psij)_{j\in\IN}$ be a continuous ONB of $L^2(K)$. We consider the hyperrectangle
    \begin{align*}
        \Theta_{\qq}\coloneqq\left\{g\in L^2(K):~|\langle g,\psij\rangle_{L^2(K)}|\leq j^{-\qq}~\text{for all }j\in\IN\right\}.
    \end{align*}
We now show that the rate in Corollary \ref{cor:Expectationconvergence} is minimax optimal up to logarithmic factors over $\Theta_q$ if $\rho=0$ in \eqref{eq:omegaassumppolynomial} and the prior decay matches the decay of the expansion coefficients of $g_0$, i.e.~$\rr=\qq$ in \eqref{eq:rtassump}.  For a proof of Theorem \ref{th:minimax}, see Appendix \ref{app:lowerbound}.

\begin{theorem}\label{th:minimax}
Consider Setting \ref{set:gaussiansetup}, let $\qq>1$ and let $(\psij)_{j\in\IN}$ be a continuous ONB of $L^2(K)$ with $M\coloneqq\sup_{j\in\IN}\lVert\psij\rVert_{C^0(K)}<\infty$, i.e.~\eqref{eq:omegaassumppolynomial} holds with $\rho=0$.
\begin{enumerate}[label=(\roman*)]
\item\label{item:minimaxupper} Consider the Gaussian prior from Example \ref{ex:GaussianPrior} with prior decay $\rr=\qq$ in \eqref{eq:rtassump}. Then there exists a constant $C>0$ such that for all $n\in\IN$
    \begin{align}\label{eq:upperboundhyperrect}
        \sup_{g_0\in\Theta_{\qq}}\IE_{(\pXX,\pEE)}\left[\lVert\hatgn(\pXX,d\circ g_0(\pXX)+\pEE)-g_0\rVert_{L^2(K)}^2\right]\leq C\max\{1,\log(n)\}^2n^{-\frac{2\qq-1}{2\qq}}.
    \end{align}
\item\label{item:minimaxlower} There exists a constant $c>0$ such that for all $n\in\IN$
    \begin{align}\label{eq:lowerboundhyperrect}
        \inf_{\tilde g_n}\sup_{g_0\in\Theta_{\qq}}\IE_{(\pXX,\pEE)}\left[\lVert\tilde g_n(\pXX,d\circ g_0(\pXX)+\pEE)-g_0\rVert_{L^2(K)}^2\right]\geq c\,n^{-\frac{2\qq-1}{2\qq}},
    \end{align}
    where the infimum ranges over all measurable estimators $\tilde g_n:\pX\times\bs{\tilde\Y^*}\to L^2(K)$.
\end{enumerate}
\end{theorem}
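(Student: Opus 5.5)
For part \ref{item:minimaxupper} the plan is to reduce to Corollary \ref{cor:Expectationconvergence} with $\rho=0$ and $\rr=\qq$, which gives $\delta_n^2=\max\{1,\log(n)\}^2 n^{-(2\qq-1)/(2\qq)}$. The corollary's constant depends on $g_0$ only through $\lVert g_0\rVert_\infty$ and the hidden constant in \eqref{eq:rtassump}. On $\Theta_\qq$ both are uniform: the hidden constant in $|\Gzj|\lesssim j^{-\qq}$ equals $1$, and $\lVert g_0\rVert_\infty\leq\sum_j|\Gzj|\omega_j\leq M\sum_j j^{-\qq}<\infty$ since $\qq>1$. This bound also shows that the expansion converges uniformly, so every $g_0\in\Theta_\qq$ has a continuous representative in $C^0(K)$, as Theorem \ref{th:convergencetheorem} requires. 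Moreover $\rr=\qq>1=1+2\rho$ holds, and \eqref{eq:strongerdecayproperty} follows from $\kappa_j\omega_j\lesssim j^{-\qq}$ with $\qq>1$. Taking the supremum then yields \eqref{eq:upperboundhyperrect}.

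For part \ref{item:minimaxlower} I will use Assouad's lemma over a hypercube inside $\Theta_\qq$. Fix $m\simeq n^{1/(2\qq)}$ and set, for $\tau\in\{0,1\}^m$,
\begin{equation*}
g_\tau=\sum_{j=m+1}^{2m}\tau_{j-m}\,(2m)^{-\qq}\psi_j .
\end{equation*}
Since $(2m)^{-\qq}\leq j^{-\qq}$ for $j\le 2m$, each $g_\tau\in\Theta_\qq$ and is continuous. By orthonormality, $\lVert g_\tau-g_{\tau'}\rVert_{L^2(K)}^2=(2m)^{-2\qq}\,\mathrm{Ham}(\tau,\tau')$. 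Assouad then gives the lower bound
\begin{equation*}
\inf_{\tilde g_n}\max_\tau\IE\lVert\tilde g_n-g_\tau\rVert^2\gtrsim m\,(2m)^{-2\qq}\,\min_{\mathrm{Ham}(\tau,\tau')=1}\bigl(1-\mathrm{TV}(P_\tau,P_{\tau'})\bigr),
\end{equation*}
where $P_\tau$ denotes the law of the data $(\pXX,\pZZ)$ under $g_0=g_\tau$.

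The main step is to bound the divergence between neighbouring data laws. Conditionally on $\pXX$, the $Z_i$ are independent, and by the Cameron--Martin theorem (the computation behind Theorem \ref{th:PosteriorGaussianNoise}) the law of $Z_i$ is $E$ shifted by $d\,g(X_i)\in H_E$, with $H_E\simeq\Y$ isometrically. Hence
\begin{equation*}
\mathrm{KL}(P_\tau\|P_{\tau'})=\tfrac n2\IE_X\lVert g_\tau(X)-g_{\tau'}(X)\rVert_\Y^2=\tfrac n2\lVert g_\tau-g_{\tau'}\rVert_{L^2(K)}^2=\tfrac n2(2m)^{-2\qq}.
\end{equation*}
The last identity uses $L^2(K,\IPX;\Y)$ and is where the random design and infinite-dimensional $\Y$ are handled cleanly. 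With $m\simeq n^{1/(2\qq)}$ this is bounded by a small constant, so Pinsker gives $\mathrm{TV}\leq 1/2$.

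The resulting bound is $\gtrsim m^{1-2\qq}\simeq n^{-(2\qq-1)/(2\qq)}$, and the minimax risk dominates the maximum over the cube. The obstacle I expect is the rigour of the KL identity for white noise, namely absolute continuity of shifted measures on $\tilde\Y^*$ and the explicit log-likelihood \eqref{eq:phigeneral}. Here I would take the density from Lemma \ref{lemma:GXZ} and Theorem \ref{th:PosteriorGaussianNoise} with a Dirac prior and compute its expectation using Corollary \ref{cor:existencetildemapping}. Small $n$ is handled by adjusting $c$.
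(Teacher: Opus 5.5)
Your proposal is correct and follows essentially the paper's proof. Part (i) comes from Corollary~\ref{cor:Expectationconvergence} with $\rho=0$, $\rr=\qq$ and constants uniform over $\Theta_\qq$ (hidden constant $1$, $\lVert g_0\rVert_\infty\le M\sum_j j^{-\qq}$), and part (ii) from an Assouad argument on a hypercube of $m\simeq n^{1/(2\qq)}$ orthonormal perturbations, with $D_{KL}=\tfrac n2\lVert g-g'\rVert_{L^2(K)}^2$ computed from the white-noise likelihood \eqref{eq:phigeneral}.

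The only differences are cosmetic. The paper perturbs $\psi_1,\dots,\psi_m$ with amplitude $m^{-\qq}$, which already lies in $\Theta_\qq$, so your shift to the indices $m+1,\dots,2m$ is unnecessary. It also invokes the KL form of Assouad's lemma \cite[Theorem 2.12 (iv)]{tsybakov2009introduction} instead of going through Pinsker and total variation.
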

The upper and lower bounds from \eqref{eq:upperboundhyperrect} and \eqref{eq:lowerboundhyperrect} show that the rate in \eqref{eq:deltan} cannot be improved beyond logarithmic factors for $\rho=0$ and $\rr=\qq$. 

\section{Applications}\label{sec:applications}
In this section we apply the Bayesian operator framework to the learning of holomorphic operators and show that Theorem \ref{th:convergencetheorem} implies posterior mean convergence for the solution operator of the Darcy flow PDE.

\subsection{Bayesian Learning of Holomorphic Operators}\label{sec:holomorphicoperators}
 We consider a setup similar to \cite{reinhardt2026statistical,herrmann2024neural}. Let $(\phi_j)_{j\in\IN}$ and $(\eta_j)_{j\in\IN}$ be ONBs of the Hilbert spaces $\X$ and $\Y$ respectively.
 We start by introducing the Hilbert spaces $\Xalpha$ and $\Ybeta$ for $\alpha,\beta\geq 0$ as
 \begin{align}
        \X^{\alpha}\coloneqq\{x\in\X:\,\lVert x\rVert_{\X^{\alpha}}<\infty\}\quad\text{and}\quad
        \Ybeta\coloneqq\{y\in\Y:\,\lVert y\rVert_{\Ybeta}<\infty\}
    \end{align}
    where
\begin{align}
    \lVert x\rVert_{\X^{\alpha}}^2\coloneqq\sum_{j\in\IN}\langle x,\phi_j\rangle_{\X}^2 \theta_j^{-2\alpha}\qquad\text{and}\qquad
    \lVert y\rVert_{\Ybeta}^2\coloneqq\sum_{j\in\IN}\langle y,\eta_j\rangle_{\Y} ^2 \theta_j^{-2\beta}
\end{align}
and $(\theta_j)_{j\in\IN}$ with
\begin{equation}\label{eq:XrYt}
  \theta_j\lesssim j^{-1}
\end{equation}
is positive and monotonically decreasing.
With the scalar product 
\begin{align}\label{eq:scalarXt}
    \langle y,y'\rangle_{\Ybeta}\coloneqq\sum_{j\in\IN}\theta_j^{-2\beta}\langle y,\eta_j\rangle_{\Y}\langle y',\eta_j\rangle_{\Y}
\end{align}
the spaces $\Ybeta$ (and similarly $\X^{\alpha}$) become Hilbert spaces, see \cite[Lemma 1]{herrmann2024neural}. These spaces exhibit ``extra smoothness'' which is encoded via higher summability of the expansion coefficients w.r.t.~their ONBs. With $U=[-1,1]^\IN$, $\ssv>1/2$ and a scaling factor $R>0$, we define
\begin{align}\label{eq:sigmarr}
\sigmaRr:&~U\to\X,~~\pu\mapsto R\sumj \theta_j^{\ssv}u_j\phij.
\end{align}
The map $\sigmaRr$ is well-defined, since %
$(\phij)_{j\in\IN}$ is an orthonormal basis
and $\ssv>1/2$ yields $\left(\thetaj^{\ssv} u_j\right)_{j\in\IN}\in\ell^{2}(\IN)$ for all $\pu\in U$. Following \cite{herrmann2024neural} we further define the ``cubes''
\begin{align}
    \CRr(\X)\coloneqq \sigmaRr(U)\subset\X,
\end{align}
which encode ``input decay'' w.r.t.~the ONB $(\phij)_{j\in\IN}$. Note that $\CRr(\X)$ is compact in $\X$ for all $s>1/2$ and $R>0$, since the set $\bigtimes_{j\in\IN}[-R\thetaj^s,R\thetaj^s]$ is compact in $\ell^2(\IN)$. 

In the following we will make use of the property that bounded holomorphic operators with sufficient input decay have $\ell^p$-summable ($p<1$) expansion coefficients w.r.t.~general polynomial chaos-based $L^2$-bases, see \cite{cohen2010convergence,chkifa2015breaking,CDS11}.
Similar to \cite{reinhardt2026statistical,herrmann2024neural}, we first define an ONB of $\LtwoCrsIPxYE$ based on multivariate Legendre polynomials.

Denote by $\Lj$ the $L^2([-1,1],\Unif[-1,1])$-normalized $j$-th Legendre polynomial. Let $\F$ be the set of multiindices with finite support, i.e.~
$$\F=\left\{(\nu_j)\in\IN_0^{\IN}~\big|~|\supp\pnu|<\infty   \right\}$$
and define the multivariate Legendre polynomials as
\begin{align}\label{eq:Lnu}
\Lnu:~U\to\IR,~~\Lnu(\pu)=\prod_{j\in\supp\pnu} L_{\nu_j}(u_j).
\end{align}
By e.g.~\cite[§18.2(iii) and §18.3]{Olver.2010} we obtain the uniform bound
\begin{align}
\label{eq:omeganu}
\omeganu\coloneqq\sup_{\pu\in U}\lvert\Lnu(\pu)\rvert\leq\prod_{j\in\supp\pnu}\left(1+2\nu_j\right)^{1/2}.
\end{align}

We now use \eqref{eq:Lnu} to construct an ONB of $\LtwoCrsIPxYE$. Recall $\pi=\otimes_{j\in\IN}\Unif[-1,1]$. 
For $\pnu\in\F$ and $k\in\IN$, define 
\begin{align}\label{eq:legendrel2basis}
    \psi_{\pnu,k}:~\CRr(\X)\to\Y,~~\psi_{\pnu,k}(x)=\Lnu\circ\left(\sigmaRr\right)^{-1}(x)~\etak.
\end{align}
The following lemma shows that $(\psi_{\pnu,k})_{\pnu\in\F,k\in\IN}$ is an ONB of $\LtwoCrsIPxYE$. For a proof of Lemma \ref{lem:ltwobasis}, see Appendix \ref{app:Ltwobasisproof}.
\begin{lemma}
\label{lem:ltwobasis}
Let $\IPX=(\sigmaRr)_\#\pi$. Then $\left(\psinuk\right)_{\pnu\in\F,k\in\IN}$ in \eqref{eq:legendrel2basis} is an ONB of $\LtwoCrsIPxYE$. 
\end{lemma}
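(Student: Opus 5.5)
The natural route is to transport the problem to $L^2(U,\pi;\Y)$ through $\sigmaRr$, where the Legendre--tensor structure is classical, and then pull back.

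\textbf{Step 1 (bijectivity and isometry).} Since $\theta_j>0$ and $(\phi_j)_{j\in\IN}$ is an ONB, the map $\sigmaRr:U\to\CRr(\X)$ is injective. Indeed, $u_j=\langle \sigmaRr(\pu),\phi_j\rangle_\X/(R\theta_j^s)$. It is also continuous from $U$ with the product topology to $\X$, because the tails $\sum_{j>J}R^2\theta_j^{2s}$ are uniformly small. As $U$ is compact and $\X$ is Hausdorff, $\sigmaRr$ is a homeomorphism onto $\CRr(\X)$. In particular $(\sigmaRr)^{-1}$ is Borel, so each $\psinuk$ is a well-defined Borel map, and in fact a continuous one.

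Moreover, $\IPX=(\sigmaRr)_\#\pi$ gives, for measurable $f$,
\[\int_{\CRr(\X)}\lVert f(x)\rVert_\Y^2\dd\IPX(x)=\int_U\lVert f\circ\sigmaRr(\pu)\rVert_\Y^2\dd\pi(\pu).\]
Hence the composition $f\mapsto f\circ\sigmaRr$ is a linear isometry from $\LtwoCrsIPxYE$ onto $L^2(U,\pi;\Y)$. It is surjective because the inverse map $h\mapsto h\circ(\sigmaRr)^{-1}$ is available by bijectivity and measurability. It therefore suffices to show that $(\Lnu\,\eta_k)_{\pnu\in\F,k\in\IN}$ is an ONB of $L^2(U,\pi;\Y)$.

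\textbf{Step 2 (orthonormality).} By Fubini on the product measure $\pi$ and the univariate orthonormality of the $L_j$ (with $L_0=1$), we get $\int_U \Lnu L_{\pnu'}\dd\pi=\delta_{\pnu\pnu'}$. Therefore
\[\langle \Lnu\eta_k,L_{\pnu'}\eta_{k'}\rangle=\delta_{\pnu\pnu'}\delta_{kk'}.\]

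\textbf{Step 3 (completeness).} This is the main point. I would use the known fact that $(\Lnu)_{\pnu\in\F}$ is an ONB of $L^2(U,\pi;\IR)$. If a self-contained argument is wanted, proceed as follows. Finite tensor products of the univariate ONBs are complete in $L^2([-1,1]^J)$. Functions depending only on $u_1,\dots,u_J$ are dense in $L^2(U,\pi)$ as $J\to\infty$, since the conditional expectations onto $\sigma(u_1,\dots,u_J)$ converge in $L^2$ by the martingale convergence theorem.

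Then extend to $\Y$-valued functions. Suppose $h\in L^2(U,\pi;\Y)$ is orthogonal to every $\Lnu\eta_k$. Then for each $k$ the scalar function $\langle h(\cdot),\eta_k\rangle_\Y\in L^2(U,\pi)$ is orthogonal to all $\Lnu$, so it vanishes $\pi$-a.e. Taking the countable union of null sets gives $h=0$ a.e. Equivalently, one can invoke $L^2(U,\pi;\Y)\cong L^2(U,\pi)\otimes\Y$.

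\textbf{Conclusion.} Pulling back through the unitary map of Step 1 yields the claim, since $\psinuk\circ\sigmaRr=\Lnu\eta_k$.
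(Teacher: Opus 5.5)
Your proof is correct and follows essentially the same route as the paper. Both arguments transport the problem through the pushforward isometry induced by $\sigma_R^s$, invoke the completeness of $(L_{\pnu})_{\pnu\in\F}$ in $L^2(U,\pi)$, and pass to $\Y$-valued functions via $(\eta_k)_{k\in\mathbb N}$. The paper uses the identification $L^2(\cdot;\Y)\cong L^2(\cdot;\IR)\otimes\Y$ first and then transports the scalar problem, whereas you transport first and then check orthonormality and the coordinatewise orthogonality by hand. Your compactness argument that $(\sigma_R^s)^{-1}$ is continuous, hence Borel, usefully justifies a step the paper only asserts.
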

We now recall the (well-known) fact that holomorphic operators have fast decaying expansion coefficients 
\begin{align*}
    \Gznuk\coloneqq \langle g_0,\psinuk\rangle_{\LtwoCrsIPxYE}.
\end{align*}
The following assumption is formulated similarly to \cite[Assumption 1]{herrmann2024neural}. We denote the complexification of a Hilbert space $\X$ by $\X_{\IC}$, see, e.g.,~\cite{munoz1999complexifications}
or \cite[p.~18]{zech2018sparse}.
\begin{assumption}[Holomorphy Assumption]\label{ass:holo}
  For some $M$, $R>0$, $\ssv>1$ and $\ttv>0$, there exists an open set $\OC\subset\XC$ containing $\CRr(\X)$ s.t.
    \begin{enumerate}[label=(\alph*)]
        \item $\sup_{a\in \OC} \lVert g_0(a)\rVert_{\Y_{\IC}^t}\le M$,\label{item:holoassumptionboundedness}
        \item $g_0:\OC\to\Y_{\mathbb C}$ is holomorphic.\label{item:holoassumptionholo}
    \end{enumerate}
\end{assumption}
Under Assumption \ref{ass:holo} one can show the decay property mentioned above, as stated in the next theorem, which is a variant of
\cite[Theorem 2.2.10]{zech2018sparse} or \cite[Theorem
5]{herrmann2024neural}. For details see Appendix
\ref{app:proofholodecay}.

\begin{theorem}[{\cite[Theorem 5 and Proposition 2]{herrmann2024neural}}]\label{th:holodecay}
Let $O_{\IC}\subseteq\X_{\IC}$, $M$, $R>0$, $\ssv>1$ and $\ttv>0$ and let $ p\in(1/\ssv,1)$ and $t'\in(0,t)$. %
Let $(\theta_j)_{j\in\IN}$ 
be as in \eqref{eq:XrYt}.
Then there exists a constant $C>0$ and a sequence $(\anu)_{\pnu\in\F}\in\ell^p(\F)$ of positive numbers
s.t.\ 
\begin{enumerate}[label=(\roman*)]
\item\label{item:holodecaylp}
  for every $g_0$ satisfying Assumption \ref{ass:holo} (with $O_{\IC}$, $M$, $R>0$, $\ssv>1$ and $\ttv>0$) and
      all $\pnu\in\F$ 
    \begin{align*}
        \sumk \theta_k^{-2\ttv'}\Gznuk^2\leq C\anu^2,
    \end{align*}
    \item\label{item:holodecayseq} there exist constants $T_1>1$ and $T_2>0$ s.t.~with 
    $$\tau=\left(\tau_j\right)_{j\in\IN},~~\tau_j\coloneqq\max\left\{T_1,T_2\thetaj^{\ssv(p-1)}\right\}$$
    and $\enu\coloneqq\tau^{\pnu}$ it holds $(\anu\enu)_{\pnu\in\F}\in\ell^1(\F)$ and $(\enu^{-1})_{\pnu\in\F}\in\ell^{\frac{p}{1-p}}(\F)$.
\end{enumerate}
\end{theorem}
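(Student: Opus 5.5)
The plan is to reduce to the classical $(\boldsymbol b,\varepsilon)$-holomorphy argument for the parametric map $u\mapsto g_0(\sigmaRr(u))$, choosing one fixed polyradius $\boldsymbol\varrho$ that works for every $\pnu$. Part \ref{item:holodecaylp} will come from Cauchy estimates on the associated polyellipse, part \ref{item:holodecayseq} from the choice of $\boldsymbol\varrho$, and the $\ell^p$-membership of $(\anu)$ will follow from \ref{item:holodecayseq} by Hölder.

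\textbf{Step 1 (holomorphic extension).} Since $\CRr(\X)$ is compact and $\OC$ is open, there is $\varepsilon>0$ such that the $\varepsilon$-neighbourhood of $\CRr(\X)$ in $\XC$ lies in $\OC$. Set $b_j\coloneqq R\thetaj^{\ssv}$ and $f\coloneqq g_0\circ\sigmaRr$. For every $\boldsymbol\varrho\in(1,\infty)^{\IN}$ with
\begin{equation*}
\sum_j(\varrho_j-1)b_j\le\varepsilon,
\end{equation*}
and every $\boldsymbol z$ in the polyellipse $\mathcal E_{\boldsymbol\varrho}=\prod_j\mathcal E_{\varrho_j}$, I would check that $\sigmaRr(\boldsymbol z)$ lies within distance $\varepsilon$ of $\sigmaRr(\mathrm{Re}\,\boldsymbol z)\in\CRr(\X)$. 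Hence $f$ is holomorphic on $\mathcal E_{\boldsymbol\varrho}$ with values in $\Y^{\ttv}_{\IC}$, and $\sup\lVert f\rVert_{\Y^\ttv_\IC}\le M$ by Assumption \ref{ass:holo}.

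\textbf{Step 2 (coefficient bound).} By Lemma \ref{lem:ltwobasis}, $\Gznuk=\langle\int_U f\,\Lnu\dd\pi,\etak\rangle_\Y$. Writing $c_{\pnu}\coloneqq\int_U f\Lnu\dd\pi\in\Y^\ttv$, the standard Legendre Cauchy estimate (iterated one-dimensional contour deformation, as in \cite{CDS11,zech2018sparse}) gives
\begin{equation*}
\lVert c_{\pnu}\rVert_{\Y^\ttv}\le C M\,\boldsymbol\varrho^{-\pnu}\prod_{j\in\supp\pnu}(1+2\nu_j)^{3/2}.
\end{equation*}
Because $\theta_k$ is bounded and $t'<t$, we have $\theta_k^{-2t'}\lesssim\theta_k^{-2t}$, so $\sum_k\theta_k^{-2t'}\Gznuk^2\lesssim\lVert c_{\pnu}\rVert^2_{\Y^\ttv}$. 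Defining $\anu\coloneqq\boldsymbol\varrho^{-\pnu}\prod_j(1+2\nu_j)^{3/2}$ then yields \ref{item:holodecaylp} with a constant independent of $g_0$; this is also where the uniformity in $g_0$ enters.

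\textbf{Step 3 (choice of $\boldsymbol\varrho$, the main obstacle).} The polyradius must simultaneously satisfy the admissibility constraint of Step 1 and make $\anu\enu$ summable, i.e. dominate $\tau_j$ by a factor that is summable in $j$. I would fix a small $\epsilon'>0$ with $\ssv(p-\epsilon')>1$, which is possible since $p>1/\ssv$. Then I take
\begin{equation*}
\varrho_j=\max\{\varrho_0,\;T_3\,\thetaj^{\ssv(p-1)-\ssv\epsilon'}\}.
\end{equation*}
Admissibility holds because
\begin{equation*}
\sum_j(\varrho_j-1)b_j\le(\varrho_0-1)R\sum_j\thetaj^{\ssv}+T_3R\sum_j\thetaj^{\ssv(p-\epsilon')},
\end{equation*}
both sums converge by $\theta_j\lesssim j^{-1}$, and the right-hand side can be made $\le\varepsilon$ by taking $\varrho_0>1$ close to $1$ and $T_3$ small. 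Next I choose $T_1\in(1,\varrho_0)$ and $T_2\le T_3$. Then $\tau_j/\varrho_j\le T_1/\varrho_0<1$ for the finitely many $j$ where the first branch of $\varrho_j$ is active, and $\tau_j/\varrho_j\lesssim\thetaj^{\ssv\epsilon'}$ for all large $j$. Consequently
\begin{equation*}
\sum_{\pnu}\anu\enu=\prod_j\sum_{k\ge0}(\tau_j/\varrho_j)^k(1+2k)^{3/2},
\end{equation*}
and this product is finite because each factor is $1+O(\tau_j/\varrho_j)$ and $\sum_j\thetaj^{\ssv\epsilon'}<\infty$ after possibly shrinking $\epsilon'$ appropriately. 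If summability of $\thetaj^{\ssv\epsilon'}$ fails for the chosen $\epsilon'$, I would instead spend part of the slack $\ssv p-1>0$ on a larger exponent, i.e. take $\epsilon'$ with $\ssv\epsilon'>1$ but still $\ssv(p-\epsilon')>1$; if $\ssv p-1$ is too small for that, I would use a logarithmic refinement of $\varrho_j$.

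\textbf{Step 4 (remaining claims).} Since $\tau_j\ge T_1>1$ and $\tau_j^{-1}\lesssim\thetaj^{\ssv(1-p)}\lesssim j^{-\ssv(1-p)}$, we get
\begin{equation*}
\sum_{\pnu}\enu^{-q}=\prod_j(1-\tau_j^{-q})^{-1}<\infty\quad\text{for }q=\tfrac{p}{1-p},
\end{equation*}
because $\ssv(1-p)q=\ssv p>1$. This gives $(\enu^{-1})_{\pnu\in\F}\in\ell^{\frac{p}{1-p}}(\F)$. Finally, Hölder's inequality with exponents $1$ and $\frac{p}{1-p}$ (note $\frac1p=1+\frac{1-p}{p}$) applied to $\anu=(\anu\enu)\,\enu^{-1}$ shows $(\anu)_{\pnu\in\F}\in\ell^p(\F)$.
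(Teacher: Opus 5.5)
\textbf{The gap is in Step 3.} No single polyradius $\boldsymbol\varrho$ that is independent of $\pnu$ can give \ref{item:holodecayseq} on the whole range $p\in(1/\ssv,1)$. Your own exponents already show the tension. Summing $\theta_j^{\ssv\epsilon'}$ for $\theta_j\simeq j^{-1}$ needs $\ssv\epsilon'>1$, and admissibility needs $\ssv(p-\epsilon')>1$; together these require $\ssv p>2$. Shrinking $\epsilon'$ goes the wrong way, and no logarithmic refinement helps, because the obstruction is polynomial. To see this, take any fixed admissible $\boldsymbol\varrho$. Admissibility together with $\sum_jb_j<\infty$ gives $\sum_j\varrho_jb_j<\infty$. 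Testing $(\anu\enu)_{\pnu\in\F}\in\ell^1(\F)$ on the unit multi-indices $\pnu=\boldsymbol e_j$, where $a_{\boldsymbol e_j}\simeq\varrho_j^{-1}$, forces $\sum_j\tau_j/\varrho_j<\infty$. Since $\tau_jb_j\ge T_2R\,\theta_j^{\ssv p}$, Cauchy--Schwarz gives
\[
\sqrt{T_2R}\,\sum_{j\in\IN}\theta_j^{\ssv p/2}\le\sum_{j\in\IN}\sqrt{\tau_jb_j}\le\Bigl(\sum_{j\in\IN}\frac{\tau_j}{\varrho_j}\Bigr)^{1/2}\Bigl(\sum_{j\in\IN}\varrho_jb_j\Bigr)^{1/2}.
\]
For $\theta_j=j^{-1}$, which \eqref{eq:XrYt} allows, the left-hand side diverges as soon as $\ssv p\le 2$. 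So the argument fails for every admissible $p$ when $\ssv\le 2$, which includes the values of $\ssv$ close to $1$ used in Proposition \ref{prop:darcy}.

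\textbf{The missing idea is a $\pnu$-dependent polyradius.} This is the argument of \cite[Theorem 2.2.10]{zech2018sparse}, repeated in \cite[Theorem 5]{herrmann2024neural}, which the paper's proof simply cites. Fix $J\in\IN$ and $\kappa>1$ with $(\kappa-1)\sum_{j\le J}b_j\le\varepsilon/2$, and let $\pnu_E,\pnu_F$ be the restrictions of $\pnu$ to $\{j\le J\}$ and $\{j>J\}$. For each $\pnu$, take $\varrho_j=\kappa$ for $j\le J$ and $\varrho_j=1+\varepsilon\nu_j/(2\lvert\pnu_F\rvert_1b_j)$ for $j\in\supp\pnu_F$. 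Each $\boldsymbol\varrho(\pnu)$ is admissible in the sense of your Step 1. Taking $\anu$ to be the resulting Cauchy bound gives, up to polynomial factors in the $\nu_j$ and geometric factors, $\anu\lesssim\kappa^{-\lvert\pnu_E\rvert_1}\lvert\pnu_F\rvert_1^{\lvert\pnu_F\rvert_1}\pnu_F^{-\pnu_F}\prod_{j>J}(2b_j/\varepsilon)^{\nu_j}$, where $\pnu^{\pnu}=\prod_j\nu_j^{\nu_j}$. Summability of $\anu\enu$ then splits into two parts:
\begin{itemize}
\item In the head there are finitely many geometric series; take $T_1\in(1,\kappa)$ and $T_2$ small so that $\tau_j<\kappa$ for $j\le J$.
\item In the tail, using $m^m\le e^m m!$, one gets the multinomial series $\sum_{\pnu}\frac{\lvert\pnu\rvert_1!}{\pnu!}\boldsymbol c^{\pnu}=\sum_{m\ge0}\lVert\boldsymbol c\rVert_{\ell^1}^m$ with $c_j\propto b_j\tau_j\lesssim\theta_j^{\ssv p}$. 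Arranging $\lVert\boldsymbol c\rVert_{\ell^1}<1$, by taking $J$ large and $T_2$ small, needs only $\ssv p>1$, which is exactly $p>1/\ssv$.
\end{itemize}
Your Step 4 then goes through unchanged.

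\textbf{Smaller repairs in Steps 1--2.}
\begin{itemize}
\item In Step 1, $\mathrm{Re}\,\boldsymbol z$ need not lie in $U$, because the Bernstein ellipse extends beyond $[-1,1]$. Compare instead with the nearest point of $U$, which lies at distance at most $(\varrho_j-\varrho_j^{-1})/2\le\varrho_j-1$ in each coordinate.
\item In Step 2, the Cauchy constant is a product of per-coordinate factors over $\supp\pnu$. These factors are large for $\varrho_j$ near $1$, so the product must go into $\anu$, not into $C$.
\item Also in Step 2, Assumption \ref{ass:holo} gives holomorphy only into $\Y_{\IC}$. The $\Y^{\ttv}$ bound should therefore be derived for the truncations $\sum_{k\le K}\langle f,\eta_k\rangle_{\Y}\eta_k$, letting $K\to\infty$.
\end{itemize}
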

We now show that the assumptions for posterior mean convergence, \eqref{eq:omegaassumppolynomial} and \eqref{eq:rtassump} in Theorem \ref{th:convergencetheorem}, hold under Assumption \ref{ass:holo} using a suitable enumeration of $\Gznuk$. For a proof of Lemma \ref{lem:holomakesconvergence}, see Appendix \ref{app:holomakesconvergence}.

\begin{lemma}\label{lem:holomakesconvergence}
    Let $O_{\IC}\subseteq\X_{\IC}$, $M$, $R>0$, $\ssv>1$, $\ttv>0$
    and $\eps>0$. Then
    \begin{enumerate}[label=(\roman*)]
    \item\label{item:g0rate} for every $g_0$ as in Assumption \ref{ass:holo} there exists
      $C>0$ and an enumeration $i:\IN\to\F\times\IN$ (meaning $i$ is bijective) s.t.\
      for all $j\in\IN$
      \begin{equation}\label{eq:g0rate}
        \omegaj=\lVert\psi_{i(j)}\rVert_{C^0(K)}\lesssim  j^{\eps}\qquad\text{and}\qquad
        |g_{0,i(j)}|\lesssim j^{-\min\{\ssv,\ttv+1/2\}+\eps},
      \end{equation}
    \item\label{item:generalrate} there exists $C>0$ and an enumeration
      $i:\IN\to\F\times\IN$ s.t.\ for all
      $g_0$ as in Assumption \ref{ass:holo} and all $j\in\IN$
      \begin{equation}\label{eq:generalrate}
        \omegaj=\lVert\psi_{i(j)}\rVert_{C^0(K)}\lesssim  j^{\eps}\qquad\text{and}\qquad
        |g_{0,i(j)}|\lesssim j^{-\min\{\ssv,\ttv\}+\eps}.
      \end{equation}      
    \end{enumerate}
\end{lemma}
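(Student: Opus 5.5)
We build both enumerations from Theorem \ref{th:holodecay}, ordering the index set $\F\times\IN$ by decreasing size of a suitable weight. Fix $p\in(1/\ssv,1)$ close to $1/\ssv$ and $t'<t$ close to $t$; these are chosen in terms of $\eps$ at the end.

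\textbf{Basis sup-norms.} Since $\sigmaRr$ is a bijection onto $K=\CRr(\X)$ and $\lVert\eta_k\rVert_\Y=1$, we have $\lVert\psinuk\rVert_{C^0(K)}=\omeganu\le\prod_{j\in\supp\pnu}(1+2\nu_j)^{1/2}$ by \eqref{eq:omeganu}. We compare this with $\enu=\tau^{\pnu}$. Because $\tau_j\ge T_1>1$, there is $\beta>0$ with $(1+2m)^{1/2}\le T_1^{\beta m}\le \tau_j^{\beta m}$ for all $m\in\IN_0$. Hence $\omeganu\le\enu^{\beta}$. It therefore suffices that the enumeration satisfy $\enu\lesssim j^{\eps/\beta}$ at position $j$, which will follow from summability of $(\enu^{-1})$ via the standard counting argument below.

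\textbf{Part \ref{item:g0rate}.} Theorem \ref{th:holodecay}\ref{item:holodecaylp} with $\theta_k\lesssim k^{-1}$ gives $|\Gznuk|\le C\anu\theta_k^{t'}$. More precisely, the sequence $(\theta_k^{-t'}|\Gznuk|)_k$ is in $\ell^2$ with norm at most $C\anu$. Define the weights
\[
w_{\pnu,k}=\anu\,\theta_k^{\,t'+1/2-\eps'} \quad\text{(or } \anu k^{-(t'+1/2)}\text{ up to log adjustment)},
\]
and enumerate by decreasing $w_{\pnu,k}$. The weight sequence lies in $\ell^{p'}(\F\times\IN)$ for $1/p'$ arbitrarily close to $\min\{1/p,\,t'+1/2\}$. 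The tensor product of an $\ell^p$ sequence with $(k^{-(t'+1/2)})_k$ is in $\ell^{p''}$ for every $p''>\max\{p,1/(t'+1/2)\}$. Stechkin's lemma then gives that the $j$-th largest weight is at most $j^{-1/p''}$, so $j^{-1/p''}\approx j^{-\min\{\ssv,t+1/2\}+\eps}$.

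A cleaner route, which I would actually take, uses $|\Gznuk|\le C\anu\theta_k^{t'}$ pointwise together with the $\ell^2$ information in $k$. After ordering by $w$, we get $|g_{0,i(j)}|\lesssim w^*_j$, where $w^*$ is the decreasing rearrangement. The extra $1/2$ then comes from the $\ell^2$-in-$k$ bound: a sequence $b_k$ with $\sum_k k^{2t'}b_k^2<\infty$ has decreasing rearrangement $o(k^{-t'-1/2})$. For the sup-norm constraint, we modify the weights to $w_{\pnu,k}\,\enu^{-\gamma}$ with small $\gamma$. This keeps the $\ell^{p''}$ property, by Hölder with $(\enu^{-1})\in\ell^{p/(1-p)}$ and $(\anu\enu)\in\ell^1$, at the cost of $\eps$. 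It also forces $\enu^{\gamma}\lesssim w^{-1}$, and $w\gtrsim j^{-C}$ along the enumeration, so $\omeganu\lesssim j^{\eps}$ once $\gamma$ is chosen relative to $\beta$.

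\textbf{Part \ref{item:generalrate}.} For the $g_0$-independent enumeration, the coefficient-specific $\ell^2$ rearrangement is not available uniformly. We therefore use only the uniform bound $|\Gznuk|\le C\anu\theta_k^{t'}$, with the fixed weight $\anu k^{-t'}\enu^{-\gamma}$ depending only on the data in Theorem \ref{th:holodecay}. This loses the $1/2$ and gives $\min\{\ssv,t\}-\eps$.

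\textbf{Main obstacle.} The main obstacle is the simultaneous control of $\omega_j\lesssim j^{\eps}$ and the coefficient decay under a single ordering. I would handle it by the $\enu^{-\gamma}$ tilting with Hölder summability as above, checking carefully that the exponent loss is absorbed into $\eps$.
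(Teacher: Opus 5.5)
The step you flag as the main obstacle does not work as proposed. Your comparison $\omeganu\le\enu^{\beta}$ uses only $\tau_j\ge T_1$, so $\beta$ is a fixed number with $\sqrt3\le T_1^{\beta}$ and cannot be taken small. At position $j$ of the enumeration, the information you have is $\anu\enu\lesssim1$, $|\Gznuk|\lesssim\anu$, and a weight $\gtrsim j^{-\qq}$ there. This only gives $\enu\lesssim j^{\qq}$, hence $\omega_j\lesssim j^{\qq\beta}$, not $j^{\eps}$ for arbitrary $\eps$.

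The tilt by $\enu^{-\gamma}$ does not repair this. Ordering by $\tilde w=w\,\enu^{-\gamma}$ gives $\enu^{\gamma}\lesssim j^{C}$, so $\omega_j\lesssim j^{C\beta/\gamma}$ is small only if $\gamma\gg\beta/\eps$. But the coefficients are dominated only by $w=\tilde w\,\enu^{\gamma}$, not by $\tilde w$, so the coefficient bound at position $j$ is off by the factor $\enu^{\gamma}$. For large $\gamma$ that factor is a power of $j$ that is not small. The two losses push $\gamma$ in opposite directions. Tilting by $\enu^{+\gamma}$, which is where your H\"older remark would be needed, only makes the $\omega$-bound worse.

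The missing idea is that a multiplicative constant buys an arbitrarily small exponent. Since $\theta_j\to0$ and $p<1$, you have $\tau_j\to\infty$. For each fixed $j$, $\sup_{m}(1+2m)^{1/2}\tau_j^{-\gamma m}<\infty$, because polynomial growth loses to exponential growth. This supremum is $\le1$ as soon as $\tau_j\ge 3^{1/(2\gamma)}$, which holds for all but finitely many $j$. Hence $\omeganu\le D_\gamma\enu^{\gamma}$ for \emph{every} $\gamma>0$. Combined with $\enu\lesssim j^{\qq}$ this gives $\omega_j\lesssim j^{\qq\gamma}$ directly, with no tilting; this is the paper's $d_\beta$, $D_\beta$ argument.

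Separately, the coefficient bound in (i) is not established. Pointwise you only know $|\Gznuk|\lesssim\anu\theta_k^{t'}$. The extra $1/2$ is an $\ell^2$-in-$k$ statement, so ordering by a fixed weight $\anu k^{-(t'+1/2)}$ or $\anu\theta_k^{t'+1/2-\eps'}$ does not give $|\Gznuk|\lesssim w_{\pnu,k}$. Your step ``after ordering by $w$, $|g_{0,i(j)}|\lesssim w^*_j$'' therefore fails. With $\theta_k$-weights the lower bound $w\gtrsim j^{-C}$ along the enumeration is also unavailable, since $\theta_k$ is not bounded below.

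The paper meets all three requirements with one $g_0$-dependent sequence, $\cnuk=\max\{|\Gznuk|,\anu k^{-\qq}\}$ with $\qq<\min\{\ssv,t+1/2\}$:
\begin{itemize}
\item H\"older turns $\sum_k k^{2t'}\Gznuk^2\lesssim\anu^2$ into $\sum_k|\Gznuk|^p\lesssim\anu^p$ for $p>1/(t'+1/2)$. Hence $(\cnuk)\in\ell^p(\F\times\IN)$ and $c_j\lesssim j^{-1/p}$.
\item The row $\pnu=\boldsymbol 0$ gives the lower bound $c_j\ge a_{\boldsymbol 0}j^{-\qq}$.
\item The bound $\cnuk\lesssim\anu\lesssim\enu^{-1}$ links the weight to $\omeganu$.
\end{itemize}
In (ii), your weight with $\gamma=0$, namely $\anu k^{-t'}$ (or $\anu k^{-\qq}$ with $\qq\le t'$), works once $\omeganu\le D_\gamma\enu^{\gamma}$ is available. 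With $\gamma>0$ it no longer dominates $|\Gznuk|$.
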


For simplicity we write in the following $\psi_j$, $g_{0,j}$ for $j\in\IN$.
From Lemma \ref{lem:holomakesconvergence} we conclude that holomorphic operators (see Assumption \ref{ass:holo}) can be efficiently learned in a Bayesian framework (Setting \ref{set:gaussiansetup}) with Gaussian prior (Example \ref{ex:GaussianPrior}). We formalize this in the next Corollary and provide two statements: The first has a better convergence, but uses a prior that requires explicit information on $g_0$. The second uses a prior which only depends on the class of operators in Assumption \ref{ass:holo}, but not on the specific $g_0$; for a proof of Corollary \ref{cor:holomorphlearning}, see Appendix \ref{app:holomorphlearning}.

\begin{corollary}\label{cor:holomorphlearning}
  Consider Setting \ref{set:gaussiansetup}, and let $\rr>1$, $\ssv>1$, and $\eps>0$. Let Assumption \ref{ass:holo} hold, and let $\IP_{X}=(\sigma_R^{\ssv})_{\#}\pi$.

    \begin{enumerate}[label=(\roman*)]
    \item\label{item:g0explicitrate} Let $\ttv>0$ and consider the Gaussian prior from Ex.~\ref{ex:GaussianPrior} with the enumeration \ref{item:g0rate}. Then
          \begin{align*}
\IE_{(\pXX,\pEE)}\left[\left\lVert\hatgn(\pXX,d\circ g_0(\pXX)+\pEE)-g_0\right\rVert_{L^2(K)}^2\right]\leq Cn^{-\frac{2\min\{\rr,\ssv,\ttv+1/2\}-1}{2\rr}+\eps}.
    \end{align*}
  \item\label{item:generalexplicitrate} Let $\ttv>1/2$ and consider the Gaussian prior from Ex.~\ref{ex:GaussianPrior} with the enumeration \ref{item:generalrate}. Then
          \begin{align*}
\IE_{(\pXX,\pEE)}\left[\left\lVert\hatgn(\pXX,d\circ g_0(\pXX)+\pEE)-g_0\right\rVert_{L^2(K)}^2\right]\leq Cn^{-\frac{2\min\{\rr,\ssv,\ttv\}-1}{2\rr}+\eps}.
          \end{align*}
    \end{enumerate}
  \end{corollary}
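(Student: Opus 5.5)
The plan is to reduce both statements to Corollary \ref{cor:Expectationconvergence}, with the enumeration supplied by Lemma \ref{lem:holomakesconvergence}. In each case we fix the enumeration $i:\IN\to\F\times\IN$ and write $\psi_j=\psi_{i(j)}$. By Lemma \ref{lem:ltwobasis}, $(\psi_j)_{j\in\IN}$ is an ONB of $L^2(K)$ with $K=C_R^s(\X)=\supp(\IP_X)$, which is compact as noted above.

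Each $\psi_j$ is continuous on $K$, since $(\sigma_R^s)^{-1}:K\to U$ is continuous. Indeed, $\sigma_R^s$ is a continuous bijection from the compact space $U$, with the product topology, onto $K$, and it is injective because $\theta_j>0$. The Legendre polynomial $L_{\pnu}$ depends on finitely many coordinates, so it is continuous on $U$.

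Lemma \ref{lem:holomakesconvergence} then gives $\omega_j\lesssim j^{\eps'}$ for any $\eps'>0$. Hence \eqref{eq:omegaassumppolynomial} holds with $\rho=\eps'$. It also gives $|g_{0,j}|\lesssim j^{-q}$ with $q=\min\{s,t+1/2\}-\eps'$ in case \ref{item:g0explicitrate} and $q=\min\{s,t\}-\eps'$ in case \ref{item:generalexplicitrate}.

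We need $q>1/2$. In case \ref{item:g0explicitrate} this holds because $s>1$ and $t>0$, so $q>1/2$ for small $\eps'$. In case \ref{item:generalexplicitrate} it uses $t>1/2$, which is why that assumption appears. We also need $g_0\in C^0(K)$, which follows from holomorphy (hence continuity) of $g_0$ on $\OC\supset K$.

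Next we choose the prior decay $\kappa_j\simeq j^{-r}$. Theorem \ref{th:convergencetheorem} requires $r>1+2\rho=1+2\eps'$. Since $r>1$ is given, this holds once $\eps'<(r-1)/2$. Condition \eqref{eq:strongerdecayproperty} is then automatic, because $\kappa_j\omega_j\log(j)^{1/2}\lesssim j^{-r+\eps'}\log(j)^{1/2}$ is summable.

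Corollary \ref{cor:Expectationconvergence} now yields the bound $C\delta_n^2$ with
\[\delta_n^2=\max\{1,\log n\}^2\, n^{-\frac{2\min\{q,r-2\rho\}-1}{2r}}.\]
We have $\min\{q,r-2\eps'\}\ge \min\{r,s,t+1/2\}-2\eps'$ in case \ref{item:g0explicitrate}, and analogously in case \ref{item:generalexplicitrate}. So the exponent is at least the target exponent minus $2\eps'/r$. The factor $\log(n)^2$ is absorbed into $n^{\eps/2}$ at the cost of enlarging $C$. Choosing $\eps'\le \eps r/4$ (and small enough for the conditions above) gives the stated rate $n^{-\frac{2\min\{\cdot\}-1}{2r}+\eps}$. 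For $n=1$ the bound is trivially adjusted through $C$.

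The main point requiring care is bookkeeping rather than a real obstacle. The $\eps$ in the corollary must be distributed between the loss in $q$, the loss $\rho=\eps'$ in the effective smoothness $r-2\rho$, and the logarithmic factor. We also have to verify that the hypotheses of Theorem \ref{th:convergencetheorem} ($q>1/2$ and $r>1+2\rho$) survive after $\eps'$ is chosen, which is where $t>1/2$ is needed in \ref{item:generalexplicitrate}.
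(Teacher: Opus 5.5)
Your proposal is correct and follows essentially the same route as the paper. Both arguments apply Lemma \ref{lem:holomakesconvergence} to obtain \eqref{eq:omegaassumppolynomial} and \eqref{eq:rtassump} with a slightly reduced $\qq$ and a small $\rho$. Both then check $\qq>1/2$ and $\rr>1+2\rho$, and use Corollary \ref{cor:Expectationconvergence}, absorbing the $\log(n)^2$ factor and the $O(\eps)$ loss in the exponent into $n^{\eps}$.

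There are two minor differences. You tie both losses (in $\qq$ and in $\rho$) to a single $\eps'$, whereas the paper chooses them separately. You also check explicitly that the $\psi_j$ are continuous on $K$, which the paper leaves implicit.
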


Similarly to Corollary \ref{cor:holomorphlearning} bounds in the $C^0(K)$-norm can be established using Theorem \ref{th:concentrationC0} instead of Corollary \ref{cor:Expectationconvergence}.

\begin{remark}\label{rmk:jacobi}
  Lemma \ref{lem:holomakesconvergence} and Corollary \ref{cor:holomorphlearning} can also be shown using Jacobi instead of Legendre polynomials as $L^2$-basis:
    \begin{enumerate}[label=(\alph*)]
    \item One can construct an ONB $\psi_{\pnu,k}^{\alpha,\beta}$ of $L^2(\CRr(\X),(\sigma_R^s)_{\#}\lambda_{\alpha,\beta},\Y)$ using the probability measures $\lambda_{\alpha,\beta}$ induced by the weight function 
    $$\omega_{\alpha,\beta}:[-1,1]\to\IR,~~ \omega_{\alpha,\beta}(u)=(1-u)^{\alpha}(1+u)^{\beta},\quad \alpha,\beta >-1,$$ for details see \cite[Example 2.1.15]{zech2018sparse}. 
    \item  Similar uniform bounds as in \eqref{eq:omeganu} hold for multivariate Jacobi polynomials, see \cite[§18.3]{Olver.2010}. Thus the weight bounds in Lemma \ref{lem:holomakesconvergence} hold.
    \item Decay of the expansion coefficients $\Gznuk=\langle g_0,\psi_{\pnu,k}^{\alpha,\beta}\rangle_{L^2(K)}$ as in Theorem \ref{th:holodecay} can be shown, see \cite[Theorem 2.2.10]{zech2018sparse}. Thus the coefficient bounds in Lemma \ref{lem:holomakesconvergence} hold.
    \end{enumerate}
\end{remark}
\subsection{Diffusion Equation on the Torus}\label{sec:darcy}
In this section we discuss the application of our results to the solution operator of the diffusion equation. 
The setting discussed here is similar to \cite[Section 7.1]{herrmann2024neural} and \cite[Section 4.2]{reinhardt2026statistical}. 
\noindent 
Let $\IT^2$ denote the $2$-dimensional torus, $\bar{a}\in L^\infty(\IT^2)$ and $f\in C^\infty(\IT^2)/\IR$.
We consider the Darcy Flow PDE and thus aim to find $u\in H^1(\IT^2)$ s.t.~in the weak sense,
\begin{align*}
     - \nabla \cdot ((\bar{a} + a )\nabla u) = f 
\text{ on } \IT^2\qquad
\text{ and }\qquad \int_{\IT^2}u(x)\dd x =0.
\end{align*}
Let $a_{\min}>0$. Then the Lax--Milgram lemma shows well-posedness of the solution operator 
\begin{align} 
g_0:\left\{a\in L^\infty(\IT^2):~\essinf_{x\in \IT^2} (\bar{a}(x)+a(x)) > a_{\min}\right\}\to H^1(\IT^2)\label{eq:darcyg0}.
\end{align}
For a (free) parameter $s_0>0$, we set $\X=H^{s_0}(\IT^2)$ and $\Y=H^1(\IT^2)$. 

Applying elliptic regularity theory (e.g.~\cite{agmon1959estimates}) one can show that if $s_0>1$ the operator $g_0$ from \eqref{eq:darcyg0} satisfies the holomorphy Assumption \ref{ass:holo} for $R>0$,
$$\X^\ssv=H^{s_0+2\ssv}(\IT^2),~~\ssv>1\text{ and }\Y^\ttv=H^{1+2\ttv}(\IT^2),~~\ttv\in [0,(s_0-1)/2),$$ see \cite[Proof of Proposition 3]{herrmann2024neural} for more details.

Defining the input regularity
$\beta\coloneqq s_0+2\ssv$, Corollary \ref{cor:holomorphlearning} 
shows the following proposition.
\begin{proposition}\label{prop:darcy}
Let $R>0$ and $\eps>0$ (arbitrarily small), and set
\begin{enumerate}[label=(\roman*)]
\item\label{item:darcyg0dependent} for the $g_0$-dependent prior, i.e.~with the enumeration from Lemma \ref{lem:holomakesconvergence} \ref{item:g0rate},
\begin{align*}
(s,s_0,\kappa)=\begin{cases}
\left(1+\frac{\eps}{2},\ \beta-2-\eps,\ \frac{\beta-3}{5-\beta}\right) & \text{if } \beta\in(3,4] \text{ and } \eps<\beta-3,\\[1ex]
\left(\frac{\beta}{4},\ \frac{\beta}{2},\ \frac{\beta-2}{2}\right) & \text{if } \beta>4,
\end{cases}
\end{align*}
\item\label{item:darcyg0independent} for the $g_0$-independent prior, i.e.~with the enumeration from Lemma \ref{lem:holomakesconvergence} \ref{item:generalrate},
\begin{align*}
(s,s_0,\kappa)=\begin{cases}
\left(1+\frac{\eps}{2},\ \beta-2-\eps,\ \frac{\beta-4}{6-\beta}\right) & \text{if } \beta\in(4,5] \text{ and } \eps<\beta-4,\\[1ex]
\left(\frac{\beta-1}{4},\ \frac{\beta+1}{2},\ \frac{\beta-3}{2}\right) & \text{if } \beta>5.
\end{cases}
\end{align*}
\end{enumerate}
Moreover, let
\begin{enumerate}[label=(\alph*)]
\item {\bf ground truth}: $g_0:C_R^{s}(H^{s_0}(\IT^2))\to H^{1}(\IT^2)$ be given through \eqref{eq:darcyg0}, with $\bar a$ such that $g_0$ is well-defined,
\item {\bf data}: $\pXX$ be i.i.d.~sampled from $\IP_X=(\sigma_R^{s})_{\#}\pi$ with white noise as in Setting \ref{set:gaussiansetup} \ref{item:Gaussiansetupnoise},
\item {\bf prior}: $\IP_G$ be the linear Gaussian prior from Example \ref{ex:GaussianPrior} with prior decay $\rr=s$ in \eqref{eq:rtassump} and the multivariate Legendre basis from Lemma \ref{lem:ltwobasis}.
\end{enumerate}
Then there exists a constant $C>0$ such that for all $n\in\IN$
\begin{align*}
\IE_{(\pXX,\pEE)}\left[\left\lVert\hatgn(\pXX,d\circ g_0(\pXX)+\pEE)-g_0\right\rVert_{L^2(K)}^2\right]\leq Cn^{-\frac{\kappa}{\kappa+1}+\eps}.
\end{align*}
\end{proposition}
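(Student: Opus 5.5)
The plan is to reduce Proposition \ref{prop:darcy} to Corollary \ref{cor:holomorphlearning} and then optimize the parameters. The holomorphy result quoted above states that for $s_0>1$ the Darcy solution operator $g_0$ satisfies Assumption \ref{ass:holo}. The relevant parameters are $\X^\ssv=H^{s_0+2\ssv}(\IT^2)$ with $\ssv>1$, and $\Y^\ttv=H^{1+2\ttv}(\IT^2)$ for every $\ttv\in[0,(s_0-1)/2)$. Throughout, the input regularity is fixed as $\beta=s_0+2\ssv$.

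The first step is to verify admissibility of the parameter choices.
\begin{itemize}
\item In every case $s_0+2s=\beta$ holds, and $s>1$, $s_0>1$.
\item For the $g_0$-independent branch of Corollary \ref{cor:holomorphlearning}\ref{item:generalexplicitrate} we also need $\ttv>1/2$, which requires $s_0>2$. In case \ref{item:darcyg0independent} this holds because $\beta>4$ gives $s_0=\beta-2-\eps>2$ for $\eps<\beta-4$, and $\beta>5$ gives $s_0=(\beta+1)/2>3$.
\end{itemize}
Since $\ttv$ can be taken arbitrarily close to $(s_0-1)/2$, we set $\ttv=(s_0-1)/2-\eps'$ with $\eps'$ small. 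The loss from $\eps'$ is absorbed into the $\eps$ of the final exponent, after rescaling $\eps$.

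The second step applies Corollary \ref{cor:holomorphlearning} with $\rr=s$. Since $\rr=s$, we have $\min\{\rr,\ssv,\cdot\}=\min\{s,\cdot\}$.

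In case \ref{item:darcyg0dependent} the exponent becomes
\[
\frac{2\min\{s,\ttv+1/2\}-1}{2s}=\frac{2\min\{s,s_0/2\}-1}{2s}\quad(\text{up to }\eps).
\]
For $\beta>4$ take $s=\beta/4$ and $s_0=\beta/2$, so that $s=s_0/2$. The exponent is then $(\beta/2-1)/(\beta/2)=(\beta-2)/\beta=\kappa/(\kappa+1)$ with $\kappa=(\beta-2)/2$.

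For $\beta\in(3,4]$, balancing would give $s<1$, so we take $s=1+\eps/2$, the smallest admissible value. Then $s_0=\beta-2-\eps$, giving $s_0/2<1<s$ and exponent $\approx(s_0-1)/2=(\beta-3)/2$. This matches $\kappa/(\kappa+1)$ with $\kappa=(\beta-3)/(5-\beta)$, since then $\kappa+1=2/(5-\beta)$.

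Case \ref{item:darcyg0independent} is analogous with $\min\{s,\ttv\}=\min\{s,(s_0-1)/2\}$.
\begin{itemize}
\item For $\beta>5$: $s=(\beta-1)/4$ and $s_0=(\beta+1)/2$ give $(s_0-1)/2=s$. The exponent is $(2s-1)/(2s)=(\beta-3)/(\beta-1)$, which equals $\kappa/(\kappa+1)$ with $\kappa=(\beta-3)/2$.
\item For $\beta\in(4,5]$: $s\approx1$ and $(s_0-1)/2\approx(\beta-3)/2\le1$. The exponent is $\approx\bigl(2\cdot\frac{\beta-3}{2}-1\bigr)/2=(\beta-4)/2$, which equals $\kappa/(\kappa+1)$ with $\kappa=(\beta-4)/(6-\beta)$.
\end{itemize}

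The main obstacle is bookkeeping rather than anything conceptual. The several small losses must be collected into one $\eps$: the $\eps$ from Corollary \ref{cor:holomorphlearning}, the strict inequality $\ttv<(s_0-1)/2$, and the $\eps$-perturbations of $s$ and $s_0$. This requires checking that the exponent depends continuously on these parameters, so that all losses fit into a single $\eps$ after renaming constants. I would also double-check at the boundary values $\beta=4$ and $\beta=5$ that both formulas for $\kappa$ agree, which they do (each gives $1$), so that the rate is consistent.
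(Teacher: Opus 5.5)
Your proposal is correct and follows the paper's proof, which plugs exactly these parameter choices ($\rr=\ssv$, $\ttv=(s_0-1)/2-\eps'$) into Corollary~\ref{cor:holomorphlearning}; your exponent computations and the check $s_0>2$ (needed for $\ttv>1/2$) in case (ii) spell out what the paper leaves implicit. One small tightening: you cannot freely ``rescale'' $\eps$ in the regimes $\beta\in(3,4]$ and $\beta\in(4,5]$, because $\eps$ also fixes $(s,s_0)$ and "continuity" alone does not give the bound. However, the loss caused by $s=1+\eps/2$, $s_0=\beta-2-\eps$ is $\eps(\beta-1)/(2(2+\eps))$, respectively $\eps(\beta-2)/(2(2+\eps))$, and both are below $3\eps/4$, so choosing $\eps'$ and the corollary's own $\eps$ smaller than $\eps/8$ suffices.
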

\begin{proof}
Let $\eps'>0$ be arbitrarily small. The proof is an application of Corollary \ref{cor:holomorphlearning}. In case \ref{item:darcyg0dependent} we choose $s_0=\beta-2-\eps$, $\rr=\ssv=1+\eps/2$ and $\ttv=(s_0-1)/2-\eps'$ for $\beta\in(3,4)$, and $s_0=\beta/2$, $\rr=\ssv=\ttv+1/2+\eps'=\beta/4$ for $\beta>4$.

In case \ref{item:darcyg0independent} we choose $s_0=\beta-2-\eps$, $\rr=\ssv=1+\eps/2$ and $\ttv=(s_0-1)/2-\eps'$ for $\beta\in(4,5)$, and $s_0=(\beta+1)/2$, $\rr=\ssv=\ttv+\eps'=(\beta-1)/4$ for $\beta>5$.
\end{proof}
\begin{remark}
  For the $g_0$-dependent prior in \ref{item:darcyg0dependent}, the convergence rate for the posterior mean estimator in Proposition \ref{prop:darcy} is identical to the rate obtained for the maximum likelihood estimator established in \cite[Theorem 27]{reinhardt2026statistical} in case $\beta>4$. For $\beta\in(3,4)$, the restriction $\rr>1$ on the prior in Theorem \ref{th:convergencetheorem} makes our rate suboptimal w.r.t.~\cite[Theorem 27]{reinhardt2026statistical}.\footnote{In \cite[Theorem 27]{reinhardt2026statistical}, a different (suboptimal) $s_0$ was chosen for the case $\beta\in(3,4)$.} The general prior in \ref{item:darcyg0independent} (which does not depend on $g_0$ but only only on the parameters in Assumption \ref{ass:holo}) yields a worse rate. We suspect that this gap can be closed by replacing the pointwise decay condition $|\Gzj|\lesssim j^{-\qq}$ in \eqref{eq:rtassump} by a weaker tail condition.
\end{remark}

\section{Conclusion and Outlook}\label{sec:conclusion}
In this work, we developed a Bayesian framework for learning operator-valued maps from noisy point evaluations and gave a characterization of the posterior measure in a general setting. For Gaussian priors and a white noise measurement model, we derived an explicit representation of the posterior mean, %
which is equal to the maximum a posteriori estimator
and both are equal to ridge regularized kernel regression; we
proved concentration bounds for the convergence towards the ground truth in the large data limit. Finally, we applied
our theory to the learning of the solution operator of the
standard parametric diffusion equation.

So far, we have established concentration bounds for the posterior
mean towards the ground truth in the $L^2(K)$-norm, see Theorem
\ref{th:convergencetheorem}. The same technique gives
  convergence rates in the topology of $C^0(K)$ as we detail
  in Appendix \ref{app:C0Kconvergence}; these estimates are likely not sharp though, and
  deriving sharp convergence statements in this topology is open.
Moreover, our convergence
  results so far only treat the posterior mean. The full Bayesian
  analysis, i.e.\ the derivation of concentration results for the
posterior distribution in the sense of Bernstein--von Mises theorems
are also left for future work; see, e.g.,~\cite{nickl2023bayesian,monard2021statistical} for
corresponding results in the case of inverse problems.

Finally, our convergence analysis so far only treats linear priors, i.e.~priors of the form $\IP_G=T_\# \mu$ where $\mu$ is a probability measure on $\IR^{\IN}$ and $T:\IR^{\IN}\to\calG$ is a linear map. The study of nonlinear (e.g.~neural-network) priors is ongoing research.

\section*{Acknowledgements}
We thank Claudia Strauch for helpful discussions and Mina Chavelle Tschoua for assistance with preliminary numerical investigations, which are not included in the %
manuscript.
\section*{LLM Usage Declaration}
Large language models (LLMs) were used for language polishing and proofreading of the manu\-script. In addition, during the proofreading of Theorem \ref{th:PosteriorGaussianNoise}, Theorem \ref{th:MAPestimator}, Lemma \ref{lem:holomakesconvergence}, Corollary \ref{cor:holomorphlearning} and Theorem \ref{th:minimax}, the models \emph{Claude Opus 5.5} and \emph{Claude Fable 5.1} identified minor technical errors and assisted in formulating corrections. All resulting corrections were independently checked and verified by the authors.

\printbibliography[heading=bibintoc,title=References]

\begin{appendix}
\section{Proofs of Section \ref{sec:problemformulation}}\label{app:proofsproblemformulation}
\subsection{White Noise Construction}\label{app:auxmeaslemmas}

\begin{lemma}\label{lem:tY}
  Let $(\gamma_j)_{j\in\IN}\in\ell^2(\IN)$ be a sequence of positive numbers, and let $(\eta_j)_{j\in\IN}$ be an orthonormal basis of $\Y$. Then
  \begin{equation*}
    \tilde\Y\coloneqq \biggl\{\sum_{j\in\IN}c_j\eta_j\Big|\sum_{j\in\IN}c_j^2\gamma_j^{-2}<\infty\biggr\}
  \end{equation*}
  defines a separable Hilbert space with inner product
  \begin{equation}\label{eq:tYinp}
    \dup[\tilde\Y]{y}{y'}\coloneqq \sum_{j\in\IN}\gamma_j^{-2}\dup[\Y]{y}{\eta_j}\dup[\Y]{y'}{\eta_j}.
  \end{equation}
  The inclusion map $\iota:\tilde\Y\to\Y:y\mapsto y$ is bounded and $\iota\tilde\Y$ is a dense subset of $\Y$. Moreover, $\iota^*:\Y^*\to \tilde\Y^*$ defines a dense embedding.
\end{lemma}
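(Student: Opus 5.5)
The plan is to reduce everything to weighted sequence spaces via the coefficient map with respect to the ONB $(\eta_j)_{j\in\IN}$, as in Example \ref{ex:sequenceexample}.

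\textbf{Step 1: identify $\tilde\Y$ with a weighted sequence space.} Consider
\[
  S:\tilde\Y\to\ell^2(\IN),\qquad S\Bigl(\sum_{j}c_j\eta_j\Bigr)=(c_j\gamma_j^{-1})_{j\in\IN}.
\]
By definition of $\tilde\Y$ and of \eqref{eq:tYinp}, $S$ is well defined, linear, and preserves inner products. It is also surjective: for $(a_j)\in\ell^2$, the coefficients $c_j=\gamma_ja_j$ satisfy $\sum c_j^2\le \sup_j\gamma_j^2\sum a_j^2<\infty$, because $\gamma\in\ell^2$ is bounded. Hence $\sum_j c_j\eta_j$ converges in $\Y$ and lies in $\tilde\Y$. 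So $S$ is a unitary isomorphism onto $\ell^2$. It follows that $\tilde\Y$ is a Hilbert space, and that it is separable with ONB $(\gamma_j\eta_j)_{j\in\IN}$. Positivity and definiteness of the inner product are inherited from $\ell^2$.

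\textbf{Step 2: boundedness and density of $\iota$.} For $y\in\tilde\Y$,
\[
  \|\iota y\|_\Y^2=\sum_j\langle y,\eta_j\rangle_\Y^2\le \Bigl(\sup_j\gamma_j^2\Bigr)\|y\|_{\tilde\Y}^2,
\]
so $\iota$ is bounded. Injectivity is trivial, since $\iota$ is an inclusion. For density, every $\eta_j$ lies in $\tilde\Y$, so finite spans of the $\eta_j$ are contained in $\iota\tilde\Y$. These spans are dense in $\Y$.

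\textbf{Step 3: properties of $\iota^*$.} The adjoint $\iota^*:\Y^*\to\tilde\Y^*$, $\iota^*(y^*)=y^*\circ\iota$, is bounded with $\|\iota^*\|=\|\iota\|$.

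\emph{Injectivity.} If $y^*\circ\iota=0$, then $y^*$ vanishes on the dense set $\iota\tilde\Y$. Since $y^*$ is continuous, $y^*=0$.

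\emph{Density.} Use the standard fact that a bounded operator between Hilbert spaces has dense range iff its adjoint is injective. Identify $\tilde\Y^{**}\simeq\tilde\Y$ by reflexivity. Then $(\iota^*)^*=\iota$, which is injective, so $\iota^*$ has dense range.

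Alternatively, density can be seen directly in sequence form. Via $S$ and the Riesz map, $\tilde\Y^*$ corresponds to $\ell^2_{\pgamma}$. Finitely supported sequences lie in the range of $\iota^*$, namely as images of $R_\Y\eta_j$, and these are dense in $\ell^2_{\pgamma}$.

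\textbf{Main obstacle.} There is no substantive obstacle. The only points needing care are:
\begin{itemize}
  \item the surjectivity in Step 1, which uses boundedness of $(\gamma_j)$;
  \item the duality argument for density of $\iota^*$, which needs reflexivity to identify $(\iota^*)^*$ with $\iota$.
\end{itemize}
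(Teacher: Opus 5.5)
Your proposal is correct and follows essentially the same route as the paper. The paper transports the Hilbert structure via the isometric isomorphism $\tau:\Y\to\tilde\Y$, $y\mapsto\sum_{j}\gamma_j\langle y,\eta_j\rangle_{\Y}\eta_j$ (your $S$ is its inverse composed with the coordinate map of $\Y$), uses finite spans of the $\eta_j$ for density of $\iota\tilde\Y$, and gets density of $\iota^*\Y^*$ from the same dense-range/injective-adjoint duality plus reflexivity of $\tilde\Y$ (citing Conway); your explicit checks of the bound on $\iota$, injectivity of $\iota^*$, and the alternative argument via $\ell^2_{\boldsymbol{\gamma}}$ are all valid additions.
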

\begin{proof}
  It is clear that $\tilde\Y$ is a vector space, and that
  \eqref{eq:tYinp} defines an inner product on $\tilde\Y$. Define
  $\tau:\Y\to\tilde\Y$ via
  $\tau y \coloneqq \sum_{j\in\IN}\gamma_j\dup[\Y]{y}{\eta_j}\eta_j$. One checks
  directly that $\tau$ is an isometric isomorphism. Since
  $\Y$ is a Hilbert space, this shows that $\tilde\Y$ is complete, and
  thus $\tilde\Y$ is a Hilbert space. Finally note that
  $\set{\sum_{j=1}^Nc_j\eta_j}{c_j\in\IR,~N\in\IN}\subset\tilde\Y$ is
  a dense subset of $\Y$ and also $\iota^*\Y^*$
  is dense in $\tilde\Y^*$. To see the latter, apply \cite[Corollary III.6.14]{conway2019course} and use that $\tilde\Y$ is reflexive.
\end{proof}

\begin{lemma}\label{lem:tYstar}
  Consider $\Brho\coloneqq(\gamma_j)_{j\in\IN}\in\ell^2(\IN)$ and  $(\eta_j)_{j\in\IN}$ as in Lemma \ref{lem:tY}. Define the inner product space
  \begin{equation}\label{eq:tYdual}
    \ell_{\Brho}^2\coloneqq \biggl\{(c_j)_{j\in\IN}\in\IR^{\IN}\Big|\sum_{j\in\IN}c_j^2\gamma_j^2<\infty\biggr\},\quad
    \dup[\ell_{\Brho}^2]{\Bc}{\Bc'}\coloneqq \sum_{j\in\IN}\gamma_j^{2}c_j c_j'
  \end{equation}
  and the map
    \begin{equation}\label{eq:kappa}
    \zeta:\ell_{\Brho}^2\to\tilde\Y^*:\Bc \mapsto \Big(\tilde y\mapsto\sum_{j\in\IN}c_j \dup[\Y]{\tilde y}{\eta_j}\Big).
  \end{equation}

  Then $\ell_{\Brho}^2$ is a Hilbert space and $\zeta:\ell_{\Brho}^2\to\tilde\Y^*$ is an isometric isomorphism.
\end{lemma}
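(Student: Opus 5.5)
The plan is to reduce everything to the unweighted sequence space $\ell^2(\IN)$ via diagonal scalings, as in the proof of Lemma \ref{lem:tY}.

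First, $\ell_{\Brho}^2$ is a Hilbert space. The map $m_{\Brho}:\ell_{\Brho}^2\to\ell^2$, $(c_j)_j\mapsto(\gamma_jc_j)_j$, is linear, bijective (with inverse division by $\gamma_j>0$) and isometric by the definition of $\langle\cdot,\cdot\rangle_{\ell_{\Brho}^2}$. Completeness of $\ell^2$ therefore transfers to $\ell_{\Brho}^2$.

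Next, $\zeta$ is well defined and bounded. For $\Bc\in\ell_{\Brho}^2$ and $\tilde y\in\tilde\Y$, Cauchy--Schwarz gives
\[
\sum_{j\in\IN}|c_j\langle\tilde y,\eta_j\rangle_\Y|
=\sum_{j\in\IN}|c_j\gamma_j|\,|\gamma_j^{-1}\langle\tilde y,\eta_j\rangle_\Y|
\leq\lVert\Bc\rVert_{\ell_{\Brho}^2}\lVert\tilde y\rVert_{\tilde\Y}.
\]
So the series converges absolutely and $\zeta(\Bc)$ is a linear functional on $\tilde\Y$. Its norm satisfies $\lVert\zeta(\Bc)\rVert_{\tilde\Y^*}\le\lVert\Bc\rVert_{\ell_{\Brho}^2}$, and $\zeta$ is clearly linear in $\Bc$.

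For the reverse inequality, test with $\tilde y_N\coloneqq\sum_{j\le N}\gamma_j^2c_j\eta_j\in\tilde\Y$. Then
\[
\zeta(\Bc)(\tilde y_N)=\sum_{j\le N}\gamma_j^2c_j^2,\qquad
\lVert\tilde y_N\rVert_{\tilde\Y}=\Bigl(\sum_{j\le N}\gamma_j^2c_j^2\Bigr)^{1/2}.
\]
Hence $\lVert\zeta(\Bc)\rVert_{\tilde\Y^*}\ge\bigl(\sum_{j\le N}\gamma_j^2c_j^2\bigr)^{1/2}$ for every $N$, and letting $N\to\infty$ gives the isometry. In particular $\zeta$ is injective.

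The remaining step is surjectivity, which is the only real obstacle. Given $\phi\in\tilde\Y^*$, set $c_j\coloneqq\phi(\eta_j)$; note $\eta_j\in\tilde\Y$ with $\lVert\eta_j\rVert_{\tilde\Y}=\gamma_j^{-1}$. To see $\Bc\in\ell_{\Brho}^2$, apply $\phi$ to the same $\tilde y_N$:
\[
\sum_{j\le N}\gamma_j^2c_j^2=\phi(\tilde y_N)\le\lVert\phi\rVert_{\tilde\Y^*}\Bigl(\sum_{j\le N}\gamma_j^2c_j^2\Bigr)^{1/2},
\]
so $\sum_{j\le N}\gamma_j^2c_j^2\le\lVert\phi\rVert_{\tilde\Y^*}^2$ uniformly in $N$.

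Then $\zeta(\Bc)$ and $\phi$ agree on every $\eta_j$, hence on the finite linear span of $(\eta_j)_{j\in\IN}$. This span is dense in $\tilde\Y$: under the isometric isomorphism $\tau$ from the proof of Lemma \ref{lem:tY}, finite expansions in $\tilde\Y$ correspond to finite expansions in $\Y$, and the tails $\sum_{j>N}\gamma_j^{-2}\langle\tilde y,\eta_j\rangle_\Y^2$ tend to zero. Both functionals are continuous, so $\zeta(\Bc)=\phi$.

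Alternatively, surjectivity follows from the Riesz representation theorem on $\tilde\Y$: represent $\phi$ by some $w\in\tilde\Y$ and take $c_j=\gamma_j^{-2}\langle w,\eta_j\rangle_\Y$.
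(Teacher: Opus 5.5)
Your proof is correct, and it differs from the paper's mainly in the surjectivity step.

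\textbf{Shared part.} Well-definedness and the isometry use the same Cauchy--Schwarz argument as the paper. The only difference is that the paper tests directly with the full vector $\tilde z=\sum_{j\in\IN}c_j\gamma_j^2\eta_j\in\tilde\Y$, while you use its truncations. You also prove completeness of $\ell^2_{\Brho}$ explicitly via the scaling isometry onto $\ell^2$. The paper leaves this implicit; it follows once $\zeta$ is shown to be an isometric surjection onto the complete space $\tilde\Y^*$.

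\textbf{Paper's surjectivity argument.} The paper pulls $\varphi\in\tilde\Y^*$ back through the isometric isomorphism $\tau:\Y\to\tilde\Y$ from Lemma \ref{lem:tY}. It then represents $\tau^*\varphi\in\Y^*$ by a sequence $(d_j)_{j\in\IN}\in\ell^2$ via Riesz in $\Y$, and sets $b_j=\gamma_j^{-1}d_j$. Your alternative, Riesz in $\tilde\Y$ itself, is the same idea without the detour through $\tau$.

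\textbf{Your main surjectivity argument.} It uses no representation theorem. You read off the coefficients intrinsically as $c_j=\varphi(\eta_j)$. You then get $\Bc\in\ell^2_{\Brho}$ with $\lVert\Bc\rVert_{\ell^2_{\Brho}}\le\lVert\varphi\rVert_{\tilde\Y^*}$ by testing against the same truncated vectors. Here truncation is genuinely needed, since convergence of the full series in $\tilde\Y$ is exactly what is being proved. Continuity and density of finite spans in $\tilde\Y$ finish the argument. This is the classical argument for the duality $(\ell^p)^*\simeq\ell^{p'}$.

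\textbf{Trade-off.} Your route is slightly longer but more elementary, and it identifies $\zeta^{-1}$ explicitly as $\varphi\mapsto(\varphi(\eta_j))_{j\in\IN}$. The paper's route is shorter because it recycles $\tau$.
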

\begin{proof}
  Fix $\Bc\coloneqq(c_j)_{j\in\IN}\in\ell_{\Brho}^2$. Then for all $\tilde y\in\tilde \Y$ we have
  \begin{equation}\label{eq:Hnorm}
    \Big|\sum_{j\in\IN}c_j \dup[\Y]{\tilde y}{\eta_j}\Big|\le\Big(
    \sum_{j\in\IN}(c_j\gamma_j)^2 \sum_{j\in\IN} (\gamma_j^{-1}\dup[\Y]{\tilde y}{\eta_j})^2\Big)^{1/2}
    =\|\Bc\|_{\ell_{\Brho}^2}\|\tilde y\|_{\tilde\Y}.
  \end{equation}
  Hence $\zeta:\ell_{\Brho}^2\to\tilde\Y^*$ is well-defined, clearly injective,
  and $\|\zeta(\Bc)\|_{\tilde\Y^*}\le\|\Bc\|_{\ell_{\Brho}^2}$.

  Set $\tilde z \coloneqq \sum_{j\in\IN} c_j\gamma_j^2 \eta_j$. Due to $\sum_{j\in\IN} (c_j\gamma_j^2)^2\gamma_j^{-2}=\sum_{j\in\IN}c_j^2\gamma_j^{2}<\infty$ it holds $\tilde z\in\tilde \Y$. Plugging into \eqref{eq:Hnorm}, the inequality becomes an equality, which shows $\|\zeta(\Bc)\|_{\tilde\Y^*}\ge \|\Bc\|_{\ell_{\Brho}^2}$. Thus $\zeta$ is an isometry.

  It remains to show that $\zeta$ is surjective. Fix $\varphi\in\tilde\Y^*$ and let $\tau:\Y\to\tilde\Y$, $y\mapsto\sum_{j\in\IN}\gamma_j\dup[\Y]{y}{\eta_j}\eta_j$ be the isometric isomorphism from Lemma \ref{lem:tY}. Then also the dual map $\tau^*:\tilde\Y^*\to\Y^*$ is an isometric isomorphism, and especially $\tau^*\varphi\in\Y^*$. Thus there exists a sequence $(d_j)_{j\in\IN}\in\ell^2(\IN)$ such that for all $y\in\Y$
  \begin{equation*}
    (\tau^*\varphi)y = \sum_{j\in\IN} d_j \dup[\Y]{y}{\eta_j} = \sum_{j\in\IN}d_j \gamma_j^{-1}\dup[\Y]{\tau y}{\eta_j}.
  \end{equation*}
  Hence, setting $b_j\coloneqq d_j\gamma_j^{-1}$, $j\in\IN$, we have $\Bb\in\ell_{\Brho}^2$ and $\varphi$ corresponds to $\zeta(\Bb)$.
\end{proof}

\subsubsection{Proof of Lemma \ref{lem:white_noise_construction}}\label{app:white_noise_construction}
 Throughout this proof, we set
  \begin{equation*}
    \eta_j^*\coloneqq (\iota^*\circ R_\Y)\eta_j \in\tilde\Y^*,
  \end{equation*}
  and note that its action on some $\tilde y \in \tilde\Y$ is given as
  \begin{equation*}
    \eta_j^*(\tilde y) = \iota^*\circ R_\Y\eta_j(\tilde y) = R_\Y\eta_j (\iota\tilde y)
    =\dup[\Y]{\eta_j}{\tilde y}.
  \end{equation*}

  {\bf Step 1.} We first show that \eqref{eq:limtE} converges in $L^2(\Omega,\IP,\tilde\Y^*)$. By definition of a white noise process, the $\xi_j$ are i.i.d.\ $\N(0,1)$ distributed random variables. For any $\tilde y = \sum_{j\in\IN}c_j\eta_j$ with $\|\tilde y\|_{\tilde\Y}^2 = \sum_{j\in\IN} c_j^2\gamma_j^{-2}\le 1$, it holds $\eta_j^*(\tilde y)=c_j$ where $|c_j|\le \gamma_j$. Thus
  \begin{equation}\label{eq:normejst}
    \|\eta_j^*\|_{\tilde \Y^*}=\gamma_j.
  \end{equation}
    Since $(\gamma_j)_{j\in\IN}\in\ell^2(\IN)$, the sequence $\sum_{j\in\IN}\xi_j\eta_j^*$ converges in $L^2(\Omega,\IP,\tilde\Y^*)$, and by Parseval's identity its squared norm equals $\sum_{j\in\IN}\gamma_j^2<\infty$. Hence $E\in L^2(\Omega,\IP,\tilde\Y^*)$ is well defined.

{\bf Step 2.} We show \eqref{eq:IWchar}. As mentioned above, the map $y\mapsto\IW_\Y(y)\in L^2(\Omega,\IP,\IR)$ is a linear isometry (in particular continuous). Hence for every $y\in\Y$
  \begin{equation}\label{eq:IWYconv}
    \IW_\Y(y)=\lim_{N\to\infty} \IW_\Y(P_Ny)=
    \lim_{N\to\infty} \sum_{j=1}^N \dup[\Y]{y}{\eta_j} \IW_\Y(\eta_j)
    =
    \lim_{N\to\infty} \sum_{j=1}^N \dup[\Y]{y}{\eta_j} \xi_j,
  \end{equation}
  with convergence in $L^2(\Omega,\IP,\IR)$.

  Since $E\in L^2(\Omega,\IP,\tilde\Y^*)$ is defined as the $L^2(\Omega,\IP,\tilde\Y^*)$ limit of \eqref{eq:limtE}, we may exchange the (continuous) duality pairing with the limit to obtain for $\tilde y\in\tilde \Y$
  \begin{equation*}
    {}_{\tilde \Y^*}\dup[\tilde\Y]{E}{\tilde y}=
    \lim_{N\to\infty}\sum_{j=1}^N \xi_j \eta_j^*(\tilde y)
    =
    \lim_{N\to\infty}\sum_{j=1}^N \dup[\Y]{\tilde y}{\eta_j}\xi_j.
  \end{equation*}
  Together with \eqref{eq:IWYconv} this shows $\IW_\Y(\tilde y)={}_{\tilde\Y^*}\dup[\tilde\Y]{E}{\tilde y}$ for all $\tilde y\in \tilde\Y$.

  Finally, if $y\in\Y$, then $P_Ny\in\tilde\Y$ and $\lim_{N\to\infty}P_N y = y$ in $\Y$. Thus using again the continuity of $\IW_\Y:\Y\to L^2(\Omega,\IP,\IR)$,
  \begin{equation*}
    \IW_\Y(y)=\lim_{N\to\infty} \IW_\Y(P_N y) = \lim_{N\to\infty}{}_{\tilde\Y^*}\dup[\tilde\Y]{E}{P_N y}
  \end{equation*}
  with convergence in $L^2(\Omega,\IP,\IR)$.

  {\bf Step 3.} Lastly we consider the Cameron--Martin space $H_E$ of the measure induced by $E$. To this end we first note that with $\zeta:\ell^2_{\Brho}\to\tilde\Y^*$ as in Lemma \ref{lem:tYstar} and $\boldsymbol{\delta}_j\coloneqq(\delta_{jk})_{k\in\IN} $,
  \begin{equation*}
    \eta_j^* = \zeta (\boldsymbol{\delta}_j)\quad\text{and thus}\quad E = \sum_{j\in\IN}\xi_j \zeta (\boldsymbol{\delta}_j).
  \end{equation*}

  Therefore the covariance operator $\mathcal C:\tilde\Y^*\to\tilde\Y^*$ satisfies for $\Bb,\Bc,\in\ell^2_{\Brho}$,
  \begin{align*}
    \dup[\tilde\Y^*]{\mathcal C \zeta\Bb}{\zeta\Bc}&=\int_{\Omega} \dup[\tilde\Y^*]{E(\omega)}{\zeta\Bb}
                                  \dup[\tilde\Y^*]{E(\omega)}{\zeta\Bc}\dd\IP(\omega)\\
                                &=\int_\Omega \Big(\sum_{j\in\IN} \xi_j(\omega) b_j\gamma_j^2\Big)\Big(\sum_{i\in\IN} \xi_i(\omega) c_i\gamma_i^2\Big)\dd\IP(\omega)\\
                                &=\sum_{j\in\IN}b_jc_j\gamma_j^4\\
                                    &=\sum_{j\in\IN}\dup[\ell^2_\Brho]{(\gamma_j^{-2}\delta_{jk})_{k\in\IN}}{\Bb}\dup[\ell^2_\Brho]{(\gamma_j^{-2}\delta_{jk})_{k\in\IN}}{\Bc}\gamma_j^4\\
                                &=\sum_{j\in\IN}\dup[\tilde\Y^*]{\eta_j^*}{\zeta\Bb}\dup[\tilde\Y^*]{\eta_j^*}{\zeta\Bc}
  \end{align*}
  where we used the isometry property of $\zeta$ to write inner products in $\tilde\Y^*$ as inner products in $\ell^2_\Brho$.
  
  Since this holds for all $\Bb, \Bc \in \ell^2_{\Brho}$, the operator $\mathcal C  $ acting on any $v \in \tilde\Y^*$ is given by
  \begin{equation*}
    \mathcal C   v = \sum_{j\in\IN} \dup[\tilde\Y^*]{\eta_j^*}{v}\eta_j^*
    = \sum_{j\in\IN} \gamma_j^2\dupb[\tilde\Y^*]{\eta_j^*\gamma_j^{-1}}{v}\eta_j^*\gamma_j^{-1}.
  \end{equation*}
  By \eqref{eq:normejst}, this is the spectral decomposition of $\mathcal C  $, and the eigenvalues $(\gamma_j^2)_{j\in\IN}$ are summable.
  
  Denote $e_j^*\coloneqq \eta_j^*/\gamma_j$, which gives a set of orthonormal vectors in $\tilde\Y^*$. The Cameron--Martin (CM) space $H_E$ is the image of $\mathcal C  ^{1/2}:\tilde\Y^*\to\tilde\Y^*$, which is given by
  \begin{equation*}
      \mathcal C  ^{1/2} v = \sum_{j\in\IN} \gamma_j \dup[\tilde\Y^*]{e_j^*}{v} e_j^* = \sum_{j\in\IN} \dup[\tilde\Y^*]{e_j^*}{v} \eta_j^*,
    \end{equation*}
    equipped with the inner product $\dup[H_E]{\cdot}{\cdot}\coloneqq \dup[\tilde\Y^*]{\mathcal C  ^{-1/2}\cdot}{\mathcal C  ^{-1/2}\cdot}$. Thus $\mathcal C  ^{1/2}e_j^*=\eta_j^*$, $j\in\IN$, forms an orthonormal basis of $H_E$.

    Finally, consider again the continuous dense embedding $d:=\iota^*\circ R_\Y : \Y \to \tilde\Y^*$. Since it maps the orthonormal basis $(\eta_j)_{j\in\IN}$ of $\Y$ to the orthonormal basis $(\eta_j^*)_{j\in\IN}$ of $H_E$, it defines a bijective isometry from $\Y$ onto $H_E$.

    \subsection{Posterior Existence}

    \subsubsection{Proof of Lemma \ref{lemma:GXZ}}\label{sec:lemma:GXZ}

      Throughout this proof we will use that $T$ in \eqref{eq:T} is a measurable map as a composition
  of measurable maps, due to Assumptions \ref{ass:data} and \ref{ass:meas} and the fact that all spaces are separable
  Hilbert spaces equipped with the Borel $\sigma$-algebras, so that additions are continuous.
  
  {\bf \hypertarget{hyp:step1lemjoint}{Step 1}.} We show \ref{item:ZGX}. Let $\Phi$ and $\pPhi$
  be as in Assumption \ref{ass:generalassump} \ref{item:general1} and \eqref{eq:pPhi},
  and define
  \begin{equation}\label{eq:Ddef}
    D:\B(\pZ)\times (\calG\times\pX)\to\IR: (B_{\pZ},(g,\px))\mapsto\int_{B_{\pZ}} \exp(-\pPhi(g,\px,\pz))\dd\IP_{\pEE}(\pz).
  \end{equation}
  To show that $D=\IP_{\pZZ|(G,\pXX)}$ as desired, we verify that $D$ satisfies
  Definition \ref{def:regcond}
  \ref{item:conditional1}-\ref{item:conditional3}:

  \begin{enumerate}[label=(\roman*)]
  \item Due to Assumption \ref{ass:generalassump} \ref{item:general2},
    $\exp(-\pPhi):\calG\times\pX\times\pZ\to\IR$ is jointly measurable. %
    Tonelli's theorem then implies that
    $D(B_{\pZ},\cdot):(\calG\times\pX)\to\IR$ in
    \eqref{eq:Ddef} is measurable for all $B_{\pZ}\in\B(\pZ)$.
\item By Assumption \ref{ass:generalassump} \ref{item:general1}, there exists a
  $(\IP_G\otimes \IP_{X})$-null-set $N_{\calG,\X}\subseteq \calG\times \X$ such that
  \begin{equation*}
     \frac{\dd (T_{d\circ g(x)})_\sharp\IP_E}{\dd \IP_E}=\exp(-\Phi(g,x,\cdot))
  \end{equation*}
  for all $(g,x)\notin N_{\calG,\X}$. Using independence of the $E_i$ and Tonelli's theorem we easily get $(T_{d\circ g(\px)})_\sharp\IP_{\pEE} = \otimes_{i=1}^n (T_{d\circ g(x_i)})_\sharp \IP_E\ll \IP_{\pEE}$
  and
  \begin{equation}\label{eq:exppPhi}
    \frac{\dd (T_{d\circ g(x)})_\sharp\IP_{\pEE}}{\dd \IP_{\pEE}}(\pz)=
    \prod_{i=1}^n\exp(-\Phi(g,x_i,z_i))
    =\exp(-\pPhi(g,\px,\pz))
  \end{equation}
for each $(g,\px)\notin N_{\calG,\pX}\coloneqq\bigcup_{i=1}^n\pi_i^{-1}(N_{\calG,\X})$, where
  $\pi_i:\calG\times\pX\to\calG\times\X$, $(g,\px)\mapsto (g,x_i)$.
  Equation \eqref{eq:exppPhi} implies that $D(\cdot,(g,\px))$ in \eqref{eq:Ddef} is a probability
  measure for $(g,\px)\notin N_{\calG,\pX}$. Since each $\pi_i$ is measurable with
  $(\pi_i)_\sharp\IP_{(G,\pXX)}=\IP_G\otimes\IPX$, the set $N_{\calG,\pX}$ is a measurable nullset.  Redefining $D(\cdot,(g,\px))\coloneqq\IP_{\pEE}$ for $(g,\px)\in N_{\calG,\pX}$, which affects
  neither the measurability shown in (i) nor the identity in (iii) below,
  shows that $D(\cdot,(g,\px))$ is a probability measure for \emph{every} $(g,\px)\in\calG\times\pX$. This gives \ref{item:conditional2}.
\item
  Fix $B_{\pZ}\in\B(\pZ)$, define $\IP_{(G,\pXX)}\coloneqq \IP_G\otimes\IPpX$ and let $B_{\calG,\pX}\in\B(\calG\times\pX)$. Then
  by \eqref{eq:exppPhi}
  \begin{align*}
    &\int_{B_{\calG,\pX}}\underbrace{\int_{B_{\pZ}} \exp(-\pPhi(g,\px,\pz)) \dd\IP_{\pEE}(\pz)}_{=D(B_{\pZ},(g,\px))}\dd\IP_{(G,\pXX)}(g,\px)\\
    &\qquad= \int_{B_{\calG,\pX}\backslash N_{\calG,\pX}}\int_{\pZ}\One_{B_{\pZ}}(\pz)
      \dd (T_{d\circ g(\px)})_{\sharp}\IP_{\pEE}(\pz)\dd\IP_{(G,\pXX)}(g,\px)\\
    &\qquad= \int_{B_{\calG,\pX}}
      \int_{\pZ}\One_{B_{\pZ}}(d\circ g(\px)+\pe)\dd\IP_{\pEE}(\pe)\dd\IP_{(G,\pXX)}(g,\px)\\
    &\qquad= \int_{\calG\times \pX}\One_{B_{\calG,\pX}}(g,\px)
      \int_{\pZ}\One_{B_{\pZ}}(d\circ g(\px)+\pe)\dd\IP_{\pEE}(\pe)\dd\IP_{(G,\pXX)}(g,\px)\\
    &\qquad = \int_{\calG\times\pX\times\pZ}\One_{B_{\calG,\pX}\times B_{\pZ}}(T(g,\px,\pe)) \dd\IP_{(G,\pXX,\pEE)}(g,\px,\pe),
  \end{align*}
  where in the last equality we used Tonelli's theorem and the fact
  that $T^{-1}(B_{\calG,\pX}\times B_{\pZ})$ is a measurable set.
  By \eqref{eq:GXZ}, the integral thus evaluates to
  \begin{align*}
    \IP_{(G,\pXX,\pEE)}\set{(g,\px,\pe)}{T(g,\px,\pe)\in B_{\calG,\pX}\times B_{\pZ}}&= T_\sharp\IP_{(G,\pXX,\pEE)}(B_{\calG,\pX}\times B_{\pZ})\\
    &= \IP_{(G,\pXX,\pZZ)}(B_{\calG,\pX}\times B_{\pZ}).
    \end{align*}
  \end{enumerate}
  This concludes the proof of \ref{item:ZGX}.

  {\bf \hypertarget{hyp:step2lemjoint}{Step 2}.} For \ref{item:GXZ}, we use that $\Phi$ satisfies both
  items of Assumption \ref{ass:generalassump}, so that in particular $\pPhi$
  is jointly measurable. By \hyperlink{hyp:step1lemjoint}{Step 1} we have for any $B_{\calG,\pX}\in\B(\calG\times\pX)$,
  $B_{\pZ}\in\B(\pZ)$
  \begin{align*}
    \IP_{(G,\pXX,\pZZ)}(B_{\calG,\pXX}\times B_{\pZ}) &= \int_{B_{\calG,\pX}}\int_{B_{\pZ}} \exp(-\pPhi(g,\px,\pz)) \dd\IP_{\pEE}(\pz)\dd\IP_{(G,\pXX)}(g,\px)\\
    &=\int_{B_{\calG,\pX}\times B_{\pZ}}\exp(-\pPhi(g,\px,\pz))\dd\IP_{(G,\pXX,\pEE)}(g,\px,\pz)
  \end{align*}
  where the second equality follows again by Tonelli's theorem and
  the fact that $\exp(-\pPhi)$ is jointly measurable. This proves \ref{item:GXZ}.

\subsubsection{Proof of Theorem \ref{th:existenceposterior}}\label{app:existenceposterior}
By Assumption \ref{ass:generalassump} \ref{item:general2}, Lemma \ref{lemma:GXZ} \ref{item:GXZ}, and Tonelli's theorem, we have
\begin{align}\label{eq:Zfin}
  \int_{\pX\times\boldsymbol{\Z}}\mathfrak Z(\px,\pz)\dd(\IPpX\otimes\IPpE)(\px,\pz)&=\int_{\calG\times \pX\times\boldsymbol{\Z}}
      \exp\left(-\pPhi(g,\px,\pz)\right)\dd\IP_{(G,\pXX,\pEE)}(g,\px,\pz)\nonumber\\
  &=\IP_{(G,\pXX,\pZZ)}(\calG\times \pX\times\boldsymbol{\Z})=1.
\end{align}
Moreover, the joint measurability of $\pPhi$ implies measurability of $\mathfrak Z$ in \eqref{eq:Zxy},
so that
\begin{equation}\label{eq:NXZ}
  N_{\pX,\pZ}:=(\pX\times\pZ)\backslash \set{(\px,\pz)\in\pX\times\pZ}{0<\mathfrak Z(\px,\pz)<\infty}\subset \pX\times\pZ
\end{equation}
is a measurable set. Due to \eqref{eq:Zfin} and the assumption that $\mathfrak Z(\px,\pz)>0$
$\IP_{(\pXX,\pEE)}$-a.s., $N_{\pX,\pZ}$ is a $\IP_{(\pXX,\pEE)}$-null set.

Define for $B_{\calG}\in \B(\calG)$ and $(\px,\pz)\in\pX\times\pZ$
\begin{subequations}\label{eq:Ddefconditional}
\begin{equation}\label{eq:Ddefint}
  I(g,(\px,\pz)):=
    \begin{cases}
      \frac{1}{\mathfrak Z(\px,\pz)}\exp\left(-\pPhi(g,\px,\pz)\right)&\text{if }%
                                                              (\px,\pz)\notin N_{\pX,\pZ}
      \\
      1&\text{if }%
                                                              (\px,\pz)\in N_{\pX,\pZ}
  \end{cases}
\end{equation}
and
\begin{equation}\label{eq:Ddefcond}
  D(B_{\calG},(\px,\pz)):=\int_{\calG} \One_{B_{\calG}}(g) I(g,(\px,\pz)) \dd\IP_G(g).
\end{equation}
\end{subequations}
To show that $D=\IP_{G|(\pXX,\pZZ)}$ as desired, we verify Definition \ref{def:regcond} \ref{item:conditional1}--\ref{item:conditional3}:

\begin{enumerate}[label=(\roman*)]
\item Let $B_{\calG}\in\B(\calG)$. By assumption $\exp(-\pPhi)$ is
  jointly measurable and non-negative. Thus (as mentioned) $\mathfrak Z$ in
  \eqref{eq:Zxy} is jointly measurable by Tonelli's
  theorem, and consequently $N_{\pX,\pZ}$ in \eqref{eq:NXZ} is a
  measurable set. %
  Since $B_{\calG}\in\B(\calG)$, the integrand $\One_{B_{\calG}}(g)I(g,\px,\pz)$ in
  \eqref{eq:Ddefcond} is non-negative and jointly measurable. Another
  application of Tonelli's theorem implies measurability of
  $(\px,\pz)\mapsto D(B_{\calG},(\px,\pz))$.
\item Since $I:\calG\times \pX\times\pZ\to\IR$ in \eqref{eq:Ddefint} is jointly
  measurable, for every fixed
  $(\px,\pz)\in\pX\times\pZ$, it is measurable as a function of $g\in\calG$. Hence, for
  each $(\px,\pz)$, $D(\cdot,(\px,\pz))$ in \eqref{eq:Ddefconditional}
  defines a probability measure on $(\calG,\B(\calG))$.
\item 
  Lemma \ref{lemma:GXZ} \ref{item:GXZ} implies that $\IP_{(\pXX,\pEE)}$-a.e.
  \begin{align}\label{eq:XZXEequivalence}
    \frac{\dd\IP_{(\pXX,\pZZ)}}{\dd\IP_{(\pXX,\pEE)}}(\px,\pz)=\int_{\cal{G}}\exp\left(-\pPhi(g,\px,\pz)\right)\dd\IP_G(g)=\mathfrak Z(\px,\pz).
  \end{align}  
  Since $\IP_{(\pXX,\pZZ)}\ll \IP_{(\pXX,\pEE)}=(\IPpX\otimes\IPpE)$ by Lemma \ref{lemma:GXZ} \ref{item:GXZ},
$N_{\pX,\pZ}$ in \eqref{eq:NXZ} is a $\IP_{(\pXX,\pZZ)}$-null set.
  
  Hence, for $B_{\calG}\in\B(\calG)$, $B_{\pX,\pZ}\in\B(\pX\times\pZ)$
\begin{align*}
    &\int_{B_{\pX,\pZ}}D(B_{\calG},(\px,\pz))\dd \IP_{(\pXX,\pZZ)}(\px,\pz)\\
    =~&\int_{B_{\pX,\pZ}\backslash N_{\pX,\pZ}}\mathfrak Z(\px,\pz)^{-1}\int_{B_\calG}\exp\left(-\pPhi(g,\px,\pz)\right)\dd\IP_G(g)\underbrace{\dd \IP_{(\pXX,\pZZ)}(\px,\pz)}_{=\mathfrak Z(\px,\pz)\dd\IP_{(\pXX,\pEE)}(\px,\pz)}\\
    =~&\int_{B_{\calG}\times B_{\pX,\pZ}}\exp\left(-\pPhi(g,\px,\pz)\right)\dd\IP_{(G,\pXX,\pEE)}(g,\px,\pz)\\
    =~&\IP_{(G,\pXX,\pZZ)}(B_{\calG}\times B_{\pX,\pZ}),
\end{align*}
where in the last equality we used the characterization of the joint distribution $\IP_{(G,\pXX,\pZZ)}$ from Lemma \ref{lemma:GXZ} \ref{item:GXZ}. 
\end{enumerate}

In all this shows $D=\IP_{G|(\pXX,\pZZ)}$. The formula for the Radon--Nikodym derivative
\eqref{eq:generalposteriorform} follows directly by definition \eqref{eq:Ddefconditional}
and the fact that $N_{\pX,\pZ}$ is a $\IP_{(\pXX,\pZZ)}$-null set.

\subsubsection{Proof of Theorem \ref{th:PosteriorGaussianNoise}}\label{app:proofposteriorgaussian}
We aim to apply Theorem \ref{th:existenceposterior} and therefore need to verify Assumptions \ref{ass:data}--\ref{ass:generalassump} as well as \eqref{eq:Zxy}.

  {\bf \hypertarget{hyp:step1postexistence}{Step 1} (Assumptions \ref{ass:data} and \ref{ass:meas}).}
  Assumption \ref{ass:data} (on the data generation) is implied by the more specified Setting \ref{set:gaussiansetup}  and the fact that $d:\Y\to\tilde\Y^*$ is bounded linear (hence continuous), and thus Borel-measurable.

  Assumption \ref{ass:meas} requires that $\calG\times\X\to\Y:(g,x)\mapsto g(x)$ is measurable. Since $\X$, $\Y$ are metric spaces, $\calG=C^0(K)$ and $K\subset\X$ is compact (by Setting \ref{set:gaussiansetup}), the evaluation map $\calG\times\X\to\Y:(g,x)\mapsto g(x)$ is
  continuous and thus
  Borel-measurable w.r.t.\ the product
  topology on $\calG\times\X$.

{\bf \hypertarget{hyp:step2postexistence}{Step 2} (Assumption \ref{ass:generalassump} \ref{item:general1}).} 
Since $\IPpE$ is a Gaussian measure on the separable Hilbert space $\bs{\tilde\Y^*}$, the Cameron--Martin theorem \cite[Corollary 2.4.3]{bogachev1998gaussian} shows
$ (T_{d\circ g(x)})_\sharp\IP_E\ll \IPpE$ iff $d\circ g(\px)\in\ H_{\boldsymbol E}$ which is the case for all $g\in\calG=C^0(K)$ and $\px\in \pK\coloneqq\supp(\IPpX)$ 
since $d:\Y\to H_E$ by Lemma \ref{lem:white_noise_construction}.
This shows Assumption \ref{ass:generalassump} \ref{item:general1}.

{\bf \hypertarget{hyp:step3postexistence}{Step 3} (expression for $\pPhi$).} 
The Cameron--Martin Theorem \cite[Corollary 2.4.3]{bogachev1998gaussian} shows that for $(\IP_G\otimes\IPpX)$-a.e.\ $(g,\px)$
\begin{align*}
    \frac{\dd\IP_{\pZZ|(G,\pXX)}(\cdot,g,\px)}{\dd\IPpE}(\pz)=\exp\left(- \frac{1}{2}\biggl(\sumin\lVert g(x_i)\rVert_{\Y}^2-2R_{\IPE}^{-1}(d\circ g(x_i))(z_i)\biggr)\right)
\end{align*}
with $d=\iota^*\circ R_{\Y}:\Y\to\tilde\Y^*$ from \eqref{eq:disomorphism},
\begin{equation*}
R_{\IPE}:\tilde\Y_{\IPE}^{**}\to\tilde\Y^*, ~~R_{\IPE}f=\int_{\tilde\Y^*}zf(z)\dd\IPE(z),
\end{equation*}
and where $\tilde\Y_{\IPE}^{**}$ is the $L^2(\IPE)$-closure of $\tilde\Y^{**}$, see \cite[p.~44]{bogachev1998gaussian}. 
To conclude the claim we show $R_{\IPE}\dup[\sim]{\cdot}{y}= d(y)\in\tilde\Y^*$ for all $y\in\Y$ which is equivalent to  
$${}_{\tilde\Y^*}\dup[\tilde\Y]{R_{\IPE}\dup[\sim]{\cdot}{y}}{\tilde y}={}_{\tilde\Y^*}\dup[\tilde\Y]{d(y)}{\tilde y}=\dup[\Y]{y}{\tilde y}\qquad\text{for all }y\in\Y,~\tilde y\in\tilde\Y,$$
where $\dup[\Y]{\cdot}{\cdot}$ denotes the scalar product in $\Y$.
Using the definition of $R_{\IPE}$, we have for $y\in\Y$ and $\tilde y\in\tilde\Y$,
\begin{align}\label{eq:prooftildeloglikelihood}
    {}_{\tilde\Y^*}\dup[\tilde\Y]{R_{\IPE}\dup[\sim]{\cdot}{y}}{\tilde y}&={}_{\tilde\Y^*}\left\langle  \int_{\tilde\Y^*}z \dup[\sim]{z }{y}\dd\IPE(z )\,,\,\tilde y\right\rangle{}_{\tilde\Y}\notag\\
&=  \int_{\tilde\Y^*}{}_{\tilde\Y^*}\dup[\tilde\Y]{z }{\tilde y}\dup[\sim]{z }{y}\dd\IPE(z )\notag\\
&=\lim_{N\to\infty}\int_{\tilde\Y^*}{}_{\tilde\Y^*}\dup[\tilde\Y]{z }{\tilde y}\,{}_{\tilde\Y^*}\dup[\tilde\Y]{z }{P_N y}\dd\IPE(z ).
\end{align}
In \eqref{eq:prooftildeloglikelihood} we used \eqref{eq:IWchar2} and that for fixed $y\in\Y$
it holds that for $\IP_E$-a.e.~$z\in\tilde\Y^*$ we have
$\langle z ,y \rangle_\sim=\lim_{N\to\infty} {}_{\tilde\Y^*}\langle z ,P_Ny\rangle_{\tilde\Y}$ 
by Corollary \ref{cor:existencetildemapping}. 
Using \eqref{eq:WN} and Corollary \ref{cor:existencetildemapping}, we get for $y\in\Y$ and $\tilde y\in\tilde\Y$,

\begin{align*}
\lim_{N\to\infty} \int_{\tilde\Y^*}{}_{\tilde\Y^*}\dup[\tilde\Y]{z }{\tilde y}\,{}_{\tilde\Y^*}\dup[\tilde\Y]{z }{P_N y}\dd\IPE(z )&=\lim_{N\to\infty} \IE[\IW_\Y(\tilde y) \IW_\Y(P_N y)]\\
&=\lim_{N\to\infty}\dup[\Y]{\tilde y}{P_Ny}=\dup[\Y]{\tilde y}{y}.
\end{align*}

{\bf \hypertarget{hyp:step4postexistence}{Step 4} (Assumption \ref{ass:generalassump} \ref{item:general2}).} 
It suffices to show Borel measurability of the map
$$\phi:\calG\times\X\times\tilde\Y^*\to\IR,~~\phi(g,x,z )=\frac 12\lVert g(x)\rVert_{\Y}^2-\langle z ,g(x)\rangle_{\sim}$$ as summation and exponentiation are Borel measurable in $\IR$. The first summand is measurable as the norm $\lVert\cdot\rVert_\Y$ and the map $\calG\times\X\to\IR$, $(g,x)\mapsto g(x)$ are continuous and thus Borel measurable. The second summand is measurable, as measurability of the map $\langle\cdot,\cdot\rangle_\sim$ is guaranteed by Corollary \ref{cor:existencetildemapping}.

{\bf \hypertarget{hyp:step5postexistence}{Step 5} ($\mathfrak Z(\px,\pz)>0$).}
We have $\pPhi(g,\px,\pz)<\infty$ on $\calG\times\pK\times\boldsymbol{\tilde\Y^*}$ by the definition of $\langle\cdot,\cdot \rangle_\sim$ in Corollary \ref{cor:existencetildemapping}. Thus, for all $(\px,\pz)\in\pK\times\boldsymbol{\tilde\Y^*}$, we have
\begin{align*}
    \mathfrak Z(\px,\pz)=\int_{\calG}\exp(-\pPhi(g,\px,\pz))\dd\IP_G(g)>0.
\end{align*}

\subsection{Prior Measure}\label{app:priormeasure}
\subsubsection{Proof of Lemma \ref{lem:decay}}\label{app:decayholo}
We divide the proof into two steps.

{\bf Step 1 ($\boldsymbol{G\in C^0(K)}$ a.s.).}
Let $G\sim \IP_G$ and denote the (random) basis coefficients of $G$ w.r.t.~the ONB $(\psij)_{j\in\IN}$ by $G_j$, i.e.~$G_j=\langle G,\psij\rangle_{L^2(K)}$
and let $a\coloneqq \sum_{j\in\IN}|G_j|\omegaj$. 
In the Gaussian case from Example \ref{ex:GaussianPrior}, the RVs $a_j\coloneqq \omega_j| G_j|$ are half-normal with $\mu_j\coloneqq\IE[a_j]=\omega_j\kappaj\sqrt{2/\pi}$. 
Then $\sum_{j\in\IN}\kappaj\omegaj<\infty$ implies $\sumj\mu_j<\infty$. In the uniform case from Example \ref{ex:UniformPrior} we have $|G_j|\sim\text{Unif}([0,\kappaj])$, i.e.~$\mu_j=\frac{\kappaj\omegaj}{2}$ and thus also $\sumj\mu_j<\infty$ in this case. The monotone convergence theorem then shows $a<\infty$ a.s.~in both cases.
Now we define the operator sequence
    \begin{align*}
        \tilde{Q}_N: K\to\Y,\quad \tilde{Q}_N(x)=\sumjN  G_j \psij(x),~~N\in\IN.
    \end{align*}
    Each $\tilde{Q}_N$ is continuous and bounded, since the basis elements $\psij$ are continuous by assumption (and thus bounded on every compact set).
    It holds for $N,M\in\IN$, $M<N$,
    \begin{align}
        \sup_{x\in K}\lV \tilde{Q}_N(x)-\tilde{Q}_M(x)\rV_{\Y}&\leq \sup_{x\in K} \left\lV \sum_{M<j\leq N} \psij G_j\right\rV_{\Y}\leq \sum_{M<j\leq N} \omegaj | G_j| \overset{M,N\to\infty}{\xrightarrow{\hspace{1cm}}} 0\label{eq:C0convergencePn}
    \end{align}
    where we used that a.s.~$a<\infty$. Since $(C^0(K),\lVert\cdot\rVert_{\infty})$ is a Banach space, we get $G\in C^0(K)$ a.s.

{\bf Step 2 ($\IP_G$ induces a measure on $\boldsymbol{C^0(K)}$).}
For $N\in\IN$, let $g_j\coloneqq \langle g,\psi_j\rangle_{L^2(K)}$ and 
\begin{align}\label{eq:QN}
Q_N:L^2(K)\to C^0(K),\qquad Q_Ng\coloneqq \sum_{j\le N}g_j\psi_j 
\end{align}
which is bounded linear with
$\lVert Q_Ng\rVert_\infty\le\big(\sum_{j\le N}\omega_j^2\big)^{1/2}
\lVert g\rVert_{L^2(K)}$. Now define
\begin{align*}
\Lambda\coloneqq\biggl\{g\in L^2(K):\sum_{j\in\IN}\omega_j\lvert g_j\rvert<\infty\biggr\}.
\end{align*}
Since $g\mapsto\sum_{j\in\IN}\omega_j\lvert g_j\rvert$ is
a pointwise limit of continuous functions and hence Borel measurable we have
$\Lambda\in\B(L^2(K))$. Also $\IP_G(\Lambda)=1$ following Step 1. 
For $g\in\Lambda$ and $M,N\in\IN$, $M<N$, we
have 
$$\lVert Q_Ng-Q_Mg\rVert_\infty\le\sum_{M<j\le N}\omega_j\lvert g_j\rvert\to0,\qquad M,N\to\infty,$$
 see \eqref{eq:C0convergencePn}, so $(Q_Ng)_{N\in\IN}$ converges in the Banach space
$(C^0(K),\lVert\cdot\rVert_\infty)$. Thus
\begin{align*}
    Q:L^2(K)\to C^0(K),\qquad
    Qg\coloneqq \lim_{N\to\infty}Q_Ng\ \text{ for }g\in\Lambda,\qquad Qg\coloneqq 0\ \text{ else},
\end{align*}
is Borel measurable and
$\IP_G^{C^0}\coloneqq Q_\#\IP_G$ is a Borel probability measure on $(C^0(K),\B(C^0(K)))$.
Finally, $\lVert\cdot\rVert_{L^2(K)}\le\lVert\cdot\rVert_\infty$ and
$Q_Ng\to g$ in $L^2(K)$ give $Qg=g$ in $L^2(K)$ for every $g\in\Lambda$, so
$\IP_G^{C^0}$ is a version of $\IP_G$ supported on $C^0(K)$. 
This shows the claim.
  
\section{Proofs of Section \ref{sec:contraction}}\label{app:proofscontraction}
\subsection{Proof of Lemma \ref{lem:posteriorexpectationexistence}}\label{app:proofexistenceGnhat}
    Recall $\mathcal G=C^0(K)$ and abbreviate $\lVert\cdot\rVert_{\infty}\coloneqq \lVert\cdot\rVert_{C^0(K)}$. We show that the operator
    \begin{align*}
    F:\calG\to\calG,~~F(g)=g\exp\left(-\pPhi(g,\px,\pz)\right)
    \end{align*}
    belongs $(\px,\pz)$ $(\IPpX\otimes\IPpE)$-a.s.~to the Bochner space $L^1(\calG,\IP_G,\calG)$. 
    
    Set $\tomega_j\coloneqq\omega_j\max\{1,\log(j)^{1/2}\}$, so \eqref{eq:strongerdecayproperty} reads $\sum_{j\in\IN}\kappa_j\tomega_j<\infty$. Applying Lemma \ref{lem:envelopelemma} we have for $x_i\in K$, $z_i\in\tilde\Y^*$ and $i=1,\dots,n$,
        \begin{align}\label{eq:envelopebound}
        |\langle z_i,\psi_j(x_i)\rangle_\sim|\le \tfrac1n C(\px,\pz)\,\tomega_j,\qquad j\in\IN
    \end{align}
    with the  $(\IPpX\otimes\IPpE)$-a.s.~finite constant 
    $$C(\px,\pz)\coloneqq n\max_{1\le i\le n}M(x_i,z_i)$$
    and $M$ from Lemma \ref{lem:envelopelemma}. 

    Fix $g\in\calG_0\coloneqq\{g\in C^0(K):\sumj\tomega_j|\gj|<\infty\}$, which is a $\IP_G$-full set by \eqref{eq:strongerdecayproperty}, so that $g=\sum_{j\in\IN}g_j\psi_j$ with $g_j=\langle g,\psi_j\rangle_{L^2(K)}$ converges in $\lVert\cdot\rVert_\infty$, see \eqref{eq:C0convergencePn}. The log-likelihood from \eqref{eq:phigeneral} can be bounded by
    \begin{align}\label{eq:Phicrossbound}
        -\pPhi(g,\px,\pz)=\sum_{i=1}^n\left(-\tfrac12\lVert g(x_i)\rVert_\Y^2+\langle z_i,g(x_i)\rangle_\sim\right)\le\sum_{i=1}^n\langle z_i,g(x_i)\rangle_\sim.
    \end{align}
   Fix $i\in\{1,\dots,n\}$ and consider the partial sums
\begin{align*}
    Q_N^i\coloneqq\sumjN\gj\dup[\sim]{z_i}{\psij(x_i)},\quad N\in\IN.
\end{align*}
We first show that $Q_N^i$ converges absolutely $g$ $\IP_G$-a.s. Using \eqref{eq:envelopebound} we have
\begin{align}\label{eq:SNiestimate}
    |Q_N^i|\leq\sumj|\gj|\left\lvert\dup[\sim]{z_i}{\psij(x_i)}\right\rvert\leq\tfrac1n C(\px,\pz)\sumj\tomega_j|\gj|.
\end{align}
For the uniform prior from Example \ref{ex:UniformPrior} we have $|\gj|\leq\kappaj$, and for the Gaussian prior from Example \ref{ex:GaussianPrior} we have $\IE\sumj\tomega_j|\gj|=\sqrt{2/\pi}\sumj\kappaj\tomega_j<\infty$, both by \eqref{eq:strongerdecayproperty}. Thus the r.h.s.~of \eqref{eq:SNiestimate} is finite $(g,\px,\pz)$ $(\IP_G\otimes\IPpX\otimes\IPpE)$-a.s.~for both priors. 

On $\Omega\setminus\Omega_i$, $\IP(\Omega_i)=0$, we have $\dup[\sim]{E_i}{\psij(x_i)}=\WY(\psij(x_i))$ for all $j\in\IN$. Since $\WY:\Y\to L^2(\Omega,\IP)$ is linear, see \eqref{eq:WN}, we have
\begin{align}
    Q_N^i=\sumjN\gj\WY(\psij(x_i))=\WY(Q_Ng(x_i))\label{eq:linearitywhitenoise}
\end{align}
with equality in $L^2(\Omega,\IP)$.
Since $Q_Ng(x_i)\to g(x_i)$ in $\Y$ and $\WY$ is an $L^2(\Omega,\IP)$-isometry, see \eqref{eq:WN}, we obtain
\begin{align*}
    Q_N^i\to\WY(g(x_i))=\dup[\sim]{E_i}{g(x_i)}\quad\text{in }L^2(\Omega,\IP).
\end{align*}
$L^2$-convergence yields a subsequence $(N_k)_{k\in\IN}$ with $Q_{N_k}^i\to\WY(g(x_i))$ $\IP$-a.s.~for $k\to\infty$. 
For fixed $g$ in the $\IP_G$-full set $\calG_0\coloneqq\{g\in\calG:\sumj\tomega_j|\gj|<\infty\}$ this identifies, $(\IPpX\otimes\IPpE)$-a.s.,
\begin{align}\label{eq:pointwiseexpansion}
    \dup[\sim]{z_i}{g(x_i)}=\sumj\gj\dup[\sim]{z_i}{\psij(x_i)},\quad i=1,\dots,n,
\end{align}
the exceptional set depending on $g$. Set
\begin{align*}
    B\coloneqq\left\{(\px,\pz,g):\ \eqref{eq:pointwiseexpansion}\text{ fails for some }i\right\}\subseteq\pX\times\bs{\tilde\Y^*}\times\calG
\end{align*}
and denote the $g$-sections and $(\px,\pz)$-sections by $B_g$ and $B_{(\px,\pz)}$ respectively.
Since $(g,\px)\mapsto g(x_i)$ is continuous and $\dup[\sim]{\cdot}{\cdot}$ is Borel measurable (Corollary \ref{cor:existencetildemapping}), the partial sums $\sumjN\gj\dup[\sim]{z_i}{\psij(x_i)}$ and hence their limit are jointly Borel measurable. As $\calG$, $\X$ and $\tilde\Y^*$ are separable metric spaces, we have $B\in\B(\pX)\otimes\B(\bs{\tilde\Y^*})\otimes\B(\calG)$, see, e.g.,~\cite[Lemma 6.4.2]{bogachev2007measurevol2}. Following \eqref{eq:pointwiseexpansion}, the $g$-sections satisfy $(\IPpX\otimes\IPpE)(B_g)=0$ for $g\in\calG_0$. Moreover, $\IP_G(\calG\setminus\calG_0)=0$, so Tonelli gives $(\IPpX\otimes\IPpE\otimes\IP_G)(B)=0$ and thus $\IP_G(B_{(\px,\pz)})=0$ for $(\IPpX\otimes\IPpE)$-a.e.~$(\px,\pz)$. Fix such $(\px,\pz)$. Then \eqref{eq:pointwiseexpansion} holds $\IP_G$-a.s., so \eqref{eq:envelopebound} and \eqref{eq:Phicrossbound} yield the $\IP_G$-a.s.~majorant
\begin{align}
    \lVert F(g)\rVert_{\infty}\leq\left(\sumj\omegaj|\gj|\right)\exp\left(\sumin\dup[\sim]{z_i}{g(x_i)}\right)\leq\left(\sumj\omegaj|\gj|\right)\exp\left(C(\px,\pz)\sumj\tomega_j|\gj|\right).\label{eq:majorant}
\end{align}
By monotone convergence,
\begin{align*}
    \lVert F\rVert_{L^1(\calG,\IP_G,\calG)}
    &=\int_\calG\lVert g\exp(-\pPhi(g,\px,\pz))\rVert_\infty\,\dd\IP_G(g)\\
    &\leq\limNinfty\int_\calG\left(\sumjN\omegaj|\gj|\right)\exp\left(C(\px,\pz)\sumjN\tomega_j|\gj|\right)\dd\IP_G(g).
\end{align*}

For the uniform prior from Example \ref{ex:UniformPrior} we have $|g_j|\leq\kappaj$ and thus
$$\lVert F\rVert_{L^1(\calG,\IP_G,\calG)}\leq \left(\sumj\omegaj\kappaj\right)\exp\left(C(\px,\pz)\sumj\tomega_j\kappaj\right)<\infty$$
which shows the claim for this case.

For the Gaussian prior from Example \ref{ex:GaussianPrior} we have $\gj\sim\N(0,\kappaj^2)$. We let $C=C(\px,\pz)$ change throughout the proof and calculate
\begin{align}
I_N&=\int_{\calG}\left(\sumjN\omega_j|g_j|\right)\exp\left(C(\px,\pz)\sumjN\tomega_j|g_j|\right)\dd\IP_G(g)\notag\\
&=\sumjN\omega_j\biggg(\frac{2}{\sqrt{2\pi\kappaj^2}}\int_0^\infty\gj\exp\left(C\tomega_j\gj-\frac{\gj^2}{2\kappaj^2}\right)\dd\gj\notag\\
&~~\times\prod_{\substack{\ell\leq N\\\ell\neq j}}\frac{2}{\sqrt{2\pi\kappal^2}}\int_0^\infty\exp\left(C\tomega_\ell g_\ell-\frac{ g_\ell^2}{2\kappa_\ell^2}\right)\dd g_\ell \biggg)\label{eq:gaussianmajorant1}.
\end{align}
Consider the integral estimates
\begin{align}
    \int_0^\infty\exp(ax-bx^2)\dd x&=\frac{\sqrt{\pi}\left(\erf\left(\frac{a}{2\sqrt{b}}\right)+1\right)\exp\left(\frac{a^2}{4b}\right)}{2\sqrt{b}}\label{eq:gaussint1}\\
    &\leq \frac{\sqrt{\pi}}{2\sqrt{b}}\exp\left( \frac{a^2}{4b}+\frac{a}{\sqrt{\pi b}}\right),~~a,b>0,\notag\\
      \int_0^\infty x \exp(ax-bx^2)\dd x&=\frac{1}{2b}+\frac{\sqrt{\pi}a}{4b^{3/2}}\exp\left(\frac{a^2}{4b}\right)\left(1+\erf\left(\frac{a}{2\sqrt{b}}\right)\right)\label{eq:gaussint2}\\
    &\leq \frac{1}{2b}+\frac{\sqrt{\pi}a}{4b^{3/2}}\exp\left( \frac{a^2}{4b}+\frac{a}{\sqrt{\pi b}}\right),~~a,b>0,\notag
\end{align}
where we used $1+\erf(x)\leq 1+cx\leq\exp(cx)$ for $x>0$ and $c= 2/\sqrt\pi$. With \eqref{eq:gaussint1} and \eqref{eq:gaussint2} we get 
\begin{align}
    &\frac{2}{\sqrt{2\pi\kappaj^2}}\int_0^\infty\gj\exp\left(C\tomega_j\gj-\frac{\gj^2}{2\kappaj^2}\right)\dd\gj\label{eq:gaussianmajorant2}\\
    \leq~&\frac{2\kappaj}{\sqrt{2\pi}}\left(1 +\sqrt{\frac{C^2\pi\tomega_j^2\kappaj^2}{2}}\exp\left(\frac{C^2\tomega_j^2\kappaj^2}{2}+\sqrt{\frac{2C^2\tomega_j^2\kappaj^2}{\pi}}\right)\right)\leq C\kappaj,\notag\\
    &\frac{2}{\sqrt{2\pi\kappal^2}}\int_0^\infty\exp\left(C\tomega_\ell g_\ell-\frac{ g_\ell^2}{2\kappa_\ell^2}\right)\dd g_\ell 
    \leq\exp\left(\frac{C^2\tomega_\ell^2\kappa_\ell^2}{2}+\sqrt{\frac{2C^2\tomega_\ell^2\kappa_\ell^2}{\pi}}\right)\leq\exp\left(C\tomega_\ell\kappa_\ell\right).  \notag
\end{align}
In \eqref{eq:gaussianmajorant2} we additionally used that $\tomega_j\kappaj\to 0$ for $j\to\infty$ following \eqref{eq:strongerdecayproperty} and changed $C$ from line to line. Inserting \eqref{eq:gaussianmajorant2} into \eqref{eq:gaussianmajorant1} yields
\begin{align*}
    I_N&\leq C\sumjN\omegaj\kappaj\left(\prod_{\ell\leq N}\exp\left(C\tomega_\ell\kappa_\ell\right)\right)\\
    &\leq C \sumj \omegaj\kappaj\exp\left(C\sum_{\ell\in\IN}\tomega_\ell\kappa_\ell \right)<\infty
\end{align*}
which shows the claim for the Gaussian case.
As Borel measurability implies weak measurability and $\calG=C^0(K)$ is separable, the Pettis measurability theorem (e.g.~\cite[Theorem II.1.2]{DiestelUhl1977}) shows that $g\mapsto g\exp(-\pPhi(g,\px,\pz))$ is $(\px,\pz)$ $(\IPpX\otimes\IPpE)$-a.s.~strongly measurable and thus Bochner integrable. In particular $\hatgn$ is Borel measurable as a parametric Bochner integral.

\subsection{Proof of Lemma \ref{lem:calcgaussiangnhat}}\label{app:proofgnhatgaussian}
For $N\in\IN$ we consider the projection $Q_N$ from \eqref{eq:QN}.
Since $(\psij)_{j\in\IN}$ is a continuous ONB of $\LtwoK$ we have $Q_N g\to g$ for all $g\in\calG$ w.r.t.~$\lVert\cdot\rVert_{\LtwoK}$. Moreover, the decay property \eqref{eq:strongerdecayproperty} implies that this convergence also holds for $\IP_G$-a.e.~$g$ w.r.t.~$\lVert\cdot\rVert_{\infty}$, see \eqref{eq:C0convergencePn}.
 
The dominated convergence theorem (integrand convergence and majorant by \eqref{eq:pointwiseexpansion} and \eqref{eq:majorant}) shows that
$(\px,\pz)$ $(\IPpX\otimes\IPpE)$-a.s.
\begin{align}
    \hatgn(\px,\pz)&=\mathfrak Z(\px,\pz)^{-1}\int_{\calG}\lim_{N\to\infty} Q_N g\exp\left(-\pPhi(Q_Ng,\px,\pz)\right)\dd\IP_G(g)\notag\\
   &=\mathfrak Z(\px,\pz)^{-1}\lim_{N\to\infty} \int_{\calG}Q_N g\exp\left(-\pPhi(Q_Ng,\px,\pz)\right)\dd\IP_G(g)\label{eq:domconvgaussian}.
\end{align}
 
With the likelihood $\pPhi$ from \eqref{eq:phigeneral} we have 
\begin{align}
    \pPhi\left(Q_Ng,\px,\pz\right)&=\sumin\frac{1}{2}\lVert Q_Ng(x_i)\rVert_{\Y}^2-\langle z_i,Q_Ng(x_i)\rangle_{\sim}.\label{eq:PhiPNG}
\end{align}
We write out the terms in \eqref{eq:PhiPNG} as
\begin{align}
    \frac{1}{2}\lVert Q_Ng(x_i)\rVert_{\Y}^2&=\frac{1}{2}\left\lVert \sumjN \gj\psij(x_i)\right\rVert_{\Y}^2=\frac{1}{2}\sumjlN\gj\gl\langle\psij(x_i),\psi_\ell(x_i)\rangle_{\Y} \notag,\\
\langle z_i,Q_Ng(x_i)\rangle_{\sim}&=\sumjN\gj\langle z_i,\psij(x_i)\rangle_{\sim}\label{eq:PNxiYalph},
\end{align}
where the last identity holds $(g,\px,\pz)$ $(\IP_G\otimes\IPpX\otimes\IPpE)$-a.s.~by \eqref{eq:linearitywhitenoise} and Fubini.
Then the definitions
\begin{align*}
    &\alphajl:\pX\to\IR,~~\alphajl(\px)=\frac{1}{n}\sumin\langle\psij(x_i),\psi_\ell(x_i)\rangle_{\Y},~~1\leq j,\ell\leq N,\\
    &\betaj:\pX\times\bs{\tilde\Y^*}\to\IR,~~\betaj(\px,\pz)=\frac{1}{n}\sumin \dup[\sim]{z_i}{\psij(x_i)},~~1\leq j\leq N,
\end{align*}
are Borel measurable and finite $(\IPpX\otimes\IPpE)$-a.s.\ by Lemma \ref{lem:envelopelemma}.
Thus we get
\begin{align*}
    &\exp\left(-\pPhi(Q_Ng,\px,\pz)\right)\\
    =~&\exp\left(-\sumin\left( \frac{1}{2}\sumjlN\gj\gl\langle\psij(x_i),\psi_\ell(x_i)\rangle_{\Y}-\sumjN\gj\dup[\sim]{z_i}{\psij(x_i)}\right)\right)\\
    =~&\exp\left(-n\left( \frac{1}{2}\sumjlN\alphajl(\px)\gj\gl-\sumjN\betaj(\px,\pz)\gj\right)\right).
\end{align*}
Writing $\alphajl=\alphajl(\px)$ and $\betaj=\betaj(\px,\pz)$ we get $(\IPpX\otimes\IPpE)$-a.s.
\begin{align}
    \hatgn(\px,\pz)&=\mathfrak Z(\px,\pz)^{-1}\lim_{N\to\infty} \int_{\calG}Q_N g\exp\left(-\pPhi(Q_Ng,\px,\pz)\right)\dd\IP_G(g)\label{eq:gnatlim}\\
    &=\mathfrak Z(\px,\pz)^{-1}\lim_{N\to\infty}\sumjN\psij\int_{\calG}\gj\exp\left(-n\left( \frac{1}{2}\sumjlNprime \alphajlprime\gjprime\glprime-\sumjNprime\betajprime\gjprime\right)\right)\dd\IP_G(g)\notag.
\end{align}
Since $\gjprime\sim\N(0,\kappajprime^2)$, $j'\in\IN$, are independent under $\IP_G$ (Example \ref{ex:GaussianPrior}), the integral reduces to the $N$-dimensional marginal, i.e.~for $j\in\IN$,
\begin{align}
    &I_j\coloneqq\int_{\calG}\gj\exp\left(-n\left( \frac{1}{2}\sumjlNprime \alphajlprime\gjprime\glprime-\sumjNprime\betajprime\gjprime\right)\right)\dd\IP_G(g)\label{eq:gaussianEcalc1}\\
   &= \prodjNprime\frac{1}{\sqrt{2\pi\kappajprime^2}}\int_{\IR^N}\gj\exp\biggg(-\frac{1}{2}\sumjlNprime \left(n\alphajlprime+\deltajlprime\kappajprime^{-2}\right)\gjprime\glprime+\sumjNprime n\betajprime\gjprime\biggg)\dd g_{1}\dots\dd g_N\notag.
\end{align}
Let $\tilde{\bs{A}}_N:\pX\to\IR^{N\times N}$, $\tilde{\bs{b}}_N:\pX\times\bs{\tilde\Y^*}\to\IR^N$ and $\GN\in\IR^N$ with
\begin{align*}
    (\tilde{\bs{A}}_N)_{j\ell}(\px)=n\alphajl(\px) +\deltajl\kappaj^{-2},~(\tilde{\bs{b}}_N)_{j}(\px,\pz)=n\betaj(\px,\pz),~\GNj=\gj
\end{align*}
where $1\leq j,\ell\leq N$.
For $0\neq\GN\in\IR^N$ we have for $\px\in\pX$ and $\pz\in\bs{\tilde\Y^*}$,
\begin{align*}
    \GNt\tilde{\bs{A}}_N\GN&=\sumjlN \gj\gl(n\alphajl+\deltajl\kappaj^{-2})=\sumin\lVert Q_Ng(x_i)\rVert_{\Y}^2+\sumjN\gj^2\kappaj^{-2}>0,
\end{align*}
 so $\tilde{\bs{A}}_N$ is symmetric positive definite and in particular $\det\tilde{\bs{A}}_N>0$ for all $N\in\IN$. Using Leibniz rule for differentiation under the integral yields
\begin{align}
    I_j&=\prodjNprime\frac{1}{\sqrt{2\pi\kappajprime^2}}\int_{\IR^N}g_{j}\exp\left(-\frac{1}{2}\GN^T\tilde{\bs{A}}_N\GN+\tilde{\bs{b}}_N^T\GN\right)\dd \GN\notag\\
    &=\prodjNprime\frac{1}{\sqrt{2\pi\kappajprime^2}}\int_{\IR^N}\frac{\partial}{\partial \tilde{b}_{j}}\exp\left(-\frac{1}{2}\GN^T\tilde{\bs{A}}_N\GN+\tilde{\bs{b}}_N^T\GN\right)\dd \GN\notag\\
    &=\prodjNprime\frac{1}{\sqrt{2\pi\kappajprime^2}}\frac{\partial}{\partial \tilde{b}_{j}}\int_{\IR^N}\exp\left(-\frac{1}{2}\GN^T\tilde{\bs{A}}_N\GN+\tilde{\bs{b}}_N^T\GN\right)\dd \GN.\label{eq:multigaussintegral}
\end{align}
The remaining integral in \eqref{eq:multigaussintegral} evaluates to
\begin{align*}
    \int_{\IR^N}\exp\left(-\frac{1}{2}\GN^T\tilde{\bs{A}}_N\GN+\tilde{\bs{b}}_N^T\GN\right)\dd \GN=\sqrt{\frac{(2\pi)^N}{\det \tilde{\bs{A}}_N}}\exp\left(\frac{1}{2}\tilde{\bs{b}}_N^T\tilde{\bs{A}}_N^{-1}\tilde{\bs{b}}_N\right).
\end{align*}
Hence
\begin{align}
    I_j&=\prodjNprime\frac{1}{\sqrt{2\pi\kappajprime^2}}\sqrt{\frac{(2\pi)^N}{\det \tilde{\bs{A}}_N}}\frac{\partial}{\partial \tilde{b}_j}\exp\left(\frac{1}{2}\tilde{\bs{b}}_N^T\tilde{\bs{A}}_N^{-1}\tilde{\bs{b}}_N\right)\notag\\
    &=\prodjNprime\frac{1}{\sqrt{2\pi\kappajprime^2}}\sqrt{\frac{(2\pi)^N}{\det \tilde{\bs{A}}_N}}\left(\tilde{\bs{A}}_N^{-1}\tilde{\bs{b}}_N\right)_j\exp\left(\frac{1}{2}\tilde{\bs{b}}_N^T\tilde{\bs{A}}_N^{-1}\tilde{\bs{b}}_N\right)\label{eq:Ij}.
\end{align}
For the normalization factor $\mathfrak Z(\px,\pz)$ similar argumentation (same integrand without the factor $g_j$) shows $(\IPpX\otimes\IPpE)$-a.s.
\begin{align}
    \mathfrak Z(\px,\pz)=\lim_{N\to\infty}\prodjNprime\frac{1}{\sqrt{2\pi\kappajprime^2}}\sqrt{\frac{(2\pi)^N}{\det\tilde{\bs{A}}_N}}\exp\left(\frac{1}{2}\tilde{\bs{b}}_N^T\tilde{\bs{A}}_N^{-1}\tilde{\bs{b}}_N\right)\label{eq:Zxygaussian}.
\end{align}
Then $\mathfrak Z(\px,\pz)\in(0,\infty)$ for $(\px,\pz)$ $(\IPpX\otimes\IPpE)$-a.s.~following Theorem \ref{th:PosteriorGaussianNoise}.
Inserting \eqref{eq:Ij} and \eqref{eq:Zxygaussian} into \eqref{eq:gnatlim} and then \eqref{eq:domconvgaussian} shows
\begin{align*}
    \hatgn(\px,\pz)&=\lim_{N\to\infty} \sumjN \psij (\tilde{\bs{A}}_N^{-1}(\px)\tilde{\bs{b}}_N(\px,\pz))_j= \lim_{N\to\infty} \psiNt\tilde{\bs{A}}_N^{-1}(\px)\tilde{\bs{b}}_N(\px,\pz).
\end{align*}
Defining $\AN\coloneqq n^{-1}\tilde{\bs{A}}_N$ and $\bN\coloneqq n^{-1}\tilde{\bs{b}}_N$ shows the claim.

\subsection{Proof of Theorem \ref{th:convergencetheorem}} \label{app:proofconvergencetheorem}
Since the proof is rather long, we first give the general idea
and outline the main steps:

By Lemma \ref{lem:calcgaussiangnhat}, the posterior mean is the $C^0(K)$-limit of $\bs{\psi}_N^T\ANin\bN$, where $\AN=(\alphajl+\delta_{j\ell}n^{-1}\kappaj^{-2})_{j,\ell=1}^N$, $\bN=(\betaj)_{j=1}^N$ and
\begin{align*}
    \alphajl=\frac1n\sumin\langle\psij(x_i),\psi_\ell(x_i)\rangle_{\Y},\qquad
\betaj=\frac1n\sumin\dup[\sim]{z_i}{\psij(x_i)}.
\end{align*}
The idea in the following is to
(i) establish concentration bounds for $\alphajl$ and $\betaj$ for $$j,\ell\leq \lceil n^{1/(\rr-\rho-1)}\rceil \eqqcolon N_H$$ using Hoeffding's inequality and Gaussian tail bounds, (ii) expand $\ANin$ in a Neumann series around $\DNinv$ (the inverse of the diagonal of $\AN$) and bounding $\RRN=\AN-\DN$, and (iii) split all infinite sums into a head part $\{j\leq N_H\}$, controlled by the concentration bound from (i), and a tail part $\{j>N_H\}$, controlled by the decay of the prior and the ground truth from \eqref{eq:rtassump}. 

The head/tail split in (iii) is used in \cite[Theorems 3.3 and 3.9]{de2023convergence}. In contrast, (ii) has no counterpart in \cite{de2023convergence} as the matrix $\AN$ is diagonal in their linear analysis. The argument is divided into five steps:

\begin{itemize}[leftmargin=!, labelwidth=\widthof{\bfseries Step 1:}, labelindent=0pt, align=left]    
    \item[\bf \hyperlink{hyp:step1}{Step 1}] We show that Lemma \ref{lem:calcgaussiangnhat} can be applied to obtain an algebraic expression for the posterior mean $\hatgn$, see \eqref{eq:hatgngaussianprior}. We then define the central objects of the proof: the set $S^\delta$ in \eqref{eq:Sdelta} and the sequence $({d_n^\delta})_{n\in\IN}$ in \eqref{eq:dn}.
    \item[\bf \hyperlink{hyp:step2}{Step 2}] We show that there exists a constant $C>0$ such that for $\delta\gtrsim\delta_n$
    \begin{align*}
        (\IPpX\otimes\IP_{\pEE})\left(S^\delta\right)\geq 1-C\exp\left(-\frac{n{d_n^\delta}}{C}\right).
    \end{align*}
    \item[\bf \hyperlink{hyp:step3}{Step 3}] We show $\lVert\hatgn-g_0\rVert_{L^2(K)}^2\lesssim\delta^2$ on $S^\delta$ in the regime $\delta_n\lesssim\delta\leq n^{3/2}$. Writing $\AN=\DN+\RRN$ splits the error into an off-diagonal contribution \hyperlink{hyp:firstterm}{$(i)$}, treated in \hyperlink{hyp:step3a}{Step 3a} via a Neumann series argument, and a diagonal contribution \hyperlink{hyp:secondterm}{$(ii)$}, treated in \hyperlink{hyp:step3b}{Step 3b} similarly to the analysis in \cite{de2023convergence}.
    \item[\bf \hyperlink{hyp:step4}{Step 4}] We treat the remaining regime $\delta\geq n^{3/2}$. Here the crude bound $\ANin\preceq\diag(n\kappaj^{2})$ suffices, since only integrability of the tail is needed.
    \item[\bf \hyperlink{hyp:step5}{Step 5}] We finish the proof: Steps \hyperlink{hyp:step3}{3} and \hyperlink{hyp:step4}{4} show that the event $\{\lVert\hatgn-g_0\rVert_{L^2(K)}^2\geq\delta^2\}$ is contained in the complement of $S^{\overline{C}^{-1}\delta}$ for some $\overline{C}>0$, so that \hyperlink{hyp:step2}{Step 2} yields the claim.
    \end{itemize}

\paragraph*{Proof.}

{\bf \hypertarget{hyp:step1}{Step 1} (Preparation and Definitions).}
For $x\in\IR$, we denote by $\lceil x\rceil$ the smallest integer greater than or equal to $x$. Following \eqref{eq:omegaassumppolynomial}, \eqref{eq:rtassump} and since $\rr>1+2\rho\geq 1+\rho$, it holds
$$\sumj \kappaj\omegaj\max\{1,\log(j)^{1/2}\}\lesssim\sumj j^{-\rr+\rho}\max\{1,\log(j)^{1/2}\}<\infty$$ and hence the decay property \eqref{eq:strongerdecayproperty} is fulfilled. 
Applying Lemma \ref{lem:calcgaussiangnhat} yields $(\px,\pz)$ $(\IPpX\otimes\IPpE)$-a.s.
\begin{align*}
    \hatgn(\px,\pz)=\limNinfty\bs{\psi}_N^T\ANin(\px)\bN(\px,\pz)
\end{align*}
with convergence in $L^2(K)$ and (abbreviating $\AN=\AN(\px)$ and $\bN=\bN(\px,\pz)$)
\begin{align}
A_{N,j\ell}&=\alphajl+\deltajl n^{-1}\kappaj^{-2},~~
\alphajl=\frac{1}{n}\sumin\langle\psij(x_i),\psi_\ell(x_i)\rangle_{\Y},~~1\leq j,\ell\leq N,\notag\\
b_{N,j}&=\beta_j=\frac{1}{n}\sumin \dup[\sim]{z_i}{\psij(x_i)},~~1\leq j\leq N\label{eq:ANbNgaussianproof}.
\end{align}
For $\delta>0$ we define the sequence $({d_n^\delta})_{n\in\IN}$ as
\begin{align}
{d_n^\delta}=c n^{-\frac {4\rho+2} {2\rr}}\max\left\{\min\{1,n^{\frac{1}{2\rr}}\delta^2\},n^{-3}\delta^2\right\},~~n\in\IN,\label{eq:dn}
\end{align} 
where $c>0$ is a constant to be chosen later. In particular, for $0<\delta\leq n^{3/2}$, 
\begin{align}\label{eq:dnestimate}
n^{\frac{4\rho+1}{2\rr}}{d_n^\delta}\lesssim \delta^2\quad\text{and}\quad n^{\frac{4\rho+2}{2\rr}}{d_n^\delta}\leq c\lesssim 1.
\end{align}
With $\pK= K^n$ and $\omega_j\coloneqq\sup_{x\in K}\lVert\psi_j(x)\rVert_\Y$, $j\in\IN$, we define the set 
\begin{align} 
    S^\delta &\coloneqq\Sdeltal\cap\Sdeltag,\label{eq:Sdelta}\\  
    \Sdeltal&\coloneqq\biggg\{(\px,\boldsymbol{e}) \in (\pK\times\bs{\tilde\Y^*}):~\trm{for }j,\ell\in\{1,\dots,\lceil n^{1/(\rr-\rho-1)}\rceil\},j\neq \ell:\label{eq:Sdeltal}\\
    &\qquad|\alphajl(\px)|^2\leq \omega_j^2\omega_\ell^2{d_n^\delta},~|\alphajj(\px)-1|^2\leq \omega_j^4 {d_n^\delta},~\left|\betaj(\px,d\circ g_0(\px)+\boldsymbol{e})-\Gzj\right|^2\leq \omega_j^2{d_n^\delta}\biggg\},\notag\\
    \Sdeltag&\coloneqq\biggg\{(\px,\boldsymbol{e}) \in (\pK\times\bs{\tilde\Y^*}):~\text{for }j>\lceil n^{1/(\rr-\rho-1)}\rceil:\label{eq:Sdeltag}\\
    &\qquad\left|\betaj(\px,d\circ g_0(\px)+\boldsymbol{e})-\Gzj\right|^2\leq C_\beta j\log(j)^2\omega_j^2\max\left\{1,\lVert g_0\rVert_{\infty}^2 \right\}{d_n^\delta}\biggg\},\qquad \delta >0\notag
\end{align}
with a constant $C_\beta>0$ to be chosen later.
The remaining proof is divided into several steps.
We first show that for $\delta\gtrsim\delta_n$ there exists a constant $C>0$ s.t.
\begin{align}
    (\IPpX\otimes\IP_{\pEE})\left(S^\delta\right)\geq 1-
C\exp\left(-\frac{n{d_n^\delta}}{C}\right).\label{eq:Smeasure}
\end{align}
Then we show
$\lVert \hatgn(\px,d\circ g_0(\px)+\boldsymbol{e})-g_0\rVert_{L^2(K)}^2\lesssim \delta^2$ for $\delta\gtrsim\delta_n$ and $(\px,\boldsymbol{e})\in S^\delta$.
Combining these results gives the claim.

{\bf \hypertarget{hyp:step2}{Step 2} (upper bound $(\IPpX\otimes\IP_{\pEE})\left(S^\delta\right)$).}
Hoeffding's inequality (e.g.~\cite[Theorem 2.2.6]{vershynin2018high}) yields for $\delta_1$,~$\delta_2\geq 0$ and $j, \ell\in\IN$, $j\neq \ell$,
\begin{align}
    \IPpX\left(|\alphajl(\pXX)|\geq \delta_1\right)&=\IPpX\left(\left|\frac{1}{n}\sumin\langle\psij(X_i),\psi_\ell(X_i)\rangle_{\Y}\right|\geq \delta_1\right)\leq 2\exp\left(-\frac{n\delta_1^2}{2\omegaj^2\omega_\ell^2}\right)\label{eq:alphajkproof}\\
    \IPpX\left(|\alphajj(\pXX)-1|\geq \delta_2\right)&=\IPpX\left(\left|\frac{1}{n}\sumin\lVert\psij(X_i)\rVert_{\Y}^2-1\right|\geq \delta_2\right)\leq 2\exp\left(-\frac{n\delta_2^2}{\omegaj^4}\right).\label{eq:alphajjproof}
\end{align}
For the RVs $\beta_j=\beta_j(\pXX,d\circ g_0(\pXX)+\pEE)$ we split off the noise dependence, i.e.
\begin{align}
        &\left(\IPpX\otimes\IPpE\right)\left(|\betaj(\pXX,d\circ g_0(\pXX)+\pEE)-\Gzj|\geq 2\delta_3\right)\label{eq:betasplit}\\
        \leq~&\left(\IPpX\otimes\IPpE\right)\left(|\betaj(\pXX,d\circ g_0(\pXX)+\pEE)-\IE_{\pEE}[\betaj](\pXX)|\geq \delta_3\right)\notag\\
        &~~+\IPpX\left(|\IE_{\pEE}[\betaj](\pXX)-\Gzj|\geq \delta_3\right)  ,\quad \delta_3\geq 0.\notag
\end{align}
By \eqref{eq:tildemapping1} it holds $\IP$-a.s.~$\dup[\sim]{E_i}{\psij(x_i)}=\WY(\psij(x_i))$, so that we have $\IE_{E}[\dup[\sim]{E_i}{\psij(x_i)}]=0$ for all $x_i\in K$, $i=1,\dots, n$ and $j\in\IN$. Using this together with the $\IPE$-a.s.~linearity of $\dup[\sim]{\cdot}{y}$ (Corollary \ref{cor:existencetildemapping}), we get for $\px\in\pK$
\begin{align}
    \IE_{\pEE}[\betaj](\px)&=\frac{1}{n}\sumin \IE_{\pEE}\left[\dup[\sim]{d\circ g_0(x_i)+E_i}{\psij(x_i)}\right]\notag\\
    &=\frac{1}{n}\sumin \IE_{\pEE}\left[\dup[\sim]{d\circ g_0(x_i)}{\psij(x_i)}\right]=\frac{1}{n}\sumin\langle g_0(x_i),\psij(x_i)\rangle_{\Y}.\label{eq:IEbetaj}
\end{align}
The last equality uses $\dup[\sim]{d(y_0)}{y}=\langle y_0,y\rangle_{\Y}$ for $y_0,y\in\Y$, which follows from the definition of $d=\iota^*\circ R_\Y$ and $\dup[\sim]{\cdot}{\cdot}$, cf.~\eqref{eq:disomorphism} and Corollary \ref{cor:existencetildemapping}.
Using this and again the $\IPE$-a.s.~linearity of $\dup[\sim]{\cdot}{y}$ we can bound the first summand in \eqref{eq:betasplit} using Gaussian tails as
\begin{align}
&\left(\IPpX\otimes\IPpE\right)\left(|\betaj(\pXX,d\circ g_0(\pXX)+\pEE)-\IE_{\pEE}[\betaj](\pXX)|\geq \delta_3\right)\notag\\
\leq~& \sup_{\px\in\pK}\IPpE\left(|\betaj(\px,d\circ g_0(\px)+\pEE)-\IE_{\pEE}[\betaj](\px)|\geq \delta_3\right)\notag\\
=~&\sup_{\px\in\pK}\IPpE\left(\left|\frac{1}{n}\sumin\dup[\sim]{E_i}{\psij(x_i)}\right|\geq \delta_3\right)\notag\\
\leq~&2 \sup_{x\in K}\exp\left(-\frac{n\delta_3^2}{2\lVert\psij(x)\rVert_\Y^2}\right)= 2\exp\left(-\frac{n\delta_3^2}{2\omegaj^2}\right),\quad \delta_3\geq 0.\label{eq:betaj1}
\end{align}
For the second summand in \eqref{eq:betasplit} we use \eqref{eq:IEbetaj} and Hoeffding's inequality to show
\begin{align}
    \IPpX\left(|\IE_{\pEE}[\betaj](\pXX)-\Gzj|\geq \delta_3\right)&=\IPpX\left(\left|\frac{1}{n}\sum_{i=1}^n\langle g_0(X_i),\psij(X_i)\rangle_{\Y}-\Gzj\right|\geq \delta_3\right)\notag\\
    &\leq 2\exp\left(-\frac{n\delta_3^2}{2\lVert g_0\rVert_{\infty}^2\omegaj^2}\right),~~\delta_3\geq 0,~~j\in\IN.\label{eq:betaj2}
\end{align}

Inserting \eqref{eq:betaj1} and \eqref{eq:betaj2} into \eqref{eq:betasplit} yields
\begin{align}\label{eq:betajfinal}
\left(\IPpX\otimes\IPpE\right)\left(|\betaj-\Gzj|\geq \delta_3\right)\leq 4\exp\left(-\frac{n\delta_3^2}{8\max\left\{1,\lVert g_0\rVert_{\infty}^2\right\}\omegaj^2}\right).
\end{align}
We now estimate the probability of the sets $\Sdeltal$ and $\Sdeltag$ separately.
Using \eqref{eq:alphajkproof} with $\delta_1^2=\omega_j^2\omega_\ell^2 {d_n^\delta}$, \eqref{eq:alphajjproof} with $\delta_2^2=\omega_j^4{d_n^\delta}$ and \eqref{eq:betajfinal} with $\delta_3^2=\omega_j^2{d_n^\delta}$ shows
\begin{align}
&(\IPpX\otimes\IP_{\pEE})\left(\Sdeltal\right)\notag\\
=~&1-(\IPpX\otimes\IP_{\pEE})\left((\pK\times\bs{\tilde\Y^*})\setminus \Sdeltal\right)\notag\\
\geq~~& 1-\sum_{\substack{j,\ell=1\\j\neq \ell}}^{\lceil n^{1/(\rr-\rho-1)}\rceil}\IPpX\left(|\alphajl|^2\geq \omega_j^2\omega_\ell^2{d_n^\delta}\right)-\sum_{j=1}^{\lceil n^{1/(\rr-\rho-1)}\rceil}\IPpX\left(|\alphajj-1|^2\geq \omega_j^4{d_n^\delta}\right)\notag\\
&~~-\sum_{j=1}^{\lceil n^{1/(\rr-\rho-1)}\rceil}\left(\IPpX\otimes\IPpE\right)\left(|\betaj-\Gzj|^2\geq \omega_j^2{d_n^\delta}\right)\notag\\
\geq~~& 1-2\sum_{\substack{j,\ell=1\\j\neq \ell}}^{\lceil n^{1/(\rr-\rho-1)}\rceil}\exp\left(-\frac{n{d_n^\delta}}{C}\right)-2\sum_{j=1}^{\lceil n^{1/(\rr-\rho-1)}\rceil}\exp\left(-\frac{n{d_n^\delta}}{C}\right)-\sum_{j=1}^{\lceil n^{1/(\rr-\rho-1)}\rceil}4\exp\left(-\frac{n{d_n^\delta}}{C}\right)\notag\\
\geq~~& 1-24n^{\frac{2}{\rr-\rho-1}}\exp\left(-\frac{n{d_n^\delta}}{C}\right)
\geq1-C\exp\left(-\frac{n{d_n^\delta}}{C}\right),\label{eq:Sequality}
\end{align}
where we used at the last inequality that for $\delta\gtrsim\delta_n$ and $\rr>1+2\rho$ we have
\begin{align}
    n{d_n^\delta}\gtrsim n n^{-\frac{4\rho+2}{2\rr}}\min\{1,n^{\frac{1}{2\rr}}\delta^2\}\gtrsim\min\left\{n^{\frac{2\rr-4\rho-2}{2\rr}},n^{\frac{2\rr-4\rho-1}{2\rr}}\delta_n^2\right\}\gtrsim\log(n)^{2},\label{eq:ndnloggrowth}
\end{align}
which implies
\begin{align*}
n^{\frac{2}{\rr-\rho-1}}\exp\left(-\frac{n{d_n^\delta}}{C}\right)&= \exp\left(-\underbrace{\frac{n{d_n^\delta}}{C}}_{\gtrsim \log(n)^2}+\frac{2}{\rr-\rho-1}\log(n)\right)\lesssim \exp\left(-\frac{n{d_n^\delta}}{C}\right).
\end{align*}
Note that the constant $C$ in \eqref{eq:Sequality} changes from line to line.
To estimate the probability of the set $\Sdeltag$, we define $g_0^\infty\coloneqq \max\{1,\lVert g_0\rVert_{\infty}^2\}$. Then, using \eqref{eq:betajfinal} again, we get  
\begin{align}
    (\IPpX\otimes\IP_{\pEE})\left(\Sdeltag\right)
&=1-(\IPpX\otimes\IP_{\pEE})\left((\pK\times\bs{\tilde\Y^*})\setminus \Sdeltag\right)\notag\\
&\geq1-\sum_{j=\lceil n^{1/(\rr-\rho-1)}\rceil}^\infty\left(\IPpX\otimes\IPpE\right)\left(|\betaj-\Gzj|^2 \geq C_\beta j\log(j)^2\omega_j^2g_0^\infty{d_n^\delta}\right) \notag\\
&\geq1-4\sum_{j=\lceil n^{1/(\rr-\rho-1)}\rceil}^\infty\exp\left(-\frac{C_\beta n{d_n^\delta}j\log(j)^2}{8}\right) \notag\\
&\geq1-4\sum_{j=2}^\infty \frac{1}{2j\log(j)^2}\exp\left(-\frac{C_\beta n{d_n^\delta}}{16}\right) \notag\\
&\geq1-C\exp\left(-\frac{n{d_n^\delta}}{C}\right)\label{eq:Sdeltagestimate}.
\end{align}
In \eqref{eq:Sdeltagestimate} we additionally used $\exp(-xy)\leq\exp(-x)/y$ which holds since $x=C_\beta n{d_n^\delta}/16\geq 1$ for $\delta\gtrsim\delta_n$ by \eqref{eq:ndnloggrowth} and choosing $C_\beta$ large enough, and $y=2j\log(j)^2\geq 1$ for $j\geq 2$. Combining \eqref{eq:Sequality} and \eqref{eq:Sdeltagestimate} shows 
\begin{align*}
    (\IPpX\otimes\IP_{\pEE})\left(S^\delta\right)&\geq 1-(\IPpX\otimes\IP_{\pEE})\left((\pK\times\bs{\tilde\Y^*})\setminus \Sdeltal\right)-(\IPpX\otimes\IP_{\pEE})\left((\pK\times\bs{\tilde\Y^*})\setminus \Sdeltag\right)\\
    &\geq 1-C\exp\left(-\frac{n{d_n^\delta}}{C}\right),
\end{align*}
which finishes \hyperlink{hyp:step2}{Step 2}.

{\bf \hypertarget{hyp:step3}{Step 3} (upper bound $\boldsymbol{\lVert\hatgn-g_0\rVert_{L^2(K)}}$ on $\boldsymbol{S^\delta}$ for $\boldsymbol{\delta_n\lesssim\delta\leq n^{3/2}}$).}
Define $\GzeroN\coloneqq(\langle g_0,\psij\rangle_{L^2(K)})_{j=1}^N$. Lemma \ref{lem:calcgaussiangnhat} and Parseval's theorem yields $(\px,\pz)$ $(\IPpX\otimes\IPpE)$-a.s.
\begin{align}
\label{eq:gnhatg0}
\lVert\gnhat(\px,\pz)-g_0\rVert_{L^2(K)}^2&=\limNinfty\lVert \bs{\psi}_N^T\ANin(\px)\bN(\px,\pz)-\bs{\psi}_N^T\GzeroN\rVert_{L^2(K)}^2\notag\\
&=\limNinfty\lVert\ANin(\px)\bN(\px,\pz)-\GzeroN\rVert_2^2.
\end{align} 
Define the matrix-valued maps $\DN:\pX\to\IRNN$ and $\RRN:\pX\to\IRNN$ with 
\begin{align}
\DN(\px)=\diag(\alphajj(\px)+n^{-1}\kappaj^{-2},j=1,\dots,N),~~\RRN(\px)=\AN(\px)-\DN(\px).\label{eq:DNRN}
\end{align}
In the following we write $\DN=\DN(\px)$ and $\RRN=\RRN(\px)$. We split \eqref{eq:gnhatg0} as
\begin{align}
  \label{eq:convonetwosplit}
  \lVert\ANin\bN-\GzeroN\rVert_2^2
  \leq~&2\underbrace{\lVert(\ANin-\DNinv)\bN\rVert_2^2 }_{\coloneqq \text{\hypertarget{hyp:firstterm}{(i)}}}
  +2\underbrace{\|\DNinv\bN-\GzeroN\|_2^2}_{\coloneqq\text{\text{\hypertarget{hyp:secondterm}{(ii)}}}} 
\end{align}
and estimate the summands \hyperlink{hyp:firstterm}{$(i)$} and \hyperlink{hyp:secondterm}{$(ii)$} in \eqref{eq:convonetwosplit} separately.

{\bf \hypertarget{hyp:step3a}{Step 3a} (estimation of {\hyperlink{hyp:firstterm}{$(i)$}}).} To estimate \hyperlink{hyp:firstterm}{$(i)$} we define $\EN\coloneqq -\DNhalfinv\RRN\DNhalfinv$. We first show $\lVert \EN\rVert_2<1/2$ for $n\geq n_0$, $n_0\in\IN$ sufficiently large, $\delta_n\lesssim\delta\leq n^{3/2}$ and $(\px,\pz)\in S^\delta$.
Since $\EN$ is symmetric, we have
 \begin{align}\label{eq:ENtwosums}
     \lVert\EN\rVert_2&\leq\supxN \sumjN\sumlNnotj \frac{n^{1/2}\kappaj |x_j||\alphajl| n^{1/2}\kappa_\ell |x_\ell|}{(1+\alphajj n\kappaj^2)^{1/2}(1+\alpha_{\ell\ell} n\kappa_\ell^2)^{1/2}}\notag\\
     &\leq\supxN \sumjnr\frac{n^{1/2}\kappaj |x_j|}{(1+\alphajj n\kappaj^2)^{1/2}}\sumlnrnotj\frac{|\alphajl| n^{1/2}\kappa_\ell |x_\ell|}{(1+\alpha_{\ell\ell} n\kappa_\ell^2)^{1/2}}\notag\\
     &~+2\supxN\sumjN\frac{n^{1/2}\kappaj |x_j|}{(1+\alphajj n\kappaj^2)^{1/2}}\sumlnrgrnotj\frac{|\alphajl| n^{1/2}\kappa_\ell |x_\ell|}{(1+\alpha_{\ell\ell} n\kappa_\ell^2)^{1/2}}.
 \end{align}
 For the first term in \eqref{eq:ENtwosums} we use that for $(\px,\pz)\in S^\delta$ we have  $\alphajj>1/2$ for $j\leq \lceil n^{1/(2\rr)}\rceil$, see \eqref{eq:alphajjinvbound} below, and $|\alphajl|^2\leq \omega_j^2\omega_\ell^2 {d_n^\delta}$ for $j,\ell\leq \lceil n^{1/(\rr-\rho-1)}\rceil $, $j\neq \ell$. Therefore we can estimate
 \begin{align}
     \label{eq:ENfirstsum}
&\supxN \sumjnr\frac{n^{1/2}\kappaj |x_j|}{(1+\alphajj n\kappaj^2)^{1/2}}\sumlnrnotj\frac{|\alphajl|n^{1/2}\kappa_\ell |x_\ell|}{(1+\alpha_{\ell\ell} n\kappa_\ell^2)^{1/2}}\notag\\
\leq~&{d_n^\delta}^{1/2}\supxN \left(\sumjnr\frac{n^{1/2}\kappaj\omega_j |x_j|}{(1+\alphajj n\kappaj^2)^{1/2}}\right)^2\notag\\
\lesssim~&{d_n^\delta}^{1/2}n\sumjnr\frac{\kappa_j^2\omega_j^2}{1+\alphajj n \kappa_j^2}\notag\\
\lesssim~&{d_n^\delta}^{1/2} n\sum_{j\leq \lceil n^{1/(2\rr)}\rceil} \frac{j^{-2\rr+2\rho}}{1+n j^{-2\rr}}
\lesssim n^{\frac{2\rho+1}{2\rr}}{d_n^\delta}^{1/2}.
 \end{align}
In \eqref{eq:ENfirstsum} we additionally used \cite[Lemma B.2]{de2023convergence} and that for $\bs{a}\in\ell^2(\IN)$ we have 
\begin{align*}
    \sup_{\bs{x}\in\ell^2,\lVert \bs{x}\rVert_{\ell^2}=1}\langle\bs{a},\bs{x}\rangle_{\ell^2}=\lVert \bs{a}\rVert_{\ell^2}
\end{align*}
which follows from Cauchy--Schwarz.
For the tail term in \eqref{eq:ENtwosums} we use $|\alphajl|\leq\omegaj\omega_\ell$ for $j,\ell\in\IN$ and \cite[Lemma B.2]{de2023convergence} to estimate
\begin{align}
    \label{eq:ENsecondsum}
&\supxN\sumjN\frac{n^{1/2}\kappaj |x_j|}{(1+\alphajj n\kappaj^2)^{1/2}}\sumlnrgrnotj\frac{|\alphajl|n^{1/2}\kappa_\ell |x_\ell|}{(1+\alpha_{\ell\ell} n\kappa_\ell^2)^{1/2}}\notag\\
\leq~&\supxN\sumjN\frac{n^{1/2}\kappaj\omega_j |x_j|}{(1+\alphajj n\kappaj^2)^{1/2}}\sup_{\lVert y_N\rVert_2=1}\sumlnrgr\frac{ n^{1/2}\kappa_\ell\omega_\ell y_\ell}{(1+\alpha_{\ell\ell} n\kappa_\ell^2)^{1/2}}\notag\\
\leq~&\left(\sumjN\frac{ n\kappa_j^2\omega_j^2}{1+\alphajj n\kappaj^2}~\sumlnrgr\frac{ n\kappa_\ell^2\omega_\ell^2}{1+\alpha_{\ell\ell} n\kappa_\ell^2}\right)^{1/2}\notag\\
\lesssim~&n\left(\sum_{j\leq \lceil n^{1/(2\rr)}\rceil}\frac{ j^{-2\rr+2\rho}}{(1+nj^{-2\rr})}~\sumlnrgr \ell^{-2\rr+2\rho}\right)^{1/2}\notag\\
\lesssim~&n n^{-\frac{2\rr-2\rho-1}{4\rr}}n^{-\frac{2\rr-2\rho-1}{2\rr-2\rho-2}}\lesssim n^{-\frac{2\rr-2\rho-1}{4\rr}}.
\end{align}
Combining \eqref{eq:ENtwosums}--\eqref{eq:ENsecondsum} and choosing $c$ in \eqref{eq:dn} sufficiently small shows that 
\begin{align}
\lVert\EN\rVert_2<1/2,\quad n\geq n_0\label{eq:ENless1}
\end{align} 
for $n_0\in\IN$ sufficiently large. Using \eqref{eq:ENless1} and the Neumann series for $\EN$ we bound \hyperlink{hyp:firstterm}{$(i)$} as
\begin{align}
    \label{eq:twofactor}
    \hyperlink{hyp:firstterm}{(i)}&=\|(\ANin-\DNinv)\bN\|_2^2 \notag\\
    &=\|\DNhalfinv\DNhalf(\ANin-\DNinv)\DNhalf\DNhalfinv\bN\|_2^2 \notag\\
    &=\|\DNhalfinv((\IIN-\EN)^{-1}-\IIN)\DNhalfinv\bN\|_2^2\notag\\
  &=\|\DNhalfinv(\IIN-\EN)^{-1}\underbrace{\EN\DNhalfinv\bN}_{\coloneqq\vN}\|_2^2\notag\\
    &\leq\lVert\DNhalfinv\rVert_2^2~\lVert (\IIN-\EN)^{-1}\rVert_2^2 ~\lVert \vN\rVert_2^2\lesssim\lVert \vN\rVert_2^2
\end{align}
where we additionally used $\lVert\DNhalfinv\rVert_2^2\lesssim 1$ as for $\delta\leq n^{3/2}$
\begin{align}\label{eq:Dninv2norm}
    \frac{n\kappa_j^2}{1+n\alpha_{jj}\kappa_j^2}\leq
    \begin{cases}
       \alpha_{jj}^{-1}<2,\quad &j<\lceil n^{1/(2\rr)}\rceil,\\
       n\kappa_j^2\lesssim 1,\quad &j\geq \lceil n^{1/(2\rr)}\rceil.
    \end{cases}
\end{align}
In \eqref{eq:Dninv2norm} we used that for $j\leq\lceil n^{1/(2\rr)}\rceil\leq\lceil n^{1/(\rr-\rho-1)}\rceil$, $\delta_n\lesssim\delta\leq n^{3/2}$ and sufficiently small $c>0$ in \eqref{eq:dn} we have 
\begin{align}\label{eq:alphajjinvbound}
|\alpha_{jj}-1|^2\leq\omega_j^4{d_n^\delta}&\leq C_{\omega}^4j^{4\rho}cn^{-\frac{4\rho+2}{2\rr}}<2^{4\rho}C_{\omega}^4cn^{-\frac{1}{\rr}}<\frac 14\implies \alpha_{jj}^{-1}<2
\end{align}
where $C_{\omega}$ denotes the constant hidden in \eqref{eq:omegaassumppolynomial}.
We proceed by estimating the remaining factor $\lVert\vN\rVert_2$ in \eqref{eq:twofactor} which is given by
\begin{align}\label{eq:vnestimate}
 \lVert\vN\rVert_2^2&=\lVert\EN\DNhalfinv\bN\rVert_2^2=\sumjN\left(\frac{n^{1/2}\kappa_j}{(1+\alphajj n\kappa_j^2)^{1/2}}\sumlNnotj\frac{\alphajl n\kappal^2\beta_\ell}{1+\alphall n \kappa_\ell^2} \right)^2.   
\end{align}
Estimating the inner sum first, we split for $j=1,\dots,N$,
\begin{align}
    \label{eq:vNtwoparts}
    \sumlNnotj\frac{|\alphajl| n\kappal^2\beta_\ell}{1+\alphall n \kappa_\ell^2}\leq\sumlNnotj\frac{|\alphajl| n\kappal^2|g_{0,\ell}|}{1+\alphall n \kappa_\ell^2}+\sumlNnotj\frac{|\alphajl| n\kappal^2|\beta_\ell-g_{0,\ell}|}{1+\alphall n \kappa_\ell^2}.
\end{align}
To estimate \eqref{eq:vNtwoparts} we define 
\begin{align}\label{eq:dnjtilde}
    \tilde{d}_{n,j}\coloneqq\begin{cases}
        {d_n^\delta},\quad & j\leq\lceil n^{1/(\rr-\rho-1)}\rceil,\\
        1,\quad & j>\lceil n^{1/(\rr-\rho-1)}\rceil.
    \end{cases}
\end{align}

Then we have $|\alphajl|\leq\tilde{d}_{n,j}^{1/2}\omega_j\omega_\ell$ for $j\leq N$, $j\neq\ell$ and $\ell\leq\lceil n^{1/(\rr-\rho-1)}\rceil$. Using this together with \cite[Lemma B.2]{de2023convergence} we bound the first summand in \eqref{eq:vNtwoparts} for $\qq\neq\rho+1$ as\footnote{In case $\qq=\rho+1$ following \eqref{eq:vnfirst}--\eqref{eq:vnfinal} shows $\lVert\vN\rVert_2^2\lesssim \max\{\delta^2,\log(n)^2 n^{-(2\qq-1)/(2\rr)}\}\lesssim \max\{\delta^2,\delta_n^2\}\lesssim\delta^2$ and thus \eqref{eq:iestimate} holds.}

\begin{align}\label{eq:vnfirst}
    &\sumlNnotj\frac{|\alphajl| n\kappal^2|g_{0,\ell}|}{1+\alphall n \kappa_\ell^2}\notag\\
    \leq~& \omega_j \tilde{d}_{n,j}^{1/2}n\sumlnr\frac{\kappal^2|g_{0,\ell}|\omega_\ell}{1+\alphall n \kappa_\ell^2}+\omegaj n \sumlnrgr\frac{\kappal^2|g_{0,\ell}|\omega_\ell}{1+\alphall n \kappa_\ell^2}\notag\\
    \lesssim~&\omega_j \tilde{d}_{n,j}^{1/2}n\sum_{\ell\leq\lceil n^{1/(2\rr)}\rceil}\frac{\ell^{-2\rr-\qq+\rho}}{1+n \ell^{-2\rr}}+\omegaj n \sumlnrgr \ell^{-2\rr-\qq+\rho}\notag\\
    \lesssim~&\omega_j \tilde{d}_{n,j}^{1/2} \max\{1,n^{-\frac{\qq-\rho-1}{2\rr}}\}+\omegaj \underbrace{n^{-\frac{\rr+\qq}{\rr-\rho-1}}}_{\leq n^{-1/2}\lesssim \tilde{d}_{n,j}^{1/2}}\notag\\
    \lesssim~&\omega_j \tilde{d}_{n,j}^{1/2} \max\{1,n^{-\frac{\qq-\rho-1}{2\rr}}\}.
\end{align}
For the second summand in \eqref{eq:vNtwoparts} we use that for $(\px,\pz)\in S^\delta$ we have
 \begin{align}\label{eq:betaj}
     |\beta_\ell-g_{0,\ell}|^2\lesssim \begin{cases}
     \omega_\ell^2 {d_n^\delta},~~&\ell\leq \lceil n^{1/(\rr-\rho-1)}\rceil,\\
     \ell\log(\ell)^2 \omega_\ell^2 {d_n^\delta}, ~~&\ell>\lceil n^{1/(\rr-\rho-1)}\rceil.
 \end{cases} \end{align}
 Using \eqref{eq:dnjtilde} and again \cite[Lemma B.2]{de2023convergence} we get for $j\neq\ell$,
 \begin{align}\label{eq:vnsecond}
    &\sumlNnotj\frac{|\alphajl|n\kappal^2|\beta_\ell-g_{0,\ell}|}{1+\alphall n \kappa_\ell^2}\notag\\
\leq~&\omega_j  {d_n^\delta}^{1/2}\tilde{d}_{n,j}^{1/2} n\sumlnr\frac{\kappal^2\omega_\ell^2}{1+\alphall n \kappa_\ell^2}+\omega_j{d_n^\delta}^{1/2}n\sumlnrgr\frac{\ell^{1/2}\log(\ell)\kappal^2\omega_\ell^2}{1+\alphall n \kappa_\ell^2}\notag\\
\lesssim~&\omega_j  {d_n^\delta}^{1/2}\tilde{d}_{n,j}^{1/2}n\sum_{\ell\leq\lceil n^{1/(2\rr)}\rceil}\frac{\ell^{-2\rr+2\rho}}{1+n\ell^{-2\rr}}+\omega_j{d_n^\delta}^{1/2}n\sumlnrgr\ell^{-2\rr+2\rho+1/2}\log(\ell)\notag\\
\lesssim~&\omega_j  {d_n^\delta}^{1/2}\tilde{d}_{n,j}^{1/2}n^{\frac{2\rho+1}{2\rr}}+\omega_j{d_n^\delta}^{1/2} \underbrace{n^{-\frac{\rr-\rho-1/2}{\rr-\rho-1}}\log(n)}_{\lesssim 1}\lesssim\omega_j  \tilde{d}_{n,j}^{1/2}.
 \end{align}
In \eqref{eq:vnsecond} we also used ${d_n^\delta}^{1/2}n^{(2\rho+1)/(2\rr)}\lesssim 1$ following \eqref{eq:dnestimate} in the last line. Inserting \eqref{eq:vnfirst} and \eqref{eq:vnsecond} into \eqref{eq:vnestimate} and using \eqref{eq:dnestimate} gives 
\begin{align}\label{eq:vnfinal}
     \lVert\vN\rVert_2^2&\lesssim  n\max\Bigl\{1,n^{-\frac{2\qq-2\rho-2}{2\rr}}\Bigr\}\sumjN\frac{\kappa_j^2\omega_j^2\tilde{d}_{n,j}}{1+\alphajj n\kappa_j^2}\notag\\
     &= n\max\Bigl\{1,n^{-\frac{2\qq-2\rho-2}{2\rr}}\Bigr\}\biggl({d_n^\delta}\sumjnr \frac{\kappa_j^2\omega_j^2}{1+\alphajj n\kappa_j^2}+\sumjnrgr \frac{\kappa_j^2\omega_j^2}{1+\alphajj n\kappa_j^2}\biggr)\notag\\
     &\lesssim n\max\Bigl\{1,n^{-\frac{2\qq-2\rho-2}{2\rr}}\Bigr\}\left({d_n^\delta} n^{-\frac{2\rr-2\rho-1}{2\rr}}+\underbrace{n^{-\frac{2\rr-2\rho-1}{\rr-\rho-1}}}_{\leq n^{-2}\lesssim {d_n^\delta} n^{-1}}\right)\notag\\
     &\lesssim  \max\Bigl\{1,n^{-\frac{2\qq-2\rho-2}{2\rr}}\Bigr\} {d_n^\delta} n^{\frac{2\rho+1}{2\rr}}\notag\\
     &\lesssim \max\Bigl\{1,n^{-\frac{2\qq-2\rho-2}{2\rr}}\Bigr\} \min\Bigl\{n^{-\frac{4\rho+1}{2\rr}}\delta^2,n^{-\frac{4\rho+2}{2\rr}}\Bigr\}n^{\frac{2\rho+1}{2\rr}}\notag\\
     &\lesssim\max\{\delta^2, \underbrace{n^{-\frac{2\qq-1}{2\rr}}}_{\leq\delta_n^2\lesssim\delta^2}\}\lesssim\delta^2.
\end{align}
Equations \eqref{eq:twofactor} and \eqref{eq:vnfinal} show the bound  
\begin{align}
     \hyperlink{hyp:firstterm}{(i)}&\lesssim \delta^2 \label{eq:iestimate}
 \end{align}
 for $(\px,\pz)\in S^\delta$, $\delta_n\lesssim\delta\leq n^{3/2}$ and $n\geq n_0$ and thus finish \hyperlink{hyp:step3a}{Step 3a}.
 
{\bf \hypertarget{hyp:step3b}{Step 3b} (estimation of {\hyperlink{hyp:secondterm}{$(ii)$}}).} To estimate $(ii)$ from \eqref{eq:convonetwosplit} on $S^\delta$, we split
 \begin{align}\label{eq:iisplit}
     (ii)&=\left\lVert\DNinv\bN-\GzeroN\right\rVert_2^2\\
     &=\sumjN\left(\frac{n\kappaj^2\betaj}{1+\alphajj n\kappaj^2}-\Gzj\right)^2\notag\\
     &=\sumjN \left(\frac{n\kappaj^2(\betaj-\Gzj)}{1+\alphajj n\kappaj^2}+\frac{n\kappaj^2(1-\alphajj)\Gzj}{1+\alphajj n\kappaj^2}-\frac{\Gzj}{1+\alphajj n\kappaj^2}\right)^2\notag\\
     &\leq 3\sumjN\left(\frac{n^2\kappaj^4(\betaj-\Gzj)^2}{\left(1+\alphajj n\kappaj^2\right)^2}+\frac{n^2\kappaj^4(1-\alphajj)^2\Gzj^2}{\left(1+\alphajj n\kappaj^2\right)^2}+\frac{\Gzj^2}{\left(1+\alphajj n\kappaj^2\right)^2}\right).\notag
 \end{align}
 We now bound the summands in \eqref{eq:iisplit} separately. For the first summand we use \eqref{eq:betaj} to estimate
 \begin{align}
     &\sumjN \frac{n^2\kappaj^4
     (\betaj-\Gzj)^2}{\left(1+\alphajj n\kappaj^2\right)^2}\notag\\
\lesssim~&n^2{{d_n^\delta}}\sum_{j\leq\lceil n^\frac{1}{\rr-\rho-1}\rceil}\frac{\kappa_j^4\omega_j^2}{\left(1+\alphajj n\kappa_j^2\right)^2}+n^2{d_n^\delta}\sum_{j>\lceil n^\frac{1}{\rr-\rho-1}\rceil}\frac{\kappa_j^4\omega_j^2j\log(j)^2}{\left(1+\alphajj n\kappa_j^2\right)^2} \notag\\
\lesssim~&n^2{{d_n^\delta}}\sum_{j\leq\lceil n^\frac{1}{2\rr}\rceil}\frac{j^{-4\rr+2\rho}}{\left(1+nj^{-2\rr}\right)^2}+n^2{d_n^\delta}\sum_{j>\lceil n^\frac{1}{\rr-\rho-1}\rceil}j^{-4\rr+2\rho+1}     \log(j)^{2}\notag\\
     \lesssim~&n^2n^{-\frac{4\rr-2\rho-1}{2\rr}}{{d_n^\delta}}+n^2 \underbrace{\log(n)^{2}n^{-\frac{4\rr-2\rho-2}{\rr-\rho-1}}}_{\lesssim n^{-\frac{4\rr-2\rho-1}{2\rr}}\text{ for }\rr>1+\rho}{d_n^\delta}\notag\\
     \lesssim~& n^{\frac{2\rho+1}{2\rr}}{d_n^\delta}\lesssim\delta^2,\label{eq:iifirst}
 \end{align}
 where we used \cite[Lemma B.2]{de2023convergence} and \eqref{eq:dnestimate}.

 The second term in \eqref{eq:iisplit} can be estimated in a similar way as \eqref{eq:iifirst} using 
\begin{align}
    |\alphajj-1|^2\lesssim\begin{cases}
       \omegaj^4 {d_n^\delta},~~j\leq \lceil n^{1/(\rr-\rho-1)}\rceil, \\
        4\omegaj^4,~~j> \lceil n^{1/(\rr-\rho-1)}\rceil.
    \end{cases}\label{eq:alphajjestimate}
\end{align}
Applying again \cite[Lemma B.2]{de2023convergence} and using \eqref{eq:dnestimate} shows
 \begin{align}
     &\sumjN \frac{n^2\kappaj^4
     (1-\alphajj)^2 g_{0,j}^2}{\left(1+\alphajj n\kappaj^2\right)^2}\notag\\
\lesssim~&n^2{{d_n^\delta}}\sum_{j\leq\lceil n^{\frac{1}{\rr-\rho-1}}\rceil}\frac{\kappa_j^4 g_{0,j}^2\omega_j^4}{\left(1+\alphajj n\kappa_j^2\right)^2}+n^2\sum_{j>\lceil n^\frac{1}{\rr-\rho-1}\rceil}\frac{\kappa_j^4 g_{0,j}^2\omega_j^4}{\left(1+\alphajj n\kappa_j^2\right)^2}\notag\\
\lesssim~&n^2{{d_n^\delta}}\sum_{j\leq\lceil n^{\frac{1}{\rr-\rho-1}}\rceil}\frac{j^{-4\rr-2\qq+4\rho}}{\left(1+nj^{-2\rr}\right)^2}+n^2\sum_{j>\lceil n^\frac{1}{\rr-\rho-1}\rceil}j^{-4\rr-2\qq+4\rho}\notag\\
     \lesssim~& n^2 \max\left\{n^{-2},n^{-\frac{4\rr+2\qq-4\rho-1}{2\rr}}\right\}{d_n^\delta}+n^2n^{-\frac{4\rr+2\qq-4\rho-1}{\rr-\rho-1}}\notag\\
\lesssim~& n^{\frac{4\rho+1}{2\rr}}{d_n^\delta}+\underbrace{n^{-\frac{2\rr+2\qq-2\rho+1}{\rr-\rho-1}}}_{\leq n^{-1}\leq\delta_n^2\lesssim\delta^2}\lesssim \delta^2.
 \end{align}
 For the third summand we get
 \begin{align}
     \sumjN\frac{\Gzj^2}{\left(1+\alphajj n \kappaj^2\right)^2}&\lesssim\sum_{j\leq\lceil n^{1/(2\rr)}\rceil}\frac{j^{-2\qq}}{(1+nj^{-2\rr})^2}
     \lesssim \max\bigl\{n^{-2},n^{-\frac{2\qq-1}{2\rr}}\bigr\}\leq\delta_n^2\lesssim\delta^2\label{eq:iithird}
 \end{align}
 again using \cite[Lemma B.2]{de2023convergence}. Combining \eqref{eq:iifirst}--\eqref{eq:iithird} shows 
 \begin{align}
     \hyperlink{hyp:secondterm}{(ii)}\lesssim \delta^2
     \label{eq:iiestimate}
 \end{align}
 completing \hyperlink{hyp:step3b}{Step 3b}.
Using \eqref{eq:convonetwosplit}, \eqref{eq:iestimate} and \eqref{eq:iiestimate} shows that for $n\geq n_0$ and $\delta_n\lesssim\delta\leq n^{3/2}$ we have
\begin{align}\label{eq:iandiiestimate}
    \lVert\hatgn-g_0\rVert_{L^2(K)}^2\lesssim \delta^2~~\text{on }S^{\delta}
\end{align}
which finishes \hyperlink{hyp:step3}{Step 3}.

{\bf \hypertarget{hyp:step4}{Step 4} (upper bound $\boldsymbol{\lVert\hatgn-g_0\rVert_{L^2(K)}^2}$ on $\boldsymbol{S^\delta}$ for $\boldsymbol{\delta\geq n^{3/2}}$).}
Following \eqref{eq:gnhatg0}, we have 
\begin{align}\label{eq:Scbound}
\lVert\hatgn-g_0\rVert_{L^2(K)}^2&=\limNinfty\left\|\ANin\bN-\GzeroN \right\|_2^2\notag\\
   &\leq2\limNinfty\left\|\ANin\bN \right\|_2^2
   +2\sum_{j=1}^{\infty}\underbrace{\Gzj^2}_{\lesssim j^{-2\qq}}\lesssim \limNinfty\left\|\ANin\bN\right\|_2^2+1,   
\end{align}
where we used $\qq>1/2$.
Define $v_N'=\DNhalfinv\bN$. Using $\lVert\ANin\rVert_2\lesssim n$ and ${\AN}^{-2}\preceq \lVert\AN^{-1}\rVert \ANin$ (here $\preceq $ denotes the Loewner order, e.g.,~\cite[Section 7.7]{horn2012matrix}) we have
\begin{align}\label{eq:matrixanalysis}
\left\|\ANin\bN\right\|_2^2&\leq\lVert\ANin\rVert_2~\lVert\DNhalf\ANin\DNhalf\rVert_2~\lVert v_N'\rVert_2^2\lesssim n\lVert\DNhalf\ANin\DNhalf\rVert_2~\lVert v_N'\rVert_2^2.
\end{align}
Since $\DNhalf\ANin\DNhalf$ is symmetric positive definite we have
\begin{align}
\label{eq:eigenvalue}
\left\lVert\DNhalf\ANin\DNhalf\right\rVert_2&=\lambda_{\min}\left(\DNhalfinv\AN\DNhalfinv \right)^{-1}\notag\\
&=\left(\inf_{\lVert x_N\rVert_2=1}x_N^T\DNhalfinv\AN\DNhalfinv x_N\right)^{-1}\notag\\
&\leq \left(\inf_{\lVert x_N\rVert_2=1}x_N^T\DNhalfinv\diag\left(n^{-1}\kappaj^{-2}\right)\DNhalfinv x_N\right)^{-1}\notag\\
&\leq \left(\inf_{\lVert x_N\rVert_2=1}x_N^T\diag\left(1+\alphajj n\kappaj^{2}\right)^{-1} x_N\right)^{-1}\lesssim n.
\end{align}
For $v_N'$ we use \eqref{eq:betaj} to get

\begin{align}
    \Vert v_N'\rVert_2^2
    =\sumjN\frac{n\kappaj^2\betaj^2}{1+\alphajj n\kappaj^2}
    \lesssim n\sumjN \kappaj^2\Gzj^2+n{d_n^\delta}\sumjN \max\{1,j\log(j)^2\}\kappaj^2\omegaj^2
    \lesssim n(1+{d_n^\delta}).\label{eq:vNprimeestimate}
\end{align}

Inserting \eqref{eq:vNprimeestimate} and \eqref{eq:eigenvalue} into \eqref{eq:matrixanalysis} and then \eqref{eq:Scbound} shows for $\delta\geq n^{3/2}$
\begin{align}
    \label{eq:scboundfinal}
   \lVert\hatgn-g_0\rVert_{L^2(K)}^2&\lesssim n^3(1+{d_n^\delta})\lesssim\delta^2
\end{align}
and thus finishes \hyperlink{hyp:step4}{Step 4}.

{\bf \hypertarget{hyp:step5}{Step 5} (finish the proof).} Let $n\in\IN$, $n\geq n_0$ for $n_0\in\IN$ sufficiently large and $\delta\geq\delta_n$. Denote the constant hidden in $\lesssim$ from \eqref{eq:iandiiestimate} and \eqref{eq:scboundfinal} by $\overline{C}^2 >0$. 
Then it holds
\begin{align}
&\left(\IPpX\otimes\IP_{\pEE}\right)\left(\left\lVert\hatgn-g_0\right\rVert_{L^2(K)}^2\geq\delta^2\right)\leq1-\left(\IPpX\otimes\IP_{\pEE}\right)\left(S^{\overline{C}^{-1}\delta}\right)\leq C\exp\left({-\frac{n{d_n^\delta}}{C}}\right),\label{eq:concentrationbprooffinal}
\end{align}
where we used \eqref{eq:Smeasure} with $\overline{C}^{-1}\delta\gtrsim\delta_n$ and absorbed $\overline{C}$ into $C$. 
Choosing $C\geq \exp(n_0)>1$ then shows \eqref{eq:convergencegaussian1} and \eqref{eq:convergencegaussian2} for $n\leq n_0$. Since \eqref{eq:scboundfinal} holds for $n\in\IN$ this shows \eqref{eq:concentrationbprooffinal} for $n\in\IN$ with a potential larger $C$.
Writing out the definition of ${d_n^\delta}$ shows \eqref{eq:convergencegaussian}. In particular $C$ depends on $g_0$ only through $\lVert g_0\rVert_\infty$, see \eqref{eq:betajfinal} and \eqref{eq:Sdeltag}, and $\sup_{j\in\IN}j^{\qq}|\Gzj|$, see \eqref{eq:rtassump}.  This finishes the proof
of Theorem \ref{th:convergencetheorem}.

\subsection{Proof of Lemma \ref{lem:abstractlmap}}\label{app:abstractlmap}
Let $g_j\coloneqq \langle g,\psi_j\rangle_{L^2(K)}$ and $C_\kappa\coloneqq \sumj \omega_j^2\kappa_j^2$. Then the decay property \eqref{eq:strongerdecayproperty} shows $C_\kappa<\infty$ and together with Cauchy--Schwarz we get
\begin{align}\label{eq:HpriorCzero}
    \biggl\|\sumj g_j\psi_j\biggr\|_\infty\le\sum_{j\in\IN}\omega_j|g_j|
    \le C_\kappa^{1/2} \|g\|_{H_{\rm{prior}}},\qquad g\in H_{\rm{prior}}
\end{align}
which shows $H_{\rm{prior}}\subseteq C^0(K)$.
With $\tomega_j\coloneqq\omega_j\max\{1,\log(j)^{1/2}\}$, $j\in\IN$,
Lemma \ref{lem:envelopelemma} and \eqref{eq:strongerdecayproperty} show that $(x,z)$ $(\IPX\otimes\IPE)$-a.s.~
\begin{align*}
  C_{x,z}^2\coloneqq\sum_{j\in\IN}\kappa_j^2\langle z,\psi_j(x)\rangle_\sim^2
  \leq M(x,z)^2\sum_{j\in\IN}\kappa_j^2\tilde \omega_j^2<\infty .
\end{align*}
By Cauchy--Schwarz we have $(x,z)$ $(\IPX\otimes\IPE)$-a.s.
\begin{align*}
\sum_{j\in\IN}|g_j\langle z,\psi_j(x)\rangle_\sim|
\leq \left(\sum_{j\in\IN}\kappa_j^2\langle z,\psi_j(x)\rangle_\sim^2\right)^{1/2}
\left(\sum_{j\in\IN}\kappa_j^{-2}g_j^2\right)^{1/2}
\leq C_{x,z}\lVert g\rVert_{H_{\rm{prior}}}
\end{align*}
for all $g\in H_{\rm{prior}}$. Thus $H_{\rm{prior}}\subseteq\Lambda_{x,z}$ and
$\ell_{x,z}$ is bounded on $H_{\rm{prior}}$ with
$\lVert\ell_{x,z}\rVert\leq C_{x,z}$ for $(x,z)$ $(\IPX\otimes\IPE)$-a.s. Clearly $\Lambda_{x,z}$ is a linear subspace and $\IP_G(\Lambda_{x,z})=1$ because, by the argumentation after \eqref{eq:envelopebound},
$\sum_{j\in\IN}\tomega_j|g_j|<\infty$ holds $\IP_G$-a.s.
Lastly, \eqref{eq:lmapidentity} follows from \eqref{eq:pointwiseexpansion}.

\subsection{Proof of Theorem \ref{th:MAPestimator}}\label{app:MAPestimator}
 For $x_i\in K$ and $z_i\in\tilde\Y^*$, $i=1,\dots,n$, $n\in\IN$, consider the maps
\begin{align*}
 &B:C^0(K)\times C^0(K)\to\IR,\quad B(g,h)=\sum_{i=1}^n\langle g(x_i),h(x_i)\rangle_\Y,\\
 &\ell: \Lambda_{\px,\pz}\to\IR,\quad \ell(g)=\sum_{i=1}^n\ell_{x_i,z_i}(g)
\end{align*}
with $\ell_{x,z}$ and $\Lambda_{\px,\pz}=\bigcap_{i=1,\dots, n}\Lambda_{x_i,z_i}$ from Lemma \ref{lem:abstractlmap}. In particular $\IP_G(\Lambda_{\px,\pz})=1$ and Theorem
\ref{th:PosteriorGaussianNoise} and Lemma \ref{lem:abstractlmap} give $g$ $\IP_G$-a.s.
\begin{align}\label{eq:posteriorquadraticdensity}
    \varrho(g)\coloneqq\frac{\dd\IP_{G|(\pXX,\pZZ)}}{\dd\IP_G}(g)=\mathfrak Z(\px,\pz)^{-1}
    \exp\bigl(-\tfrac12 B(g,g)+\ell(g)\bigr).
\end{align}
Then \cite[Example 5.10.17]{bogachev1998gaussian} shows that $\IP_{G|(\pXX,\pZZ)}$ is Gaussian and \cite[p.~179]{bogachev2007measure} yields $\IP_{G|(\pXX,\pZZ)}\sim\IP_G$ (since $\varrho>0$ on $\Lambda_{\px,\pz}$) which gives $H_{\text{post}}=H_{\rm{prior}}$ as sets by
\cite[Proposition 2.7.3 (i)]{bogachev1998gaussian}. 

Now fix $h\in H_{\text{post}}=H_{\rm{prior}}$ and let $\hat h_{\text{post}}$ denote the
measurable linear extension of $\dup[H_{\text{post}}]{\cdot}{h}$ onto $C^0(K)$. Since $\IP_{G|(\pXX,\pZZ)}$ is Gaussian with mean $\gnhat$ and CM space $\Hpost$, the Cameron--Martin formula (e.g.~\cite[Corollary 2.4.3]{bogachev1998gaussian}) reads
\begin{align*}
    \frac{\dd\IP_{G|(\pXX,\pZZ)}(\cdot-h)}{\dd\IP_{G|(\pXX,\pZZ)}}(g)
    =\exp\Bigl(\hat h_{\text{post}}(g-\gnhat)
    -\tfrac12\lV h\rV^2_{H_{\text{post}}}\Bigr),
\end{align*}
and, as $h\mapsto\hat h_{\text{post}}$ is linear, it holds $g$ $\IP_G$-a.s.
\begin{align}\label{eq:CMpost}
    \frac{\dd\IP_{G|(\pXX,\pZZ)}(\cdot-h)}{\dd\IP_{G|(\pXX,\pZZ)}}(g)
    \frac{\dd\IP_{G|(\pXX,\pZZ)}(\cdot+h)}{\dd\IP_{G|(\pXX,\pZZ)}}(g)
    =\exp\bigl(-\lV h\rV^2_{H_{\text{post}}}\bigr).
\end{align}
On the other hand, $B(g\pm h,g\pm h)=B(g,g)\pm2B(g,h)+B(h,h)$ and the linearity
of $\ell$ yield
$\varrho(g-h)\varrho(g+h)/\varrho(g)^2=\exp(-B(h,h))$ for
$g$ $\IP_G$-a.s., so the Cameron--Martin formula for
$\IP_G$ and the chain rule for Radon--Nikodym derivatives
(\cite[p.~179]{bogachev2007measure}) show $g$ $\IP_G$-a.s.
\begin{align}
    \frac{\dd\IP_{G|(\pXX,\pZZ)}(\cdot-h)}{\dd\IP_{G|(\pXX,\pZZ)}}(g)
     \frac{\dd\IP_{G|(\pXX,\pZZ)}(\cdot+h)}{\dd\IP_{G|(\pXX,\pZZ)}}(g)
    &=\frac{\varrho(g-h)\varrho(g+h)}{\varrho(g)^2}\,
     \frac{\dd\IP_G(\cdot-h)}{\dd\IP_G}(g)\,\frac{\dd\IP_G(\cdot+h)}{\dd\IP_G}(g)\notag\\
    &=\exp\bigl(-B(h,h)-\lV h\rV^2_{H_{\rm{prior}}}\bigr).\label{eq:CMprior}
\end{align}
Comparing \eqref{eq:CMpost} with \eqref{eq:CMprior} and using the polarization identity shows
\begin{align}\label{eq:CMnormpost}
    \dup[H_{\text{post}}]{g}{h}=\dup[H_{\rm{prior}}]{g}{h}
    +\sum_{i=1}^n\dup[\Y]{g(x_i)}{h(x_i)},\qquad g,h\in H_{\rm{prior}}.
\end{align}
In particular $\lV\cdot\rV_{H_{\rm{prior}}}\le\lV\cdot\rV_{H_{\text{post}}}$
and, by \eqref{eq:HpriorCzero},
$\lV\cdot\rV^2_{H_{\text{post}}}\le(1+nC_\kappa)\lV\cdot\rV^2_{H_{\rm{prior}}}$,
so the two norms are equivalent.

Now, for $N\in\IN$, consider $\AN$ and $\bN$ from \eqref{eq:alphajk} and \eqref{eq:betanuj}. By \eqref{eq:CMnormpost} we
have $\dup[\Hpost]{\psi_j}{\psi_k}=nA_{N,jk}$ for $j,k\le N$, while
$\ell(\psi_j)=n\beta_j$ by \eqref{eq:lmap}. Since $\ell$ is
bounded on $\Hpost$, the Riesz representation theorem provides a unique
$m\in\Hpost$ with $\dup[\Hpost]{m}{\cdot}=\ell$, and
\begin{align*}
    \dup[\Hpost]{\bs{\psi}_N^\top\AN^{-1}\bN}{\psi_j}
    =n\bigl(\AN\AN^{-1}\bN\bigr)_j=n\beta_j=\dup[\Hpost]{m}{\psi_j},
    \qquad j=1,\dots,N,
\end{align*}
i.e.\ $\bs{\psi}_N^\top\AN^{-1}\bN$ is the $\Hpost$-orthogonal projection of $m$ onto
$\operatorname{span}\{\psi_1,\dots,\psi_N\}$. As $(\kappa_j\psi_j)_{j\in\IN}$ is an
ONB of $\Hprior$ and the norms $\lV\cdot\rV_{H_{\rm{prior}}}$ and $\lV\cdot\rV_{H_{\text{post}}}$ are equivalent, these projections converge to $m$ in
$\Hpost$, hence in $\Hprior$ and, by \eqref{eq:HpriorCzero}, in $C^0(K)$. Then Lemma
\ref{lem:calcgaussiangnhat} shows $\gnhat=m\in\Hpost$ and
\begin{align}\label{eq:gnhatRiesz}
    \dup[\Hpost]{\gnhat}{h}=\ell(h),\qquad h\in\Hpost.
\end{align}
Using e.g.~\cite[Corollary 5.4]{ayanbayev2022gamma} yields that
\begin{align}
   \tilde I: \mathcal G\to \IR\cup\{\infty\},~~ \tilde I (g)=\begin{cases}
    \frac{1}{2}\lVert g-\gnhat\rVert_{H_{\text{post}}}^2,~~ &g \in H_{\rm{prior}}\\
   \infty,~~&\text{otherwise},
   \end{cases}\label{eq:tikhonovOMtilde}
\end{align}
defines an OM functional of the posterior $\IP_{G|(\pXX,\pZZ)}$.%
Setting $I\coloneqq\tilde I-\tfrac12\lVert\gnhat\rVert^2_{H_{\text{post}}}$,
\eqref{eq:OMfunctional} shows that $I$ is also an OM functional of the posterior. 
Using \eqref{eq:CMnormpost} and \eqref{eq:gnhatRiesz} we get for $g\in H_{\rm{prior}}$,
\begin{align*}
2I(g)=\lV g\rV_{H_{\text{post}}}^2-2\langle \gnhat,g\rangle_{\Hpost}=  \sum_{i=1}^n\left(\lVert g(x_i)\rVert_\Y^2-2\ell_{x_i,z_i}(g)\right)+\lVert g\rVert_{H_{\rm{prior}}}^2.
\end{align*}
It remains to show that the posterior mean $\gnhat$ is the unique MAP estimator. Define $\mu\coloneqq\IP_{G|(\pXX,\pZZ)}(\cdot,\px,\pz)$ and $\mu_0\coloneqq\mu(\cdot+\gnhat)$. As $B_\delta(0)$ is convex and symmetric, Anderson's inequality \cite[Theorem~2.8.10]{bogachev1998gaussian} gives
\[
J_\delta(g)=\mu_0\bigl(B_\delta(0)+g-\gnhat\bigr)\le\mu_0\bigl(B_\delta(0)\bigr)=J_\delta(\gnhat),\qquad g\in C^0(K),\ \delta>0 .
\]
Hence $\gnhat$ maximizes $J_\delta$ for every $\delta>0$, and thus $\gnhat$ is a MAP estimator. Conversely, any MAP estimator $g^\star$ is a so-called \emph{global weak mode} of $\mu$ (see \cite[Definition~3.7]{ayanbayev2022gamma} for a definition) by \cite[Lemma~3.9]{ayanbayev2022gamma}. Then \cite[Proposition~4.1]{ayanbayev2022gamma} and \cite[Lemma 3.7]{dashti2013bayesian} show that $g^\star$ minimizes $I$, and therefore $g^\star=\gnhat$.

\subsection{\texorpdfstring{Proof of Theorem \ref{th:minimax} }{Proof of Minimax Optimality}}\label{app:lowerbound}

{\bf Step 1 (proof of \ref{item:minimaxupper}).} Since $\qq>1$ and $\rho=0$, we have $\Theta_{\qq}\subset C^0(K)$ with $\sup_{g\in\Theta_{\qq}}\lVert g\rVert_{C^0(K)}\leq M\sumj j^{-\qq}<\infty$. Moreover, \eqref{eq:rtassump} holds for all $g_0\in\Theta_\qq$ with hidden constant $1$. Hence the constant in \eqref{eq:convergencegaussianIE} can be chosen uniformly over $\Theta_{\qq}$, and Corollary \ref{cor:Expectationconvergence} with $\rho=0$ and $\rr=\qq>1$ yields \eqref{eq:upperboundhyperrect}.

{\bf Step 2 (proof of \ref{item:minimaxlower}).} For $g\in C^0(K)$ denote by $\IP^{g}$ the law of $(\pXX,\pZZ)=(\pXX,d\circ g(\pXX)+\pEE)$ under $\IPpX\otimes\IPpE$. Following Corollary \ref{cor:existencetildemapping} and Theorem \ref{th:PosteriorGaussianNoise}, $\IP^{g}$ has density $\exp(-\pPhi(g,\cdot))$ w.r.t.~$\IPpX\otimes\IPpE$ with $\pPhi$ from \eqref{eq:phigeneral}, and $\langle d(y)+E,\bar y\rangle_\sim=\langle y,\bar y\rangle_\Y+\IW_\Y(\bar y)$ almost surely for $y,\bar y\in\Y$ and $E\sim\IPE$. We calculate the Kullback--Leibler (KL) divergence $D_{KL}$ (see \cite[p.~84]{tsybakov2009introduction})
\begin{align}    D_{KL}(\IP^{g},\IP^{g'})&=\int\log\left(\frac{\dd\IP^{g}}{\dd\IP^{g'}}\right)\dd\IP^{g}=\int\log\left(\frac{\dd\IP^{g}}{\dd(\IPpX\otimes\IPpE)}\frac{\dd(\IPpX\otimes\IPpE)}{\dd\IP^{g'}}\right)\dd\IP^{g}\notag\\
    &=\IE_{(\pXX,\pEE)}\left[\pPhi\big(g',\pXX,d\circ g(\pXX)+\pEE\big)-\pPhi\big(g,\pXX,d\circ g(\pXX)+\pEE\big)\right]\notag\\
    &=n\,\IE_{X\sim\IPX}\left[\frac12\lVert g'(X)\rVert_\Y^2-\frac12\lVert g(X)\rVert_\Y^2+\big\langle g(X),g(X)-g'(X)\big\rangle_\Y\right]\notag\\
    &=\frac n2\lVert g-g'\rVert^2_{L^2(K)},\quad g,g'\in C^0(K).\label{eq:KLwhitenoise}
\end{align}

Fix $n\in\IN$ and set
\begin{align*}
    m\coloneqq\left\lceil n^{\frac{1}{2\qq}}\right\rceil,\qquad a\coloneqq m^{-\qq},\qquad \T\coloneqq\{0,1\}^m,\qquad g_\tau\coloneqq a\sum_{j=1}^m\tau_j\psij,\quad\tau\in\T.
\end{align*}
In the following let $d_H$ denote the Hamming distance (see \cite[p.~103]{tsybakov2009introduction}),
\begin{align*}
d_H:\T\times\T\to\{0,1,\dots,m\},\quad d_H(\tau,\tau')=\sum_{j=1}^m\lvert \tau_j-\tau_j'\rvert.
\end{align*} 
Denote by $\IE^{g_\tau}$ the expectation w.r.t.~$\IP^{g_\tau}$.
Observe $g_\tau\in\Theta_q$ for all $\tau\in\T$. By \eqref{eq:KLwhitenoise} we have
\begin{align*}
    D_{KL}(\IP^{g_\tau},\IP^{g_{\tau'}})=\frac n2\lVert g_\tau-g_{\tau'}\rVert^2_{L^2(K)}, \quad \tau,\tau'\in\T.
\end{align*}
As $(\psij)_{j\in\IN}$ is orthonormal, we have $\lVert g_\tau-g_{\tau'}\rVert^2_{L^2(K)}= a^2d_H(\tau,\tau')$ and thus
\begin{align*}
    D_{KL}(\IP^{g_\tau},\IP^{g_{\tau'}})=\frac n2\,m^{-2\qq}\leq\frac12\qquad \text{for }\tau,\tau' \text{ with } d_H(\tau,\tau')=1
\end{align*}
using $m=\lceil n^{1/(2\qq)}\rceil$.
Applying \cite[Theorem 2.12 (iv)]{tsybakov2009introduction} shows
\begin{align}\label{eq:hamminglowerbound}
    \inf_{\tilde\tau}\max_{\tau\in\T}\IE^{g_\tau}\left[d_H(\tilde\tau,\tau)\right]\geq\frac m4.
\end{align}
Now let $\tilde g_n$ be an arbitrary estimator and define
\begin{align*}
    \tilde\tau\coloneqq(\tilde\tau_1,\dots,\tilde\tau_m)\in\T,\quad \tilde\tau_j\coloneqq\mathds 1_{\left\{\langle\tilde g_n,\psij\rangle_{L^2(K)}> a/2\right\}},\quad j=1,\dots,m.
\end{align*}
If $\tilde\tau_j\neq\tau_j$ for $\tau\in\T$, then $|\langle\tilde g_n,\psij\rangle_{L^2(K)}- a\tau_j|\geq a/2$. Parseval yields
\begin{align*}
    \lVert\tilde g_n-g_\tau\rVert^2_{L^2(K)}\geq\sum_{j=1}^m\left(\langle\tilde g_n,\psij\rangle_{L^2(K)}- a\tau_j\right)^2\geq\frac{ a^2}{4}d_H(\tilde\tau,\tau),\qquad\tau\in\T.
\end{align*}
Taking expectations and using $\{g_\tau:\tau\in\T\}\subset\Theta_{\qq}$ shows with \eqref{eq:hamminglowerbound} that
\begin{align}
    \sup_{g_0\in\Theta_{\qq}}\IE_{(\pXX,\pEE)}\left[\lVert\tilde g_n-g_0\rVert^2_{L^2(K)}\right]&\geq\max_{\tau\in\T}\IE^{g_\tau}\left[\lVert\tilde g_n-g_\tau\rVert^2_{L^2(K)}\right]\notag\\
    &\geq\frac{ a^2}{4}\inf_{\tilde\tau}\max_{\tau\in\T}\IE^{g_\tau}\left[d_H(\tilde\tau,\tau)\right]\geq \frac{ a^2m}{16}.\label{eq:lowerboundfinitedim}
\end{align}
Using the definition of $m$ and $ a$ and taking the infimum over $\tilde g_n$ in
\eqref{eq:lowerboundfinitedim} shows \eqref{eq:lowerboundhyperrect}. This finishes the proof of Theorem \ref{th:minimax}.

\subsection{Envelope Lemma}
\begin{lemma}\label{lem:envelopelemma}
    Consider the map $\langle\cdot,\cdot\rangle_\sim$ from Corollary \ref{cor:existencetildemapping} and a continuous ONB $(\psi_j)_{j\in\IN}$ of $L^2(K)$ with $\omega_j\coloneqq\lVert \psi_j\rVert_{\infty}$ for $j\in\IN$. Then it holds for $x\in K$ and $z\in\tilde\Y^*$,
    \begin{align*}
        \lvert\langle z,\psi_j(x)\rangle_\sim\rvert\leq M\omega_j\max\left\{1,\log(j)^{1/2}\right\},\quad j\in\IN
    \end{align*}
    where $M=M(x,z)$ is $(x,z)$ $(\IPX\otimes\IPE)$-a.s.~finite.
\end{lemma}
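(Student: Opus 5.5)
The plan is to reduce the claim to a maximal inequality for a Gaussian sequence, combined with the normalization $\psi_j(x)/\omega_j$ and an interpolation in $x$.

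\textbf{Reduction to a Gaussian process.} Fix $x\in K$. By Corollary \ref{cor:existencetildemapping}, for each $j$ we have $\langle E,\psi_j(x)\rangle_\sim=\IW_\Y(\psi_j(x))$ $\IP$-a.s., and this is a centered Gaussian with variance $\lVert\psi_j(x)\rVert_\Y^2\le\omega_j^2$. Define $\xi_j(x,z)\coloneqq\langle z,\psi_j(x)\rangle_\sim/\omega_j$. For fixed $x$, these are centered Gaussians with variance at most $1$; they need not be independent. The standard union bound then gives, for $t\geq 2$,
\[
\IPE\Bigl(\sup_{j}\frac{|\xi_j(x,\cdot)|}{\max\{1,\log(j)^{1/2}\}}>t\Bigr)\le\sum_{j}2\exp\bigl(-t^2\max\{1,\log j\}/2\bigr)\le 2e^{-t^2/2}+2\sum_{j\ge 3} j^{-t^2/2},
\]
which is summable and tends to $0$ as $t\to\infty$. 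Hence
\[
M(x,z)\coloneqq\sup_j\frac{|\langle z,\psi_j(x)\rangle_\sim|}{\omega_j\max\{1,\log(j)^{1/2}\}}
\]
is $\IPE$-a.s.\ finite for every fixed $x\in K$.

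\textbf{Passing to the product measure.} $M$ is a countable supremum of jointly Borel functions: $(x,z)\mapsto\langle z,\psi_j(x)\rangle_\sim$ is Borel because $\langle\cdot,\cdot\rangle_\sim$ is Borel by Corollary \ref{cor:existencetildemapping} and $\psi_j$ is continuous. So $\{M=\infty\}$ is a Borel subset of $\X\times\tilde\Y^*$. Its $x$-sections are $\IPE$-null for every $x\in K$, so Tonelli gives $(\IPX\otimes\IPE)(\{M=\infty\})=0$. With this $M$, the inequality in the statement holds by definition for all $(x,z)$.

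\textbf{Main obstacle.} The substance of the proof is the union bound, and its key point is that the $\log(j)^{1/2}$ factor is exactly what makes the tail sum converge. The delicate part is the formal use of the a.s.\ identification with white noise. For each $j$ and $x$ there is an exceptional null set where $\langle z,\psi_j(x)\rangle_\sim\neq\IW_\Y(\psi_j(x))$, but there are only countably many $j$ for fixed $x$, so these can be absorbed into a single null set. This fixed-$x$ treatment means we never need uniformity in $x$, which would require chaining. Tonelli then takes care of the $x$-dependence. If $\omega_j=0$ for some $j$, then $\psi_j=0$, which contradicts orthonormality, so the normalization is legitimate.
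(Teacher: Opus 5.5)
Your proposal is correct and follows essentially the paper's route: for fixed $x\in K$, the Gaussian tail bound makes $\IPE\bigl(|\langle\cdot,\psi_j(x)\rangle_\sim|\geq t\,\omega_j\max\{1,\log(j)^{1/2}\}\bigr)$ summable in $j$, and joint Borel measurability together with Tonelli transfers this fixed-$x$ statement to $\IPX\otimes\IPE$. The difference is cosmetic: the paper applies Borel--Cantelli at the fixed level $t=2$ and takes a maximum over the finitely many exceptional $j$, whereas you let $t\to\infty$ in a union bound and define $M$ directly as the countable supremum, which makes its measurability immediate (the $j=1,2$ terms give $4e^{-t^2/2}$ rather than $2e^{-t^2/2}$, which is harmless).
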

\begin{proof}
    We aim to use the Borel--Cantelli lemma, e.g.~\cite[Theorem 4.18]{kallenberg1997foundations}. Let therefore $x\in K$ be arbitrary. Abbreviate $m_j\coloneqq\max\left\{1,\log(j)^{1/2}\right\}$ for $j\in\IN$. We define the sets
\begin{align*}
    A_j\coloneqq \left\{z\in\tilde\Y^*: \left|\frac{\langle z,\psi_j(x)\rangle_\sim}{\omega_j m_j}\right|\geq 2\right\},\quad j \in\IN.
\end{align*}
Since $\langle\cdot,\psi_j(x)\rangle_\sim$ has law $\N(0,\lVert\psi_j(x)\rVert_\Y^2)$ under $\IPE$, see \eqref{eq:WN} and \eqref{eq:tildemapping1}, we have for $\psi_j(x)\neq 0$
\begin{align*}
    \IPE(A_j)&\leq2\exp\left(-\frac{2\omega_j^2m_j^2}{\lVert\psi_j(x)\rVert_\Y^2}\right)
    \leq2j^{-2},
\end{align*}
where we used $\lVert\psi_j(x)\rVert_\Y\leq\omega_j$. Thus $\sum_{j\in\IN}\IPE(A_j)<\infty$. The Borel--Cantelli lemma (\cite[Theorem 4.18]{kallenberg1997foundations}) yields
\begin{align*}
    \IPE(z\in\tilde\Y^*:z\in A_j \text{ for finitely many }j\in\IN)=1.
\end{align*} 
Thus, for $(x,z)$ $\IP_X\otimes\IP_E$-a.s.~there exists $J=J(x,z)\in\IN$ such that
\begin{align}\label{eq:Jdefinition}
    \bigl\lvert\langle z,\psi_j(x)\rangle_\sim\bigr\rvert\leq 2\omega_j m_j,\quad j>J.
\end{align}
Choosing the $\IP_X\otimes\IP_E$-a.s.~finite constant
\begin{align*}
    M_0=M_0(x,z)=\max_{j=1,\dots,J} \frac{\bigl\lvert\langle z,\psi_j(x)\rangle_\sim\bigr\rvert}{\omega_j m_j}
\end{align*}
and setting $M\coloneqq\max\{2,M_0\}$ shows
\begin{align*}
    \bigl\lvert\langle z,\psi_j(x)\rangle_\sim\bigr\rvert\leq M\omega_j m_j,\quad j\in\IN.
\end{align*}
Choosing the minimum $J$ in \eqref{eq:Jdefinition}, and $M=\infty$ if no such $J$ exists, $(x,z)\mapsto J(x,z)$ and thus $(x,z)\mapsto M(x,z)$ are jointly measurable (by Corollary \ref{cor:existencetildemapping} and continuity of $\psi_j$) and the claim follows with Tonelli as in the proof of Lemma \ref{lem:posteriorexpectationexistence}.
\end{proof}

\section{Proofs of Section \ref{sec:applications}}\label{app:proofapplication}

\subsection{Proof of Lemma \ref{lem:ltwobasis}}\label{app:Ltwobasisproof}
    Since $\Y$ is a Hilbert space, e.g.~\cite[Theorem 12.6.1]{aubin2000applied} shows that $L^2(\CRr(\X),\IPX;\Y)$ can be isometrically identified with $L^2(\CRr(\X),\IPX;\IR)\otimes\Y$, where $\otimes$ denotes the tensor product. As $\sigmaRr$ is injective with measurable inverse, it suffices to show that $\Lnu\circ\left(\sigmaRr\right)^{-1}$ is an ONB of $L^2(\CRr(\X),\IPX;\IR)$. Let therefore $F\in L^2(\CRr(\X),\IPX;\IR) $. Since $\IPX=\left(\sigmaRr\right)_\#\pi$ we have $F\circ\sigmaRr\in L^2(U,\pi,\IR)\eqqcolon L^2(\pi)$. Because $(\Lnu)_{\pnu\in\F}$ is an ONB of $L^2(\pi)$, see, e.g.,~\cite[Theorem 2.12]{schwab2011sparse}, there exists $\left(\xi_{\pnu}\right)_{\pnu\in\F}\in\ell^2(\F)$ s.t.
    \begin{align*}
        \limNinfty \Bigl\lVert F\circ\sigmaRr-\sum_{\substack{\pnu\in\F,\\|\pnu|_1\leq N}}\xi_{\pnu}\Lnu\Bigr\rVert_{L^2(\pi)}=0.
    \end{align*}
    This is equivalent to
    \begin{align*}
        \limNinfty \Bigl\lVert F-\sum_{\substack{\pnu\in\F,\\|\pnu|_1\leq N}}\xi_{\pnu}\Lnu\circ\left(\sigmaRr\right)^{-1}\Bigr\rVert_{L^2(\CRr(\X),\IPX;\IR)}=0,
    \end{align*}
    which yields that $\Lnu\circ\left(\sigmaRr\right)^{-1}$ is an ONB of $L^2(\CRr(\X),\IPX;\IR)$. This shows the claim.
\subsection{Proof of Theorem \ref{th:holodecay}}\label{app:proofholodecay}
For a proof of \ref{item:holodecayseq}, see \cite[Theorem 5 (iii)]{herrmann2024neural} which repeats arguments from \cite[Theorem 2.2.10 (iv)]{zech2018sparse}.
For \ref{item:holodecaylp} we use $\IPX=\left(\sigmaRr\right)_\#\pi$ and the definition of $\psinuk$ to show
\begin{align*}
    \Gznuk&=\int_{C_R^s(\X)}\langle g_0(x),\psinuk(x)\rangle_{\Y}\dd\IPX(x)\\
    &=\int_U\langle g_0\circ\sigmaRr(\pu),\psinuk\circ\sigmaRr(\pu)\rangle_{\Y}\dd\pi(\pu)=\int_U\Lnu(\pu)\langle g_0\circ\sigmaRr(\pu),\eta_k\rangle_{\Y}\dd\pi(\pu).
\end{align*}
The result is now an application of \cite[Proposition 2]{herrmann2024neural}.

\subsection{Proof of Lemma \ref{lem:holomakesconvergence}}\label{app:holomakesconvergence}
We first prove \ref{item:g0rate} by constructing an enumeration $i:\IN\to\F\times\IN$, $i(j)\eqqcolon (\nu,k)_j$ and then showing that $\omegaj=\omega_{(\nu,k)_j}$ and $\Gzj\coloneqq g_{0,(\nu,k)_j}$ satisfy \eqref{eq:g0rate}. We then prove \ref{item:generalrate}.

{\bf Step 1 (construction of enumeration).} Consider the sequence $(\anu)_{\nu\in\F}$ from Theorem \ref{th:holodecay} and let $1/2<\qq<\min\{\ssv,\ttv+1/2\}$. We define the sequence
$$(\cnuk)_{\pnu\in\F,k\in\IN},\quad\cnuk=\max\left\{\left\lvert\Gznuk\right\rvert,\anu k^{-\qq}\right\}.$$
Now let $i:\IN\to\F\times\IN$, $i(j)\coloneqq(\pnu,k)_j$ be any monotonically decreasing rearrangement of $\cnuk$, i.e.~
$$j\leq j'\implies c_j\coloneqq c_{(\pnu,k)_j}\geq c_{(\pnu,k)_ {j'}}\eqqcolon c_{j'}.$$
Let $p\in (1/\qq,2)$ and $\ttv'\in[\qq-1/2,\ttv)$, which is possible since $\qq<\ttv+1/2$. Since $q<\ssv$, we have $p>1/\qq>1/\ssv$ and thus $(\anu)_{\nu\in\F}\in\ell^p(\F)$ following Theorem \ref{th:holodecay}, applied with some $p_0\in(1/\ssv,1)$, $p_0\le p$. Moreover, since $\qq p>1$, we have $(\anu k^{-\qq})_{\pnu\in\F,k\in\IN}\in\ell^p(\F\times\IN)$. Theorem \ref{th:holodecay} \ref{item:holodecaylp} and $\theta_k\lesssim k^{-1}$ yield $\sumk k^{2\ttv'}\Gznuk^2\lesssim\anu^2$, so H\"older's inequality with exponents $2/p>1$ and $2/(2-p)>1$ gives
\begin{align}
    \sumk\left\lvert\Gznuk\right\rvert^p=\sumk\left(k^{\ttv'}\left\lvert\Gznuk\right\rvert\right)^pk^{-\ttv'p}\lesssim\anu^p\left(\sumk k^{-\frac{2\ttv'p}{2-p}}\right)^{\frac{2-p}{2}}\lesssim\anu^p,\label{eq:gzjestimate}
\end{align}
where the last series converges since $2\ttv'p/(2-p)>1$ by $t'\geq \qq-1/2$ and $p\qq>1$. Hence $(\Gznuk)_{\pnu\in\F,k\in\IN}\in\ell^p(\F\times\IN)$, and thus $(\cnuk)_{\pnu\in\F,k\in\IN}\in\ell^p(\F\times\IN)$. Because $(c_j)_{j\in\IN}$ is monotonically decreasing, \cite[Lemma 1.4.8]{zech2018sparse} yields $c_j\lesssim j^{-1/p}$.

Moreover, with the zero vector $\boldsymbol{0}=(0,0,\dots)\in\F$, we have  
\begin{align}
    \label{eq:cnulowerbound}
    c_j\geq \min_{k\leq j}~c_{\boldsymbol{0},k}\geq a_{\boldsymbol{0}}j^{-\qq}\gtrsim j^{-\qq}.
\end{align}

{\bf Step 2 (proof of \eqref{eq:g0rate}).}
Following \eqref{eq:omeganu}, we have $$\omeganu\coloneqq\omega_{\pnu,k}=\lVert\psinuk\rVert_{C^0(K)}\leq\prod_{\ell\in\supp\pnu}(1+2\nu_\ell)^{1/2}$$ and aim to bound $\omega_{(\pnu,k)_j}$ in terms of $j$ using the enumeration from above.

Fix $\beta>0$ and recall $\tau_j=\max\{T_1,T_2\theta_j^{\ssv(p_0-1)}\}$ and $\enu=\tau^{\pnu}$ from Theorem \ref{th:holodecay} \ref{item:holodecayseq}. Since $\theta_j\to 0$ and $p_0<1$, we have $\tau_j\to\infty$, and $\tau_j\geq T_1>1$ for all $j\in\IN$. Hence
\begin{align*}
    d_\beta(j)\coloneqq\sup_{n\in\IN}\,(1+2n)^{\beta/2}\tau_j^{-n}<\infty,\qquad j\in\IN,
\end{align*}
and $1+2n\leq 3^n$ gives $(1+2n)^{\beta/2}\tau_j^{-n}\leq\bigl(3^{\beta/2}\tau_j^{-1}\bigr)^{n}\leq 1$ whenever $\tau_j\geq 3^{\beta/2}$. Thus, as $\tau_j\to\infty$, there is $j_\beta\in\IN$ with $d_\beta(j)\leq 1$ for all $j\geq j_\beta$. Therefore
\begin{align*}
    D_\beta\coloneqq\prod_{j<j_\beta}\max\{1,d_\beta(j)\}<\infty\quad\text{ and }\quad\prod_{j\in\supp\pnu}d_\beta(j)\leq D_\beta, \quad \pnu\in\F.
\end{align*}
Using \eqref{eq:gzjestimate} in the form $\lvert\Gznuk\rvert\lesssim\anu$, we obtain
\begin{align}
    \cnuk\lesssim\anu
    &=\underbrace{\anu\enu}_{\leq C\text{ since }(\anu\enu)_{\pnu\in\F}\in\ell^1(\F)}\enu^{-1}\notag\\
    &\lesssim\prod_{j\in\supp\pnu}\max\left\{T_1,T_2\theta_j^{\ssv(p_0-1)}\right\}^{-\nu_j}=\prod_{j\in\supp\pnu}\tau_j^{-\nu_j}\notag\\
    &\leq D_\beta\prod_{j\in\supp\pnu}(1+2\nu_j)^{-\beta/2}\leq D_\beta\,\omeganu^{-\beta},\label{eq:maxcnuesst}
\end{align}
where the last inequality follows by inserting $1=(1+2\nu_j)^{\beta/2}(1+2\nu_j)^{-\beta/2}$ into each factor and applying the definition of $d_\beta(j)$.

Let $\rho>0$ and set $\beta\coloneqq\qq/\rho$. Combining \eqref{eq:cnulowerbound} and \eqref{eq:maxcnuesst} shows
\begin{align*}
    j^{-\qq}\lesssim c_{(\pnu,k)_j}\lesssim\omega_j^{-\qq/\rho},
\end{align*}
that is, $\omegaj\lesssim j^{\rho}$. Since $\rho>0$ was arbitrary, \eqref{eq:omegaassumppolynomial} holds for every $\rho>0$.

By definition of $\cnuk$ we have $\lvert\Gznuk\rvert\leq\cnuk$ and hence
$$\lvert g_{0,j}\rvert=\lvert g_{0,(\pnu,k)_j}\rvert\leq c_{(\pnu,k)_j}\lesssim j^{-1/p}.$$
Choosing $1/p=q-\eps/2=\min\{s,t+1/2\}-\eps$ for arbitrarily small $\eps>0$ shows the claim.

{\bf Step 3 (proof of \ref{item:generalrate}).} Consider the setup from above with $0<\qq<\min\{\ssv,\ttv\}$, $p>1/\qq$ and $\ttv'\in[\qq,\ttv)$. Replace $\cnuk$ by $\bar c_{\pnu,k}\coloneqq\anu k^{-\qq}$ and consider a monotonically decreasing rearrangement $i$ of $(\bar c_{\pnu,k})_{\pnu\in\F,k\in\IN}$. Since $(\anu)_{\pnu\in\F}$ in Theorem \ref{th:holodecay} does not depend on $g_0$, neither does $i$. Then $\bar c_j\coloneqq\bar c_{(\pnu,k)_j}\lesssim j^{-1/p}$ and $\omegaj\lesssim j^{\rho}$, $p\in (1/\qq,2)$, $\rho>0$, follow as in Steps 1 and 2. Moreover, Theorem \ref{th:holodecay} \ref{item:holodecaylp}, $\theta_k\lesssim k^{-1}$ and $\ttv'\geq\qq$ yield $$\lvert\Gznuk\rvert\lesssim\anu\theta_k^{\ttv'}\lesssim\anu k^{-\qq}=\bar c_{\pnu,k},$$ and thus $\lvert\Gzj\rvert\lesssim\bar c_j\lesssim j^{-1/p}$ for $p>1/\qq$. Choosing $$1/p=\qq-\eps/2=\min\{\ssv,\ttv\}-\eps$$ shows \ref{item:generalrate}.

\subsection{Proof of Corollary \ref{cor:holomorphlearning}}\label{app:holomorphlearning}
We only prove \ref{item:g0explicitrate}.
We aim to apply Corollary \ref{cor:Expectationconvergence} and check its assumptions.
Let $\eps>0$ and set $m\coloneqq\min\{\ssv,\ttv+1/2\}>1/2$, $M\coloneqq\min\{\rr,m\}$ and
\begin{align*}
    \qq\coloneqq m-\min\left\{\tfrac{\rr\eps}{2},\tfrac{m-1/2}{2}\right\},\qquad
    \rho\coloneqq\min\left\{\tfrac{\rr\eps}{4},\tfrac{\rr-1}{4}\right\}.
\end{align*}
Then $\qq\geq(m+1/2)/2>1/2$ and $2\rho\leq(\rr-1)/2<\rr-1$, and Lemma
\ref{lem:holomakesconvergence} yields \eqref{eq:omegaassumppolynomial} and
\eqref{eq:rtassump} for this pair $(\qq,\rho)$. Since $\rr-\rho>1$,
\begin{align*}
    \sumj\kappaj\omegaj\max\left\{1,\log(j)^{1/2}\right\}\lesssim\sumj j^{-\rr+\rho}\log(j)^{1/2}<\infty,
\end{align*}
so \eqref{eq:strongerdecayproperty} holds.
Moreover, as $K=\CRr(\X)$ is compact with
$\supp(\IPX)=K$ and $g_0\in C^0(K,\Y)$ by Assumption \ref{ass:holo} \ref{item:holoassumptionholo}, Setting
\ref{set:gaussiansetup} is satisfied by Lemma \ref{lem:decay}. Hence the assumptions of Theorem
\ref{th:convergencetheorem} are fulfilled. From $M\leq m$, $M\leq\rr$ we get
$\min\{\qq,\rr-2\rho\}\geq M-\rr\eps/2$, so \eqref{eq:deltan} gives
\begin{align*}
    \delta_n^2\leq\max\{1,\log(n)\}^2\,n^{-\frac{2M-1}{2\rr}+\frac{\eps}{2}}\lesssim n^{-\frac{2M-1}{2\rr}+\eps}.
\end{align*}
Then \ref{item:g0explicitrate} follows from Corollary \ref{cor:Expectationconvergence}; \ref{item:generalexplicitrate} follows by the same proof with $m\coloneqq\min\{\ssv,\ttv\}>1/2$.

\section{Posterior Mean Convergence in \texorpdfstring{$C^0(K)$}{C0(K)}}\label{app:C0Kconvergence}

One can use the proof strategy of Theorem \ref{th:convergencetheorem} to get concentration inequalities w.r.t.~the $C^0(K)$-norm.

\begin{theorem}\label{th:concentrationC0}
Consider the setting of Theorem \ref{th:convergencetheorem}, in particular let \eqref{eq:omegaassumppolynomial} and \eqref{eq:rtassump} hold. 
Moreover, let $\qq>1+\rho$ and $\rr>3/2+3\rho$. Define the sequence $(\delta_n)_{n\in\IN}$ with 
\begin{align}
    \label{eq:deltanC0}
    \delta_n=\max\{1,\log(n)\}~n^{-\frac{\min\{\qq,\rr-2\rho\}-\rho-1}{2\rr}},\qquad n\in\IN.
\end{align}
 
Then there exists a constant $C>0$ s.t.~for all $n\in\IN$ and $\delta\geq \delta_n$ 
\begin{subequations}\label{eq:convergencegaussianC0}
\begin{fleqn}[2em]
\begin{align}
 &\left(\IPpX\otimes\IP_{\pEE}\right)\left(\left\lVert\hatgn(\pXX,d\circ g_0(\pXX)+\pEE)-g_0\right\rVert_{C^0(K)}^2\geq\delta^2\right)\notag
\end{align}
\end{fleqn}
\vspace{-\abovedisplayskip}
\begin{empheq}[left={\leq\empheqlbrace}]{alignat=2}
    &C\exp\left(-C^{-1}n^{(\rr-2\rho-1)/\rr}\delta^2\right),
      &&\;\;\delta_n\leq\delta\leq n^{-\frac{1}{4\rr}},\label{eq:convergencegaussianC01}\\
    &C\exp\left(-C^{-1}n^{(2\rr-4\rho-3)/(2\rr)}\right),
      &&\;\;n^{-\frac{1}{4\rr}}\leq\delta\leq n^{2},\label{eq:convergencegaussianC02}\\
    &C\exp\left(-C^{-1}n^{-(6\rr+4\rho+3)/(2\rr)}\delta^2\right),
      &&\;\;\delta\geq n^{2}\label{eq:convergencegaussianC03}.
\end{empheq}
\end{subequations}
\end{theorem}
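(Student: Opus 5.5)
I will follow the proof of Theorem \ref{th:convergencetheorem} closely and only change how the final error is measured. By Lemma \ref{lem:calcgaussiangnhat} the posterior mean is the $C^0(K)$-limit of $\psiNt\ANin\bN$. Also, $g_0=\sum_j\Gzj\psij$ converges in $C^0(K)$, because $\sum_j j^{-\qq}j^{\rho}<\infty$ for $\qq>1+\rho$. Hence the coefficientwise bound
\begin{align*}
\lVert\hatgn-g_0\rVert_{C^0(K)}\leq\limNinfty\sumjN\omegaj\bigl|(\ANin\bN-\GzeroN)_j\bigr|
\end{align*}
holds. Using $\omegaj\lesssim j^\rho$ and Cauchy--Schwarz with the weight $j^{-1-\varepsilon}$, I obtain
\begin{align*}
\lVert\hatgn-g_0\rVert_{C^0(K)}^2\lesssim\sumjN j^{2\rho+1}\log(j)^2\,(\ANin\bN-\GzeroN)_j^2 .
\end{align*}
This is a weighted $\ell^2$ version of \eqref{eq:gnhatg0}. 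The weight $j^{2\rho+1}$, up to logarithms, is exactly what shifts the rate from $2\min\{\qq,\rr-2\rho\}-1$ in \eqref{eq:deltan} to $\min\{\qq,\rr-2\rho\}-\rho-1$ in \eqref{eq:deltanC0}. Alternatively, one could work with $\ell^1$-type sums directly using \cite[Lemma B.2]{de2023convergence}.

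The probabilistic part carries over unchanged. I define $S^\delta$ as in \eqref{eq:Sdelta} with a modified sequence $d_n^\delta$, chosen so that on $S^\delta$ the weighted error is $\lesssim\delta^2$. Step 2 (Hoeffding and Gaussian tails for $\alphajl$, $\betaj$, and the tail event $\Sdeltag$) then gives $1-(\IPpX\otimes\IPpE)(S^\delta)\leq C\exp(-nd_n^\delta/C)$, provided $nd_n^\delta\gtrsim\log(n)^2$ for $\delta\ge\delta_n$. The three regimes of \eqref{eq:convergencegaussianC0} should then come from choosing
\begin{align*}
d_n^\delta\simeq n^{-(4\rho+3)/(2\rr)}\max\bigl\{\min\{1,n^{1/(2\rr)}\delta^2\},\,n^{-4}\delta^2\bigr\}.
\end{align*}
This matches the exponents $n^{(\rr-2\rho-1)/\rr}\delta^2$, $n^{(2\rr-4\rho-3)/(2\rr)}$ and $n^{-(6\rr+4\rho+3)/(2\rr)}\delta^2$ stated in the theorem. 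The stronger assumption $\rr>3/2+3\rho$ is what keeps $n^{(2\rr-4\rho-3)/(2\rr)}$ growing.

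The deterministic part repeats Step 3. I split $\ANin\bN-\GzeroN$ into the off-diagonal term $(\ANin-\DNinv)\bN$ and the diagonal term $\DNinv\bN-\GzeroN$. The diagonal term is handled componentwise as in \eqref{eq:iisplit}--\eqref{eq:iithird}, with the extra factor $j^{2\rho+1}$ inserted. For example, the third summand becomes
\begin{align*}
\sum_{j\lesssim n^{1/(2\rr)}}j^{2\rho+1-2\qq}(1+nj^{-2\rr})^{-2}\lesssim n^{-(2\qq-2\rho-2)/(2\rr)}\le\delta_n^2,
\end{align*}
and the $\betaj-\Gzj$ and $\alphajj-1$ summands produce powers $n^{(4\rho+3)/(2\rr)}d_n^\delta\lesssim\delta^2$.

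The main obstacle is the off-diagonal term. In \eqref{eq:twofactor} we only had $\ell^2$ control through $\lVert\DNhalfinv(\IIN-\EN)^{-1}\rVert_2$, and this does not pass to the weighted norm, since $(\IIN-\EN)^{-1}$ need not be bounded on $\ell^2$ with weights $j^{2\rho+1}$. I would first try to control the weighted norm by conjugation: write $\bs{W}_N=\diag(j^{\rho+1/2})$ and show $\lVert\bs{W}_N\EN\bs{W}_N^{-1}\rVert_2<1/2$ on $S^\delta$. This would be done by redoing \eqref{eq:ENfirstsum}--\eqref{eq:ENsecondsum} with the entries multiplied by $(j/\ell)^{\rho+1/2}$, which costs an extra factor of order $n^{(\rho+1/2)/(2\rr)}$ in the head sums. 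That factor is absorbed by the smaller $d_n^\delta$ and by the hypothesis $\rr>3/2+3\rho$. If this works, the Neumann series bound holds in the weighted norm, and the estimate of $\bs{W}_N\vN$ follows as in \eqref{eq:vnfinal}. Finally, the large-deviation regime $\delta\ge n^2$ is handled as in Step 4 via $\lVert\ANin\rVert_2\lesssim n$, now paying the extra weight $\sum_{j\le N}\omegaj\kappaj\cdots\lesssim n^{1/2}$, and Step 5 assembles the bounds exactly as before.
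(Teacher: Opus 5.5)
Your skeleton (the same ${d_n^\delta}$, the same sets $S^\delta$, Step 2 unchanged, the split into $(\ANin-\DNinv)\bN$ and $\DNinv\bN-\GzeroN$, and a crude bound for $\delta\ge n^2$) matches the paper's proof. The crux is handled differently, though, and as written your version does not deliver the stated bound. The paper never passes to an $\ell^2$ norm with an $n$-independent weight. It keeps $\lVert\hatgn-g_0\rVert_{C^0(K)}\le\lim_N\wN^T|\ANin\bN-\GzeroN|$ with $\wN=(\omegaj)_{j\le N}$, and treats the diagonal part directly as $\ell^1$-sums via Lemma B.2 of \cite{de2023convergence}; this is your parenthetical alternative. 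For the off-diagonal part it writes $(\ANin-\DNinv)\bN=\DNhalfinv(\IIN-\EN)^{-1}\vN$ and applies Cauchy--Schwarz against the $n$-dependent vector $\DNhalfinv\wN$:
\[
\wN^T\bigl|(\ANin-\DNinv)\bN\bigr|\le\lVert\DNhalfinv\wN\rVert_2\,\lVert(\IIN-\EN)^{-1}\rVert_2\,\lVert\vN\rVert_2,\qquad \lVert\DNhalfinv\wN\rVert_2^2=\sum_{j\le N}\frac{\omegaj^2n\kappaj^2}{1+\alphajj n\kappaj^2}\lesssim n^{\frac{2\rho+1}{2\rr}}.
\]
So only the unweighted bound $\lVert\EN\rVert_2<1/2$ is needed. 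The estimate $\lVert\vN\rVert_2^2\lesssim n^{(2\rho+1)/(2\rr)}{d_n^\delta}$ is reused (the maximum in \eqref{eq:vnfinal} equals $1$ since $\qq>1+\rho$). The off-diagonal term is then $\lesssim n^{(4\rho+2)/(2\rr)}{d_n^\delta}\lesssim\delta^2$, and the obstacle you describe never arises.

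Your route has two concrete defects.

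\textbf{Log loss from the fixed weight.} A Cauchy--Schwarz step with an $n$-independent summable weight loses a factor that is unbounded in $n$ relative to the $\ell^1$ computation. With your weight $j^{2\rho+1}\log(j)^2$, the stochastic diagonal term is bounded only by ${d_n^\delta}\sum_{j\le n^{1/(2\rr)}}j^{4\rho+1}\log(j)^2\simeq\log(n)^2n^{(4\rho+2)/(2\rr)}{d_n^\delta}$. That is $\log(n)^2\delta^2$, not $\delta^2$; with $j^{-1-\varepsilon}$ you would instead lose $n^{\varepsilon/(2\rr)}$. Absorbing this by shrinking ${d_n^\delta}$ weakens every exponent in \eqref{eq:convergencegaussianC0} by $\log(n)^2$. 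In the matched case $\rho=0$, $\qq\ge\rr$ it also destroys $n{d_n^\delta}\gtrsim\log(n)^2$ at $\delta=\delta_n$, which Step 2 needs for the union bound over $\lceil n^{1/(\rr-\rho-1)}\rceil^2$ events. So you get the theorem only up to logarithms. Separately, the claim $n^{(4\rho+3)/(2\rr)}{d_n^\delta}\lesssim\delta^2$ is false for $\delta\le n^{-1/(4\rr)}$, where it equals $c\,n^{1/(2\rr)}\delta^2$; the relevant power is $(4\rho+2)/(2\rr)$.

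\textbf{Conjugation step.} It does not close with your own accounting. An extra factor $n^{(\rho+1/2)/(2\rr)}$ in the head--head block gives
\[
\lVert\bs W_N\EN\bs W_N^{-1}\rVert_2\lesssim({d_n^\delta})^{1/2}n^{\frac{2\rho+1}{2\rr}+\frac{2\rho+1}{4\rr}}\le c^{1/2}n^{\frac{\rho}{2\rr}},
\]
which is not small for $\rho>0$, and $\rr>3/2+3\rho$ does not affect this exponent. The step can be rescued. In the rank-one bound, put $j^{\rho+1/2}$ on the left factor and $\ell^{-\rho-1/2}$ on the right; this costs only $\log(n)^{1/2}$ over the unweighted bound. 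The tail--head block has squared Hilbert--Schmidt norm $\lesssim\log(n)\,n^{(3\rho+1-\rr)/(\rr-\rho-1)}$, which is small since $\rr>3\rho+1$. But the weighted output $\lVert\bs W_N\vN\rVert_2$ still carries the $\log(n)^2$ loss above. The missing idea is to keep the weight outside the Neumann series through $\DNhalfinv\wN$, as the paper does.
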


\subsection*{Proof of Theorem \ref{th:concentrationC0}}
We follow the proof of Theorem \ref{th:convergencetheorem}, in particular Steps \hyperlink{hyp:step1}{1}--\hyperlink{hyp:step5}{5}.

{\bf \hypertarget{hyp:C0step1}{Step 1} (preparation and definitions).}
Following \eqref{eq:omegaassumppolynomial} and \eqref{eq:rtassump}, the decay property \eqref{eq:strongerdecayproperty} is fulfilled. Thus we can apply Lemma \ref{lem:calcgaussiangnhat} which shows
\begin{align*}
    \hatgn(\px,\pz)=\limNinfty\bs{\psi}_N^T\ANin(\px)\bN(\px,\pz)
\end{align*}
with convergence in $C^0(K)$ and $\AN$ and $\bN$ from \eqref{eq:ANbNgaussianproof}. Similar to \eqref{eq:dn}, for $\delta>0$, we define the sequence $({d_n^\delta})_{n\in\IN}$ as
\begin{align}
    {d_n^\delta}=c n^{-\frac {4\rho+3} {2\rr}}\max\left\{\min\{1,n^{\frac{1}{2\rr}}\delta^2\},n^{-4}\delta^2\right\},~~n\in\IN,\label{eq:dnc0}
\end{align}
with $c>0$ sufficiently small, see \hyperlink{hyp:C0step3a}{Step 3a}.
Consider the set $S^\delta$ from \eqref{eq:Sdelta} with ${d_n^\delta}$ from \eqref{eq:dnc0} instead of \eqref{eq:dn}.

{\bf \hypertarget{hyp:C0step2}{Step 2} (upper bound $(\IPpX\otimes\IP_{\pEE})\left(S^\delta\right)$).}
The proof is identical to the \hyperlink{hyp:step2}{$L^2(K)$}-case showing
\begin{align}
    (\IPpX\otimes\IP_{\pEE})\left(S^\delta\right)\geq 1-
C\exp\left(-\frac{n{d_n^\delta}}{C}\right)\label{eq:SmeasureC0}
\end{align}
for $\delta\gtrsim\delta_n$ with $\delta_n$ from \eqref{eq:deltanC0} and ${d_n^\delta}$ from \eqref{eq:dnc0}.

{\bf \hypertarget{hyp:C0step3}{Step 3} (upper bound $\boldsymbol{\lVert\hatgn-g_0\rVert_{C^0(K)}}$ on $\boldsymbol{S^\delta}$ for $\boldsymbol{\delta_n\lesssim\delta\leq n^{2}}$).}
Define $$\GzeroN\coloneqq(\langle g_0,\psij\rangle_{L^2(K)})_{j=1}^N$$ and $\bs{w}_N\coloneqq(\lVert\psi_j\rVert_{\infty})_{j=1}^N$. Then Lemma \ref{lem:calcgaussiangnhat} and the triangle inequality yields $(\px,\pz)$ $(\IPpX\otimes\IPpE)$-a.s.
\begin{align}
\label{eq:gnhatg0C0}
\lVert\gnhat(\px,\pz)-g_0\rVert_{\infty}^2&=\limNinfty\lVert \bs{\psi}_N^T\ANin(\px)\bN(\px,\pz)-\bs{\psi}_N^T\GzeroN\rVert_{\infty}^2\notag\\
&\leq\limNinfty(\wN^T|\ANin(\px)\bN(\px,\pz)-\GzeroN|)^2.
\end{align} 
Defining $\DN$ and $\RRN$ as in \eqref{eq:DNRN} and suppressing $\px$ and $\pz$ in the notation from now on we split \eqref{eq:gnhatg0C0} as
\begin{align}
  \label{eq:convonetwosplitC0}
  (\wN^T|\ANin\bN-\GzeroN|)^2
  \leq~&2\underbrace{(\wN^T|(\ANin-\DNinv)\bN|)^2 }_{\coloneqq \text{\hypertarget{hyp:C0firstterm}{(i)}}}
  +2\underbrace{(\wN^T|\DNinv\bN-\GzeroN|)^2}_{\coloneqq\text{\text{\hypertarget{hyp:C0secondterm}{(ii)}}}} .
\end{align}
As in the proof of Theorem \ref{th:convergencetheorem} we now estimate the summands \hyperlink{hyp:C0firstterm}{$(i)$} and \hyperlink{hyp:C0secondterm}{$(ii)$} in \eqref{eq:convonetwosplitC0} separately.

{\bf \hypertarget{hyp:C0step3a}{Step 3a} (estimation of {\hyperlink{hyp:C0firstterm}{$(i)$}}).}
Let $\EN=-\DNhalfinv\RRN\DNhalfinv$. Then, by following \eqref{eq:ENtwosums}--\eqref{eq:ENsecondsum}, it holds
\begin{align}
    \label{eq:ENboundC0}
    \lVert\EN\rVert_2\lesssim n^{\frac{2\rho+1}{2\rr}}{d_n^\delta}^{1/2}+n^{-\frac{2\rr-2\rho-1}{4\rr}}.
\end{align}
Thus, by \eqref{eq:dnc0}, we have $\lVert\EN\rVert_2<1/2$ for $n\geq n_0$ and sufficiently small $c$ in \eqref{eq:dnc0}.
Similar to \eqref{eq:twofactor} we bound \hyperlink{hyp:C0firstterm}{$(i)$} as 
\begin{align}
    \label{eq:twofactorC0}
    \hyperlink{hyp:C0firstterm}{(i)}=(\wN^T|(\ANin-\DNinv)\bN|)^2&\leq \lVert\underbrace{\wN^T\DNhalfinv}_{\coloneqq\uN}\rVert_2^2~\lVert (\IIN-\EN)^{-1}\rVert_2^2 ~\lVert \vN\rVert_2^2\notag\\
    &\lesssim   \lVert \uN\rVert_2^2~\lVert \vN\rVert_2^2
\end{align}
with $\vN=\EN\DNhalfinv\bN$ and using \eqref{eq:ENboundC0}. We now estimate the two factors in \eqref{eq:twofactorC0}. For $\lVert\uN\rVert_2^2$ we use \cite[Lemma B.2]{de2023convergence} to estimate
\begin{align}\label{eq:uNC0}
    \lVert\uN\rVert_2^2&=\sumjN \omega_j^2\frac{n\kappaj^2}{1+\alphajj n\kappa_j^2}\notag\\
    &=n\sumjnr \frac{\kappaj^2\omega_j^2}{1+\alphajj n\kappa_j^2}+n\sumjnrgr\kappaj^2\omega_j^2\notag\\
    &\lesssim n\sumjntwor \frac{j^{-2\rr+2\rho}}{1+nj^{-2\rr}}+n\sumjnrgr j^{-2\rr+2\rho}\notag\\
    &\lesssim n n^{-\frac{2\rr-2\rho-1}{2\rr}}+ n n^{-\frac{2\rr-2\rho-1}{\rr-\rho-1}}\lesssim n^{\frac{2\rho+1}{2\rr}}.
\end{align}
For $\lVert\vN\rVert_2^2$ following \eqref{eq:vnestimate}--\eqref{eq:vnfinal}\footnote{The estimate ${d_n^\delta}^{1/2}n^{\frac{2\rho+1}{2\rr}}\lesssim 1$, used in \eqref{eq:vnsecond}, is still valid for ${d_n^\delta}$ from \eqref{eq:dnc0} instead of \eqref{eq:dn}.} shows 
\begin{align}
    \label{eq:vNC0}
    \lVert\vN\rVert_2^2\lesssim {d_n^\delta} n^{\frac{2\rho+1}{2\rr}}.
\end{align}
Inserting \eqref{eq:uNC0} and \eqref{eq:vNC0} into \eqref{eq:twofactorC0} shows

\begin{align}
    \hyperlink{hyp:C0firstterm}{(i)}&\lesssim {d_n^\delta} n^{\frac{4\rho+2}{2\rr}} 
\lesssim \min\{n^{-\frac{4\rho+2}{2\rr}}\delta^2,n^{-\frac{4\rho+3}{2\rr}}\}n^{\frac{4\rho+2}{2\rr}} 
\lesssim\max\{\delta^2, \underbrace{n^{-\frac{2\qq-2\rho-1}{2\rr}}}_{\leq\delta_n^2\lesssim\delta^2}\}\lesssim\delta^2\label{eq:iestimateC0}
\end{align}
finishing \hyperlink{hyp:C0step3a}{Step 3a}.

{\bf \hypertarget{hyp:C0step3b}{Step 3b} (estimation of {\hyperlink{hyp:C0secondterm}{$(ii)$}}).}
We split \hyperlink{hyp:C0secondterm}{$(ii)$} from \eqref{eq:convonetwosplitC0} similar to \eqref{eq:iisplit} as
\begin{align}
    \label{eq:iisplitc0}
\hyperlink{hyp:C0secondterm}{(ii)}^{1/2}&=\wN^T|\DNinv\bN-\GzeroN|\\
&=\sumjN\omega_j\left|\frac{n\kappaj^2\betaj}{1+\alphajj n\kappaj^2}-\Gzj\right|\notag\\
     &\leq\sumjN\left|\frac{n\kappaj^2\omega_j(\betaj-\Gzj)}{1+\alphajj n\kappaj^2}\right|+\left|\frac{n\kappaj^2\omega_j(1-\alphajj)\Gzj}{1+\alphajj n\kappaj^2}\right|+\left|\frac{\omega_j\Gzj}{1+\alphajj n\kappaj^2}\right|.\notag
\end{align}
We estimate the first term in \eqref{eq:iisplitc0} similar to \eqref{eq:iifirst} using \eqref{eq:betaj} and \cite[Lemma B.2]{de2023convergence} and get
\begin{align}
     &\sumjN\left|\frac{n\kappaj^2\omega_j(\betaj-\Gzj)}{1+\alphajj n\kappaj^2}\right|\notag\\
\lesssim~&n {d_n^\delta}^{1/2} \sum_{j\leq\lceil n^\frac{1}{\rr-\rho-1}\rceil}\frac{\kappa_j^2\omega_j^2}{1+\alphajj n\kappa_j^2}+n{d_n^\delta}^{1/2}\sum_{j>\lceil n^\frac{1}{\rr-\rho-1}\rceil}\frac{\kappa_j^2\omega_j^2j^{1/2}\log(j)}{1+\alphajj n\kappa_j^2} \notag\\
\lesssim~&nn^{-\frac{2\rr-2\rho-1}{2\rr}}{d_n^\delta}^{1/2}+n \underbrace{\log(n)n^{-\frac{2\rr-2\rho-3/2}{\rr-\rho-1}}}_{\lesssim n^{-\frac{2\rr-2\rho-1}{2\rr}}\text{ for }\rr>1+\rho}{d_n^\delta}^{1/2}\notag\\
     \lesssim~& n^{\frac{2\rho+1}{2\rr}}{d_n^\delta}^{1/2}\lesssim\delta.\label{eq:iifirstC0}
\end{align}
For the second term in \eqref{eq:iisplitc0}, we use \eqref{eq:alphajjestimate} and $\qq>1+\rho$ to get
\begin{align}
    &\sumjN\left|\frac{n\kappaj^2\omega_j(1-\alphajj)\Gzj}{1+\alphajj n\kappaj^2}\right|\notag\\
\lesssim~&n{d_n^\delta}^{1/2}\sum_{j\leq\lceil n^{\frac{1}{\rr-\rho-1}}\rceil}\frac{\kappa_j^2 |g_{0,j}|\omega_j^3}{1+\alphajj n\kappa_j^2}+n\sum_{j>\lceil n^\frac{1}{\rr-\rho-1}\rceil}\frac{\kappa_j^2 |g_{0,j}|\omega_j^3}{1+\alphajj n\kappa_j^2}\notag\\
\lesssim~& n \max\left\{n^{-1},n^{-\frac{2\rr+\qq-3\rho-1}{2\rr}}\right\}{d_n^\delta}^{1/2}+nn^{-\frac{2\rr+\qq-3\rho-1}{\rr-\rho-1}}\notag\\
\lesssim~& n^{\frac{2\rho+1}{2\rr}}{d_n^\delta}^{1/2}+\underbrace{n^{-\frac{\rr+\qq-2\rho}{\rr-\rho-1}}}_{\leq n^{-1/2}\leq\delta_n\lesssim\delta}\lesssim \delta.\label{eq:iisecondsummandC0}
\end{align}
Finally, for the third summand in \eqref{eq:iisplitc0} it holds
\begin{align}
    \label{eq:iithirdsummandC0}
    \sumjN\left|\frac{\omega_j\Gzj}{1+\alphajj n\kappaj^2}\right|\lesssim\sumjntwor \frac{j^{-\qq+\rho}}{1+nj^{-2\rr}}\lesssim n^{-\frac{\qq-\rho-1}{2\rr}}\lesssim\delta_n\lesssim\delta.
\end{align}
Combining \eqref{eq:iifirstC0}--\eqref{eq:iithirdsummandC0} shows $\hyperlink{hyp:C0secondterm}{(ii)}\lesssim\delta^2$
finishing \hyperlink{hyp:C0step3b}{Step 3b}. Together with \eqref{eq:iestimateC0} this finishes \hyperlink{hyp:C0step3}{Step 3}.

{\bf \hypertarget{hyp:C0step4}{Step 4} (upper bound $\boldsymbol{\lVert\hatgn-g_0\rVert_{C^0(K)}}$ on $\boldsymbol{S^\delta}$ for $\boldsymbol{\delta\geq n^{2}}$).}
Following \eqref{eq:gnhatg0C0} and using $\qq>1+\rho$, we have 
\begin{align}\label{eq:ScboundC0}
\lVert\hatgn-g_0\rVert_{\infty}^2\notag
   &\leq2\limNinfty\left(\wN^T|\ANin\bN|\right)^2
   +2\Bigl(\sum_{j=1}^{\infty}\underbrace{|\Gzj|\omega_j}_{\lesssim j^{-\qq+\rho}}\Bigr)^2\notag\\
   &\lesssim \limNinfty\left(\wN^T|\ANin\bN|\right)^2+1.   
\end{align}
Using $\lVert\uN\rVert_2^2\lesssim n$, \eqref{eq:eigenvalue}, \eqref{eq:vNprimeestimate} and the definition of ${d_n^\delta}$ in \eqref{eq:dnc0} we bound 

\begin{align}
    \left(\wN^T|\ANin\bN|\right)^2\lesssim \lVert \uN\rVert_2^2~\lVert\DN^{1/2}\AN^{-1}\DN^{1/2}\rVert_2^2~\lVert\vN'\rVert_2^2 
    \lesssim n^{4}(1+{d_n^\delta})\lesssim\delta^2\label{eq:roughboundC0}
\end{align}
where we additionally used $2\rr>2\rho+1$.
Inserting \eqref{eq:roughboundC0} into \eqref{eq:ScboundC0} and using $1\leq n^{4}\lesssim\delta^2$ shows $\lVert\hatgn-g_0\rVert_\infty^2\lesssim\delta^2$ which finishes \hyperlink{hyp:C0step4}{Step 4}.

{\bf \hypertarget{hyp:C0step5}{Step 5} (finish the proof).}
Identical argumentation as in \eqref{eq:concentrationbprooffinal} lets us conclude 
\begin{align}
    &\left(\IPpX\otimes\IP_{\pEE}\right)\left(\left\lVert\hatgn-g_0\right\rVert_{\infty}^2\geq\delta^2\right) \leq C\exp\left({-\frac{n{d_n^\delta}}{C}}\right),\label{eq:concentrationbprooffinalC0}
\end{align}
with ${d_n^\delta}$ from \eqref{eq:dnc0}. Writing out ${d_n^\delta}$ shows \eqref{eq:convergencegaussianC0} and thus finishes the proof.
\end{appendix}
\end{document}